\renewcommand*\libertine@figurestyle{LF}
\renewcommand*\libertine@figurestyle{OsF}
\DeclareMathAlphabet{\mathpzc}{OT1}{pzc}{m}{it}
\let\oldtocsection=\tocsection
\let\oldtocsubsection=\tocsubsection
\let\oldtocsubsubsection=\tocsubsubsection
\renewcommand{\tocsection}[2]{\hspace{0em}\oldtocsection{#1}{#2}}
\renewcommand{\tocsubsection}[2]{\hspace{1em}\oldtocsubsection{#1}{#2}}
\renewcommand{\tocsubsubsection}[2]{\hspace{2em}\oldtocsubsubsection{#1}{#2}}
\newtheorem{bigthm}{Theorem}
\newtheorem{bigcor}[bigthm]{Corollary}
\newtheorem{thm}{Theorem}[section]
\newtheorem{lem}[thm]{Lemma}
\newtheorem{cor}[thm]{Corollary}
\theoremstyle{definition}
\newtheorem{dfn}[thm]{Definition}
\theoremstyle{remark}
\newtheorem{ex}[thm]{Example}
\newtheorem{rem}[thm]{Remark}
\newtheorem*{nrem}{Remark}
\newcommand\arcsymbol{\mathrel{\ooalign{$-$\cr
  \raise0.15ex\hbox{$\scriptstyle\bullet$}\cr}}}
\newcommand\stripsymbol{%
  \mathbin{\scalerel*{\stackinset{r}{-1.5pt}{c}{.3pt}{%
    \def\stackalignment{l}
    \stackon[-.8pt]{\kern0.4pt\rule{2.4pt}{1.2pt}}{\textcolor{white}{\rule{2.4pt}{0.4pt}}}%
  }{$\circ$}\kern1.0pt}{$D$}}%
}
\newcommand\smallsquare{\mathbin{\text{\raise0.17ex\hbox{\scalebox{.7}{$\blacksquare$}}}}}
\newcommand\dslash{/\mkern-6mu/}
\newcommand{\ocolour}[1]{\mathfrak{#1}}
\newcommand{\plus}{\ensuremath{{\scaleobj{0.8}{+}}}}
\newcommand{\aut}{\ensuremath{\operatorname{Aut}}}
\newcommand{\id}{\ensuremath{\operatorname{id}}}
\newcommand{\im}{\ensuremath{\operatorname{im}}}
\newcommand{\braiding}{\ensuremath{b}}
\newcommand{\ob}{\ensuremath{\operatorname{ob}}}
\newcommand{\mor}{\ensuremath{\operatorname{mor}}}
\newcommand{\op}{\ensuremath{\operatorname{op}}}
\newcommand{\oriented}{\ensuremath{\operatorname{or}}}
\newcommand{\braidgroup}{\ensuremath{B}}
\newcommand{\kernel}{\ensuremath{\operatorname{ker}}}
\newcommand{\cokernel}{\ensuremath{\operatorname{coker}}}
\newcommand{\Rel}{\ensuremath{\operatorname{Rel}}}
\newcommand{\fib}{\ensuremath{\operatorname{fib}}}
\newcommand{\RW}{\ensuremath{\operatorname{RW}}}
\newcommand{\mcg}{\ensuremath{\operatorname{mcg}}}
\newcommand{\minimum}{\ensuremath{\operatorname{min}}}
\newcommand{\usr}{\ensuremath{\operatorname{usr}}}
\newcommand{\Path}{\ensuremath{\operatorname{Path}}}
\newcommand{\const}{\ensuremath{\operatorname{const}}}
\newcommand{\PDiff}{\ensuremath{\operatorname{PDiff}}}
\newcommand{\Diff}{\ensuremath{\operatorname{Diff}}}
\newcommand{\catsingle}[1]{\ensuremath{\mathcal{#1}}}
\newcommand{\cat}[1]{\ensuremath{\mathpzc{#1}}}
\newcommand{\oC}{\ensuremath{\operatorname{C}}}
\newcommand{\oH}{\ensuremath{\operatorname{H}}}
\newcommand{\bfH}{\ensuremath{\mathbf{H}}}
\newcommand{\bfN}{\ensuremath{\mathbf{N}}}
\newcommand{\bfR}{\ensuremath{\mathbf{R}}}
\newcommand{\bfZ}{\ensuremath{\mathbf{Z}}}
\newcommand{\bfQ}{\ensuremath{\mathbf{Q}}}
\newcommand{\tE}{\ensuremath{E}}
\newcommand{\tB}{\ensuremath{B}}
\newcommand{\cA}{\ensuremath{\catsingle{A}}}
\newcommand{\cB}{\ensuremath{\catsingle{B}}}
\newcommand{\cC}{\ensuremath{\catsingle{C}}}
\newcommand{\cD}{\ensuremath{\catsingle{D}}}
\newcommand{\cE}{\ensuremath{\catsingle{E}}}
\newcommand{\cF}{\ensuremath{\catsingle{F}}}
\newcommand{\cG}{\ensuremath{\catsingle{G}}}
\newcommand{\cM}{\ensuremath{\catsingle{M}}}
\newcommand{\cN}{\ensuremath{\catsingle{N}}}
\newcommand{\cO}{\ensuremath{\catsingle{O}}}
\newcommand{\UB}{\ensuremath{U\catsingle{B}}}
\newcommand{\CoB}{\ensuremath{\cat{CoB}}}
\newcommand{\CoBM}{\ensuremath{\cat{CoBM}}}
\newcommand{\SC}{\ensuremath{\cat{SC}}}
\newcommand{\TT}{\ensuremath{\cat{Top}}}
\newcommand{\Sets}{\ensuremath{\cat{Sets}}}
\newcommand{\FI}{\ensuremath{\cat{FI}}}
\newcommand{\Ab}{\ensuremath{\cat{Ab}}}
\newcommand{\bfNext}{\ensuremath{\bar{\mathbf{N}}}}
\newcommand{\Conf}{\ensuremath{C}}
\newcommand{\pConf}{\ensuremath{\widetilde{C}}}
\newcommand{\FConf}{\ensuremath{F}}
\newcommand{\Emb}{\ensuremath{\operatorname{Emb}}}
\newcommand{\Maps}{\ensuremath{\operatorname{Maps}}}
\newcommand{\hofib}{\ensuremath{\operatorname{hofib}}}
\newcommand{\semisimpop}{\ensuremath{\Delta^{\operatorname{op}}_{\operatorname{inj}}}}
\newcommand{\semisimp}{\ensuremath{\Delta_{\operatorname{inj}}}}
\newcommand{\semisimpopthick}{\ensuremath{\widetilde{\Delta}^{\operatorname{op}}_{\operatorname{inj}}}}
\newcommand{\semisimpthick}{\ensuremath{\widetilde{\Delta}_{\operatorname{inj}}}}
\newcommand{\colim}{\ensuremath{\operatorname{colim}}}
\newcommand{\hocolim}{\ensuremath{\operatorname{hocolim}}}
\newcommand{\circled}[1]{\raisebox{.5pt}{\textcircled{\raisebox{-.9pt} {#1}}}}
\newcommand{\maptwo}[2]{\ensuremath{#1\longrightarrow#2}}
\newcommand{\maptwoshort}[2]{\ensuremath{#1\rightarrow#2}}
\newcommand{\mapwo}[3]{\ensuremath{#1\colon#2\longrightarrow#3}}
\newcommand{\map}[5]{\ensuremath{#1\colon\begin{array}{rcl} 
      #2 & \longrightarrow & #3 \\[0.3em] 
      #4 & \longmapsto & #5
    \end{array}}}   
\newcommand{\mapwoshort}[3]{\ensuremath{#1\colon#2\rightarrow#3}}
\newcommand{\mapnoname}[4]{\ensuremath{\begin{array}{rcl} 
      #1 & \longrightarrow & #2 \\[0.3em] 
      #3 & \longmapsto & #4
    \end{array}}}
    \newcommand{\mapnonameshort}[4]{\ensuremath{\begin{array}{rcl} 
      #1 & \rightarrow & #2 \\[0.3em] 
      #3 & \mapsto & #4
    \end{array}}}
\newcommand{\xlongrightarrow}[1]{\overset{#1}{\longrightarrow}}
\newcommand{\xlongleftarrow}[1]{\overset{#1}{\longleftarrow}}
\begin{document}

\title{Homological stability of topological moduli spaces}
\begin{abstract}
Given a graded $E_1$-module over an $E_2$-algebra in spaces, we construct an augmented semi-simplicial space up to higher coherent homotopy over it, called its canonical resolution, whose graded connectivity yields homological stability for the graded pieces of the module with respect to constant and abelian coefficients. We furthermore introduce a notion of coefficient systems of finite degree in this context and show that, without further assumptions, the corresponding twisted homology groups stabilise as well. This generalises a framework of Randal-Williams and Wahl for families of discrete groups.

In many examples, the canonical resolution recovers geometric resolutions with known connectivity bounds. As a consequence, we derive new twisted homological stability results for e.g.~moduli spaces of high-dimensional manifolds, unordered configuration spaces of manifolds with labels in a fibration, and moduli spaces of manifolds equipped with unordered embedded discs. This in turn implies representation stability for the ordered variants of the latter examples.
\end{abstract}
\author{Manuel Krannich}
\email{\href{mailto:krannich@dpmms.cam.ac.uk}{krannich@dpmms.cam.ac.uk}}
\address{Centre for Mathematical Sciences, University of Cambridge, Wilberforce Road, Cambridge CB3 0WB, United Kingdom}
\subjclass[2010]{55P48, 55R40, 55R80, 57R19}
\maketitle

\addtocontents{toc}{\protect\setcounter{tocdepth}{0}}

 A sequence of spaces  \[\ldots\longrightarrow \cM_{n-1}\longrightarrow \cM_n\longrightarrow \cM_{n+1}\longrightarrow\ldots\] is said to satisfy \emph{homological stability} if the induced maps in homology are isomorphisms in degrees that are small relative to $n$. There is a well-established strategy for proving homological stability that traces back to an argument by Quillen for the classifying spaces of a sequence of inclusions of groups $G_n$. Given simplicial complexes whose connectivity increases with $n$ and on which the groups $G_n$ act simplicially, transitively on simplices, and with stabilisers isomorphic to groups $G_{n-k}$ prior in the sequence, stability can often be derived by employing a spectral sequence relating the different stabilisers. In \cite{RWW}, Randal-Williams and Wahl axiomatised this strategy of proof, resulting in a convenient categorical framework for proving homological stability for families of discrete groups that form a braided monoidal groupoid. Their work unifies and improves many classical stability results and has led to a number of applications since its introduction \cite{Friedrich, GW, PatztWu, RWtwisted,SW}.
 
However, homological stability phenomena have been proved to occur not only in the context of discrete groups, but also in numerous non-aspherical situations, many of them of a moduli space flavour, such as unordered configuration spaces of manifolds \cite{McDuffConf,SegalConf,SegalConfII}, the most classical example, or moduli spaces of high-dimensional manifolds \cite{GRWII,GRWI} to emphasise a more recent one. The majority of the stability proofs in this context resemble the original line of argument for discrete groups, and one of the objectives of the present work is to provide a conceptualisation of this pattern.

Instead of considering the single spaces $\cM_n$ and the maps $\maptwoshort{\cM_n}{\cM_{n+1}}$ between them one at a time, it is beneficial to treat them as a single space $\cM=\coprod_{n\ge0}\cM_n$ together with a \emph{grading} $\mapwoshort{g_\cM}{\cM}{\bfN_0}$ to the nonnegative integers, capturing the decomposition of $\cM$ into the pieces $\cM_n$, and a \emph{stabilisation map} $\mapwoshort{s}{\cM}{\cM}$ that restricts to the maps $\maptwoshort{\cM_n}{\cM_{n+1}}$, so it increases the degree by one. From the perspective of homotopy theory, such $\cM$ that result from families $\cM_n$ that are known to stabilise homologically usually share the characteristic of forming a \emph{(graded) $\tE_1$-module over an $\tE_2$-algebra}---the homotopy theoretical analogue of a module over a braided monoidal category. This observation is the driving force behind the present work.

Referring to \cref{section:modules} for a precise definition, we encourage the reader to think of a graded $\tE_1$-module $\cM$ over an $\tE_2$-algebra $\cA$ as a pair of spaces $(\cM,\cA)$ together with gradings $\mapwoshort{g_\cM}{\cM}{\bfN_0}$ and $\mapwoshort{g_\cA}{\cA}{\bfN_0}$, a homotopy-commutative multiplication $\mapwoshort{\oplus}{\cA\times\cA}{\cA}$, and a homotopy-associative action-map $\mapwoshort{\oplus}{\cM\times \cA}{\cM}$. These are required to satisfy various axioms, among them additivity with respect to $g_\cM$ and $g_\cA$ (see \cref{definition:E1module}). Given such $\cM$ and $\cA$, the choice of a \emph{stabilising object} $X\in\cA$, meaning an element of degree $1$, results in a \emph{stabilisation map} \[\mapwoshort{s\coloneq(-\oplus X)}{\cM}{\cM}\] that increases the degree by $1$ and hence gives rise to a sequence
\[\ldots\xlongrightarrow{} \cM_{n-1}\xlongrightarrow{s} \cM_n\xlongrightarrow{s} \cM_{n+1}\xlongrightarrow{}\ldots\] of the subspaces $\cM_n=g_\cM^{-1}(n)$ of a fixed degree. The sequences of spaces arising in this fashion are the ones whose homological stability behaviour the present work is concerned with. 

The key construction of this work is introduced in \cref{section:complexes}. We assign to $\cM$ its \emph{canonical resolution} \begin{equation}\label{equation:canonicalresolution}\maptwo{R_\bullet(\cM)}{\cM},\end{equation} which is an augmented semi-simplicial space up to higher coherent homotopy---a notion made precise in \cref{section:semisimplicialthick}, but which can be thought of as an augmented semi-simplicial space in the usual sense. The fibre $W_\bullet(A)$ of the canonical resolution at a point $A\in\cM$ is an analogue of the simplicial complex in Quillen's argument; it is a semi-simplicial space up to higher coherent homotopy whose space of $p$-simplices $W_p(A)$ is the homotopy fibre at $A$ of the $(p+1)$st iterated stabilisation map $\mapwoshort{s^{p+1}}{\cM}{\cM}$. Thus $W_\bullet(A)$ should be thought of as the \emph{space of destabilisations of $A$}---a terminology that suggests that the canonical resolution controls the stability behaviour of $\cM$, justified by Theorems~\ref{theorem:constant} and~\ref{theorem:twisted}.

To state our main theorems, we call the canonical resolution of $\cM$ \emph{graded $\varphi(g_\cM)$-connected in degrees $\ge m$} for a function $\mapwoshort{\varphi}{\bfN_0}{\bfQ}$ if the restriction $\maptwoshort{|R_\bullet(\cM)|_n}{\cM_n}$ of the geometric realisation of \eqref{equation:canonicalresolution} to the preimage of $\cM_n$ is $\lfloor \varphi(n)\rfloor$-connected in the usual sense for all $n\ge m$. The first theorem, proved in \cref{section:proofconstant}, treats homological stability with constant and abelian coefficients, the latter being local systems on which the commutator subgroups of the fundamental groups at all basepoints act trivially.

\begin{bigthm}\label{theorem:constant}
Let $\cM$ be a graded $\tE_1$-module over an $\tE_2$-algebra with stabilising object $X$ and $L$ a local system on $\cM$. If the canonical resolution of $\cM$ is graded $(\frac{g_\cM-2+k}{k})$-connected in degrees $\ge1$ for some $k\ge 2$, then \[\mapwo{s_*}{\oH_i(\cM_{n};s^*L)}{\oH_i(\cM_{n+1};L)}\] 
\begin{enumerate}
\item is an isomorphism for $i\le\frac{n-1}{k}$ and an epimorphism for $i\le\frac{n-2+k}{k}$, if $L$ is constant, and 
\item is an isomorphism for $i\le\frac{n+1-k}{k}$ and an epimorphism for $i\le\frac{n}{k}$, if $L$ is abelian and $k\ge3$.
\end{enumerate}
\end{bigthm}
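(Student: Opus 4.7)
The plan is to follow the inductive strategy of Randal-Williams–Wahl, adapted from the setting of families of discrete groups to semi-simplicial spaces up to higher coherent homotopy. I would induct on $n$, proving the isomorphism and epimorphism claims simultaneously, and use the spectral sequence associated to the canonical resolution as the main tool.

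First, I would set up the spectral sequence of the augmented semi-simplicial space up to higher coherent homotopy $|R_\bullet(\cM)|_n \to \cM_n$. This produces a homology spectral sequence of the form
\[E^1_{p,q} = \oH_q(R_p(\cM)|_n; L) \Longrightarrow \oH_{p+q+1}(\cM_n, |R_\bullet(\cM)|_n; L),\]
whose abutment vanishes in total degree at most $\lfloor(n-2+k)/k\rfloor + 1$ by the connectivity assumption on the resolution. Using the construction of $R_\bullet(\cM)$ via the homotopy fibres $W_p(A)$ of $s^{p+1}$, there is a canonical identification $R_p(\cM)|_n \simeq \cM_{n-p-1}$ over $\cM_n$ under which the augmentation corresponds to the iterated stabilization $s^{p+1}$, yielding
\[E^1_{p,q} \cong \oH_q(\cM_{n-p-1}; (s^{p+1})^*L).\]

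Next, I would analyze the $d^1$ differentials. The face maps $d_i \colon R_p(\cM) \to R_{p-1}(\cM)$ correspond under the above identification to stabilization maps on $\cM_{n-p-1}$ that differ from one another only by the braiding provided by the $\tE_2$-algebra structure on $\A$. For constant coefficients the braiding acts trivially on $\oH_*$, so every face map induces $s_*$ and $\sum_{i=0}^{p}(-1)^i(d_i)_*$ equals $s_*$ for odd $p$ and vanishes for even $p$. For abelian coefficients the braiding may twist $L$, but since $L$ is pulled back along maps whose effect on $\pi_1$ factors through the abelianisation, one still gets the same formula after absorbing one extra application of $s$; this shift is the origin of the stronger hypothesis $k\ge 3$ in part (ii). Combining this with the inductive hypothesis that $s_*$ is an iso/epi in the claimed ranges for $\cM_m$ with $m<n$ shows that $E^2_{p,q}$ vanishes in a suitable range around total degree $(n-1)/k$ (resp.\,$(n+1-k)/k$). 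A standard diagram chase along the bottom rows of the spectral sequence, using the vanishing of $E^\infty$ inherited from the connectivity assumption, then delivers the iso/epi statements for $s_*\colon \oH_i(\cM_n;s^*L) \to \oH_i(\cM_{n+1};L)$.

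The main obstacle is the identification of the face maps of the canonical resolution with stabilization maps twisted by braidings. Because $R_\bullet(\cM)$ is only a semi-simplicial object up to higher coherent homotopy and the identification $R_p(\cM)|_n \simeq \cM_{n-p-1}$ is itself a zig-zag of weak equivalences, the coherences must be tracked carefully and the entire argument carried out on the level of homology. For abelian coefficients the difficulty is compounded by the fact that these coherences may act on $L$ through $\pi_1(\cM_n)$; showing that this action becomes trivial on homology after one further step of stabilization is precisely what requires both the abelian assumption and the bound $k \ge 3$.
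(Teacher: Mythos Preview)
Your proposal is essentially the paper's own proof: the same spectral sequence from the canonical resolution, the same identification $E^1_{p,q,n+1}\cong\oH_q(\cM_{n-p};(s^{p+1})^*L)$, the same analysis of $d^1$ as an alternating sum of stabilizations twisted by braidings, and the same inductive scheme. One small slip: the parity is reversed---since $\sum_{i=0}^{p}(-1)^i$ equals $1$ when $p$ is even and $0$ when $p$ is odd, the differential $d^1$ equals $s_*$ for \emph{even} $p$ and vanishes for \emph{odd} $p$; and for the abelian case, what you describe as ``absorbing one extra application of $s$'' is made precise in the paper via a self-homotopy of $s^2$ coming from the $\tE_2$-structure together with the braid relation $[\braiding_{X,X}\oplus X]=[X\oplus\braiding_{X,X}]$ in the abelianisation, which shows that the compositions $(s;\eta_i)\circ(s;\id)$ are all homotopic.
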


\begin{nrem}
In certain cases, discussed in \cref{remark:improvement}, the ranges of \cref{theorem:constant} can be improved marginally. \end{nrem}

Restricting to homological degree $0$, the theorem has the following cancellation result as a consequence.

\begin{bigcor}\label{corollary:cancellation}
Let $\cM$ be a graded $\tE_1$-module over an $\tE_2$-algebra with stabilising object $X$. If the connectivity assumption of \cref{theorem:constant} is satisfied, then the fundamental groupoid of $\cM$ is $X$-cancellative for objects of positive degree, i.e.~for objects $A$ and $A'$ of $\cM$ of positive degree, $A\oplus X\cong A'\oplus X$ in $\Pi(\cM)$ implies $A\cong A'$.
\end{bigcor}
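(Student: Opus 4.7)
The plan is to deduce the cancellation property by evaluating \cref{theorem:constant} in the simplest possible setting, namely at homological degree $i=0$ with $L$ the constant local system $\bfZ$. Under this specialization, part (i) of the theorem asserts that the stabilization map $s_*\colon\oH_0(\cM_n;\bfZ)\to\oH_0(\cM_{n+1};\bfZ)$ is an isomorphism for every $n\ge 1$, since the requirement $0\le(n-1)/k$ reads precisely as $n\ge 1$.

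The next step is to translate this $\oH_0$-isomorphism into a statement about sets of isomorphism classes in $\Pi(\cM)$. Since $\oH_0(\cM_n;\bfZ)$ is canonically the free abelian group $\bfZ[\pi_0(\cM_n)]$, and path components of $\cM_n$ correspond bijectively to isomorphism classes of degree-$n$ objects in $\Pi(\cM)$, the stabilization map at the level of $\oH_0$ is the $\bfZ$-linear extension of the set map $\pi_0(\cM_n)\to\pi_0(\cM_{n+1})$ sending $[A]$ to $[A\oplus X]$. A $\bfZ$-linearization of a map of sets is an isomorphism between the resulting free abelian groups if and only if the underlying set map is a bijection, so one concludes that $[A]\mapsto[A\oplus X]$ is a bijection $\pi_0(\cM_n)\to\pi_0(\cM_{n+1})$ for every $n\ge 1$.

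To finish, I would take objects $A$ and $A'$ of $\cM$ of positive degree with $A\oplus X\cong A'\oplus X$ in $\Pi(\cM)$. Additivity of the grading under $\oplus$ forces $\deg A=\deg A'=:n\ge 1$, and the assumed isomorphism in $\Pi(\cM)$ shows $[A\oplus X]=[A'\oplus X]$ in $\pi_0(\cM_{n+1})$. Injectivity of the bijection obtained in the previous step then yields $[A]=[A']$ in $\pi_0(\cM_n)$, i.e.\,$A\cong A'$ in $\Pi(\cM)$, as required. There is no genuine obstacle here; the entire substance sits inside \cref{theorem:constant}, and the only care needed is the routine bookkeeping linking $\oH_0$ with $\pi_0$ and ensuring that the degrees of $A$ and $A'$ agree.
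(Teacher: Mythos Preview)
Your proof is correct and follows exactly the approach the paper indicates: the paper simply remarks that the corollary follows from \cref{theorem:constant} ``by restricting to homological degree $0$'', and you have spelled out precisely this deduction. The only minor point is that ``positive degree'' should be read as finite positive degree, since the connectivity hypothesis (and hence the conclusion of \cref{theorem:constant}) concerns only the pieces $\cM_n$ with $n<\infty$; but this is a matter of convention rather than a gap in your argument.
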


To cover more general coefficients, we note that the fundamental groupoid of an $\tE_2$-algebra $\cA$ naturally carries the structure of a braided monoidal category $(\Pi(\cA),\oplus,\braiding,0)$ and the fundamental groupoid of an $\tE_1$-module $\cM$ over $\cA$ becomes a right-module $(\Pi(\cM),\oplus)$ over it (see \cref{section:modules}). In terms of this, we define in \cref{section:coefficientsystems} a \emph{coefficient system $F$ for $\cM$} with stabilising object $X$ as an abelian group-valued functor $F$ on $\Pi(\cM)$, together with a natural transformation $\mapwoshort{\sigma^F}{F}{F(-\oplus X)}$ for which the image of the canonical morphism $\maptwoshort{B_m}{\aut_\cA(X^{\oplus m})}$ from the braid group on $m$ strands acts trivially on the image of $\mapwoshort{(\sigma^F)^m}{F}{F(-\oplus X^{\oplus m})}$ for all $n$ and $m$. Such a coefficient system enhances the stabilisation map to a map of spaces with local systems
\[\mapwoshort{(s;\sigma^F)}{(\cM_n;F)}{(\cM_{n+1};F)}\] by restricting $F$ to subspaces of homogenous degree. A coefficient system $F$ induces a new one $\Sigma F=F(-\oplus X)$, called its \emph{suspension}, which comes with a morphism $\maptwoshort{F}{\Sigma F}$, named the \emph{suspension map} (see \cref{definition:suspension}). The coefficient system $F$ is inductively said to be \emph{of degree $r$} if the kernel of the suspension map vanishes and the cokernel has degree $(r-1)$; the zero coefficient system having degree $-1$. In fact, we define a more general notion of being of \emph{(split) degree $r$ at $N$} such that $F$ is of degree $r$ in the sense just described if it is of degree $r$ at $0$ (see \cref{definition:coefficientsystems}). This notion of a coefficient system of finite (split) degree generalises the one introduced by Randal-Williams and Wahl \cite{RWW} for braided monoidal groupoids (see Remarks~\ref{remark:RWWcoefficients} and~\ref{remark:quillencoefficients}), which was itself inspired by work of Dwyer \cite{Dwyer} and van der Kallen \cite{vanderKallen} on general linear groups, and work of Ivanov \cite{Ivanov} on mapping class groups of surfaces.

\begin{nrem}There is an alternative point of view on coefficient systems for $\cM$, namely as abelian-group valued functors on a category $\langle\cM,\cB\rangle$ constructed from the action of $\Pi(\cA)$ on $\Pi(\cM)$ (see \cref{remark:quillencoefficients}).
\end{nrem}

Our second main theorem, demonstrated in \cref{section:twistedstability}, addresses homological stability of $\cM$ with coefficients in a coefficient system of finite degree.

\begin{bigthm}\label{theorem:twisted}
Let $\cM$ be a graded $\tE_1$-module over an $\tE_2$-algebra with stabilising object $X$ and $F$ a coefficient system for $\cM$ of degree $r$ at $N\ge0$. If the canonical resolution of $\cM$ is graded $(\frac{g_\cM-2+k}{k})$-connected in degrees $\ge1$ for some $k\ge 2$, then the map induced by stabilisation\[\mapwo{(s;\sigma^F)_*}{\oH_i(\cM_n;F)}{\oH_i(\cM_{n+1};F)}\] is an isomorphism for  $i\le\frac{n-rk-k}{k}$ and an epimorphism for $i\le\frac{n-rk}{k}$, when $n>N$. If $F$ is of split degree $r$ at $N\ge0$ then $(s;\sigma^F)_*$ is an isomorphism for  $i\le\frac{n-r-k}{k}$ and an epimorphism for $i\le\frac{n-r}{k}$, when $n>N$.
\end{bigthm}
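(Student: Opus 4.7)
The plan is to derive Theorem B by a double induction, on the degree $r$ of the coefficient system and on $n$, running in parallel with the argument for Theorem A but now with twisted coefficients. The starting point is a spectral sequence extracted from the canonical resolution: after pulling back the local system associated to $F$ along $|R_\bullet(\cM)| \to \cM$, the graded connectivity hypothesis produces a spectral sequence converging to $\oH_*(\cM_n; F)$ in a range of slope $\tfrac{1}{k}$, whose $E^1$-page is built from the twisted homology of the spaces $R_p(\cM)_n$ of $p$-simplices in degree $n$. Since by construction each $R_p(\cM)_n$ is a homotopy fiber of the iterated stabilization $s^{p+1}\colon \cM_{n-p-1}\to \cM_n$, a Serre-type argument identifies the $p$th column with a term of the form $\oH_*(\cM_{n-p-1}; F'_p)$ for a pulled-back coefficient system $F'_p$ on the smaller module.

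The induction on $r$ starts at $r=-1$ (where $F$ is zero and the statement is vacuous) and at $r=0$, where the trivial-braid-action axiom makes $F$ locally constant and reduces the claim to Theorem A. For the inductive step, the short exact sequence
\[0 \longrightarrow F \longrightarrow \Sigma F \longrightarrow \cokernel(\sigma^F) \longrightarrow 0\]
of coefficient systems (valid in degrees $\ge N$ by the vanishing of $\kernel(\sigma^F)$) converts the stability problem for $F$ into one for $\Sigma F$ and for $\cokernel(\sigma^F)$, the latter of degree $r-1$ and hence accessible by the outer induction. Combining the long exact sequences in homology with the five lemma yields the desired isomorphism and epimorphism ranges. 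The split-degree variant uses that this short exact sequence splits, letting the error term accumulate additively rather than multiplicatively in $r$ and producing the sharper slopes $\tfrac{n-r-k}{k}$ and $\tfrac{n-r}{k}$.

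The principal obstacle will be controlling the $E^1$-columns $\oH_*(\cM_{n-p-1}; F'_p)$ uniformly in $p$. The pulled-back systems $F'_p$ are not a priori of degree $r$ on the smaller modules, and in order to apply the inductive hypothesis in $n$ column by column, one must establish that $F'_p$ has (split) degree $r$ at a suitably shifted parameter $N'$ depending on $p$. This degree-preservation lemma is the technical heart of the argument, and I expect it to require a careful comparison of the braid-group action on $F$ with the face maps of the canonical resolution, together with the observation that pulling back along a stabilization interacts well with suspension of coefficient systems. Once that lemma is in place, the final slope-chase in the spectral sequence is a combinatorial bookkeeping exercise following the pattern established by Randal-Williams and Wahl in the discrete case.
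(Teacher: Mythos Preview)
Your plan has two genuine gaps. First, the base case $r=0$ does not reduce to Theorem~A: a coefficient system of degree~$0$ has $\sigma^F$ an isomorphism in the relevant range, but the $\aut_{\cM}(A)$-action on $F(A)$ may be highly nontrivial, so $F$ is not locally constant and the constant-coefficient argument does not apply. In the paper the base case is simply $r<0$ (where $F$ vanishes) together with the trivial case of homological degree $<0$; nothing is ever reduced to Theorem~A. Second, and more seriously, your inductive step via the short exact sequence $0\to F\to\Sigma F\to\cokernel(F)\to 0$ and the five lemma does not close the induction on $r$: while $\cokernel(F)$ has degree $r-1$, the suspension $\Sigma F$ still has degree $r$ (at $N-1$), so you are left needing exactly the statement you are trying to prove, just for $\Sigma F$ instead of $F$. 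There is no shift in $n$ or $r$ that lets an inner induction on $n$ absorb this.

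The paper resolves this by working throughout with the \emph{relative} groups $\Rel_i(F)_n=\oH_i((\cM_{n+1};F),(\cM_n;F))$ and proving they vanish in the claimed range. The crucial extra ingredient you are missing is Lemma~4.3: the relative stabilization $(s;\sigma^F)^\sim_*$ on $\Rel_*(F)$ satisfies $(s;\sigma^F)^\sim_*\circ(s;\sigma^F)^\sim_*=0$, so it suffices to show $(s;\sigma^F)^\sim_*$ is injective. This map is then factored as $\Rel_*(F)\xrightarrow{(\id;\sigma^F)_*}\Rel_*(\Sigma F)\xrightarrow{(s;\id)^\sim_*}\Rel_*(F)$; the first factor is handled by your short exact sequence (now in $\Rel_*$ rather than absolute homology), and the second is handled by a \emph{relative} spectral sequence built from a map of canonical resolutions over the stabilization, whose $E^1$-page is $\Rel_q(\Sigma^{p+1}F)_{n-p}$. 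Your ``principal obstacle'' is thus misidentified: the pulled-back systems are exactly the iterated suspensions $\Sigma^{p+1}F$, and their degree is an easy lemma (Lemma~4.2). The real heart of the proof is the composition-is-zero trick together with the relative spectral sequence argument for injectivity of $(s;\id)^\sim_*$.
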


As a proof of concept, we apply the developed theory to three main classes of examples to which we devote the remainder of this introduction.

\subsection*{Configuration spaces}
The \emph{unordered configuration space} $\Conf_n^\pi(W)$ of a manifold with boundary $W$ with labels in a Serre fibration $\mapwoshort{\pi}{E}{W}$ is the quotient of the \emph{ordered configuration space}
\[\FConf^{\pi}_n(W)=\{(e_1,\ldots,e_n)\in E^n\mid\pi(e_i)\neq\pi(e_j)\text{ for }i\neq j\text{ and }\pi(e_i)\in W\backslash\partial W\}\] by the apparent action of the symmetric group $\Sigma_n$. If $W$ is of dimension $d\ge2$ and has nonempty boundary, then the union of its configuration spaces $\cM=\coprod_{n\ge0}\Conf_n^\pi(W)$ admits the structure of an $\tE_1$-module over the $\tE_2$-algebra $\cA=\coprod_{n\ge0}\Conf_n(D^d)$ of configurations in a $d$-disc, graded by the number of points (see \cref{lemma:moduleconfigurations}). In \cref{section:arcresolution}, we identify its canonical resolution with the \emph{resolution by arcs}---an augmented semi-simplicial space of geometric nature that has already been considered in the context of homological stability (see e.g.~\cite{MillerWilson,KM}) and is known to be sufficiently connected to apply Theorems~\ref{theorem:constant} and~\ref{theorem:twisted}.

\begin{bigthm}\label{theorem:configurationspaces}Let $W$ be a connected manifold of dimension at least $2$ with nonempty boundary and let $\mapwoshort{\pi}{E}{W}$ be a Serre fibration with path-connected fibres.\begin{enumerate}
\item For a local system $L$ on $\Conf^\pi_{n+1}(W)$, the stabilisation map \[\mapwo{s_*}{\oH_i(\Conf^\pi_n(W);s^*L)}{\oH_i(\Conf^\pi_{n+1}(W);L)}\] is an isomorphism for $i\le\frac{n-1}{2}$ and an epimorphism for $i\le\frac{n}{2}$, if $L$ is constant. It is an isomorphism for $i\le\frac{n-2}{3}$ and an epimorphism for $i\le\frac{n}{3}$, if $L$ is abelian.
\item If $F$ is a coefficient system of degree $r$ at $N\ge0$, then the stabilisation map \[\mapwo{(s;\sigma^F)_*}{\oH_i(\Conf^\pi_n(W);F)}{\oH_i(\Conf^\pi_{n+1}(W);F)}\] is an isomorphism for $i\le\frac{n-2r-2}{2}$ and an epimorphism for $i\le\frac{n-2r}{2}$, when $n>N$. If $F$ is of split degree $r$ at $N\ge0$, then it is an isomorphism for $i\le\frac{n-r-2}{2}$ and an epimorphism for $i\le\frac{n-r}{2}$, when $n>N$.
\end{enumerate} 
\end{bigthm}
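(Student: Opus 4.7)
The plan is to deduce \cref{theorem:configurationspaces} as a direct application of \cref{theorem:constant} and \cref{theorem:twisted} to the union of configuration spaces, once it has been exhibited as a graded $\tE_1$-module with a sufficiently highly connected canonical resolution.

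First, I would invoke \cref{lemma:moduleconfigurations} to endow $\cM=\coprod_{n\ge0}\Conf_n^\pi(W)$ with the structure of a graded $\tE_1$-module over the $\tE_2$-algebra $\A=\coprod_{n\ge0}\Conf_n(D^d)$, graded by the number of points, with any singleton $X\in\Conf_1(D^d)$ as stabilizing object. Since $W$ is connected with nonempty boundary and dimension $d\ge2$, embedding $D^d$ into a collar of $\partial W$ and taking disjoint union realizes the usual stabilization map $\mapwoshort{s}{\Conf^\pi_n(W)}{\Conf^\pi_{n+1}(W)}$, so the local systems $L$ and the coefficient systems $F$ in the two parts of the statement are precisely the inputs required by \cref{theorem:constant,theorem:twisted}.

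Second, I would verify the connectivity hypothesis using the results of \cref{section:arcresolution}, which identify the canonical resolution of $\cM$ (up to higher coherent homotopy) with the geometric resolution by arcs: the $p$-simplices over a configuration $A\in\Conf^\pi_n(W)$ are tuples of $p+1$ disjoint embedded arcs from $\partial W$ to the points added by $s^{p+1}$, together with compatible lifts along $\pi$ (here path-connectedness of the fibers of $\pi$ is used to see that the lift data contributes nothing to the connectivity). The classical connectivity estimate for arc complexes in a manifold of dimension $\ge2$ with nonempty boundary then yields that in degree $n\ge1$ the realization is $\lfloor n/2\rfloor$-connected, i.\,e. the canonical resolution is graded $(\frac{g_\cM-2+k}{k})$-connected in degrees $\ge1$ with $k=2$. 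Since $\lfloor n/2\rfloor\ge\lfloor(n+1)/3\rfloor$ for all $n\ge1$, the same bound also delivers the $k=3$ version required for the abelian coefficient case.

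Finally, the numerical ranges are read off by substitution: $k=2$ into \cref{theorem:constant} gives the constant part of (i), and into \cref{theorem:twisted} gives both the degree-$r$ and split-degree-$r$ cases of (ii), while $k=3$ into \cref{theorem:constant} gives the abelian part of (i). The main obstacle is the second step, namely the identification of the abstractly defined canonical resolution with the concrete semi-simplicial space of $\pi$-lifted disjoint arcs. Constructing a comparison map of augmented semi-simplicial spaces up to higher coherent homotopy, in the framework of \cref{section:semisimplicialthick}, and checking that it is a levelwise weak equivalence, is where the geometry genuinely enters; once this is in place, the connectivity of the arc complex may be imported as a black box from the existing literature and the conclusion is immediate.
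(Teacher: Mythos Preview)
Your proposal is correct and follows essentially the same route as the paper: invoke \cref{lemma:moduleconfigurations} for the module structure, use \cref{theorem:highconnectivityconfigurationspaces} to identify the canonical resolution with the resolution by arcs, import the connectivity of the latter from \cref{theorem:arcresolutionconnected}, and read off the ranges from \cref{theorem:constant} and \cref{theorem:twisted} with $k=2$ (and $k=3$ for the abelian case). One small imprecision: the arc resolution is actually graded $(g_\cM-1)$-connected (not merely $\lfloor g_\cM/2\rfloor$-connected), and in its fiber over $A$ the arcs connect \emph{any} $p+1$ points of $A$ to the boundary rather than ``the points added by $s^{p+1}$''---but neither point affects the argument.
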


\begin{nrem}Employing the improvement of \cref{remark:improvement}, one obtains for constant coefficients a slightly better isomorphism range of $i\le\frac{n}{2}$ than the one stated in \cref{theorem:configurationspaces}.
\end{nrem}

Configuration spaces have a longstanding history in the context of homological stability, starting with work of Arnold \cite{Arnold}, who established stability for $\Conf_n(D^2)$ with constant coefficients. McDuff and Segal \cite{McDuffConf, SegalConf, SegalConfII} observed that this behaviour is not restricted to the $2$-disc and proved stability for more general $\Conf^{\pi}_n(W)$ with constant coefficients and $\pi=\id_W$, which can be extended to general $\pi$, e.g.~by adapting the proof for a trivial fibration presented in \cite{RWConf} (see \cite{CanteroPalmer,KM} for alternative proofs). 

As proved for example in \cite{RWConf}, the stabilisation map for configuration spaces is in fact split injective in homology with constant coefficients in all degrees---a phenomenon special to configuration spaces, not captured by our general approach.

For a trivial fibration, stability of $\Conf^{\pi}_n(W)$ with respect to a nontrivial coefficient system $F$ was studied by Palmer \cite{Palmer}, building on work of Betley \cite{Betley} on symmetric groups. The second part of \cref{theorem:configurationspaces} extends his result to nontrivial fibrations and a significantly larger class of coefficient systems, partly conjectured by Palmer \cite[Rem.\,1.5]{Palmer} (see \cref{remark:comparisonwithpalmer} for a more detailed comparison to his work). In the case of surfaces and a trivial fibration, a result similar to \cref{theorem:configurationspaces}, but with respect to a slightly smaller class of coefficient systems, is contained in work by Randal-Williams and Wahl \cite[Thm\,D]{RWW}.

In \cref{section:coefficientsystemsconfigurationspaces}, we provide a discussion of coefficient systems for configuration spaces by relating them, for instance, to the theory of $\FI$-modules as introduced by Church, Ellenberg, and Farb \cite{ChurchEllenbergFarb} or to coefficient systems studied in \cite{RWW}. These considerations provide numerous nontrivial coefficient systems $F$ with respect to which the homology of $\Conf^{\pi}_n(W)$ stabilises.

To our knowledge, stability with abelian coefficients for configuration spaces of manifolds of dimensions greater than two has not been considered so far. We next discuss a direct consequence of stability with respect to this class of coefficients as the first item in a series of applications exploiting \cref{theorem:configurationspaces}.

\subsubsection*{Oriented configuration spaces}
The \emph{oriented configuration space} $\Conf_n^{\pi,\oriented}(W)$ with labels in a Serre fibration $\pi$ over $W$ is the double cover of $\Conf_n^{\pi}(W)$ given as the quotient of the ordered configuration space $\FConf_n^{\pi}(W)$ by the action of the alternating group $A_n$, or equivalently, the space of labelled configurations ordered up to even permutations. By the space version of Shapiro's lemma, the homology of $\Conf_{n}^{\pi,\oriented}(W)$ is isomorphic to $\oH_*(\Conf_{n}^{\pi}(W);\bfZ[\bfZ/2\bfZ]),$ with the action of $\pi_1(\Conf_{n}^{\pi}(W))$ on the group ring $\bfZ[\bfZ/2\bfZ]$ being induced by the composition of the sign homomorphism with the morphism $\pi_1(\Conf_{n}^{\pi}(W))\rightarrow\Sigma_{n}$, obtained by choosing an ordering of a basepoint. These local systems are abelian and are preserved by pulling back along the stabilisation map, hence homological stability for $\Conf_n^{\pi,\oriented}(W)$ follows as a by-product of \cref{theorem:configurationspaces}.

\begin{bigcor}\label{corollary:orientedconfigurationspaces}Let $W$ and $\pi$ be as in \cref{theorem:configurationspaces}. The map induced by stabilisation \[\mapwo{s_*}{\oH_i(\Conf^{\pi,\oriented}_{n}(W);\bfZ)}{\oH_i(\Conf^{\pi,\oriented}_{n+1}(W);\bfZ)}\] is an isomorphism for $i\le\frac{n-2}{3}$ and an epimorphism for $i\le\frac{n}{3}$.
\end{bigcor}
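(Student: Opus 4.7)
The plan is to reduce the statement to the abelian-coefficient case of \cref{theorem:configurationspaces}, exactly along the lines indicated in the paragraph preceding the corollary. First I would identify the double cover $\Conf^{\pi,+}_n(W)\to\Conf^\pi_n(W)$ with the cover classified by the composite $\mapwoshort{\pi_1(\Conf^\pi_n(W))}{\Sigma_n\xrightarrow{\operatorname{sgn}}\bfZ/2\bfZ}$, where the first map is obtained after picking an ordered basepoint. By the space-level Shapiro lemma (transfer for finite coverings), there is a natural isomorphism
\[\oH_*(\Conf^{\pi,+}_n(W);\bfZ)\cong\oH_*(\Conf^\pi_n(W);L_n),\]
where $L_n$ denotes the local system with stalk $\bfZ[\bfZ/2\bfZ]$ on which $\pi_1(\Conf^\pi_n(W))$ acts via the sign composed with the map to $\Sigma_n$.

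Next I would verify the two hypotheses needed to feed $L_n$ into \cref{theorem:configurationspaces} (i). On the one hand, since the $\pi_1$-action on $L_n$ factors through the abelian quotient $\bfZ/2\bfZ$, the commutator subgroup acts trivially; hence $L_n$ is abelian in the sense of the paper. On the other hand, under the standard inclusion $\Sigma_n\hookrightarrow\Sigma_{n+1}$ the sign representation pulls back to the sign representation, so the stabilization map $\mapwoshort{s}{\Conf^\pi_n(W)}{\Conf^\pi_{n+1}(W)}$ satisfies $s^*L_{n+1}\cong L_n$ as local systems with $\bfZ[\bfZ/2\bfZ]$-stalks, and the transfer isomorphism is natural with respect to $s$.

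Applying part (i) of \cref{theorem:configurationspaces} to the abelian local system $L_{n+1}$ then gives that $\mapwoshort{s_*}{\oH_i(\Conf^\pi_n(W);L_n)}{\oH_i(\Conf^\pi_{n+1}(W);L_{n+1})}$ is an isomorphism for $i\le\frac{n-2}{3}$ and an epimorphism for $i\le\frac{n}{3}$, and transporting this across the natural Shapiro isomorphism yields the claimed ranges for $\oH_i(\Conf^{\pi,+}_n(W);\bfZ)$. The only genuine verification beyond quoting the theorem is the naturality of Shapiro's isomorphism with respect to the stabilization map; this is the step I expect to require the most care, since it amounts to checking that $s$ lifts compatibly to the oriented covers and that the associated transfer maps commute with this lift—which in turn reduces to the observation that the sign character is compatible with the standard inclusion $\Sigma_n\hookrightarrow\Sigma_{n+1}$.
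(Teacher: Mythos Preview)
Your proposal is correct and follows essentially the same approach as the paper: the paper's argument, given in the paragraph immediately preceding the corollary, uses Shapiro's lemma to identify $\oH_*(\Conf^{\pi,+}_n(W);\bfZ)$ with $\oH_*(\Conf^\pi_n(W);\bfZ[\bfZ/2\bfZ])$ for the sign local system, observes this system is abelian and preserved under pullback by the stabilization, and then invokes the abelian case of \cref{theorem:configurationspaces}. Your explicit attention to the naturality of the Shapiro isomorphism with respect to $s$ is a welcome bit of care that the paper leaves implicit.
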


Stability for oriented configuration spaces of connected orientable surfaces with nonempty boundary and without labels was proved by Guest, Kozlowsky, and Yamaguchi \cite{GuestKozlowskyYamaguchi} using computations due to Bödigheimer, Cohen, Taylor, and Milgram \cite{BoedigheimerCohenMilgram,BoedigheimerCohenTaylor}. Palmer \cite{PalmerOriented} extended this to manifolds of higher dimensions with nonempty boundary and labels in a trivial fibration. \cref{corollary:orientedconfigurationspaces} gives an alternative proof of his result and enhances it by means of general labels and an improved stability range.
 
\subsubsection*{Configuration spaces of embedded discs}The \emph{configuration space $\Conf^k_n(W)$ of unordered $k$-discs} in a connected $d$-manifold $W$ is the quotient by the action of $\Sigma_n$ on the \emph{configuration space of ordered $k$-discs} \[\textstyle{\FConf^k_n(W)=\Emb(\coprod^n D^k,W\backslash\partial W)},\] equipped with the $\cC^\infty$-topology. For $k=d$ and oriented $W$, there are variants $\FConf_n^{d^{\plus}}(W)$ and $\Conf^{d^{\plus}}_n(W)$ by restricting to orientation preserving embeddings. Mapping an embedding of a $k$-disc to its centre point, labelled with the $k$-frame induced by standard framing of $D^k$ at the origin, results in a map $\maptwoshort{\Conf^k_n(W)}{\Conf^{\pi_k}_n(W)}$, where $\pi_k$ is the bundle of $k$-frames in $M$. This map can be seen to be a weak equivalence by choosing a metric and exponentiating frames. For $k<d$, the fibre of $\pi_k$ is path-connected, so the homological stability results of \cref{theorem:configurationspaces} carry over to $\Conf^k_n(W)$, comprising part of \cref{corollary:configurationspacesofdiscs} below. Using the bundle $\pi_d^{\plus}$ of oriented $d$-framings, the argument for $\Conf^{d^{\plus}}_n(W)$ is analogous, since the orientability condition ensures that the fibres of $\pi_k^{\plus}$ are path-connected.

The topological group of diffeomorphisms $\Diff_\partial(W)$ fixing a neighbourhood of the boundary in the $\cC^{\infty}$-topology naturally acts on the configuration spaces $\FConf^k_n(W)$ and $\Conf^k_n(W)$, and the resulting homotopy quotients $\FConf^k_n(W)\dslash\Diff_\partial(W)$ and $\Conf^k_n(W)\dslash\Diff_\partial(W)$ model the classifying spaces of the subgroups \[\PDiff^k_{\partial,n}(W)\subseteq\Diff^k_{\partial,n}(W)\subseteq\Diff_\partial(W),\] where $\PDiff^k_{\partial,n}(W)$ are the diffeomorphisms that fix $n$ chosen embedded $k$-discs in $W$ and $\Diff^k_{\partial,n}(W)$ are the ones permuting them (see \cref{lemma:diffeospermutingdiscs}). If $W$ is orientable, the (sub)groups of orientation preserving diffeomorphisms are denoted with a (+)-superscript. In \cref{example:groupaction}, we explain how the canonical resolution of a graded $\tE_1$-module $\cM$ over an $\tE_2$-algebra $\cA$ relates to that of the $\tE_1$-module $EG\times_G\cM$ over $\cA$ in the presence of a graded action of a group $G$ on $\cM$ that commutes with the action of $\cA$. An application of this consideration to the situation at hand implies the following, carried out in \cref{section:embeddeddiscs}.

\begin{bigcor}\label{corollary:configurationspacesofdiscs}\label{corollary:diffeomorphismspermutingdiscs}Let $W$ be a $d$-dimensional manifold as in \cref{theorem:configurationspaces} and let $0\le k< d$.
\begin{enumerate}
\item For a local system $L$, the stabilisation maps
\[\maptwoshort{\oH_i(\Conf^k_n(W);s^*L)}{\oH_i(\Conf^k_{n+1}(W);L)}\quad\text{and}\quad\maptwoshort{\oH_i(\tB \Diff_{\partial,n}^k(W);s^*L)}{\oH_i(\tB \Diff_{\partial,n+1}^k(W);L)}\] are isomorphisms for $i\le\frac{n-1}{2}$ and epimorphisms for $i\le\frac{n}{2}$, if $L$ is constant. If $L$ is abelian, then they are isomorphisms for $i\le\frac{n-2}{3}$ and epimorphisms for $i\le\frac{n}{3}$.
\item If $F$ is a coefficient system of degree $r$ at $N\ge0$, then the maps induced by the stabilisation $(s;\sigma^F)$ \[\maptwoshort{\oH_i(\Conf^k_n(W);F)}{\oH_i(\Conf^k_{n+1}(W);F)}\quad\text{and}\quad\maptwoshort{\oH_i(\tB \Diff_{\partial,n}^k(W);F)}{\oH_i(\tB \Diff_{\partial,n+1}^k(W);F)}\] are isomorphisms for $i\le\frac{n-2r-2}{2}$ and epimorphisms for $i\le\frac{n-2r}{2}$, when $n>N$. If $F$ is of split degree $r$ at $N\ge0$, then they are isomorphisms for $i\le\frac{n-r-2}{2}$ and epimorphisms for $i\le\frac{n-r}{2}$, when $n>N$.
\end{enumerate}
If $W$ is oriented, then the analogous statements hold for the variants $\Conf^{d,\plus}_n(W)$ and $\tB\Diff_{\partial,n}^{d,\plus}(W)$.
\end{bigcor}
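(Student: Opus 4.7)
The statements for the configuration spaces $\Conf^k_n(W)$ and $\Conf^{d,+}_n(W)$ are essentially already contained in the discussion immediately preceding the corollary: the weak equivalence $\Conf^k_n(W)\simeq\Conf^{\pi_k}_n(W)$, together with the path-connectedness of the fibre of $\pi_k$ for $k<d$, reduces the claim to \cref{theorem:configurationspaces}, and the oriented variant proceeds in parallel using the oriented frame bundle. What remains to prove is the corresponding statement for $\tB\Diff_{\partial,n}^k(W)$ and $\tB\Diff_{\partial,n}^{d,+}(W)$.

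For this I would invoke the group-action variant of the canonical resolution introduced in \cref{example:groupaction}. Concretely, fix a boundary collar of $W$ and realise the $\tE_2$-algebra $\A=\coprod_{n\ge 0}\Conf^k_n(D^d)$ as configurations in a $d$-disc sitting inside this collar, so that the $\tE_1$-action of $\A$ on $\cM=\coprod_{n\ge 0}\Conf^k_n(W)$ is supported entirely inside it. The group $G=\Diff_\partial(W)$ preserves the grading on $\cM$ and, because it pointwise fixes the collar, its action commutes strictly with that of $\A$. This places us in the setting of \cref{example:groupaction}, so the homotopy quotient $EG\times_G\cM$ inherits the structure of a graded $\tE_1$-module over $\A$, whose degree-$n$ piece is weakly equivalent to $\tB\Diff_{\partial,n}^k(W)$ by \cref{lemma:diffeospermutingdiscs}.

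The input from \cref{example:groupaction} that I would rely on is that the fibres of the canonical resolution are insensitive to passing to this homotopy quotient: the fibre of $R_\bullet(EG\times_G\cM)$ at a point agrees with the fibre of $R_\bullet(\cM)$ at its image. Consequently the graded connectivity bound for $R_\bullet(\cM)$, established in \cref{section:arcresolution} via the arc resolution, transfers without loss to $R_\bullet(EG\times_G\cM)$. Applying \cref{theorem:constant,theorem:twisted} to the resulting graded $\tE_1$-module then yields precisely the ranges asserted for $\tB\Diff_{\partial,n}^k(W)$, both for local systems and for coefficient systems of finite (split) degree. The oriented case is handled by the identical argument with $G$ replaced by the subgroup of orientation-preserving diffeomorphisms and $\pi_k$ by the oriented frame bundle.

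The main obstacle I anticipate is verifying that the $G$-action commutes with the $\tE_2$-action in the strict structured sense required by \cref{example:groupaction}, and not merely up to an unstructured homotopy that would fail to descend to a canonical resolution of the Borel construction. This reduces to arranging all of the multiplicative parametrising data of $\A$ so that it lies inside a region pointwise fixed by $G$; once this is carried out carefully, the remainder of the argument is a formal application of the earlier theorems.
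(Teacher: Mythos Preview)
Your proposal is correct and follows essentially the same approach as the paper: pass to the Borel construction via \cref{example:groupaction}, inherit the connectivity of the canonical resolution, and apply \cref{theorem:constant,theorem:twisted}. The one organisational difference worth noting is that the paper does not put an $\tE_1$-module structure on $\coprod_n\Conf^k_n(W)$ directly; instead it works with the model $\cM=\coprod_n\pConf^{\pi_k}_n(W)$ from \cref{lemma:moduleconfigurations}, where the module structure and the arc-resolution connectivity are already in place, lets $\Diff_\partial(W)$ act by extending diffeomorphisms to $\tilde{W}$ via the identity on the attached cylinder (which is where the $\A$-action lives, making commutativity automatic), and only at the end invokes the $\Diff_\partial(W)$-equivariant equivalence $\Conf^k_n(W)\to\Conf^{\pi_k}_n(W)\subseteq\pConf^{\pi_k}_n(W)$ to transfer the conclusion. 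This sidesteps precisely the obstacle you flag in your last paragraph and avoids having to re-establish the connectivity bound for a new module structure on $\Conf^k_n(W)$.
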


\begin{nrem}
The isomorphism range for constant coefficients in the previous theorem can be improved to $i\le\frac{n}{2}$ by virtue of \cref{remark:improvement}.\end{nrem}

For compact manifolds $W$, Tillmann \cite{Tillmann} has proved homological stability with constant coefficients for variants of $\tB \Diff^0_{\partial,n}(W)$ and $\tB \Diff^{d,\plus}_{\partial,n}(W)$ involving diffeomorphisms that are only required to fix a disc in the boundary instead of the whole boundary. A Serre spectral sequence argument shows that stability for these variants follows from stability of the spaces $\tB\Diff^{0}_{\partial,n}(W)$ and $\tB\Diff^{d,\plus}_{\partial,n}(W)$. Hatcher and Wahl \cite[Prop.\,1.5]{HatcherWahl} have shown stability with constant coefficients for the mapping class groups $\pi_0(\Diff^{0}_{\partial,n}(W))$, which can be seen to be equivalent to $\Diff^{0}_{\partial,n}(W)$ for compact $2$-dimensional $W$ as a result of the homotopy discreteness of the space of diffeomorphisms of a compact surface \cite{EarleEells,Gramain}. In this case, stability with respect to some of the twisted coefficient systems \cref{corollary:configurationspacesofdiscs} deals with is contained in work by Randal-Williams and Wahl \cite[Thm\,5.22]{RWW}.

\subsubsection*{Representation stability}
The first rational homology group of the ordered configuration space of the $2$-disc \[\oH_1(\FConf_n(D^2);\bfQ)\cong \bfQ^{n\choose2},\]as e.g.~computed in \cite{ArnoldPure}, exemplifies that---in contrast to unordered configuration spaces---the homology of the ordered variant does not stabilise. However, by incorporating the action of the symmetric groups $\Sigma_n$, it does stabilise in a more refined, representation theoretic sense. To make this precise, recall the correspondence between irreducible representations of $\Sigma_n$ and partitions of $n$ \cite[Ch.\,4]{FultonHarris}. We denote the irreducible $\Sigma_{|\lambda|}$-module corresponding to a partition $\lambda=(\lambda_1\ge\ldots\ge\lambda_k)\vdash |\lambda|$ of $|\lambda|$ by $V_\lambda$ and define for $n\ge |\lambda|+\lambda_1$, the \emph{padded partition} $\lambda[n]=(n-|\lambda|\ge\lambda_1\ge\ldots\ge\lambda_k)\vdash n$. Using the Totaro spectral sequence \cite{Totaro}, Church \cite{Church} has shown that for a connected orientable manifold of dimension at least two with finite-dimensional rational cohomology, the groups $\oH^i(\FConf_n(W);\bfQ)$ are \emph{uniformly representation stable}---a concept introduced by Church and Farb \cite{ChurchFarb}. This implies the existence of a constant $N(i)$, depending solely on $i$, such that the multiplicity of $V_{\lambda[n]}$ in the $\Sigma_n$-module $\oH^i(\FConf_n(W);\bfQ)$ is independent of $n$ for $n\ge N(i)$. Church's result has been extended in several directions \cite{ChurchEllenbergFarb, MillerWilson,Petersen,Tosteson}.

A twisted Serre spectral sequence argument (see \cref{lemma:multiplicityistwistedhomology}) shows that the multiplicity of an irreducible $\Sigma_n$-module $V_\lambda$ in $\oH^i(\FConf^\pi_n(W);\bfQ)$ agrees with the dimension of $\oH_i(\Conf^\pi_n(W);V_\lambda)$, where $\pi_1(\Conf^\pi_n(W))$ acts on $V_\lambda$ via the morphism $\maptwoshort{\pi_1(\Conf^\pi_n(W))}{\Sigma_n}$. This fact allows us to derive the stability of these multiplicities from \cref{theorem:configurationspaces}, at least for all manifolds to which the latter theorem applies (see \cref{section:representationstability}).

\begin{bigcor}\label{corollary:representationstability}
Let $W$ and $\pi$ be as in \cref{theorem:configurationspaces} and let $Z_n$ be one of the following sequences of $\Sigma_n$-spaces: 
{\setlength\multicolsep{3pt}\begin{multicols}{2}
\begin{enumerate}
\item $\FConf_n^\pi(W)$,
\item $\FConf_n^k(W)$ for $0\le k<d$,
\item $\FConf_n^{d,\plus}(W)$ if $W$ is oriented,
\item $\tB\PDiff^{k}_{\partial,n}(W)$ for $0\le k<d$,
\item $\tB\PDiff^{d,\plus}_{\partial,n}(W)$ if $W$ is orientable.
\end{enumerate}
\end{multicols}}
The $V_{\lambda[n]}$-multiplicity in $\oH^i(Z_n;\bfQ)$ for a fixed partition $\lambda$ is independent of $n$ for $n$ large relative to $i$.
\end{bigcor}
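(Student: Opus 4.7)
The plan is to reduce the assertion for each of the five families $Z_n$ to a twisted homological stability statement on the unordered quotient $Z_n/\Sigma_n$, which is already in the reach of \cref{theorem:configurationspaces} and \cref{corollary:configurationspacesofdiscs}. The key input is \cref{lemma:multiplicityistwistedhomology}, which identifies the $V_{\lambda[n]}$-multiplicity in $\oH^i(Z_n;\bfQ)$ with $\dim_{\bfQ}\oH_i(Z_n/\Sigma_n;V_{\lambda[n]})$, where $V_{\lambda[n]}$ is viewed as a local system via the classifying morphism $\pi_1(Z_n/\Sigma_n)\to\Sigma_n$ of the $\Sigma_n$-cover $Z_n\to Z_n/\Sigma_n$. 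Fixing the partition $\lambda$, it therefore suffices to show that $\dim_{\bfQ}\oH_i(Z_n/\Sigma_n;V_{\lambda[n]})$ is independent of $n$ once $n$ is large compared to $i$.

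To apply \cref{theorem:twisted}, I would assemble the sequence of local systems $\{V_{\lambda[n]}\}_n$ into a single coefficient system $F_\lambda$ on the graded $\tE_1$-module $\cM=\coprod_{n\ge 0} Z_n/\Sigma_n$. Concretely, $F_\lambda$ sends an object of degree $n$ in $\Pi(\cM)$ to $V_{\lambda[n]}$ with $\pi_1(Z_n/\Sigma_n)$ acting through the morphism to $\Sigma_n$, and the suspension map $\sigma^{F_\lambda}\colon V_{\lambda[n]}\to V_{\lambda[n+1]}$ is the canonical $\Sigma_n$-equivariant inclusion of $V_{\lambda[n]}$ as an isotypic summand of the restriction of $V_{\lambda[n+1]}$ to $\Sigma_n\subset\Sigma_{n+1}$ that fixes the new point. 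The braid-triviality axiom holds because the image of $\braidgroup_m\to\aut_\A(X^{\oplus m})$ acts by permutations of the added strands, which act trivially on the image of $(\sigma^{F_\lambda})^m$ by construction.

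The main obstacle is to verify that $F_\lambda$ has finite degree. I would do this by identifying $F_\lambda$ with the coefficient system arising from the $\FI$-module $n\mapsto V_{\lambda[n]}$ under the comparison of \cref{section:coefficientsystemsconfigurationspaces}; since this $\FI$-module is finitely generated, with explicit generation degree bounded in terms of $|\lambda|$ and $\lambda_1$ by the results of Church, Ellenberg and Farb, it translates into a coefficient system of finite degree at some $N$ depending only on $\lambda$. With $F_\lambda$ in hand, \cref{theorem:configurationspaces}(ii) yields $n$-independence of $\oH_i(\Conf^\pi_n(W);V_{\lambda[n]})$ for $n\gg i$, which is case (i); cases (ii) and (iii) follow by the same argument using the weak equivalences between configurations of embedded (oriented) discs and labelled configurations in the bundle of (oriented) frames recalled in the discussion preceding \cref{corollary:configurationspacesofdiscs}.

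Finally, cases (iv) and (v) are handled in the same way using \cref{corollary:configurationspacesofdiscs}(ii) together with the identification $\tB\PDiff^{k}_{\partial,n}(W)\dslash\Sigma_n\simeq\tB\Diff^{k}_{\partial,n}(W)$ (and its oriented analogue) coming from \cref{lemma:diffeospermutingdiscs}, so that the very same twisted Serre spectral sequence argument of \cref{lemma:multiplicityistwistedhomology} translates multiplicity stability for $\tB\PDiff^{k}_{\partial,n}(W)$ into twisted stability for $\tB\Diff^{k}_{\partial,n}(W)$ with coefficients $V_{\lambda[n]}$.
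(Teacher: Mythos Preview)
Your proposal is correct and follows essentially the same route as the paper: reduce multiplicities to twisted homology via the Serre spectral sequence argument of \cref{lemma:multiplicityistwistedhomology}, pull back the finitely generated $\FI$-module $V(\lambda)$ of Church--Ellenberg--Farb to a coefficient system of finite degree along the functor of \cref{section:coefficientsystemsconfigurationspaces}, and then invoke \cref{theorem:configurationspaces} and \cref{corollary:configurationspacesofdiscs}. The paper handles cases (iv) and (v) exactly as you suggest, by replacing the covering $\Sigma_n\to\FConf^\pi_n(W)\to\Conf^\pi_n(W)$ with the Borel construction covering coming from \cref{lemma:diffeospermutingdiscs} and rerunning the spectral sequence argument.
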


In \cref{remark:representationstability}, we discuss explicit ranges for \cref{corollary:representationstability} and compare them to Church's. Let us at this juncture record that our approach leads to ranges that depend on $|\lambda|$, so we do not recover \emph{uniform} representation stability. On the other hand, in contrast to Church's result, we neither require $W$ to be orientable nor to have finite dimensional rational cohomology or $\pi$ to be the identity.

Jiménez Rolland \cite{JimenezI, JimenezII} has shown uniform representation stability for the cohomology groups $\oH^i(\tB\PDiff^0_{\partial,n}(W);\bfQ)$ for compact orientable surfaces and for compact connected manifolds $W$ of dimension $d\ge 3$, assuming that $\tB \Diff_\partial(W)$ has the homotopy type of a CW-complex of finite type. Furthermore, she proved uniform representation stability for $\pi_0(\PDiff^0_{\partial,n}(W))$ for compact orientable surfaces, as well as for higher-dimensional manifolds under some further assumptions.

\subsection*{Moduli spaces of manifolds}
The moduli space $\cM$ of compact $d$-dimensional smooth manifolds with a fixed boundary $P$ forms an $\tE_1$-module over the $\tE_d$-algebra $\cA$ given by the moduli space of compact $d$-manifolds with a sphere as boundary (see \cref{lemma:modulestructuremanifolds}). The homotopy types of $\cM$ and $\cA$ are \[\textstyle{\cM\simeq\coprod_{[W]}\tB\Diff_\partial(W)\quad\text{and}\quad\cA\simeq\coprod_{[N]}\tB\Diff_\partial(N)},\] where $[W]$ runs over diffeomorphism classes relative to $P$ of compact $d$-manifolds with $P$-boundary and $[N]$ over the ones of compact $d$-manifolds with a sphere as boundary. Acting with a manifold $X\in\cA$ on $\cM$ corresponds to taking the boundary connected sum $(-\natural X)$ with $X$, so the resulting stabilisation map thus restricts on path components to a map of the form \begin{equation}\label{equation:manifoldstabilisationintroduction}\mapwoshort{s}{\tB \Diff_\partial(W)}{\tB\Diff_\partial(W\natural X)},\end{equation} which models the map on classifying spaces induced by extending diffeomorphisms by the identity. 

As shown in \cref{section:resolutionbyembeddings}, the canonical resolution of $\cM$ with respect to a choice of a stabilising manifold $X$ is equivalent to the \emph{resolution by embeddings}---an augmented semi-simplicial space of submanifolds $W\in\cM$, together with embeddings of $X$ with a fixed behaviour near their boundary. For specific manifolds $X$ and $W$, this resolution and its connectivity has been studied to prove homological stability of \eqref{equation:manifoldstabilisationintroduction}, first by Galatius and Randal-Williams in their work \cite{GRWI} for $X\cong D^{2p}\sharp (S^p\times S^p)$ and simply-connected $2p$-dimensional $W$ with $p\ge 3$. Their results extend the classical stability result for mapping class groups of surfaces by Harer \cite{Harer} to higher dimensions. As in Harer's theorem, the known connectivity of the resolution by embeddings, and hence the resulting stability ranges, depend on the \emph{$X$-genus} of $W$, \[g^X(W)=\max\{k\ge0\mid\text{there exists $M\in\cM$ such that }M\natural X^{\natural k}\cong W\text{ relative to $P$}\},\] which incidentally provides a method of grading $\tE_1$-modules $\cM$ in general (see \cref{section:stablegenus}). Perlmutter \cite{Perlmutter} succeeded in carrying out this strategy for $X\cong D^{p+q}\sharp (S^p\times S^q)$ with certain $p\neq q$ depending on which $W$ is required to satisfy a connectivity assumption. Recently, Friedrich \cite{Friedrich} extended the work of Galatius and Randal-Williams to manifolds $W$ with nontrivial fundamental group in terms of the \emph{unitary stable rank} \cite[Def.\,6.3]{MirzaiiVanderkallen} of the group ring $\bfZ[\pi_1(W)]$. These connectivity results can be restated in our context as graded connectivity for the canonical resolution of $\cM$ with respect to different gradings (see \cref{lemma:canonicalresolutionconnectedsmanifolds}), allowing us to apply Theorems~\ref{theorem:constant} and~\ref{theorem:twisted}. 

Employing the improvement of \cref{remark:improvement}, the ranges with constant and abelian coefficients obtained from \cref{theorem:constant} agree with the ones established in \cite{Friedrich,GRWI,Perlmutter} (after extending \cite{Perlmutter} to abelian coefficients by adapting the methods of \cite{GRWI}). The cancellation result for connected sums of manifolds that we derive from \cref{corollary:cancellation} coincides with their cancellation results as well. Our main contribution with respect moduli spaces of manifolds lies in the application of \cref{theorem:twisted}, i.e.~homological stability with respect to a large class of nontrivial coefficient systems, which has not yet been considered in the context of moduli spaces of high-dimensional manifolds. On path components, it reads as follows.

\begin{bigthm}\label{theorem:manifolds}Let $W$ be a compact $(p+q)$-manifold with nonempty boundary and $F$ a coefficient system of degree $r$. Denote by $g(W)$ the $(S^p\times S^q)$-genus of $W$, and set $u$ to be $1$ if $W$ is simply connected and to be the unitary stable rank of $\bfZ[\pi_1(W)]$ otherwise. The stabilisation map \[\mapwo{(s,\sigma^F)_*}{\oH_i(\tB\Diff_\partial(W);F)}{\oH_i(\tB\Diff_\partial(W\sharp(S^p\times S^q));F)}\] 
\begin{enumerate}
\item is an isomorphism for $i\le\frac{g(W)-2r-u-3}{2}$ and an epimorphism for $i\le\frac{g(W)-2r-u-1}{2}$, if $p=q\ge3$, and 
\item an isomorphism for $i\le\frac{g(W)-2r-m-4}{2}$ and an epimorphism for $i\le\frac{g(W)-2r-m-2}{2}$, if $W$ is $(q-p+2)$-connected and $0<p<q<2p-2$ with $m=\minimum\{i\in\bfN_0\mid\text{there exists an epimorphism }\maptwoshort{\bfZ^i}{\pi_q(S^p)}\}$.
\end{enumerate}
\end{bigthm}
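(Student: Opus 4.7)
The plan is a direct application of \cref{theorem:twisted} to the graded $\tE_1$-module structure on the moduli space of manifolds established earlier in the paper. We consider $\cM=\coprod_{[W]}\tB\Diff_\partial(W)$ as an $\tE_1$-module over the $\tE_d$-algebra $\A=\coprod_{[N]}\tB\Diff_\partial(N)$, with the stabilizing object $X\in\A$ chosen so that $\bar{X}\cong S^p\times S^q$; the module structure and the identification of the stabilization map with $\tB\Diff_\partial(W)\to\tB\Diff_\partial(W\sharp(S^p\times S^q))$ have been set up in \cref{lemma:modulestructuremanifolds}. Restricting to the path-component indexed by $W$ reduces the statement of \cref{theorem:manifolds} to stability for the degree-$g^X(W)$ piece of $\cM$ with respect to the coefficient system $F$, which will be obtained by invoking \cref{theorem:twisted} with $k=2$ and $N=0$.

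The key input is the graded connectivity of the canonical resolution. By \cref{section:resolutionbyembeddings}, the canonical resolution of $\cM$ with respect to $X$ is equivalent to the resolution by embeddings, whose connectivity has been computed in the simply connected case by Galatius--Randal-Williams, in the $p\neq q$ case by Perlmutter, and for general fundamental groups by Friedrich. As recorded in \cref{lemma:canonicalresolutionconnectedsmanifolds}, these bounds can be stated as: in case (i), the canonical resolution graded by $g^X$ is graded $\bigl(\tfrac{g^X-u-1}{2}\bigr)$-connected in positive degrees, and in case (ii) it is graded $\bigl(\tfrac{g^X-m-2}{2}\bigr)$-connected, provided $W$ is $(q-p+2)$-connected.

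To meet the hypothesis $\bigl(\tfrac{g_\cM-2+k}{k}\bigr)$-connected of \cref{theorem:twisted} with $k=2$, I perform a shift of grading. In case (i) set $g_\cM'\coloneq g^X-u-1$, so that $\tfrac{g^X-u-1}{2}=\tfrac{g_\cM'-2+2}{2}$ and the hypothesis is verified with $k=2$; similarly set $g_\cM'\coloneq g^X-m-2$ in case (ii). The shifted grading is permissible because the stabilization map still increases the new grading by one and the canonical resolution is unaffected. Applying \cref{theorem:twisted} to a coefficient system $F$ of degree $r$ (i.e.\ of degree $r$ at $N=0$) yields isomorphisms for $i\le\tfrac{n'-2r-2}{2}$ and epimorphisms for $i\le\tfrac{n'-2r}{2}$ in the shifted grading. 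Substituting back $n'=g^X(W)-u-1$ (respectively $n'=g^X(W)-m-2$) produces the ranges claimed in the theorem.

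The substantive work in this proof is not in the application of \cref{theorem:twisted} itself, which is formal, but in the two auxiliary steps: verifying that the moduli space of manifolds carries the graded $\tE_1$-module structure with the expected stabilization map (done in \cref{lemma:modulestructuremanifolds}), and repackaging the deep connectivity theorems from \cite{Friedrich,GRWI,Perlmutter} into graded connectivity of the canonical resolution (\cref{lemma:canonicalresolutionconnectedsmanifolds}). The main subtlety to watch is a bookkeeping one---ensuring that the grading shifts absorb exactly the $u+1$, respectively $m+2$, deficit in the connectivity bounds so that the output matches the stated stability ranges---but no new geometric or homotopical input is required beyond what has already been collected in the cited lemmas and the general stability theorem.
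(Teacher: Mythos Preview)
Your plan is the paper's plan, but you skip one real step. You write that the canonical resolution is ``graded by $g^X$'' and then shift this grading; however, as the paper explicitly notes in \cref{section:stablegenus}, the unstable $X$-genus $g^X$ is \emph{not} a grading in general, because the inequality $g^X(A)+1\le g^X(A\oplus X)$ can be strict. The connectivity bounds you quote from \cref{lemma:canonicalresolutionconnectedsmanifolds} are stated for the \emph{stable} genus $\bar g^X_A$, localized at the components stably isomorphic to a fixed $A$ (cf.\ \cref{remark:localization}), and this is what the paper actually uses to grade $\cM$ before invoking \cref{theorem:twisted}. After obtaining the range in terms of $\bar g^X(W)$, the paper converts back to the unstable genus appearing in the statement of \cref{theorem:manifolds} via \cref{remark:genusisstablegenus}: the cancellation result \cref{corollary:cancellation} forces $g^X(W)=\bar g^X(W)$ whenever the stable genus is positive, hence in the entire range where the conclusion is nonvacuous. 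Your write-up conflates $g^X$ with $\bar g^X_A$ and omits this conversion.

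A smaller bookkeeping point: defining $g'_\cM=g^X-u-1$ (or $g^X-m-2$) as you do can take negative values on low-genus components, so it is not a grading in the sense of \cref{section:gradings}. The paper handles the offset via \cref{remark:improvement} instead. This is indeed just bookkeeping, but it should be done through that remark rather than by a literal downward shift.
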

If $F$ is (split) of degree $r$ at a number $N\ge 0$, the ranges in \cref{theorem:manifolds} change as per \cref{theorem:twisted}.

\begin{nrem}
The unitary stable rank \cite[Def.\,6.3]{MirzaiiVanderkallen} of a group ring $\bfZ[G]$ need not be finite. To provide a class of examples of finite unitary stable rank, recall that $G$ is called \emph{virtually polycyclic} if there is a series $1=G_0\subseteq G_1\subseteq\ldots \subseteq G_n=G$ such that $G_i$ is normal in $G_{i+1}$ and the quotients $G_{i+1}/G_i$ are either finite or cyclic. Its \emph{Hirsch length} $h(G)$ is the number of infinite cyclic factors. Crowley and Sixt \cite[Thm\,7.3]{CrowleySixt} showed $\usr(\bfZ[G])\le h(G)+3$ for virtually polycyclic groups $G$. In particular, we have $\usr(\bfZ[G])\le 3$ for finite groups and $\usr(\bfZ[G])\le \operatorname{rank}(G)+3$ for finitely generated abelian groups.
\end{nrem}

In \cref{rem:resolutionbyembeddingssurfaces}, we briefly elaborate on how to include the case of orientable surfaces in this picture by utilising high-connectivity of the \emph{complex of tethered chains}---a result of Hatcher and Vogtmann \cite{HatcherVogtmann}. For constant coefficients, this implies Harer's classical stability theorem \cite{Harer} with a better, but not optimal range (see \cite{Boldsen,RWresolutions}). For twisted coefficients, it extends a result by Ivanov \cite{Ivanov} to more general coefficient systems. However, in the case of surfaces, stability with respect to most of these more general coefficient systems was already known by \cite{RWW}.

In \cref{section:coefficientsystemsmanifolds}, we show that coefficient systems for $\cM$ are equivalent to certain families of modules over the mapping class groups $\pi_0(\Diff_\partial(W))\cong\pi_1(\tB\Diff_\partial(W))$ and explain how the action of the mapping class groups on the homology of the manifolds gives rise to a coefficient system of degree $1$ for $\cM$. This yields the following corollary.

\begin{bigcor}\label{corollary:homologyrepresentation}Let $W$ be a compact $(p+q)$-manifold with nonempty boundary and $k\ge0$. The stabilisation \[\maptwo{\oH_i\left(\tB\Diff_\partial(W);\oH_k(W)\right)}{\oH_i(\tB\Diff_\partial(W\sharp(S^p\times S^q));\oH_k(W\sharp(S^p\times S^q)))}\] 
is an epi- and isomorphism for the same $W$ as in \cref{theorem:manifolds} and with the same ranges, after replacing $r$ by $1$.
\end{bigcor}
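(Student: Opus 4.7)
The plan is to apply \cref{theorem:manifolds} with $r=1$, after establishing that the assignment $W\mapsto\oH_k(W;\bfZ)$ extends to a coefficient system of degree $1$ on the moduli space $\cM$ in the sense of \cref{definition:coefficientsystems}. The corollary will then follow at once from the identification, under the equivalence of \cref{section:coefficientsystemsmanifolds}, of the local system on $\tB\Diff_\partial(W)$ obtained by restricting this coefficient system with the action of the mapping class group on $\oH_k(W;\bfZ)$.

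First, I would build the coefficient system $F$ using the identification between coefficient systems on $\cM$ and compatible families of mapping-class-group modules promised in \cref{section:coefficientsystemsmanifolds}. Concretely, set $F(W)=\oH_k(W;\bfZ)$ with the canonical action of $\pi_0(\Diff_\partial(W))$ by functoriality of singular homology, and define the suspension map $\mapwoshort{\sigma^F_W}{\oH_k(W;\bfZ)}{\oH_k(W\sharp(S^p\times S^q);\bfZ)}$ to be the map induced by the inclusion of $W$, with a small open interior disc removed, into the connected sum. Naturality in the mapping class groups is immediate because both the stabilization map (\ref{equation:manifoldstabilizationintroduction}) and $\sigma^F$ are induced by extending diffeomorphisms by the identity outside a fixed disc. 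The braid-triviality condition is the genuinely non-trivial point: the image of the iterated suspension $(\sigma^F)^m$ lies in the Mayer--Vietoris summand $\oH_k(W\setminus D^{p+q})$, which is fixed pointwise by every diffeomorphism supported in the complementary copy of $\sharp^m(S^p\times S^q)$ with a disc removed, while the image of $\maptwoshort{\braidgroup_m}{\aut_\A(X^{\oplus m})}$ is represented by precisely such diffeomorphisms.

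Second, I would compute the degree of $F$ via Mayer--Vietoris for a connected sum along $S^{p+q-1}$. For $k\notin\{0,p+q\}$ this yields a natural split short exact sequence \[0\longrightarrow\oH_k(W;\bfZ)\xlongrightarrow{\sigma^F}\oH_k(W\sharp(S^p\times S^q);\bfZ)\longrightarrow\oH_k(S^p\times S^q;\bfZ)\longrightarrow 0,\] and in the extreme cases $k=0$ and, for closed $W$, $k=p+q$, the suspension map is itself an isomorphism. Hence $\sigma^F$ is always injective with cokernel the constant functor of value $\oH_k(S^p\times S^q;\bfZ)$. Since constant coefficient systems have degree $0$ (their suspension maps are isomorphisms, sitting one level above the zero system), the inductive clause of \cref{definition:coefficientsystems} shows that $F$ has degree at most $1$, whence \cref{theorem:manifolds} applied to $F$ with $r=1$ yields the asserted ranges. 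The main obstacle is the naturality and braid-triviality verification from the second paragraph; both rely on being able to choose representatives of diffeomorphisms with supports disjoint from the submanifold carrying classes in the image of $\sigma^F$, and become routine once the framework of \cref{section:coefficientsystemsmanifolds} is in place.
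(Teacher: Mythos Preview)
Your proposal is correct and follows essentially the same strategy as the paper: build the coefficient system $F(A)=\oH_k(A)$ with structure map induced by inclusion, verify the braid-triviality condition via supports of diffeomorphisms, and show $F$ has degree at most $1$ by exhibiting an injective suspension map with constant cokernel, then invoke \cref{theorem:manifolds}. The only difference is cosmetic: the paper computes the cokernel using the long exact sequence of the pair $(A\oplus X,A)$ together with the equivalence of pairs $(A\oplus X,A)\simeq(\bar{X},*)$ obtained by collapsing the complement of the last $X$-summand, which is slightly cleaner than Mayer--Vietoris in the edge degree $k=p+q-1$ and makes naturality of the short exact sequence (hence constancy of the cokernel \emph{as a coefficient system}, not merely objectwise) transparent via a commutative ladder; your parenthetical about closed $W$ in degree $k=p+q$ is moot since $W$ has nonempty boundary by hypothesis.
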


Furthermore, in \cref{section:extensionsmanifolds}, we provide a short discussion of how our methods can be applied to the case of certain stably parallelisable $(2n-1)$-connected $(4n+1)$-manifolds $X$ and $2$-connected $W$, extending stability results by Perlmutter \cite{PerlmutterLinkingforms}. Similarly, we also briefly explain how to enhance work of Kupers \cite{Kupers} on homeomorphisms of topological manifolds and automorphisms of piecewise linear manifolds.

\subsection*{Modules over braided monoidal categories}
We close in \cref{section:modulesovercategories} by explaining applicability of our results to discrete situations, such as groups or monoids, and by drawing a comparison to \cite{RWW}.

The classifying space $\tB \cM$ of a graded module $\cM$ over a braided monoidal category is a graded $\tE_1$-module over an $\tE_2$-algebra (see \cref{lemma:classifyingspacesofmodules}), so forms a suitable input for Theorems~\ref{theorem:constant} and~\ref{theorem:twisted}. In \cref{lemma:equivalenttosimplicialsets}, we identify the space of destabilisations $W_\bullet(A)$ of $A\in\cM$ with a semi-simplicial set $W^{\RW}_\bullet(A)$ in the case of $\cM$ being a groupoid satisfying an injectivity condition. This identification gives rise to a framework for homological stability for modules over braided monoidal categories, phrased entirely in terms of $\cM$ and semi-simplicial sets instead of semi-simplicial spaces up to higher coherent homotopy (see \cref{remark:discreteframework}).

Using this, it can, for instance, be concluded that work of Hepworth on homological stability for Coxeter groups \cite{Hepworth} with constant coefficients implies their stability with respect to a large class of nontrivial coefficient systems without further effort, as well as stability of their commutator subgroups.

In the case of a braided monoidal groupoid acting on itself, the semi-simplicial sets $W^{\RW}_\bullet(A)$ were introduced by Randal-Williams--Wahl in \cite{RWW} as part of their stability results for the automorphisms of a braided monoidal groupoid, which this work enhances in various ways. We generalise from braided monoidal groupoids to modules over such, remove all hypotheses on the categories they impose, improve the stability ranges in certain cases (see \cref{remark:improverangesRWW}), and enlarge the class of coefficient systems (see \cref{remark:improvecoefficientsystemsRWW}). We refer to \cref{section:comparisonRWW} for a more detailed comparison of our results in the discrete setting to \cite{RWW} and also for an analysis of their assumptions on the braided monoidal groupoid.

\subsection*{Acknowledgements}
I wish to express my gratitude to Nathalie Wahl for her overall support, to Oscar Randal-Williams for enlightening discussions and his hospitality at the University of Cambridge, and to Søren Galatius as well as to Alexander Kupers for valuable comments. 

I was supported by the Danish National Research Foundation through the Centre for Symmetry and Deformation (DNRF92) and by the European Research Council (ERC) under the European Union’s Horizon 2020 research and innovation programme (grant agreement No 682922).

\addtocontents{toc}{\protect\setcounter{tocdepth}{3}}

\tableofcontents

\section{Preliminaries}\label{section:preliminaries}
This section is devoted to fix conventions and collect general techniques. We work in the category of compactly generated spaces, use Moore paths throughout, and denote the endpoint of a path $\mu$ by $\omega(\mu)$.

\subsection{Graded spaces and categories} \label{section:gradings}
We denote by $(\bfNext,+)$ the discrete abelian monoid obtained by extending the non-negative integers $(\bfN_0,+)$ by an element $\infty$ satisfying $k+\infty=\infty$ for all $k\ge0$. 

A \emph{graded space} is a space $X$ together with a continuous map $\mapwoshort{g_X}{X}{\bfNext}$. A \emph{map of graded spaces} is a map that preserves the grading and a \emph{map of degree $k$ between graded spaces} for a number $k\ge0$ is a map that increases the degree by $k$. The category of graded spaces is symmetric monoidal with the monoidal product of two graded spaces $(X,g_X)$ and $(Y,g_Y)$ given by $(X\times Y,g_X+g_Y)$. The subspace of elements of degree $n\in\bfNext$ is denoted by $X_n=g_X^{-1}(\{n\})\subseteq X$. By restricting the grading, subspaces of graded spaces are implicitly considered as being graded. A graded space $(X,g_X)$, or a map $\maptwoshort{(Y,g_Y)}{(X,g_X)}$ of graded spaces, is \emph{$\varphi(g_X)$-connected in degrees $\ge m$} for a function $\mapwoshort{\varphi}{\bfNext}{\bfQ\cup\{\infty\}}$ satisfying $\phi(\infty)=\infty$ if $X_n$ or $\maptwoshort{Y_n}{Z_n}$, respectively, is $\lfloor \varphi(n)\rfloor $-connected for all $ m\le n<\infty$ in the usual sense. Note that we do not require anything on $X_\infty$ or $\maptwoshort{Y_\infty}{X_\infty}$.

A \emph{graded set} $X$ is a graded space that is discrete. A \emph{graded category} $\cC$ is a category internal to graded sets, i.e.~a category $\cC$ with a function $\mapwoshort{g_\cC}{\ob\cC}{\bfNext}$ whose value on objects that are connected by morphisms is constant. This is equivalent to a grading on the classifying space $\tB\cC$. A \emph{graded monoidal category} is a monoid internal to graded categories with the monoidal product $(\cC,g_\cC)\times(\cD,g_\cD)=(\cC\times\cD,g_\cC+g_\cD)$, i.e.~a monoidal category $(\cA,\oplus,0)$ together with a grading $g_\cA$ on $\cA$ that satisfies $g_\cA(0)=0$ and $g_\cA(X\oplus Y)=g_\cA(X)+g_\cA(Y)$. A \emph{graded right-module} $(\cM,\oplus)$ over a graded monoidal category $(\cA,\oplus,0)$ is a graded category $\cM$ together with a right-action of $(\cA,\oplus,0)$ on $\cD$ internal to graded categories, i.e.~a functor $\mapwoshort{\oplus}{\cM\times\cA}{\cM}$ which is unital and associative up to coherent isomorphisms, and satisfies $g_\cM(A\oplus X)=g_\cM(A)+g_\cM(X)$.

\subsection{Homology with local coefficients} \label{section:localcoefficients}We adopt the convention of \cite[Ch.\,VI]{Whitehead}: for points $x$ and $y$ in a space $X$, a morphism in the fundamental groupoid $\Pi(X)$  from $x$ to $y$ is a homotopy class of paths from $y$ to $x$, resulting in the fundamental group $\pi_1(X,x)$ being a subgroupoid of $\Pi(X)$. A \emph{local system} on a pair of spaces $(X,A)$ with $A\subseteq X$ is a functor $F$ from the fundamental groupoid $\Pi(X)$ of $X$ to the category of abelian groups. It is \emph{constant} if it is constant as a functor. For a path-connected space $X$, local systems can equivalently be described as modules over $\pi_1(X,x)$, since the fundamental groupoid $\Pi(X)$ is equivalent to the one-object groupoid $\pi_1(X,x)$. Subspaces of spaces with local systems are implicitly equipped with the local system obtained by restriction along the inclusion. When we write $(X,A)$ for a map $\maptwoshort{A}{X}$ that is not necessarily an inclusion, we implicitly replace $X$ by the mapping cylinder of $\maptwoshort{A}{X}$. A \emph{morphism $(f;\eta)$ between pairs with local systems} $(X,A;F)$ and $(Y,B;G)$ is a map of pairs $\mapwoshort{f}{(X,A)}{(Y,B)}$ with a natural transformation $\mapwoshort{\eta}{F}{f^*G}$ of functors on $\Pi(X)$. A \emph{homotopy} between $(f_0;\eta_0)$ and $(f_1;\eta_1)$ from $(X,A;F)$ to $(Y,B;G)$ consists of a homotopy of pairs $\mapwoshort{H_t}{(X,A)}{(Y,B)}$ from $f_0$ and $f_1$ such that
\begin{equation}\label{equation:localcoefficientsdiagram}
\begin{tikzcd}[row sep=0.1cm,column sep=2cm]
&G(f_1(-))\arrow[dd,"G(H_t(-))"]\\
F(-)\arrow[ru,"\eta_1"]\arrow[rd,"\eta_0",swap]&\\
&G(f_0(-))
\end{tikzcd}
\end{equation} commutes. Taking singular chains with coefficients in a local system provides a homotopy invariant functor $\oC_*(-)$ from pairs with local systems to chain complexes. The homology $\oH_*(X,A;F)$ of $\oC_*(X,A;F)$ is the \emph{homology of the pair $(X,A)$ with coefficients in the local system $F$}. A grading on $X$ results in an additional grading $\bigoplus_{n\in\bfNext}\oH_*(X_n,A_n;F)$ on $\oH_*(X,A;F)$. For a morphism $\maptwoshort{(X,A;F)}{(Y,B;G)}$, the homology of the mapping cone of $\maptwoshort{\oC_*(X,A;F)}{\oC_*(Y,B;G)}$ is denoted by $\oH_*((Y,B;G),(X,A;F))$. If $X$ and $Y$ are graded and the underlying map $\maptwoshort{X}{Y}$ is of degree $k$, then $\oH_*((Y,B;G),(X,A;F))$ inherits an extra grading
\[\textstyle{\oH_*\big((Y,B;G),(X,A;F)\big)=\bigoplus_{n\in\bfNext}\oH_*\big((Y_{n+k},B_{n+k};G),(X_n,A_n;F)\big)}.\] We refer to \cite[Ch.\,VI]{Whitehead} for more details on homology with local coefficients. 

\subsection{Augmented semi-simplicial spaces}\label{section:semisimplicial}
Denoting by $[p]$ the ordered set $ \{0,1,\ldots,p\}$, the \emph{semi-simplicial category} is the category $\semisimp$ with objects $[0],[1],\ldots$ and order-preserving injections between them. A \emph{semi-simplicial space} $X_\bullet$ is a space-valued functor on $\semisimpop$, or equivalently, a collection of spaces $X_p$ for $p\ge0$, together with \emph{face maps} $\mapwoshort{d_i}{X_p}{X_{p-1}}$ for $0\le i\le p$ that satisfy the \emph{face relations} $d_id_j=d_{j-1}d_i$ for $i<j$. An \emph{augmented semi-simplicial space} $\maptwoshort{X_\bullet}{X_{-1}}$ is a semi-simplicial space $X_\bullet$ with maps $\maptwoshort{X_p}{X_{-1}}$ for $p\ge0$ that commute with the face maps. As for simplicial spaces, augmented semi-simplicial spaces $\maptwoshort{X_\bullet}{X_{-1}}$ have a \emph{geometric realisation}---a space over $X_{-1}$, denoted by $\maptwoshort{|X_\bullet|}{X_{-1}}$ (see \cite[Sect.\,1.2]{semisimplicial}).

Given an augmented semi-simplicial space $\maptwoshort{X_\bullet}{X_{-1}}$ and a local system $F$ on $X_{-1}$, we obtain local systems on the spaces of $p$-simplices $X_p$ and on the realisation $|X_\bullet|$ by pulling back $F$ along the augmentation. Filtering $|X_\bullet|$ by skeleta induces a strongly convergent homologically graded spectral sequence 
\begin{equation}\label{equation:sssemisimplicial}E_{p,q}^1\cong \oH_q(X_p;F)\implies \oH_{p+q+1}(X_{-1},|X_\bullet|;F),\end{equation}
 defined for $q\ge0$ and $p\ge -1$ (see \cites[Sect.\,1.4]{semisimplicial}[Lem.\,2.7]{MP}). The differential $\mapwoshort{d^1}{\oH_q(X_p;F)}{\oH_q(X_{p-1};F)}$ is the alternating sum $\sum_{i=0}^{p}(-1)^{i}(d_i;\id)_*$ of the morphisms induced by the face maps for $p>0$, and induced by the augmentation for $p=0$. Given a morphism of augmented semi-simplicial spaces
$\mapwoshort{(f_\bullet,f_{-1})}{(X_\bullet\rightarrow X_{-1})}{(Y_\bullet\rightarrow Y_{-1})},$
local systems $F$ on $X_{-1}$ and $G$ on $Y_{-1}$, and a morphism of local systems $\maptwoshort{F}{{f_{-1}}^*G}$, we obtain a morphism of augmented semi-simplicial objects in spaces with local systems, resulting in a relative version of the spectral sequence \eqref{equation:sssemisimplicial}, 
\begin{equation}\label{equation:sssemisimplicialrelative}E_{p,q}^1\cong \oH_q\big((Y_p;G),(X_p;F)\big)\implies \oH_{p+q+1}\big((Y_{-1},|Y_\bullet|;G),(X_{-1},|X_\bullet|;F)\big).\end{equation}
If $X_{-1}$ is graded, all spaces $X_p$ and $|X_\bullet|$ inherit a grading by pulling back $g_{X_{-1}}$ along the augmentation. This results in a third grading of the spectral sequence \eqref{equation:sssemisimplicial}, but since the differentials preserve the additional grading, it is just a sum of spectral sequences, one for each $n\in\bfNext$. Analogously, if the map $f_{-1}$ of $\mapwoshort{(f_\bullet,f_{-1})}{(X_\bullet\rightarrow X_{-1})}{(Y_\bullet\rightarrow Y_{-1})}$ is a map of degree $k$ for gradings on $X_{-1}$ and $Y_{-1}$, the spectral sequence \eqref{equation:sssemisimplicialrelative} splits as a sum, with the $n$th summand of the $E_1$-page being $E_{p,q,n}^1\cong \oH_q((Y_{p,n+k};G),(X_{p,n};F))$.

\subsection{$\cC$-spaces and their rectification}\label{section:homotopycolimits}
We set up an ad-hoc theory of spaces parametrised by a topologically enriched category, serving us as a convenient language in the body of this work.

We call an enriched space-valued functor $X_\bullet$ on a topologically enriched category $\cC$ a \emph{$\cC$-space}, and write $X_C$ for its value at an object $C$. An \emph{augmentation} $\mapwoshort{f_\bullet}{X_\bullet}{X_{-1}}$ of a $\cC$-space $X_\bullet$ over a space $X_{-1}$ is a lift of $X_\bullet$ to a functor with values in the overcategory $\TT/X_{-1}$, and an \emph{augmented $\cC$-space} is a $\cC$-space together with an augmentation. We denote the value of an augmented $\cC$-space $\mapwoshort{f_\bullet}{X_\bullet}{X_{-1}}$ at an object $C$ by $\mapwoshort{f_C}{X_C}{X_{-1}}$. A \emph{morphism of augmented $\cC$-spaces} is a natural transformation of functors $\maptwoshort{\cC}{\TT/X_{-1}}$, and it is called a \emph{weak equivalence} if it is a weak equivalence objectwise. A morphism between a $\cC$-space $X_\bullet$ augmented over $X_{-1}$ and a $\cC$-space $Y_{\bullet}$ over $Y_{-1}$ consists of a map $\mapwoshort{h}{X_{-1}}{Y_{-1}}$ and a morphism $\mapwoshort{h_\bullet}{h_*(X_\bullet)}{Y_\bullet}$ of $\cC$-spaces augmented over $Y_{-1}$, where $h_*(X_\bullet)$ denotes $X_\bullet$ considered augmented over $Y_{-1}$ via $h$. Such a morphism is a \emph{weak equivalence} if $h$ is a weak equivalence of spaces and $h_\bullet$ is one of $\cC$-spaces over $Y_{-1}$.  An augmented $\cC$-space $f_\bullet$ is \emph{fibrant} if all maps $f_C$ are Serre fibrations. 

\begin{ex}For $\cC$ being the opposite of the semi-simplicial category, the notion of a $\cC$-space agrees with the one of a semi-simplicial spaces (see \cref{section:semisimplicial}). This example motivated our choice of notation.
\end{ex}

\begin{dfn}\label{definition:fibrantreplacement} The \emph{fibrant replacement} of an augmented $\cC$-space $\maptwoshort{X_\bullet}{X_{-1}}$ is the augmented $\cC$-space $\maptwoshort{X_\bullet^{\fib}}{X_{-1}}$ obtained by applying the path-space construction objectwise, 
\[X^{\fib}_C=\{(x,\mu)\in X_C\times \Path X_{-1}\mid\omega(\mu)=f_C(x)\},\] considered as a space over $X_{-1}$ by evaluating paths at zero. It is fibrant and admits a canonical weak equivalence $\maptwoshort{X_\bullet}{X_\bullet^{\fib}}$ of augmented $\cC$-spaces, given by mapping $x\in X_C$ to $(x,\const_{f_C(x)})\in X_C^{\fib}$.
\end{dfn}

The \emph{fibre} $X_{x,\bullet}$ of an augmented $\cC$-space $\mapwoshort{f_\bullet}{X_\bullet}{X_{-1}}$ at $x\in X_{-1}$ is the $\cC$-space that assigns to an object $C$ the fibre $X_{x,C}=f_C^{-1}(x)$. Its \emph{homotopy fibre} $\hofib_x(X_\bullet)$ at $x$ is the fibre of $\maptwoshort{X_\bullet^{\fib}}{X_{-1}}$ at $x$. If $\maptwoshort{X_\bullet}{X_{-1}}$ is fibrant, then the weak equivalence $\maptwoshort{X_\bullet}{X_\bullet^{\fib}}$ induces a weak equivalence $\maptwoshort{X_{x,\bullet}}{\hofib_x(X_\bullet)}$.

\begin{dfn}\label{definition:bar-construction}
Let $\cC$ be a small topologically enriched category.
\begin{enumerate}
\item The \emph{bar construction} $\tB(Y_\bullet,\cC,X_\bullet)$ of a pair of $\cC$-spaces $(X_\bullet,Y_\bullet)$, where $X_\bullet$ is co- and $Y_\bullet$ is contravariant, is the realisation of the semi-simplicial space $\tB_{\smallsquare}(Y_\bullet,\cC,X_\bullet)$ with $p$-simplices \[\textstyle{\coprod_{C_0,\ldots,C_p\in\ob\cC}X_{C_0}\times\cC(C_0,C_1)\times\ldots\times\cC(C_{p-1},C_p)\times Y_{C_p}}.\] The $i$th face map is induced by composing morphisms in $\cC(C_{i-1},C_i)$ and $\cC(C_{i},C_{i+1})$ for $1\le i\le p-1$, and by the evaluations $\maptwoshort{X_{C_0}\times\cC(C_0,C_1)}{X_{C_1}}$ and  $\maptwoshort{\cC(C_{p-1},C_p)\times Y_{C_p}}{X_{C_{p-1}}}$ for $i=p-1$ and $i=p$, respectively. An augmentation $\maptwoshort{X_\bullet}{X_{-1}}$ naturally induces a map $\maptwoshort{\tB(Y_\bullet,\cC,X_\bullet)}{X_{-1}}$.
\item The \emph{homotopy colimit} \[\maptwo{\hocolim_{\cC}X_\bullet}{X_{-1}}\] of an augmented $\cC$-space $\maptwoshort{X_\bullet}{X_{-1}}$ is the bar construction $\maptwoshort{\tB(\ast,\cC,X_\bullet)}{X_{-1}}$.
\end{enumerate}
\end{dfn}

A $\cC$-space is \emph{$k$-connected} for a number $k\ge0$ if its homotopy colimit is so. If the base $X_{-1}$ of an augmented $\cC$-space $\maptwoshort{X_{\bullet}}{X_{-1}}$ is graded, then its values $X_C$ and its homotopy colimit inherit gradings by pulling back $g_{X_{-1}}$ from $X_{-1}$. It is \emph{graded $\varphi(g_{X_{-1}})$-connected in degrees $\ge m$} for $\mapwoshort{\varphi}{\bfNext}{\bfQ\cup\{\infty\}}$ if $\maptwoshort{\hocolim_\cC X_{\bullet}}{X_{-1}}$ is. A functor between topologically enriched categories is a \emph{weak equivalence} if it induces weak equivalences on morphism spaces and a bijection on the set of objects. Note that this notion of weak equivalence is slightly stronger than the usual one. With this choice, it is immediate to see that the map on bar constructions induced by a weak equivalence $\maptwoshort{(X_\bullet,\cC,Y_\bullet)}{(X_\bullet',\cC',Y_\bullet')}$ of triples, defined in the appropriate sense, is a weak equivalence, since levelwise weak equivalences of semi-simplicial spaces realise to weak equivalences (see e.g.~\cite[Thm 2.2]{semisimplicial}). In particular, taking homotopy colimits turns weak equivalences of $\cC$-spaces augmented over $X_{-1}$ into weak equivalences of spaces over $X_{-1}$.

\begin{lem}\label{section:homotopyfibrerealisation}Let $\maptwoshort{X_\bullet}{X_{-1}}$ be an augmented $\cC$-space and $x\in X_{-1}$. The canonical map
\[\maptwoshort{\hocolim_\cC(\hofib_x(\maptwoshort{X_\bullet}{X_{-1}}))}{\hofib_x(\maptwoshort{\hocolim_\cC X_\bullet}{X_{-1}})}\] is a weak equivalence.
\end{lem}
\begin{proof}
We show that the map in consideration is even a homeomorphism, provided $X_{-1}$ is a weak Hausdorff space. This implies the claim, since the two functors in comparison both preserve weak equivalences of augmented $\cC$-spaces and every augmented $\cC$-space $\maptwoshort{X_\bullet}{X_{-1}}$ can be replaced, up to weak equivalence, by one over a weak Hausdorff space, for instance by pulling back the fibrant replacement of $\maptwoshort{X_\bullet}{X_{-1}}$ along a CW-approximation of $X_{-1}$. We have $\hocolim_\cC(\hofib_x(\maptwoshort{X_\bullet}{X_{-1}}))=|\tB_{\smallsquare}(*,\cC,\hofib_x(\maptwoshort{X_\bullet}{X_{-1}}))|$ and $\hofib_x(\maptwoshort{\hocolim_\cC X_\bullet}{X_{-1}})=\hofib_x(|\tB_{\smallsquare}(*,\cC,\maptwoshort{X_\bullet}{X_{-1}})|)$, so the statement follows from proving that both the bar construction $\tB_{\smallsquare}(*,\cC,-)$ as well as the geometric realisation $|-|$ commute with taking homotopy fibres $\hofib_x(-)$. Unwrapping the definitions of $\tB_{\smallsquare}(*,\cC,-)$ and $|-|$, these two claims are implied by the fact that the functor $\mapwoshort{\hofib_x(-)}{\TT/X_{-1}}{\TT}$ commutes with colimits and also with taking products $-\times Z$ with a fixed space $Z$. The latter is clear, and the former follows from the fact that the functor $\mapwoshort{\omega^{*}}{\TT/X_{-1}}{\TT/(\Path_x X_{-1}})$ given by pulling back the path fibration $\mapwoshort{\omega}{\Path_x X_{-1}}{X_{-1}}$ is a left adjoint \cite[Prop.\,2.1.3]{MaySigurdsson}, so preserves colimits, together with the observation that the forgetful functor from $\TT/X_{-1}$ to $\TT$ is colimit-preserving as well.
\end{proof}

For an augmented $\cC$-space $\maptwoshort{X_\bullet}{X_{-1}}$, the composition in $\cC$ and the evaluation maps $\maptwoshort{X_C'\times \cC(C',C)}{X_{C}}$ combine to augmentations $\maptwoshort{B_\bullet(\cC({\smallsquare},C),\cC,X_{\smallsquare})}{X_C}$ for each $C$ in $\cC$, which realise to weak equivalences as they admit extra degeneracies by inserting the identity (see e.g.~\cite[Thm 2.2]{semisimplicial}). These equivalences are natural in $C$ and compatible with the augmentation to $X_{-1}$, so assemble to a weak equivalence \[\maptwoshort{B\big(\cC({\smallsquare},\bullet),\cC,X_{\smallsquare}\big)}{X_\bullet}\] of augmented $\cC$-spaces---the \emph{bar resolution} of $X_\bullet\rightarrow X_{-1}$. 

\begin{lem}\label{lemma:rectification}Let $\mapwoshort{p}{\cC}{\cD}$ be a weak equivalence of topologically enriched categories. There is a functor \[\mapwo{p_*}{(\TT/X_{-1})^\cC}{(\TT/X_{-1})^\cD}\] that fits into a zig-zag of natural transformations between endofunctors on $(\TT/X_{-1})^\cC$, \[p^*p_*\longleftarrow\cdot\longrightarrow \id_{(\TT/X_{-1})^\cC},\] where $\mapwoshort{p^*}{(\TT/X_{-1})^\cD}{(\TT/X_{-1})^\cC}$ is given by precomposition with $p$. When evaluated at an augmented $\cC$-space, the zig-zag consists of weak equivalences of augmented $\cC$-spaces.\end{lem}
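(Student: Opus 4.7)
The plan is to model $p_*$ as a homotopy left Kan extension along $p$, expressed via the bar construction. For an augmented $\cC$-space $\mapwoshort{f_\bullet}{X_\bullet}{X_{-1}}$, I would set
\[(p_*X_\bullet)_D=\tB(\cD(p(-),D),\cC,X_\bullet),\]
naturally augmented over $X_{-1}$ through $f_\bullet$ as explained in the preceding definition. Functoriality in $D$ comes from the covariance of $\cD(p(-),D)$ in $D$, and functoriality in the $\cC$-space variable from that of the bar construction.

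To produce the zig-zag, I introduce the intermediate endofunctor $R$ of $(\TT/X_{-1})^\cC$ given by
\[R(X_\bullet)_C=\tB(\cC(-,C),\cC,X_\bullet),\]
again augmented over $X_{-1}$ via $f_\bullet$. There is a natural transformation $R\to\id$ whose component at $C$ is induced by the semi-simplicial map sending a $p$-simplex $(x;\varphi_1,\ldots,\varphi_p,\psi)\in X_{C_0}\times\cC(C_0,C_1)\times\cdots\times\cC(C_{p-1},C_p)\times\cC(C_p,C)$ to $(\psi\circ\varphi_p\circ\cdots\circ\varphi_1)_*(x)\in X_C$. In parallel, the functor $p$ yields a natural transformation $R\to p^*p_*$ whose component at $C$ is induced by $\mapwoshort{p}{\cC(-,C)}{\cD(p(-),p(C))}$ applied to the contravariant slot of the bar construction. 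By construction, both transformations are morphisms of augmented $\cC$-spaces over $X_{-1}$.

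To verify that $R(X_\bullet)\to X_\bullet$ is a weak equivalence, I would invoke the classical extra-degeneracy argument: the simplicial object $\tB_\bullet(\cC(-,C),\cC,X_\bullet)$ carries an extra operator given by post-composition with $\id_C\in\cC(C,C)$, producing an explicit simplicial homotopy which exhibits its augmentation to the constant simplicial object on $X_C$ as a simplicial homotopy equivalence. Combined with \cref{remark:fatrealization}, this passes to a weak equivalence after fat realization. For $R\to p^*p_*$, the assumption that $p$ is a weak equivalence of topologically enriched categories, bijective on objects and a weak equivalence on all morphism spaces, implies that the induced map $\tB_\bullet(\cC(-,C),\cC,X_\bullet)\to\tB_\bullet(\cD(p(-),p(C)),\cC,X_\bullet)$ is a level-wise weak equivalence of simplicial spaces, hence a weak equivalence after fat realization by \cref{remark:fatrealization} once again.

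The technical point I anticipate requires most care is the extra-degeneracy argument: since we realize via the fat geometric realization, the classical simplicial-homotopy argument must be phrased so that it persists through fat realization, which is well known for bar constructions but warrants explicit verification. All remaining steps---naturality of the two transformations, compatibility with the augmentation over $X_{-1}$, and functoriality of $p_*$ and $R$---are routine verifications that flow directly from the definition of the bar construction.
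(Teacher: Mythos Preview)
Your proposal is correct and follows essentially the same approach as the paper: both define $p_*$ as the homotopy left Kan extension via the bar construction $\tB(\cD(p(-),D),\cC,X_\bullet)$, introduce the intermediate functor $\tB(\cC(-,C),\cC,X_\bullet)$, and obtain the zig-zag from the extra-degeneracy argument on one side and the level-wise weak equivalence induced by $p$ on the other. The paper handles your anticipated technical point about fat realization by citing Section~4.5 of Riehl's book.
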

\begin{proof}
The value $p_*X_\bullet$ for $X_\bullet\in(\TT/X_{-1})^\cC$ is the homotopy left Kan-extension of $X_\bullet$ along $p$, mapping an object $D$ in $\cD$ to $\tB(\cD(p({\smallsquare}),D),\cC,X_{\smallsquare})$. Its pullback $p^*p_*X_\bullet$ fits into a zig-zag of augmented $\cC$-spaces\[p^*p_*X_\bullet=\tB\big(\cD(p({\smallsquare}),p(\bullet)),\cC,X_{\smallsquare}\big)\longleftarrow \tB\big(\cC({\smallsquare},\bullet),\cC,X_{\smallsquare}\big)\longrightarrow X_\bullet,\] in which the left arrow is induced by $p$ and the right one is the bar resolution of $X_\bullet$, so both are weak equivalences and compatible with the augmentation. As the zig-zag is natural in $X_\bullet$, the claim follows.
\end{proof}

\begin{lem}\label{lemma:hocolimrealisation}The homotopy colimit of an augmented semi-simplicial space $\maptwoshort{X_\bullet}{X_{-1}}$ and its geometric realisation are weakly equivalent as spaces over $X_{-1}$.
\end{lem}
\begin{proof}The classifying space of the overcategory $\semisimp/[p]$ is isomorphic to the $p$th topological standard simplex $\Delta^p$, since the nerve of $\semisimp/[p]$ is the barycentric subdivision of the $p$th simplicial standard simplex. This extends to an isomorphism $\Delta^\bullet\cong B(\semisimp/\bullet)$ of co-semi-simplicial spaces from which \cite[Thm\,6.6.1]{Riehl} implies that, given an augmented semi-simplicial space $\maptwoshort{X_\bullet}{X_{-1}}$, the thin realisation (see \cite[Sect.\,1.2]{semisimplicial}) of $\tB_\bullet(\ast,\semisimpop,X_\bullet)$, considered as a simplicial space, is homeomorphic over $X_{-1}$ to the realisation of $X_\bullet$. But for augmented $\cC$-spaces $\maptwoshort{X_\bullet}{X_{-1}}$ on a discrete category $\cC$, the fat and the thin geometric realisation of $\tB_\bullet(\ast,\cC,X)$ are weakly equivalent over $X_{-1}$, because $\tB_\bullet(\ast,\cC,X)$ is \emph{good} in the sense of \cite[Prop.\,A.1]{Segal}.
\end{proof}

\subsection{Semi-simplicial spaces up to higher coherent homotopy}
\label{section:semisimplicialthick}
In the course of this work, a number of constructions that are key to the theory require choices of contractible ambiguity. To deal with such, we are led to consider objects that are as good as semi-simplicial spaces, but only in a homotopical sense. To model those, let us define an \emph{(augmented) semi-simplicial space up to higher coherent homotopy} as an (augmented) $\semisimpthick$-space $X$, defined on any topologically enriched category $\semisimpthick$ that comes with a weak equivalence $\maptwoshort{\semisimpthick}{\semisimp}$. Roughly speaking, these are categories with the same objects as $\semisimp$ and a (weakly) contractible space of choices for all morphisms in $\semisimp$. In particular, a $\semisimpthick$-space $X_\bullet$ includes spaces $X_p$ for $p\ge0$, together with face maps $\mapwoshort{\tilde{d}_i}{X_p}{X_{p-1}}$, unique up to homotopy. By precomposing with $\maptwoshort{\semisimpthick}{\semisimp}$, every semi-simplicial space is a $\semisimpthick$-space and in the light of \cref{lemma:rectification}, every $\semisimpthick$-space is equivalent to one arising in this way. By virtue of this rectification result and \cref{lemma:hocolimrealisation}, all homotopy invariant constructions for semi-simplicial spaces carry over to $\semisimpthick$-spaces, so in particular, we have analogues of the spectral sequences \eqref{equation:sssemisimplicial} and \eqref{equation:sssemisimplicialrelative}, the differentials being the alternating sum $\sum_{i=0}^{p}(-1)^{i}(\tilde{d}_i)_*$ of morphisms induced by (weakly) contractible choices $\tilde{d}_i$ of face maps. A $\semisimpthick$-space $X_\bullet$ induces a simplicial set $\pi_0(X_\bullet)$ by taking path components, together with a morphism $\maptwoshort{X_\bullet}{\pi_0(X_\bullet)}$ of $\semisimpthick$-spaces, which is a weak equivalence if and only if $X_\bullet$ is homotopy discrete, i.e.~takes values in homotopy discrete spaces. To emphasise similarities and by abuse of notation justified by \cref{lemma:hocolimrealisation}, we call the homotopy colimit of an augmented $\semisimpthick$-space $\maptwoshort{X_{\bullet}}{X_{-1}}$ its \emph{realisation}, and denote it by $\maptwoshort{|X_\bullet|}{X_{-1}}$.

\section{The canonical resolution of an $\tE_1$-module over an $\tE_2$-algebra}
\subsection{$\tE_1$-modules over $\tE_n$-algebras and their fundamental groupoids}\label{section:modules}
We recall the notion of an $\tE_1$-module over an $\tE_n$-algebra and explain its relation to modules over monoidal categories.

By an \emph{operad}, we mean a symmetric coloured operad in spaces (see e.g.~\cite[Sect.\,1.1]{BergerMoerdijkColoredOperads}), and an \emph{algebra} over such is understood in the usual sense (see e.g.~\cite[Sect.\,1.1]{BergerMoerdijkColoredOperads}). For a subspace $X\subseteq\bfR^n$, we let $\cD^k(X)$ be the space of tuples of $k$ embeddings of the closed disc $D^n$ into $X$ that have disjoint interiors and are compositions of scalings and translations. Recall the one-coloured operad $\cD^\bullet(D^n)$ of little $n$-discs \cite{BoardmanVogt,MayGeometry} with $k$-operations $\cD^k(D^n)$ and operadic composition induced by composing embeddings.

\begin{dfn}\label{definition:SC}Let $\SC_n$ be the coloured operad with colours $\ocolour{m}$ and $\ocolour{a}$ whose space of operations $\SC_n(\ocolour{m}^k, \ocolour{a}^l; \ocolour{m})$ is empty for $k\neq1$ and for $k=1$ the space of pairs $(s,\phi)\in[0,\infty)\times \cD^l(\bfR^n)$ such that $\phi\in\cD^l((0,s)\times(-1,1)^{n-1})$, allowing $(0,\emptyset)\in [0,\infty)\times \cD^0(\bfR^n)$ as a valid element of $\SC_n(\ocolour{m}^k, \ocolour{a}^0; \ocolour{m})$. The space $\SC_n(\ocolour{m}^k,\ocolour{a}^l,\ocolour{a})$ is empty for $k\neq0$ and equals $\cD^l(D^n)$ otherwise. The composition restricted to the $\ocolour{a}$-colour is given by the composition in $\cD^\bullet(D^n)$ and the composition \[\mapwo{\gamma}{\SC_n(\ocolour{m},\ocolour{a}^l;\ocolour{m})\times \left(\SC_n(\ocolour{m},\ocolour{a}^k;\ocolour{m})\times \SC_n(\ocolour{a}^{i_1};\ocolour{a})\times\ldots \times\SC_n(\ocolour{a}^{i_l};\ocolour{a})\right)}{\SC_n(\ocolour{m},\ocolour{a}^{k+i};\ocolour{m})}\] for $i=\sum_j i_j$ by mapping an element $((s,\phi),((s',\psi),(\varphi^1,\ldots,\varphi^l)))$ in the codomain to $(s'+s,(\psi,(\phi_1\circ\varphi^1)+s',\ldots,(\phi_l\circ\varphi^l)+s'))\in \SC_n(\ocolour{m},\ocolour{a}^{k+i};\ocolour{m})$, where $(-+s')$ denotes the translation by $s'$ in the first coordinate. In words, it is defined by adding the parameters, putting the discs of $\SC_n(m,a^k;m)$ to the left of the ones of $\SC_n(\ocolour{m},\ocolour{a}^l;\ocolour{m})$, and composing the embeddings of discs of the $\SC_n(\ocolour{a}^{i_j};\ocolour{a})$-factors with the ones of $\SC_n(\ocolour{m},\ocolour{a}^l;\ocolour{m})$ as in the operad of little $n$-discs. See Figure~\ref{figure:operad} for an illustration.

\end{dfn}
\begin{figure}[h]
\begin{subfigure}[b]{0.2\linewidth}
\centering
\begin{tikzpicture}[scale=0.4,node1/.style={inner sep=0pt,minimum size=1.5cm]}]
\draw (4,-2) circle (1.5cm);
\node[node1] (2) at (4,-2){$1$};
\draw [rounded corners] (0,0)--(6,0)--(6,-4)--node[below] {$(0,s)$}(0,-4)-- cycle;
\end{tikzpicture}
\caption*{$e\in \SC_2(\ocolour{m},\ocolour{a};\ocolour{m})$}
\end{subfigure}
\begin{subfigure}[b]{0.2 \linewidth}
\centering
\begin{tikzpicture}[scale=0.4,node1/.style={inner sep=0pt,minimum size=1.5cm]}]
\draw (1.2,-3) circle (0.75cm);
\node[node1] (1) at (1.2,-3){$1$};
\draw [rounded corners] (0,0)--(2.4,0)--(2.4,-4)--node[below] {$(0,s')$}(0,-4)-- cycle;
\end{tikzpicture}
\caption*{$d\in \SC_2(\ocolour{m},\ocolour{a};\ocolour{m})$}
\end{subfigure}
\begin{subfigure}[b]{0.2 \linewidth}
\centering
\begin{tikzpicture}[scale=0.4,node2/.style={inner sep=0pt,minimum size=0.75cm]}]
\draw (0,0) circle (1.5);
\draw (-0.7,0) circle (0.5);
\draw (0.7,0) circle (0.5);
\node[node2] (1) at (-0.7,0){$1$};
\node[node2] (1) at (0.7,0){$2$};
\node[node2] (1) at (0,-2){$\ $};
\end{tikzpicture}
\caption*{$f\in \SC_2(\ocolour{a}^2;\ocolour{a})$}
\end{subfigure}
\begin{subfigure}[b]{0.3\linewidth}
\centering
  \begin{tikzpicture}[scale=0.4,node1/.style={inner sep=0pt,minimum size=1.5cm]},]
  \draw (1.2,-3) circle (0.75);
  \draw[dashed] (6.4,-2) circle (1.5);
  \draw (5.75,-2) circle (0.5);
  \draw (7.05,-2) circle (0.5);
  \node[node1] (1) at (1.2,-3){$1$};
\draw [rounded corners] (2.4,-4)--node[below] {$(0,s')$}(0,-4)--(0,0)--(2.4,0);
\draw[dashed] (2.4,0)--(2.4,-4);
\node[node1] (2) at (5.75,-2){$2$};
\node[node1] (2) at (7.05,-2){$3$};
\draw [rounded corners] (2.4,0)--(8.4,0)--(8.4,-4)--node[below] {$(s',s'+s)$}(2.4,-4);
\end{tikzpicture}\hfill
\caption*{$\gamma(e;d,f)\in\SC_2(\ocolour{m},\ocolour{a}^3;\ocolour{m})$}
\end{subfigure}
\caption{The operadic composition of $\SC_n$}
\label{figure:operad}
\end{figure}
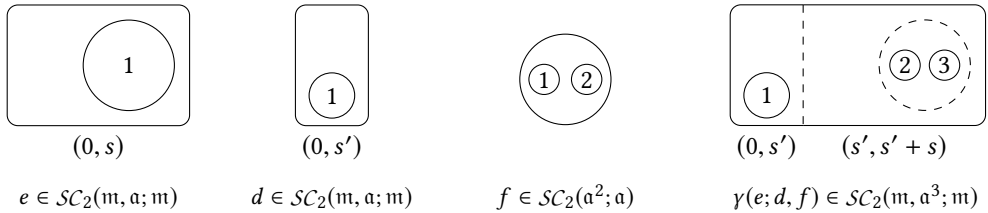
The canonical embedding $\maptwoshort{\cD^\bullet(D^n)}{\cD^\bullet(D^{n+1})}$ of little discs operads (see e.g.~\cite[Sect.\,4.1.5]{Fresse}), extends to an embedding of two-coloured operads $\SC_n\rightarrow \SC_{n+1}$ by taking products with $(-1,1)$ from the right. Consequently, any algebra over $\SC_{n+1}$ is also one over $\SC_{n}$. 

We call two coloured operads \emph{weakly equivalent} if there is a zig-zag between them that consists of morphisms of operads that are weak homotopy equivalences on all spaces of operations.

\begin{rem}The operad $\SC_n $ is weakly equivalent to a suboperad of the $n$-dimensional version of the Swiss-Cheese operad of \cite{Voronov}, motivating the notation.
\end{rem}

\begin{dfn}\label{definition:E1module}An \emph{$\tE_{1,n}$-operad} is an operad $\cO$ that is weakly equivalent to $\SC_n$. A graded \emph{$\tE_1$-module $\cM$ over an $\tE_n$-algebra $\cA$} is an algebra $(\cM,\cA)$ over an $\tE_{1,n}$-operad $\cO$, considered as an operad in graded spaces, where $\cM$ corresponds to the $\ocolour{m}$- and $\cA$ to the $\ocolour{a}$-colour. That is, it consists of two graded spaces $(\cM,g_{\cM})$ and $(\cA,g_{\cA})$, together with multiplication maps for $l\ge0$ of the form
\[\mapwo{\theta}{\cO(\ocolour{m},\ocolour{a}^l;\ocolour{m})\times \cM\times \cA^{l}}{\cM}\quad\text{and}\quad\mapwo{\theta}{\cO(\ocolour{a}^l;\ocolour{a})\times \cA^{l}}{\cA},\] which are graded, where $\cO(\ocolour{m},\ocolour{a}^l;\ocolour{m})$ and $\cO(\ocolour{a}^l;\ocolour{a})$ are equipped with the grading that is constantly $0$, i.e.~the degree of a multiplication of points is the sum of their degrees. These structure maps are required to satisfy the usual associativity, unitality, and equivariance axioms for an algebra over an coloured operad.
\end{dfn}

The fundamental groupoid of an algebra over the little $2$-discs operad has a braided monoidal groupoid structure; the multiplication is induced by the choice of a $2$-operation \cite[Ch.\,5--6]{Fresse}. Similarly, for a graded algebra $(\cM,\cA)$ over an $\tE_{1,2}$-operad $\cO$ and operations $c\in\cO(\ocolour{m},\ocolour{a};\ocolour{m})$ and $d\in\cO(\ocolour{a}^2;\ocolour{a})$, the fundamental groupoid $\Pi(\cA)$ is a graded braided monoidal groupoid with multiplication induced by $d$, and $\Pi(\cM)$ becomes a graded right-module over $\Pi(\cA)$ with the action induced by $c$. In other words, the functor $\mapwoshort{\oplus}{\Pi(\cM)\times\Pi(\cA)}{\Pi(\cM)}$ induced by $\theta(c;-,-)$ is associative, unital up to coherent natural isomorphisms, and compatible with the grading on $\Pi(\cM)$ and $\Pi(\cA)$ induced by the grading on $\cM$ and $\cA$.

\begin{rem}\label{remark:gradingmoduleisgradingoncategory}Since the path components of a space coincide with the path components of its fundamental groupoid in the categorical sense, a grading on an $\tE_1$-module over an $\tE_n$-algebra is equivalent to a grading of the induced right-module $(\Pi(\cM),\oplus)$ over the braided monoidal groupoid $(\Pi(\cA),\oplus,\braiding,0)$.
\end{rem}

\subsection{The canonical resolution}\label{section:complexes}
Let $\cM$ be a graded $\tE_1$-module over an $\tE_2$-algebra $\cA$ with underlying $\tE_{1,2}$-operad $\cO$ and structure maps $\theta$. We call a point $X\in\cA$ of degree $1$ a \emph{stabilising object} for $\cM$, and define the \emph{stabilisation map} with respect to a stabilising point $X$,  \[\mapwo{s}{\cM}{\cM},\] as the multiplication $\theta(c;-,X)$ by $X$, using an operation $c\in\cO(\ocolour{m},\ocolour{a},\ocolour{m})$, which we fix once and for all. As $X$ has degree $1$, so does the stabilisation map, which hence restricts to maps $\mapwoshort{s}{\cM_n}{\cM_{n+1}}$ between the subspaces of consecutive degrees for all $n\ge 0$. It will be convenient to denote the stabilisation map also by $\mapwoshort{(-\oplus X)}{\cM}{\cM}$ and we use the two notations interchangeably.

\begin{rem}We chose to restrict to stabilising objects of degree $1$ to simplify the exposition. However, by keeping track of the gradings, the developed theory generalises to stabilising objects of arbitrary degree.
\end{rem}

In the following, we assign to a graded $\tE_1$-module $\cM$ over an $\tE_2$-algebra with stabilising object $X$ an augmented semi-simplicial space $\maptwoshort{R_\bullet(\cM)}{\cM}$ up to higher coherent homotopy, called the \emph{canonical resolution}. It will be defined as an augmented $\semisimpthick$-space for a topologically enriched category $\semisimpthick$ weakly equivalent to the semi-simplicial category, constructed from the underlying $\tE_{1,2}$-operad $\cO$. We begin by recalling the braided analogue of the category of finite sets and injections, as introduced in \cite{RWW}.

\begin{dfn}\label{definition:UB}Define the category $\UB$ with objects $[0],[1],\ldots$ as in $\semisimp$, no morphisms from $[q]$ to $[p]$ for $q>p$ and $\UB([q],[p])$ for $q\le p$ given by the cosets $\braidgroup_{p+1}/\braidgroup_{p-q}$, where $\braidgroup_i$ denotes the braid group on $i$ strands and $\braidgroup_{p-q}$ acts on $\braidgroup_{p+1}$ from the right as the first $(p-q)$ strands. The composition is defined as \[\mapnoname{\UB([l],[q])\times\UB([q],[p])}{\UB([l],[p])}{[b],[b']}{[b'(1^{p-q}\oplus b)],}\] where $1^{p-q}\oplus b$ is the braid obtained by inserting $(p-q)$ trivial strands to the left of $b$ (see Figure~\ref{figure:UB}).
\end{dfn}
\begin{figure}[h]
\centering
\begin{subfigure}{0.25\textwidth}
  \centering
  \begin{tikzpicture}[scale=0.5,main_node/.style={circle,draw,inner sep=0pt,minimum size=5pt]},fat_node/.style={circle,fill=.,draw,inner sep=0pt,minimum size=5pt]}]
    \node[main_node] (1) at (0,0){};
    \node[main_node] (2) at (1,0){};
     \node[fat_node] (3) at (2,0){};
    \node[fat_node] (5) at (0,-3){};
    \node[fat_node] (6) at (1,-3){};
     \node[fat_node] (7) at (2,-3){};
     \draw[-] (1) to [out=-90,in=90]  (6);
     \draw[preaction={draw, line width=3pt, white}][-] (3) to[out=-90,in=90]  (5);
     \draw[preaction={draw, line width=3pt, white}][-] (2) to[out=-90,in=90]  (7);
\end{tikzpicture}
  \caption*{$f\in\UB([0],[2])$}
\end{subfigure}
\begin{subfigure}{0.3\textwidth}
  \centering
  \begin{tikzpicture}[scale=0.5,main_node/.style={circle,draw,inner sep=0pt,minimum size=5pt]},fat_node/.style={circle,fill=.,draw,inner sep=0pt,minimum size=5pt]}]
  \node[main_node] (9) at (0,0){};
    \node[fat_node] (10) at (0,-3){};
    \node[main_node] (1) at (1,0){};
    \node[fat_node] (2) at (2,0){};
     \node[fat_node] (3) at (3,0){};
     \node[fat_node] (4) at (4,0){};
    \node[fat_node] (5) at (1,-3){};
    \node[fat_node] (6) at (2,-3){};
     \node[fat_node] (7) at (3,-3){};
     \node[fat_node] (8) at (4,-3){};
     \draw[-] (9) to [out=-90,in=90]  (5);
     \draw[preaction={draw, line width=3pt, white}][-] (2) to [out=-90,in=90]  (10);
     \draw[-] (3) to [out=-90,in=90]  (8);
     \draw[preaction={draw, line width=3pt, white}][-] (1) to [out=-90,in=90]  (7);
     \draw[preaction={draw, line width=3pt, white}][-] (4) to [out=-90,in=90]  (6);
\end{tikzpicture}
    \caption*{$g\in\UB([2],[4])$}
\end{subfigure}
\begin{subfigure}{0.4\textwidth}
  \centering
  \begin{tikzpicture}[scale=0.5,main_node/.style={circle,draw,inner sep=0pt,minimum size=5pt]},fat_node/.style={circle,fill=.,draw,inner sep=0pt,minimum size=5pt]},former_node/.style={shape=circle, fill=gray, draw=none,inner sep=0pt,minimum size=5pt]},]
       \node[main_node] (13) at (0,0){};
       \node[former_node] (14) at (0,-1.5){};
      \node[fat_node] (15) at (0,-3){};
     \node[main_node] (1) at (1,0){};
      \node[main_node] (2) at (2,0){};
    \node[main_node] (3) at (3,0){};
     \node[fat_node] (4) at (4,0){};
      \node[former_node] (5) at (1,-1.5){};
    \node[former_node] (6) at (2,-1.5){};
    \node[former_node] (7) at (3,-1.5){};
     \node[former_node] (8) at (4,-1.5){};
     \node[fat_node] (9) at (1,-3){};
    \node[fat_node] (10) at (2,-3){};
    \node[fat_node] (11) at (3,-3){};
     \node[fat_node] (12) at (4,-3){};     
        \draw[-] (13) to [out=-90,in=90]  (14);      
      \draw[-] (1) to [out=-90,in=90]  (5);   
     \draw[-] (2) to [out=-90,in=90]  (7);
     \draw[preaction={draw, line width=3pt, white}][-] (4) to[out=-90,in=90]  (6);
     \draw[preaction={draw, line width=3pt, white}][-] (3) to[out=-90,in=90]  (8);    
          \draw[-] (14) to [out=-90,in=90]  (9);
     \draw[preaction={draw, line width=3pt, white}][-] (6) to [out=-90,in=90]  (15);
     \draw[-] (7) to [out=-90,in=90]  (12);
     \draw[preaction={draw, line width=3pt, white}][-] (5) to [out=-90,in=90]  (11);
      \draw[preaction={draw, line width=3pt, white}][-] (8) to [out=-90,in=90]  (10);     
     \end{tikzpicture}
    \caption*{$(g\circ f)\in\UB([0],[4])$}
\end{subfigure}
\caption{The categorical composition of $\UB$}
\label{figure:UB}
\end{figure}
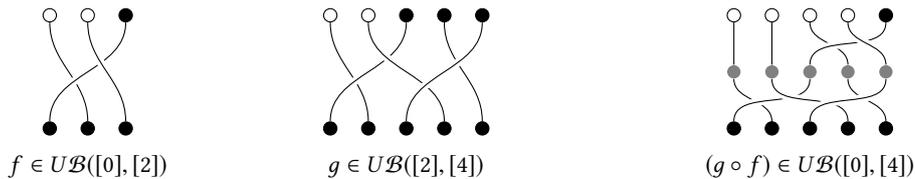
The category $\UB$ admits a canonical functor to the category $\FI$ of finite sets and injections by sending a class in $\braidgroup_{p+1}/\braidgroup_{p-q}$ to the injection obtained by following the last $(q+1)$ strands of a representing braid. Visualising $\UB$ as indicated by Figure~\ref{figure:UB}, two braids represent the same morphism if and only if they differ by a braid of the $\circ$-ends. Following the braids of the upper $\bullet$-ends to the lower ends gives the induced injections. This functor admits a section on the subcategory $\semisimp\subseteq \FI$, as shown by the next lemma for the statement of which we consider the collection of braid groups $\coprod_{n\ge0}B_n$ as the free braided monoidal category on one object $X$.

\begin{lem}\label{lemma:section}There is a unique functor $\maptwoshort{\semisimp}{\UB}$ that maps the face map $d_i\in\semisimp([p-1],[p])$ to \[[\braiding_{X^{\oplus i},X}^{-1}\oplus X^{\oplus p-i}]\in\UB([p-1],[p]).\] The composition of this functor with the functor $\maptwoshort{\UB}{\FI}$ described above agrees with the inclusion $\semisimp\subseteq \FI$.
\end{lem}
\begin{proof}To prove the first part, it is sufficient to check the face relations 
\[[\braiding_{X^{\oplus j},X}^{-1}\oplus X^{\oplus p+1-j}]\circ[\braiding_{X^{\oplus i},X}^{-1}\oplus X^{\oplus p-i}]=[\braiding_{X^{\oplus i},X}^{-1}\oplus X^{\oplus p+1-i}]\circ[\braiding_{X^{\oplus {j-1}},X}^{-1}\oplus X^{\oplus p-j+1}]\] for $i<j$ in $\UB([p-1,p+1])$. The left hand side agrees with the class of the braid
\[(\braiding_{X^{\oplus j},X}^{-1}\oplus X^{\oplus p+1-j})(X\oplus\braiding_{X^{\oplus i},X}^{-1}\oplus X^{\oplus p-i}),\] which, by applying braid relations, can be seen to agree with the braid \[(\braiding_{X^{\oplus i},X}^{-1}\oplus X^{\oplus p+1-i})(X\oplus\braiding_{X^{\oplus {j-1}},X}^{-1}\oplus X^{\oplus p-j+1})(\braiding_{X,X}^{-1}\oplus X^{\oplus p}),\] whose class in $\UB([p-1,p+1])=B_{p+2}/B_2$ coincides with the right hand side of the claimed equation. The proof is concluded by observing that the two functors $\maptwoshort{\semisimp}{\FI}$ in question agree on the face maps by construction, and thus on all of $\semisimp$.
\end{proof}

\begin{rem}In the language of \cite{RWW}, the category $\UB$ is the \emph{free pre-braided monoidal category on one object} \cite[Sect.\,1.2]{RWW}. Unwinding the definitions, their semi-simplicial set $W_n(A,X)$ associated to objects $A$ and $X$ of a pre-braided monoidal category $\cC$ (see \cite[Sect.\,2]{RWW}) agrees with the composition\[\semisimpop\longrightarrow\UB^{\op}
\longrightarrow\cC^{\op}\longrightarrow\Sets,\] in which the first arrow is the described section, the second is induced by $X$, and the third is $\cC(-,A\oplus X^{\oplus n})$.
\end{rem}
In the following, we introduce topological analogues of $\UB$ and $\semisimp$ for any $\tE_{1,2}$-operad $\cO$. To that end, we denote by $\cO(k)$ the space obtained from $\cO(\ocolour{m},\ocolour{a}^{k};\ocolour{m})$ by quotienting out the action of the symmetric group $\Sigma_k$ on the $\ocolour{a}$-inputs. To simplify the construction, we assume that the quotient maps $\maptwoshort{\cO(\ocolour{m},\ocolour{a}^{k};\ocolour{m})}{\cO(k)}$ are covering spaces, although this is not strictly necessary (see \cref{remark:generalisation}). As the operadic composition $\gamma$ on $\cO$ is equivariant, it induces composition maps $\mapwoshort{\gamma(-;-,1_{\ocolour{a}}^k)}{\cO(k)\times\cO(l)}{\cO(k+l)}$. The fixed operation $c\in \cO(1)$, used to define the stabilisation map, yields iterated operations $c_k\in\cO(k)$ by setting $c_0$ as the unit $1_\ocolour{m}$ and $c_{k+1}$ inductively as $\gamma(c;c_k,1_{\ocolour{a}})$. As a last preparotary step before defining the category $\UB$, we recall that we denote the endpoint of a Moore path $\mu$ by $\omega(\mu)$.

\begin{dfn}Define a topologically enriched category $U\cO=U(\cO,c)$ with objects $[0],[1],\ldots$ and
\[U\cO([q],[p])=\{(d,\mu)\in{\cO}(p-q)\times\Path_{c_{p+1}}{\cO}(p+1)\mid \omega(\mu)=\gamma(c_{q+1};d,1_{\ocolour{a}}^{q+1})\},\] where $\Path_{c_{p+1}}{\cO}(p+1)$ is the space of Moore paths in $\cO(p+1)$ starting at $c_{p+1}$. The composition is \[\mapnoname{U\cO([l],[q])\times U\cO([q],[p])}{U\cO([l],[p])}{\big((e,\zeta),(d,\mu)\big)}{\big(\gamma(e;d,1_{\ocolour{a}}^{q-l}),\mu\cdot\gamma(\zeta;d,1_{\ocolour{a}}^{q+1})\big),}\] as visualised by Figure~\ref{figure:UO}. Since we are using Moore paths, associativity and unitality follow from the respective properties of the operadic composition. 
\end{dfn}
\begin{figure}[h]
\centering
\begin{subfigure}{0.22\linewidth}
\centering
\begin{tikzpicture}[scale=0.45]
\draw [rounded corners] (0,-4)--(6,-4)--(5,-5)--(-1,-5)--cycle;
\draw[dashed] (2,-4)--(1,-5);
\draw[dashed] (4,-4)--(3,-5);

\draw[dotted] (0,0)--(0,-4);
\draw[dotted] (6,0)--(6,-4);
\draw[dotted] (5,-1)--(5,-5);	
\draw[dotted] (-1,-1)--(-1,-5);

\draw[preaction={draw, line width=3pt, white}][-] (1.5,-0.25) to [out=-90,in=90]  (2.5,-4.5);
\draw[preaction={draw, line width=3pt, white}][-] (4.5,-0.5) to [out=-90,in=90]  (0.5,-4.5);
\draw[preaction={draw, line width=3pt, white}][-] (2.5,-0.75) to [out=-90,in=90]  (4.5,-4.5);

\draw[rotate around={-60:(1.5,-0.25)},fill=white] (1.5,-0.25) ellipse (0.1cm and 0.2cm);
\draw[rotate around={-60:(2.5,-0.75))},fill=white] (2.5,-0.75) ellipse (0.1cm and 0.2cm);
\draw[rotate around={-60:(4.5,-0.5)},fill] (4.5,-0.5) ellipse (0.27cm and 0.4cm);

\draw[rotate around={-60:(0.5,-4.5)},fill] (0.5,-4.5) ellipse (0.27cm and 0.4cm);
\draw[rotate around={-60:(2.5,-4.5)},fill] (2.5,-4.5) ellipse (0.27cm and 0.4cm);
\draw[rotate around={-60:(4.5,-4.5)},fill] (4.5,-4.5) ellipse (0.27cm and 0.4cm);

\draw[rounded corners,preaction={draw, line width=3pt, white}] (0,0)--(6,0)--(5,-1)--(-1,-1)--cycle;
\draw[dashed] (4,0)--(3,-1);
\end{tikzpicture}
\caption*{$(e,\zeta)\in U\cO([0],[2])$}
\end{subfigure}
\begin{subfigure}{0.35\linewidth}
\centering
\begin{tikzpicture}[scale=0.45]
\draw [rounded corners] (0,-4)--(10,-4)--(9,-5)--(-1,-5)--cycle;
\draw[dashed] (2,-4)--(1,-5);
\draw[dashed] (4,-4)--(3,-5);
\draw[dashed] (6,-4)--(5,-5);
\draw[dashed] (8,-4)--(7,-5);

\draw[dotted] (0,0)--(0,-4);
\draw[dotted] (10,0)--(10,-4);
\draw[dotted] (9,-1)--(9,-5);
\draw[dotted] (-1,-1)--(-1,-5);

\draw[preaction={draw, line width=3pt, white}][-] (0.5,-0.75) to [out=-90,in=90]  (2.5,-4.5);
\draw[preaction={draw, line width=3pt, white}][-] (4.5,-0.5) to [out=-90,in=90]  (0.5,-4.5);
\draw[preaction={draw, line width=3pt, white}][-] (2.5,-0.25) to [out=-90,in=90]  (6.5,-4.5);
\draw[preaction={draw, line width=3pt, white}][-] (6.5,-0.5) to [out=-90,in=90]  (8.5,-4.5);
\draw[preaction={draw, line width=3pt, white}][-] (8.5,-0.5) to [out=-90,in=90]  (4.5,-4.5);

\draw[rotate around={-60:(0.5,-0.75)},fill=white] (0.5,-0.75) ellipse (0.1cm and 0.2cm);
\draw[rotate around={-60:(2.5,-0.25))},fill=white] (2.5,-0.25) ellipse (0.1cm and 0.2cm);
\draw[rotate around={-60:(4.5,-0.5)},fill] (4.5,-0.5) ellipse (0.27cm and 0.4cm);
\draw[rotate around={-60:(6.5,-0.5)},fill] (6.5,-0.5) ellipse (0.27cm and 0.4cm);
\draw[rotate around={-60:(8.5,-0.5)},fill] (8.5,-0.5) ellipse (0.27cm and 0.4cm);

\draw[rotate around={-60:(0.5,-4.5)},fill] (0.5,-4.5) ellipse (0.27cm and 0.4cm);
\draw[rotate around={-60:(2.5,-4.5)},fill] (2.5,-4.5) ellipse (0.27cm and 0.4cm);
\draw[rotate around={-60:(4.5,-4.5)},fill] (4.5,-4.5) ellipse (0.27cm and 0.4cm);
\draw[rotate around={-60:(6.5,-4.5)},fill] (6.5,-4.5) ellipse (0.27cm and 0.4cm);
\draw[rotate around={-60:(8.5,-4.5)},fill] (8.5,-4.5) ellipse (0.27cm and 0.4cm);

\draw [rounded corners,preaction={draw, line width=3pt, white}] (0,0)--(10,0)--(9,-1)--(-1,-1)--cycle;
\draw[dashed] (4,0)--(3,-1);
\draw[dashed] (6,0)--(5,-1);
\draw[dashed] (8,0)--(7,-1);
\end{tikzpicture}
\caption*{$(d,\mu)\in U\cO([2],[4])$}
\end{subfigure}
\begin{subfigure}{0.3\linewidth}
\centering
\begin{tikzpicture}[scale=0.45]
\draw [rounded corners] (0,-4)--(10,-4)--(9,-5)--(-1,-5)--cycle;
\draw[dashed] (2,-4)--(1,-5);
\draw[dashed] (4,-4)--(3,-5);
\draw[dashed] (6,-4)--(5,-5);
\draw[dashed] (8,-4)--(7,-5);

\draw[rounded corners] (0.2,-2)--(9.8,-2);

\draw[dotted] (0,0)--(0,-4);
\draw[dotted] (10,0)--(10,-4);
\draw[dotted] (9,-1)--(9,-5);
\draw[dotted] (-1,-1)--(-1,-5);

\draw[preaction={draw, line width=3pt, white}][-] (0.5,-0.75) to [out=-90,in=90]  (0.5,-2.75);
\draw[preaction={draw, line width=3pt, white}][-] (2.5,-0.25) to [out=-90,in=90]  (2.5,-2.25);
\draw[preaction={draw, line width=3pt, white}][-] (5.5,-0.25) to [out=-90,in=90]  (6.5,-2.5);
\draw[preaction={draw, line width=3pt, white}][-] (8.5,-0.5) to [out=-90,in=90]  (4.5,-2.5);
\draw[preaction={draw, line width=3pt, white}][-] (6.5,-0.75) to [out=-90,in=90]  (8.5,-2.5);

\draw[preaction={draw, line width=3pt, white}][-] (0.5,-2.75) to [out=-90,in=90]  (2.5,-4.5);
\draw[preaction={draw, line width=3pt, white}][-] (4.5,-2.5) to [out=-90,in=90]  (0.5,-4.5);
\draw[preaction={draw, line width=3pt, white}][-] (2.5,-2.25) to [out=-90,in=90]  (6.5,-4.5);
\draw[preaction={draw, line width=3pt, white}][-] (6.5,-2.5) to [out=-90,in=90]  (8.5,-4.5);
\draw[preaction={draw, line width=3pt, white}][-] (8.5,-2.5) to [out=-90,in=90]  (4.5,-4.5);

\draw[rotate around={-60:(0.5,-0.75)},fill=white] (0.5,-0.75) ellipse (0.1cm and 0.2cm);
\draw[rotate around={-60:(2.5,-0.25))},fill=white] (2.5,-0.25) ellipse (0.1cm and 0.2cm);
\draw[rotate around={-60:(6.5,-0.75))},fill=white] (6.5,-0.75) ellipse (0.1cm and 0.2cm);
\draw[rotate around={-60:(5.5,-0.25)},fill=white] (5.5,-0.25) ellipse (0.1cm and 0.2cm);
\draw[rotate around={-60:(8.5,-0.5)},fill] (8.5,-0.5) ellipse (0.27cm and 0.4cm);

\fill[rotate around={-60:(0.5,-2.75)},gray] (0.5,-2.75) ellipse (0.1cm and 0.2cm);
\fill[rotate around={-60:(2.5,-2.25))},fill=gray] (2.5,-2.25) ellipse (0.1cm and 0.2cm);
\fill[rotate around={-60:(4.5,-2.5)},fill=gray] (4.5,-2.5) ellipse (0.27cm and 0.4cm);
\fill[rotate around={-60:(6.5,-2.5)},gray] (6.5,-2.5) ellipse (0.27cm and 0.4cm);
\fill[rotate around={-60:(8.5,-2.5)},gray] (8.5,-2.5) ellipse (0.27cm and 0.4cm);

\draw[rotate around={-60:(0.5,-4.5)},fill] (0.5,-4.5) ellipse (0.27cm and 0.4cm);
\draw[rotate around={-60:(2.5,-4.5)},fill] (2.5,-4.5) ellipse (0.27cm and 0.4cm);
\draw[rotate around={-60:(4.5,-4.5)},fill] (4.5,-4.5) ellipse (0.27cm and 0.4cm);
\draw[rotate around={-60:(6.5,-4.5)},fill] (6.5,-4.5) ellipse (0.27cm and 0.4cm);
\draw[rotate around={-60:(8.5,-4.5)},fill] (8.5,-4.5) ellipse (0.27cm and 0.4cm);

\draw [rounded corners,preaction={draw, line width=3pt, white}] (0,0)--(10,0)--(9,-1)--(-1,-1)--cycle;
\draw[dashed] (4,0)--(3,-1);
\draw[dashed] (8,0)--(7,-1);

\draw [rounded corners,preaction={draw, line width=3pt, white}] (9.7,-2)--(10,-2)--(9,-3)--(-1,-3)--(0,-2)--(0.3,-2);
\draw[dashed] (4,-2)--(3,-3);
\draw[dashed] (6,-2)--(5,-3);
\draw[dashed] (8,-2)--(7,-3);
\end{tikzpicture}
\caption*{$(d,\mu)(e,\zeta)\in U\cO([0],[4])$}
\end{subfigure}
\caption{The categorical composition of $U\cO$}
\label{figure:UO}
\end{figure}

The construction $U(-)$ is functorial in $(\cO,c)$ and preserves weak equivalences, since $U\cO([q],[p])$ agrees with the homotopy fibre at $c_{p+1}$ of the map $\mapwoshort{\gamma(c_{q+1};-,1_{\ocolour{a}}^{q+1})}{{\cO}(p-q)}{{\cO}(p+1)}$.

\begin{rem}Using Quillen's bracket-construction $\langle-,-\rangle$ for modules over monoidal categories (see \cite[p.\,219]{Quillen}), the category $\UB$ is given by $\langle\cB,\cB\rangle$, where $\cB=\coprod_{n\ge0}\braidgroup_n$ is the free braided monoidal category acting on itself. Similarly, $U\cO$ can be obtained via an analogue of Quillen's construction for monoidal categories internal to spaces, applied to the path-category of the monoid $\coprod_{n\ge0}{\cO}(n)$.
\end{rem}
\begin{lem}\label{lemma:homotopydiscrete}The category $U\cO$ is homotopy discrete and satisfies $\pi_0(U\cO)\cong \UB$.
\end{lem}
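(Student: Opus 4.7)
The plan is to reduce the lemma to a long exact sequence computation, after identifying the homotopy type of each space $\cO(k)$. By construction, $U\cO([q],[p])$ is a Moore-path model for the homotopy fiber of the map $\mapwoshort{f}{\cO(p-q)}{\cO(p+1)}$ sending $d$ to $\gamma(c_{q+1}; d, 1_{\ocolor{a}}^{q+1})$, taken over the basepoint $c_{p+1}$ (using that $f(c_{p-q}) = c_{p+1}$ by iterated operadic associativity). The task thus splits into three pieces: understand the homotopy type of $\cO(k)$, understand $f$ on $\pi_1$, and check that the composition in $U\cO$ realizes the composition in $\UB$ on $\pi_0$.

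First, since $\cO$ is weakly equivalent as an operad to $\SC_2$, the space $\cO(\ocolor{m},\ocolor{a}^{k};\ocolor{m})$ is weakly equivalent to $\SC_2(\ocolor{m},\ocolor{a}^{k};\ocolor{m})$, which deformation retracts onto the ordered configuration space of $k$ points in an open strip and hence is a $K(PB_k,1)$. Combined with the hypothesis that the $\Sigma_k$-quotient $\cO(\ocolor{m},\ocolor{a}^k;\ocolor{m}) \to \cO(k)$ is a Serre fibration (with the $\Sigma_k$-action free), one concludes $\cO(k) \simeq K(B_k, 1)$; in particular each $\cO(k)$ is aspherical with $\pi_1(\cO(k), c_k) \cong B_k$.

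Second, geometrically $f$ places the $p-q$ discs encoded by $d$ to the left of the fixed $q+1$ discs of $c_{q+1}$, so a loop at $c_{p-q}$ pushes forward to a loop in $\cO(p+1)$ that braids only the first $p-q$ of the $p+1$ strands. This identifies $f_\ast$ with the standard inclusion of $B_{p-q}$ into $B_{p+1}$ on the first $p-q$ strands, matching the subgroup used to form cosets in $\UB([q],[p])$. The long exact sequence of the homotopy fiber sequence, combined with asphericity of the $\cO(k)$ and injectivity of $f_\ast$, then forces $\pi_n(U\cO([q],[p])) = 0$ for $n \geq 1$ and $\pi_0(U\cO([q],[p])) \cong B_{p+1}/B_{p-q}$, yielding homotopy discreteness and a bijection of morphism sets with $\UB([q],[p])$.

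Finally, for the composition check, given representatives $(e,\zeta)$ of $[b] \in B_{q+1}/B_{q-l}$ and $(d,\mu)$ of $[b'] \in B_{p+1}/B_{p-q}$, the second coordinate of their composite in $U\cO$ is $\mu \cdot \gamma(\zeta; d, 1_{\ocolor{a}}^{q+1})$. Under the identification above, $\mu$ contributes $b'$, while $\gamma(\zeta; d, 1_{\ocolor{a}}^{q+1})$ realizes the stabilization of $\zeta$ to a path in $\cO(p+1)$ whose new $p-q$ strands on the left remain trivial, thus contributing $1^{p-q} \oplus b$. Since concatenation of paths realizes multiplication in $\pi_1(\cO(p+1)) \cong B_{p+1}$, the total class is $[b'(1^{p-q} \oplus b)]$, agreeing with the composition formula defining $\UB$. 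The main obstacle is conceptually minor but notationally delicate: one must consistently track the handedness of the operadic composition in $\SC_2$ (insertion of the $\ocolor{m}$-input on the left of the strip), the side on which $\UB$ takes its cosets, and the order of path concatenation, since any convention mismatch would produce the wrong subgroup or the wrong product order.
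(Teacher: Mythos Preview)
Your proof is correct and follows essentially the same route as the paper: identify $U\cO([q],[p])$ as the homotopy fiber of $\gamma(c_{q+1};\_,1_{\ocolor{a}}^{q+1})$, recognize each $\cO(k)$ as a $K(\braidgroup_k,1)$, observe that the induced map on $\pi_1$ is the standard inclusion $\braidgroup_{p-q}\hookrightarrow\braidgroup_{p+1}$ on the first $p-q$ strands, and read off the result from the long exact sequence. The only cosmetic difference is that the paper invokes homotopy invariance of $U(\,\_\,)$ to reduce to $\cO=\SC_2$ at the outset, whereas you carry the general $\cO$ through and appeal to the weak equivalence with $\SC_2$ pointwise; your more detailed composition check is a welcome elaboration of what the paper leaves implicit.
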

Before turning to the proof of \cref{lemma:homotopydiscrete}, we suggest the reader to compare Figure~\ref{figure:UB} with Figure~\ref{figure:UO}.
\begin{proof}As $U(-)$ preserves weak equivalences, it suffices to prove the claim for $\cO=\SC_2$. Mapping embeddings of discs to their centre yields a homotopy equivalence from the space of operations $\SC_2(n)$  to the unordered configuration space $\Conf_n(\bfR^2)$ of the plane, which is an Eilenberg--MacLane space $K(B_n,1)$ for the braid group $\braidgroup_n$. On fundamental groups, the map $\mapwoshort{\gamma(c_{q+1};-,1_{\ocolour{a}}^{q+1})}{{\cO}(p-q)}{{\cO}(p+1)}$ is injective, since it is given by including $B_{p-q}$ in $B_{p+1}$ as the first $(p-q)$ strands. From this, one concludes that its homotopy fibre $\hofib_{c_{p+1}}(\gamma(c_{q+1};-,1_{\ocolour{a}}^{q+1}))=U\cO([q],[p])$ is homotopy discrete with path components $\braidgroup_{p+1}/\braidgroup_{p-q}$ and that, via this equivalence, the composition coincides with that of $\UB$, proving the claim.
\end{proof}

Equipped with \cref{lemma:homotopydiscrete}, we fix an isomorphism $\pi_0(U\cO)\cong \UB$ once and for all, which we use, for instance, to identify $\pi_1(\cO(p+1),c_{p+1})\cong\pi_0(U\cO([p],[p]))$ with the braid group $B_{p+1}$.

\begin{dfn}\label{definition:semisimplicialthickening}The \emph{thickening of the semisimplicial category} associated to an $\tE_{1,2}$-operad $\cO$ is the subcategory $\semisimpthick\subseteq U\cO$ obtained by restricting $U\cO$ to the path components hit by the section $\semisimp\rightarrow\UB\cong\pi_0(U\cO)$ of \cref{lemma:section}. It comes with a weak equivalence to $\semisimp$, induced by the functor $\maptwoshort{U\cO}{\FI}$.
\end{dfn}

Before proceeding to the central definitions of this section, we remind the reader of the theory of augmented $\cC$-spaces for a topologically enriched category $\cC$, set up in \cref{section:homotopycolimits}.

\begin{dfn}Let $\cM$ be a graded $\tE_1$-module over an $\tE_2$-algebra with structure maps $\theta$ and stabilising object $X$. Define the contravariant $U\cO$-space $B_\bullet(\cM)$ by sending $[p]$ to the path-space construction of $s^{p+1}$,
\[B_p(\cM)=\{(A,\zeta)\in\cM\times\Path\cM\mid\omega(\zeta)=s^{p+1}(A)\},\] and by
\[\mapnoname{U\cO([q],[p])\times B_p(\cM)}{B_q(\cM)}{\big((d,\mu),(A,\zeta)\big)}{\big(\theta(d;A,X^{p-q}),\zeta\cdot\theta(\mu;A,X^{p+1})\big).}\] Functoriality follows from the associativity of the module-structure $\theta$ and the composition of Moore paths. Evaluating paths at zero defines an augmentation $\maptwoshort{B_\bullet(\cM)}{\cM}$, which is a levelwise fibration.
\end{dfn}

\begin{dfn}\label{definition:canonicalresolution}Let $\cM$ be a graded $\tE_1$-module over an $\tE_2$-algebra with stabilising object $X$.
\begin{enumerate}
\item The \emph{canonical resolution} of $\cM$ is the fibrant augmented $\semisimpthick$-space
\[\maptwo{R_\bullet(\cM)}{\cM}\] obtained by restricting the augmented $U\cO$-space $B_\bullet(\cM)$ to the semi-simplicial thickening $\semisimpthick\subseteq U\cO$.
\item The \emph{space of destabilisations} of a point $A\in\cM$ is the $\semisimpthick$-space $W_\bullet(A)$ defined as the fibre of the canonical resolution $\maptwoshort{R_\bullet(\cM)}{\cM}$ at $A$.
\end{enumerate}
\end{dfn}

Unwrapping the definition, the canonical resolution $\maptwoshort{R_\bullet(\cM)}{\cM}$ is an augmented semi-simplicial space up to higher coherent homotopy with $p$-simplices \[R_p(\cM)=\{(A,\zeta)\in\cM\times\Path\cM\mid\omega(\zeta)=s^{p+1}(A)\},\] augmented over $\cM$ by evaluating paths at zero. There is a contractible space of $i$th face maps, but the following lemma provides a particularly convenient one after choosing a loop $\mu_i\in\Omega_{c_{p+1}}\cO(p+1)$ corresponding to the braid $\braiding_{X^{\oplus i},X}^{-1}\oplus X^{\oplus p-i}$ via the fixed isomorphism $B_{p+1}\cong \pi_1(\cO(p+1),c_{p+1})$. 

\begin{lem}\label{lemma:facemaps}The morphism $(c,\mu_i)\in U\cO([p-1],[p])$ lies in the path component of the image of the $i$th face map $d_i\in\semisimp([p-1],[p])$ in $\UB([p-1],[p])\cong\pi_0U\cO([p-1],[p])$ via the section of \cref{lemma:section}, so the map
\[\map{\tilde{d_i}}{R_p(\cM)}{R_{p-1}(\cM)}{(A,\zeta)}{\big(s(A),\zeta\cdot\theta(\mu_i;A,X^{p+1})\big)}\] is an $i$th face map of the canonical resolution $\maptwoshort{R_\bullet(\cM)}{\cM}$.
\end{lem}
\begin{proof}The choice of $\mu_i$ ensures that, via the isomorphism $\pi_0(U\cO([p-1],[p]))\cong \UB([p-1],[p])$, the element $(c,\mu_i)$ is in the component of the class $[\braiding_{X^{\oplus i},X}^{-1}\oplus X^{\oplus p-i}]$ in $\UB([p-1],[p])$. This is exactly the image of $d_i\in\semisimp([p-1],[p])$ in $\UB([p-1],[p])$, as claimed.
\end{proof}

\begin{rem}We borrowed the term \emph{space of destabilisations} from \cite{RWW}, where it stands for certain semi-simplicial sets $W_n(A,X)$ associated to a braided monoidal groupoid. In \cref{section:comparisonRWW}, it is explained that these semi-simplicial sets are special cases of the spaces of destabilisations in our sense.
\end{rem}

\begin{rem}\label{remark:connectivityresolutionbyspacesofdestabilisation}As $\maptwoshort{R_\bullet(\cM)}{\cM}$ is fibrant, its fibre $W_\bullet(A)$ is equivalent to its homotopy fibre $\hofib_A(R_\bullet(\cM))$, so by virtue of \cref{section:homotopyfibrerealisation}, the homotopy fibre at $A$ of the realisation $\maptwoshort{|R_\bullet(\cM)|}{\cM}$ is equivalent to $|W_\bullet(A)|$. In particular, the canonical resolution of $\cM$ is graded $\varphi(g_\cM)$-connected in degree $\ge m$ for a function $\mapwoshort{\varphi}{\bfNext}{\bfQ\cup\{\infty\}}$ satisfying $\phi(\infty)=\infty$ if and only if the spaces of destabilisations $W_\bullet(A)$ are $(\lfloor \varphi(g_\cM(A))\rfloor-1)$-connected for all points $A\in\cM$ with finite degree $g_\cM(A)\ge m$. As points in the same component have equivalent homotopy fibres, it is sufficient to check one point in each component.
\end{rem}

\begin{ex}\label{example:freealgebra}Recall the \emph{free $\tE_2$-algebra on a point} $\cO^{\ocolour{a}}=\coprod_{n\ge0}\cO(\ocolour{a}^n;\ocolour{a})/\Sigma_n$, graded in the evident way, with the \emph{free $\tE_1$-module on a point} $\cO^{\ocolour{m}}=\coprod_{n\ge0}\cO(\ocolour{m};\ocolour{a}^n;\ocolour{m})/\Sigma_n$ as a graded $\tE_1$-module over it. Choosing the unit $1_{\ocolour{a}}\in\cO(\ocolour{a};\ocolour{a})$ as the stabilising object, the space of destabilisations $W_\bullet(c_{p+1})$ is the $\semisimpthick$-space obtained by restricting the $U\cO$-space $U\cO(\bullet,[p])$ to $\semisimpthick$. As the category $U\cO$ is homotopy discrete with $\pi_0(U\cO)\cong \UB$ by \cref{lemma:homotopydiscrete}, the $\semisimpthick$-space $W_\bullet(c_{p+1})$ is equivalent to the semi-simplicial set given as the composition of the section $\maptwoshort{\semisimpop}{\UB^{\op}}$ of \cref{lemma:section} with $\UB(\bullet,[p])$. Using \cite[Prop.~3.2]{HatcherVogtmann}, the realisation of this semi-simplicial set can be seen to be contractible, but we do not go into details, since the consequences of Theorems~\ref{theorem:constant} and~\ref{theorem:twisted} regarding the twisted homological stability of $K(B_n,1)\simeq\cO(\ocolour{m};\ocolour{a}^n;\ocolour{m})/\Sigma_n$ correspond to the case $M=D^{2}$ and $\pi=\id$ of \cref{theorem:configurationspaces}, which is proved in \cref{section:configurationspaces}.
\end{ex}

\begin{rem}\label{remark:independentofalgebra}The choice of a stabilising object $X\in\cA$ for a graded $\tE_1$-module $\cM$ over an $\tE_2$-algebra $\cA$ induces a graded $\tE_1$-module structure on $\cM$ over $\cO^{\ocolour{a}}$. The two canonical resolutions of $\cM$ when considered as an module over $\cA$ or over $\cO^{\ocolour{a}}$ are identical. In fact, all our constructions and results solely depend on the induced module structure of $\cM$ over $\cO^{\ocolour{a}}$ and are in that sense independent of $\cA$.
\end{rem}

\begin{rem}\label{remark:localisation}Let $\cM$ be a graded $\tE_1$-module over an $\tE_2$-algebra with stabilising object $X$ and consider $\cM$ as a graded $\tE_1$-module over $\cO^{\ocolour{a}}$ (see \cref{remark:independentofalgebra}). For a union of path components $\cM'\subseteq \cM$ that is closed under the multiplication by $X$, we define a new grading on $\cM$ as an $\tE_1$-module over $\cO^{\ocolour{a}}$ by modifying the original grading on $\cM'$ by assigning the complement of $\cM'$ degree $\infty$ and leaving the grading on $\cM'$ unchanged. We call $\cM$ with this new grading the \emph{localisation at $\cM'$}. An example for such a subspace $\cM'$ is given by the \emph{objects stably isomorphic to an object $A\in\cM$} by which we mean the union of the path components of objects $B$ for which $B\oplus X^{\oplus n}$ is in the component of  $A\oplus X^{\oplus m}$ for some $n,m\ge0$.
\end{rem}

\begin{ex}\label{example:groupaction}Let $\cM$ be a graded $\tE_1$-module over an $\tE_2$-algebra $\cA$ and let $G$ be a group acting on $\cM$, preserving the grading. If the actions of $\cA$ and $G$ on $\cM$ commute, then the Borel construction $EG\times_G\cM$ inherits a graded $\tE_1$-module structure. The choice of a point in $EG$ induces a morphism 
\begin{center}
\begin{tikzcd}[row sep=0.5cm]
R_\bullet(\cM)\arrow[r]\arrow[d] & R_\bullet(EG\times_G\cM)\arrow[d]\\
\cM \arrow[r] & EG\times_G\cM
\end{tikzcd}
\end{center}
of augmented $\semisimpthick$-spaces, which induces weak equivalences on homotopy fibres. An application of \cref{section:homotopyfibrerealisation} implies that the respective canonical resolutions have the same connectivity.
\end{ex}

\begin{rem}\label{remark:generalisation}Some constructions of this section work in greater generality. The category $U\cO$ and the augmented $U\cO$-space $B_\bullet(\cM)$ can be defined for any coloured operad. $U\cO$ then still admits a functor to $\FI$, but might not be homotopy discrete nor admit a section on $\semisimp\subseteq\FI$. The point-set assumption on the action of $\Sigma_k$ on $\cO(\ocolour{m},\ocolour{a}^k;\ocolour{m})$ can be avoided by constructing $U\cO$ using $\cO(\ocolour{m},\ocolour{a}^k;\ocolour{m})$ instead of ${\cO}(k)$, which involves taking care of permutations corresponding to preimages of the quotient map $\maptwoshort{\cO(\ocolour{m},\ocolour{a}^k;\ocolour{m})}{{\cO}(k)}$.
\end{rem}

\subsection{The stable genus}\label{section:stablegenus}We extend the notion of the \emph{stable genus} of a manifold, as introduced in \cite{GRWI}, to our context, providing us with a general way of grading modules over braided monoidal categories and, by \cref{remark:gradingmoduleisgradingoncategory}, also of grading $\tE_1$-modules over $\tE_2$-algebras.

Let $(\cM,\oplus)$ be a right-module over a braided monoidal category $(\cA,\oplus,\braiding,0)$. Recall the free braided monoidal category on one object $\cB=\coprod_{n\ge0}B_n$, consisting of the the braid groups $B_n$. A choice of an object $X$ in $\cA$ induces a functor $\cB\rightarrow \cA$ and hence a right-module structure on $\cM$ over $\cB$. With respect to this module structure, a grading of $\cM$ that is compatible with the canonical grading on $\cB$ is equivalent to a grading $g_\cM$ on $\cM$ as a category such that $g_\cM(A\oplus X)=g_\cM(A)+1$ holds for all objetcs $A$ in $\cM$.

\begin{dfn}\label{definition:stablegenus}Let $X$ be an object of $\cA$ and $A$ an object of $\cM$.
\begin{enumerate}
\item The \emph{$X$-genus} of $A$ is defined as \[g^X(A)=\sup\{k\ge0\mid\text{there exists an object $B$ in $\cM$ with }B\oplus X^{\oplus k}\cong A\}\in\bfNext.\]
\item The \emph{stable $X$-genus} of $A$ is defined as \[\bar{g}^X(A)=\sup\{g^X(A\oplus X^{\oplus k})-k\mid k\ge0\}\in\bfNext.\]
\end{enumerate}
\end{dfn}

As $\bar{g}^X(A\oplus X)=\bar{g}^X(A)+1$ holds by definition, the stable $X$-genus provides a grading of $\cM$ when considered as a module over $\cB$ via $X$. This stands in contrast with the (unstable) $X$-genus, which does in general not define a grading, because the inequality $g^X(A)+1\le g^X(A\oplus X)$ might be strict. For an $\tE_1$-module $\cM$ over an $\tE_2$-algebra $\cA$, the choice of a point $X\in\cA$ induces an $\tE_1$-module structure on $\cM$ over the free $\tE_2$-algebra on a point $\cO^{\ocolour{a}}$ (see \cref{remark:independentofalgebra}). After taking fundamental groupoids, this results in the module structure of $\Pi(\cM)$ over $\cB$ discussed above, so the stable $X$-genus provides a grading for $\cM$ as an $\tE_1$-module over $\cO^{\ocolour{a}}$.

\begin{rem}\label{remark:genusisstablegenus}If the connectivity assumption of \cref{theorem:constant} is satisfied for an $\tE_1$-module $\cM$, graded with the stable $X$-genus, then the cancellation result \cref{corollary:cancellation} implies $g^X(A\oplus X)=g^X(A)+1$ for objects $A$ of positive stable genus, which, in turn, implies that for such $A$, the genus and the stable genus coincide.
\end{rem}

\section{Stability with constant and abelian coefficients}
\label{section:proofconstant}
Let $\cM$ be a graded $\tE_1$-module over an $\tE_2$-algebra with stabilising object $X$ and structure maps $\theta$. We prove \cref{theorem:constant} via a spectral sequence obtained from the canonical resolution $\maptwoshort{R_\bullet(\cM)}{\cM}$. All spaces $R_p(\cM)$ and $|R_\bullet(\cM)|$ are considered graded by pulling back the grading from $\cM$ along the augmentation.

\subsection{The spectral sequence}\label{section:constructionspectralsequence}Given a local system $L$ on $\cM$, the canonical resolution $\maptwoshort{R_\bullet(\cM)}{\cM}$ (see \cref{section:complexes}) gives rise to a tri-graded spectral sequence
\begin{equation}\label{equation:ssconstant}
E^1_{p,q,n}\cong \begin{cases} \oH_q(R_p(\cM)_n;L) &\mbox{if } p\ge 0 \\  \oH_q(\cM_{n};L) & \mbox{if } p=-1 \end{cases} \implies \oH_{p+q+1}(\cM_{n},|R_\bullet(\cM)|_n;L),
\end{equation}
with differential $\mapwoshort{d^1}{E^1_{p,q,n}}{E^1_{p-1,q,n}}$, induced by the augmentation for $p=0$ and the alternating sum $\sum_{i=0}^p(-1)^i(\tilde{d}_i;\id)_*$ for $p>0$, where $\tilde{d}_i$ is any choice of $i$th face map of $R_\bullet(\cM)$ (see Sections~\ref{section:semisimplicial} and~\ref{section:semisimplicialthick}). As the differentials do not change the $n$-grading, it is a sum of spectral sequences, one for each $n\in\bfNext$. To identify the $E^1$-page in terms of the stabilisation $\mapwoshort{s}{\cM}{\cM}$, recall from \cref{section:modules} that the fundamental groupoid $(\Pi(\cM),\oplus)$ is a graded module over the graded braided monoidal groupoid $(\Pi(\cA),\oplus,\braiding,0)$.

\begin{lem}\label{lemma:differentialszero}
We have $E^1_{p,q,n+1}\cong\oH_q(\cM_{n-p};(s^{p+1})^*L)$ and $\mapwoshort{d^1}{E^1_{p,q,n+1}}{E^1_{p-1,q,n+1}}$ identifies with
\[\textstyle{\mapwoshort{\sum_{i=0}^p(-1)^i(s;\eta_i)_*}{\oH_q\big(\cM_{n-p};(s^{p+1})^*L\big)}{\oH_q\big(\cM_{n-p+1};(s^{p})^*L\big)}},\] where $\eta_i$ denotes the natural transformation \[\mapwo{L(-\oplus\braiding_{X^{\oplus i},X}\oplus X^{\oplus p-i})}{L(-\oplus X^{\oplus p+1})}{L(-\oplus X^{\oplus p+1})}.\] In particular, $d^1$ corresponds for $p=0$ to the stabilisation $\mapwoshort{(s;\id)_*}{\oH_q(\cM_{n};s^*L)}{\oH_q(\cM_{n+1};L)}$. Thus, if $L$ is constant, $d^1$ identifies with $\mapwoshort{s_*}{\oH_q(\cM_{n-p};L)}{\oH_q(\cM_{n-p+1};L)}$ for $p$ even and vanishes for $p$ odd.
\end{lem}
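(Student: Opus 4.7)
The plan is to exploit the fact that each $R_p(\cM)$ is, by construction, the mapping path space of $\mapwoshort{s^{p+1}}{\cM}{\cM}$. First I would consider the projection $\mapwoshort{\pi}{R_p(\cM)}{\cM}$ sending $(A,\zeta)$ to $A$, whose fiber over a point $A$ is the space of paths in $\cM$ ending at $s^{p+1}(A)$, which is contractible; hence $\pi$ is a weak equivalence. Since the grading on $R_p(\cM)$ is pulled back from the augmentation $(A,\zeta)\mapsto \zeta(0)$ and $\zeta$ remains in a single degree component of $\cM$, the condition $\zeta(0)\in\cM_{n+1}$ forces $A\in\cM_{n-p}$, so $\pi$ restricts to an equivalence $R_p(\cM)_{n+1}\simeq \cM_{n-p}$. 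Moreover, the path $\zeta$ itself provides a canonical homotopy from the augmentation to $s^{p+1}\circ\pi$, inducing a natural isomorphism between the two pullbacks of $L$. Together these identifications produce $E^1_{p,q,n+1}\cong\oH_q(\cM_{n-p};(s^{p+1})^*L)$.

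For the differential, I would use the explicit model of the $i$-th face map from \cref{remark:unwrapresolution}, namely $\tilde{d}_i(A,\zeta)=(s(A),\zeta\cdot\theta(\mu_i;A,X^{p+1}))$, where $\mu_i\in\Omega_{c_{p+1}}\cO(p+1)$ corresponds to the braid $\braiding_{X^{\oplus i},X}^{-1}\oplus X^{\oplus p-i}$. Under the $\pi$-equivalence this reduces to the stabilization map $\mapwoshort{s}{\cM_{n-p}}{\cM_{n-p+1}}$ on spaces. The induced twist on local systems is obtained by composing the $\zeta$-identification on the source, the tautological identity on the pullback along the augmentation (since $\tilde{d}_i$ covers the identity of $\cM$), and the $\zeta'$-identification on the target, where $\zeta'=\zeta\cdot\theta(\mu_i;A,X^{p+1})$. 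The $\zeta$-contributions cancel and leave the endomorphism of $L(s^{p+1}(A))$ given by the loop $\theta(\mu_i;A,X^{p+1})$, which under the isomorphism $\pi_1(\cO(p+1),c_{p+1})\cong B_{p+1}$ and the braided action of $\Pi(\A)$ on $\Pi(\cM)$ is precisely $\eta_i$.

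The remaining assertions follow by inspection. For $p=0$ there is a single face and $\braiding_{X^{\oplus 0},X}=\id_X$, so $\eta_0=\id$ and $d^1=(s;\id)_*$ is the stabilization. For constant $L$ each $\eta_i$ is the identity natural transformation, so $d^1$ collapses to $\bigl(\sum_{i=0}^p (-1)^i\bigr)\,s_*$, which equals $s_*$ for $p$ even and $0$ for $p$ odd. The main obstacle I anticipate is purely one of bookkeeping: pinning down exactly which braid, with or without inverse, governs $\eta_i$ under the conventions for $\pi_1(\cO(p+1),c_{p+1})\cong B_{p+1}$ and the braided structure on $\Pi(\A)$. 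This I would fix by cross-checking against the $p=0$ case and the canonical section $\maptwoshort{\semisimp}{\UB}$ constructed in \cref{section:complexes}.
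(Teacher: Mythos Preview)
Your proposal is correct and follows essentially the same approach as the paper. The only cosmetic difference is that you use the projection $\mapwoshort{\pi}{R_p(\cM)}{\cM}$, $(A,\zeta)\mapsto A$, to identify $R_p(\cM)_{n+1}$ with $\cM_{n-p}$, whereas the paper uses its homotopy inverse, the canonical section $\mapwoshort{\iota}{\cM_{n-p}}{R_p(\cM)_{n+1}}$, $A\mapsto (A,\const_{s^{p+1}(A)})$; the resulting square and the tracking of the braid-induced twist $\eta_i$ are identical.
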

\begin{proof}Using the choice of face maps $\mapwoshort{\tilde{d}_i}{R_{p}(\cM)_{n+1}}{R_{p-1}(\cM)_{n+1}}$ of \cref{lemma:facemaps}, we consider the square
\begin{equation*}
\begin{tikzcd}[row sep=0.6cm]
{(\cM_{n-p};(s^{p+1})^*L)}\arrow[d,"{(s;\eta_i)}",swap]\arrow[r,"(\iota_{p};\id)"]&{(R_{p}(\cM)_{n+1};L)}\arrow[d,"(\tilde{d_i};\id)"]\\
{(\cM_{n-p+1};(s^{p})^*L)}\arrow[r,"(\iota_{p-1};\id)"]&{(R_{p-1}(\cM)_{n+1};L)},
\end{tikzcd}
\end{equation*} where $\iota_{q}$ denotes the canonical equivalence $\maptwoshort{\cM_{n-q}}{R_q(\cM)_{n+1}}$, mapping $A$ to $(A,\text{const}_{s^{q+1}(A)})$. A point $A\in\cM_{n-p}$ is mapped by the two compositions in the square to $(s(A),\theta(\mu_i;A,X^{p+1}))$ and $(s(A),\const_{s^{p+1}(A)})$, respectively, which are connected by a preferred homotopy following the path $\mu_i$ chosen in \cref{lemma:facemaps} to its endpoint. 
The commutativity of the triangle \eqref{equation:localcoefficientsdiagram} ensuring that this homotopy extends to one of spaces with local systems (see \cref{section:localcoefficients}) is equivalent to $L(\theta(\mu_i;-,X^{p+1}))\eta_i=\id$. But, by the choice of $\mu_i$, the path $\theta(\mu_i;-,X^{p+1})$ corresponds to the braid $-\oplus\braiding_{X^{\oplus i},X}^{-1}\oplus X^{\oplus p-i}$, so the required relation holds and the square commutes up to homotopy. Taking vertical mapping cones and homology results in the claimed identification. If $L$ is constant, the $\eta_i$ coincide for all $i$, so the terms in the alternating sum cancel out.
\end{proof}

\begin{lem}\label{lemma:compositionshomotopic}If the local system $L$ is abelian, then the following compositions are homotopic for all $0\le i\le p$ \[(\cM;(s^{p+2})^*L)\xlongrightarrow{(s;\id)}(\cM;(s^{p+1})^*L)\xlongrightarrow{(s;\eta_i)}(\cM;(s^{p})^*L).\]
\end{lem}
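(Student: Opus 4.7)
The plan is to unwind the composition into a single morphism of spaces with local systems, produce a homotopy between the resulting morphisms for different indices $i$ using the canonical $\tE_2$-braiding in $\A$, and verify the coherence condition by invoking the abelian hypothesis on $L$.

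By the composition rule for morphisms of spaces with local systems, the composite $(s;\eta_i)\circ(s;\id)$ has underlying map $s^2\colon\cM\to\cM$ and natural transformation $s^*\eta_i\colon(s^{p+2})^*L\to(s^{p+2})^*L$, which at $A\in\cM$ is the automorphism of $L(A\oplus X^{\oplus p+2})$ induced by the morphism $\id_{A\oplus X}\oplus\braiding_{X^{\oplus i},X}\oplus\id_{X^{\oplus p-i}}$ in $\Pi(\cM)$. Via the $\tE_2$-action of $\Pi(\A)$, the latter corresponds to an element of $\braidgroup_{p+2}$ whose class in the abelianisation $\braidgroup_{p+2}^{\mathrm{ab}}\cong\bfZ$ is the writhe, equal to $i$.

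For $i,j\in\{0,\ldots,p\}$, I would construct a homotopy $H\colon\cM\times I\to\cM$ from $s^2$ to itself as follows. The canonical braided monoidal structure on $\Pi(\A)$ provides an element $\braiding_{X,X}^{\,j-i}\in\pi_1(\A,X^{\oplus 2})$; pick a representing loop $\gamma$ at $X^{\oplus 2}$ and set $H(A,t)=A\oplus\gamma(t)$ using the $\tE_1$-action of $\A$ on $\cM$. Transporting this loop through $s^p$ yields a loop at $A\oplus X^{\oplus p+2}$ whose underlying braid in $\braidgroup_{p+2}$ is $\sigma_1^{j-i}\oplus\id_{X^{\oplus p}}$ and whose writhe is $j-i$.

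The coherence triangle for a homotopy from $(s^2;s^*\eta_i)$ to $(s^2;s^*\eta_j)$ then reduces to the equation $(s^p)^*L(H_\bullet(A))\circ(s^*\eta_i)_A=(s^*\eta_j)_A$ of automorphisms of $L(A\oplus X^{\oplus p+2})$. Each side is induced by an element of $\braidgroup_{p+2}$ coming from the $\tE_2$-action, and a short writhe computation shows that both sides have writhe $j$ (since $i+(j-i)=j$). Because $L$ is abelian, its monodromy representation on each path-component $\cM_n$ factors through $H_1(\cM_n;\bfZ)$, so the induced action of $\braidgroup_{p+2}$ on the stalks factors through its abelianisation $\bfZ$ via the writhe map and is determined by the writhe alone. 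Hence the coherence equation holds, the homotopy exists, and the compositions for different $i$ are all mutually homotopic.

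The main difficulty I anticipate is the careful bookkeeping needed to identify the various pulled-back natural transformations with honest braids in $\braidgroup_{p+2}$ with the correct strands and signs, and to ensure that the loop constructed from $\braiding_{X,X}^{j-i}$ interacts correctly with the pullback through $s^p$; once these identifications are in place, the abelian hypothesis on $L$ collapses the comparison through the writhe map.
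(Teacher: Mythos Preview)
Your proposal is correct and follows essentially the same approach as the paper. Both arguments use the self-homotopy of $s^2$ coming from the braiding $\braiding_{X,X}$ in the $\tE_2$-structure, reduce the coherence triangle to an identity of elements of $\braidgroup_{p+2}$ acting on the stalks of $L$, and then invoke the abelian hypothesis to collapse the question to the abelianisation $\braidgroup_{p+2}^{\mathrm{ab}}\cong\bfZ$, i.e.\ to writhe. The only difference is one of packaging: the paper compares consecutive indices $i$ and $i+1$ via a single twist and then uses an explicit braid-relation computation to verify the abelianisation identity, whereas you jump directly from $i$ to $j$ using $\braiding_{X,X}^{\,j-i}$ and the writhe count.
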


The proof of \cref{lemma:compositionshomotopic} uses a self-homotopy of $\mapwoshort{s^2}{\cM}{\cM}$ which is crucial for various other arguments. Using the notation of \cref{section:complexes}, it is given by \begin{equation}\label{equation:selfhomotopy}\mapnonameshort{[0,1]\times\cM}{\cM}{(t,A)}{\theta(\mu(t);A,X^{2}),}\end{equation} where $\mu$ is a choice of loop of length $1$, based at $c_2\in\cO(2)$, such that $[(1_\ocolour{m},\mu)]\in\pi_0(U\cO([1],[1]))$ corresponds to the class $[\braiding_{X,X}^{-1}]\in\UB([1],[1])$ via the isomorphism $\pi_0(U\cO)\cong \UB$ fixed in \cref{section:complexes}. Since $\mu$ is unique up to homotopy, this describes the homotopy of $s^2$ uniquely up to homotopy of homotopies.

\begin{proof}[Proof of \cref{lemma:compositionshomotopic}]By the recollection of \cref{section:localcoefficients}, the selfhomotopy \eqref{equation:selfhomotopy} of $s^2$ extends to a homotopy of maps of spaces with local systems between the $i$th and $(i+1)$st composition in question if the triangle
\begin{equation*}
\begin{tikzcd}[row sep=0.6cm,column sep=4cm]
L(-\oplus X^{\oplus p+2})
\arrow[r,"L(-\oplus X\oplus \braiding_{X^{\oplus i+1},X}\oplus X^{\oplus p-i-1})"]\arrow[dr,swap,"L(-\oplus X\oplus \braiding_{X^{\oplus i},X}\oplus X^{\oplus p-i})"] 
& L(-\oplus X^{\oplus p+2})\arrow[d,"L(-\oplus \braiding^{-1}_{X,X}\oplus X^{\oplus p})"]
\\
&L(-\oplus X^{\oplus p+2})
\end{tikzcd}
\end{equation*}
commutes. The braid relations give $(-\oplus X\oplus \braiding_{X^{\oplus i},X}\oplus X^{\oplus p-i})=(-\oplus \braiding^{-1}_{X,X}\oplus X^{\oplus p+1})(-\oplus \braiding_{X^{\oplus i+1},X}\oplus X^{\oplus p-i})$, so the claim follows if we show that $[\braiding_{X^{\oplus i+1},X}\oplus X]=[X\oplus\braiding_{X^{\oplus i+1},X}]$ holds in the abelianisation. But the braid relation $(\braiding_{X,X}\oplus X)(X\oplus\braiding_{X,X})(\braiding_{X,X}\oplus X)=(X\oplus\braiding_{X,X})(\braiding_{X,X}\oplus X)(X\oplus \braiding_{X,X})$ abelianises to $[\braiding_{X,X}\oplus X]=[X\oplus\braiding_{X,X}]$ from which the claimed identity follows by induction on $i$.
\end{proof}

\subsection{The proof of \cref{theorem:constant}}
We prove \cref{theorem:constant} by induction on $n$, using the spectral sequence \eqref{equation:ssconstant}. As $\maptwoshort{|R_\bullet(\cM)|_{n+1}}{\cM_{n+1}}$ is assumed to be $(\frac{n-1+k}{k})$-connected for a $k\ge2$ in the constant or a $k\ge 3$ in the abelian coefficients case, the summand of degree $(n+1)$ of the spectral sequence converges to zero in the range $p+q\le\frac{n-1}{k}$. By \cref{lemma:differentialszero}, the differential $\mapwoshort{d^1}{E^1_{0,i,n+1}}{E^1_{-1,i,n+1}}$ identifies with the stabilisation map $\mapwoshort{(s;\id)_*}{\oH_i(\cM_{n};s^*L)}{\oH_i(\cM_{n+1};L)}$. Since there are no differentials targeting $E^k_{-1,0,n+1}$ for $k\ge1$, the stabilisation has to be surjective for $i=0$ if $E^\infty_{-1,0,n+1}$ vanishes, which is the case, since we have $-1\le\frac{n-1}{k}$ for all $n\ge 0$. In particular, this implies the case $n=0$ for both constant and abelian coefficients, because the isomorphism claims for $n=0$ are vacuous.

\begin{proof}[Constant coefficients]Assume the claim for constant coefficients holds in degrees smaller than $n$. By \cref{lemma:differentialszero}, the differential $\mapwoshort{d^1}{E^1_{p,q,n+1}}{E^1_{p-1,q,n+1}}$  identifies with $\mapwoshort{s_*}{\oH_q(\cM_{n-p};L)}{\oH_q(\cM_{n-p+1};L)}$ for $p$ even and is zero for $p$ odd. From the induction assumption, we draw the conclusion that $E^2_{p,q,n+1}$ vanishes for $(p,q)$ if $p$ is even with $0<p\le n$ and $q\le\frac{n-p-1}{k}$, and for $(p,q)$ if $p$ is odd with $0\le p<n$ and $q\le\frac{n-p-3+k}{k}$. So in particular, $E^2_{p,q,n+1}$ vanishes if both $0<p<n$ and $q\le\frac{n-p-1}{k}$ hold. As $\mapwoshort{d^1}{E^1_{1,i,n+1}}{E^1_{0,i,n+1}}$ is zero for all $i$, injectivity of $\mapwoshort{s_*}{\oH_i(\cM_{n};L)}{\oH_i(\cM_{n+1};L)}$ holds if $E^\infty_{0,i,n+1}=0$ and $E^2_{p,q,n+1}=0$ hold for $p+q=i+1$ with $q<i$. This is the case for $i\le\frac{n-1}{k}$, as claimed, which follows from the established vanishing ranges of $E^{\infty}$ and $E^2$. Similarly, the map in question is surjective in degree $i$ if $E^\infty_{-1,i,n+1}=0$ and $E^2_{p,q,n+1}=0$ hold for $p+q=i$ with $q<i$, which is true for $i\le\frac{n-1+k}{k}$.
\end{proof}

\begin{proof}[Abelian coefficients]Assume the statement holds for degrees smaller than $n$. The differential $\mapwoshort{d^1}{E^1_{p,q,n+1}}{E^1_{p-1,q,n+1}}$ identifies with $\mapwoshort{\sum_{i}(-1)^i(s,\eta_i)_*}{\oH_q(\cM_{n-p};(s^{p+1})^*L)}{\oH_q(\cM_{n-p+1};(s^{p})^*L)}$. By \cref{lemma:compositionshomotopic}, in the range where $\mapwoshort{(s,\id)_*}{\oH_q(\cM_{n-p-1};(s^{p+2})^*L)}{\oH_q(\cM_{n-p};(s^{p+1})^*L)}$ is surjective, $d^1$ identifies with the stabilisation map $\mapwoshort{(s,\id)_*}{\oH_q(\cM_{n-p};(s^{p+1})^*L)}{\oH_q(\cM_{n-p+1};(s^{p})^*L)}$ for $p$ even and vanishes for $p$ odd. By induction, this happens for $(p,q)$ with $0\le p\le n-1$ and $q\le\frac{n-p-1}{k}$, so by using the induction again, we conclude $E^2_{p,q,n+1}=0$ for $(p,q)$ if $p$ is even satisfying $0<p\le n-1$ and $q\le\frac{n-p+1-k}{k}$, and for $(p,q)$ with $p$ odd satisfying $0\le p< n-1$ and $q\le\frac{n-p-2}{k}$. The rest of the argument proceeds as in the constant case, adapting the ranges and using that $\mapwoshort{d^1}{E^1_{1,i,n+1}}{E^1_{0,i,n+1}}$ is zero for $i\le\frac{n-1}{k}$. 
\end{proof}

\begin{rem}\label{remark:improvement}If $g_\cM$ is a grading of $\cM$, then so is $g_\cM+m$ for any fixed number $m\ge0$. Consequently, if the canonical resolution of $\cM$ is graded $(\frac{g_\cM-m+k}{k})$-connected for an $m\ge2$, then we can apply Theorems~\ref{theorem:constant} and~\ref{theorem:twisted} to $\cM$, graded by $g_\cM+(m-2)$, which results in a shift in the stability range. By adapting the ranges in the previous proof appropriately, requiring more specific connectivity assumptions improve the stability ranges in \cref{theorem:constant} as follows.
\begin{enumerate}
\item If the canonical resolution is graded $(\frac{g_\cM-m+k}{k})$-connected for an $m\ge 3$, the surjectivity range in \cref{theorem:constant} for constant coefficients can be improved from $i\le\frac{n-m+k}{k}$ to $i\le\frac{n-m+k+1}{k}$ and the one for abelian coefficients from $i\le\frac{n-m+2}{k}$ to $i\le\frac{n-m+3}{k}$.
\item If the canonical resolution is graded $(g_\cM-1)$-connected in degrees $\ge1$, then the isomorphism range in \cref{theorem:constant} for constant coefficients can be improved from $i\le\frac{n-1}{2}$ to $i\le\frac{n}{2}$, similar to the proof of \cite[Thm 5.1]{RWConf} for symmetric groups.
\end{enumerate}
\end{rem}

\section{Stability with twisted coefficients}
This section serves to introduce a notion of twisted coefficient systems and to prove \cref{theorem:twisted}. Many ideas in this section are inspired by \cite[Sect.\,4]{RWW}, which is itself a generalisation of work by Dwyer, van der Kallen, and Ivanov \cite{Dwyer, Ivanov,vanderKallen}. We use similar notation to \cite{RWW} to emphasise analogies and refer to Remarks~\ref{remark:RWWcoefficients} and~\ref{remark:quillencoefficients} for a comparison of their notion of coefficient systems to ours. 

\subsection{Coefficient systems of finite degree}\label{section:coefficientsystems}
We define coefficient systems of finite degree for graded modules over graded braided monoidal categories, such as fundamental groupoids of graded $\tE_1$-modules over $\tE_2$-algebras, as described in \cref{section:modules}.

Let $(\cM,\oplus)$ be a graded right-module over a braided monoidal category $(\cA,\oplus,\braiding,0)$ in the sense of \cref{section:gradings}. We fix a \emph{stabilising object} $X$, i.e.~an object of $\cA$ of degree $1$, and recall the free braided monoidal category $\cB=\coprod_{n\ge0}\braidgroup_n$ on one object, built from the braid groups $\braidgroup_n$. The choice of $X$ induces a functor $\maptwoshort{\cB}{\cA}$, so in particular homomorphisms $\maptwoshort{B_n}{\aut_\cA(X^{\oplus n})}$ and a module-structure on $\cM$ over $\cB$.

\begin{dfn}\label{definition:coefficientsystems}A \emph{coefficient system} $F$ for $\cM$ is a functor \[\mapwo{F}{\cM}{\Ab}\] to the category of abelian groups, together with a natural transformation \[\mapwo{\sigma^F}{F}{F(-\oplus X)},\] called the \emph{structure map of $F$}, such the image of the canonical morphism $\maptwoshort{\braidgroup_m}{\aut_{\cA}(X^{\oplus m})}$ acts trivially on the image of $\mapwoshort{(\sigma^F)^m}{F}{F(-\oplus X^{\oplus m})}$ for all $m\ge0$. A \emph{morphism between coefficient systems} $F$ and $G$ for $\cM$ is a natural transformation $\maptwoshort{F}{G}$ that commutes with the structure maps $\sigma^F$ and $\sigma^G$.
\end{dfn}

\begin{rem}The category of coefficient systems for $\cM$ is abelian, so in particular has (co)kernels. More concretely, it is a category of abelian group-valued functors on a category $\langle\cM,\cB\rangle$ (see \cref{remark:quillencoefficients}).
\end{rem}

\begin{dfn}\label{definition:suspension}Define the \emph{suspension} $\Sigma F$ of a coefficient system $F$ for $\cM$ as \[\Sigma F=F(-\oplus X),\] together with the structure map $\mapwoshort{\sigma^{\Sigma F}}{\Sigma F}{\Sigma F(-\oplus X)}$, defined as the composition \[\Sigma F=F(-\oplus X)\xrightarrow{\sigma^F(-\oplus X)}F(-\oplus X^{\oplus 2})\xrightarrow{F(-\oplus\braiding_{X,X})}F(-\oplus X^{\oplus 2})=\Sigma F(-\oplus X).\] The structure map $\sigma^F$ of $F$ induces a morphism $\maptwoshort{F}{\Sigma F}$ of coefficient systems, called the \emph{suspension map}, whose (co)kernel is the \emph{kernel} $\kernel(F)$ respectively \emph{cokernel} $\cokernel(F)$ of $F$. We call $F$ \emph{split} if the suspension map is split injective in the category of coefficient systems. \end{dfn}

\begin{lem}The suspension $\Sigma F$ and the suspension map $\maptwoshort{F}{\Sigma F}$ are well-defined.
\end{lem}
\begin{proof}The triviality condition for $\Sigma F$ is implied by the one for $F$, since $(\sigma^{\Sigma F})^m$ agrees with $F(-\oplus\braiding_{X^{\oplus m},X})\sigma^F(-\oplus X)^m$, which follows by induction on $m$, using the braid relation $(X\oplus\braiding_{X,X^{\oplus m-1}})(\braiding_{X,X}\oplus X^{\oplus m-1})=\braiding_{X^{\oplus m},X}$. The fact that the suspension map is a morphism of coefficient system is a consequence of the triviality condition on $F$, more specifically of $F(-\oplus\braiding_{X,X}) (\sigma^F)^2=(\sigma^F)^2$.
\end{proof}

\begin{rem}The suspension map gives rise to a natural transformation $\maptwoshort{\id}{\Sigma}$ of endofunctors on the category of coefficient systems for $\cM$.
\end{rem}

For the remainder of the section, we fix a coefficient system $F$ for the module $\cM$.

\begin{dfn}\label{definition:degree}We denote by $F_n$ for $n\ge0$ the restriction of $F$ to the full subcategory $\cM_n\subseteq \cM$ of objects of degree $n$ and define the \emph{degree} and \emph{split degree} of $F$ at an integer $N$ inductively by saying that $F$ has
\begin{enumerate}
\item (split) degree $\le-1$ at $N$ if $F_n=0$ for $n\ge N$,
\item degree $r$ at $N$ for a $r\ge0$ if $\kernel(F)$ has degree $-1$ at $N$ and $\cokernel(F)$ has degree $(r-1)$ at $(N-1)$, and
\item split degree $r$ at $N$ for a $r\ge0$ if $F$ is split and $\cokernel(F)$ is of split degree $(r-1)$ at $(N-1)$.
\end{enumerate}
\end{dfn}

\begin{rem}Note that for all $N\le0$, $F$ is of (split) degree $r$ at $0$ if and only if it is of (split) degree $r$ at $N$, and that the property of being of (split) degree $r$ at $0$ is independent of the chosen grading. However, being of degree $r$ at $N$ depends on the grading if $N$ is positive. If $g_\cM$ is a grading for $\cM$, then so is $g_\cM+k$ for any $k\ge0$ and by induction on $r$, one proves that for $k\ge0$, $F$ is of (split) degree $r$ at $N$ with respect to a grading $g_\cM$ if and only if it is of (split) degree $r$ at $(N-k)$ with respect to the grading $g_\cM+k$
\end{rem}

\begin{lem}\label{lemma:iteratedsuspension}The iterated suspension $\Sigma^iF$ for $i\ge0$ is given by $\Sigma^iF=F(-\oplus X^{\oplus i})$ with structure map  \[\Sigma^iF=F(-\oplus X^{\oplus i})\xrightarrow{\sigma^F(-\oplus X^{\oplus i})}F(-\oplus X^{\oplus i}\oplus X)\xrightarrow{F(-\oplus \braiding_{X^{\oplus i},X})}F(-\oplus X\oplus X^{\oplus i})=\Sigma^iF(-\oplus X).\]
\end{lem}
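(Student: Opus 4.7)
The plan is to proceed by induction on $i\ge 0$. The base case $i=0$ is immediate (the formula reads $\Sigma^0 F = F$ with $\sigma^F$ itself, using that $X^{\oplus 0}$ is the monoidal unit), and the case $i=1$ is \cref{definition:suspension} by construction. For the inductive step, I assume the formula for $\Sigma^i F$. Since $\Sigma^{i+1} F = \Sigma(\Sigma^i F)$, applying \cref{definition:suspension} to $G = \Sigma^i F$ gives on underlying functors $\Sigma^{i+1} F(A) = G(A\oplus X) = F(A\oplus X^{\oplus i+1})$, as claimed, and expresses the structure map at $A$ as the two-arrow composition $G(A\oplus \braiding_{X,X})\circ \sigma^G(A\oplus X)$.

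I would then unfold this using the inductive hypothesis. Since $G = \Sigma^i F$ sends a morphism $f$ to $F(f\oplus \id_{X^{\oplus i}})$, and $\sigma^G$ is itself the composition $F(\_\oplus \braiding_{X^{\oplus i}, X})\circ \sigma^F(\_\oplus X^{\oplus i})$ at every object, substituting turns $\sigma^{\Sigma^{i+1}F}(A)$ into the three-arrow composition
\[F(A\oplus X^{\oplus i+1}) \xrightarrow{\sigma^F} F(A\oplus X^{\oplus i+1}\oplus X) \xrightarrow{F(A\oplus X\oplus \braiding_{X^{\oplus i}, X})} F(A\oplus X^{\oplus 2}\oplus X^{\oplus i}) \xrightarrow{F(A\oplus \braiding_{X,X}\oplus X^{\oplus i})} F(A\oplus X^{\oplus i+2}),\]
where I silently use the canonical associator of the monoidal structure to identify parenthesisations such as $X\oplus X^{\oplus i}$ with $X^{\oplus i+1}$.

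The final step, and the main obstacle, is to identify this three-arrow composition with the two-arrow formula $F(A\oplus \braiding_{X^{\oplus i+1}, X})\circ \sigma^F(A\oplus X^{\oplus i+1})$ asserted in the lemma. Since the $\sigma^F$-factor is shared, functoriality of $F$ reduces the question to verifying the identity
\[\braiding_{X^{\oplus i+1}, X} = (\braiding_{X,X}\oplus \id_{X^{\oplus i}})\circ (\id_X \oplus \braiding_{X^{\oplus i}, X})\]
of morphisms in $\A$, which is precisely the hexagon axiom for the braiding applied with $Y = X$, $Z = X^{\oplus i}$, $W = X$. The hexagon identity is the only nontrivial ingredient; everything else is a routine unwinding of the definitions modulo careful bookkeeping of the coherence isomorphisms of the braided monoidal structure, which I would suppress by invoking the coherence theorem.
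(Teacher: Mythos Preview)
Your proof is correct and follows essentially the same approach as the paper's: both argue by induction on $i$ and reduce the inductive step to the braid relation $(\braiding_{X,X}\oplus X^{\oplus i})\circ(X\oplus\braiding_{X^{\oplus i},X})=\braiding_{X^{\oplus i+1},X}$, which you correctly identify as an instance of the hexagon axiom. The paper's proof is a one-line summary of precisely the unfolding you have spelled out.
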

\begin{proof}
This follows by induction on $i$, using the braid relation $(\braiding_{X,X}\oplus X^{\oplus i}) (X\oplus\braiding_{X^{\oplus i},X})=\braiding_{X^{\oplus i}\oplus X,X}$.
\end{proof}

\begin{lem} \label{lemma:degreelemma}
Let $F$ be a coefficient system for $\cM$.
\begin{enumerate}
\item For all $i\ge 0$, $\Sigma^i(\kernel(F))$ and $\Sigma^i(\cokernel(F))$ are isomorphic to $\kernel(\Sigma^iF)$ and $\cokernel(\Sigma^iF)$, respectively.
\item If $F$ is split, then $\Sigma^iF$ is split for all $i\ge0$.
\item If $F$ is of (split) degree $r$ at $N$, then $\Sigma^iF$ is of (split) degree $r$ at $(N-i)$ for $i\ge0$.
\end{enumerate}
\end{lem}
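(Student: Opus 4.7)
The plan is to reduce everything to two observations. First, the endofunctor $\Sigma$ on coefficient systems is exact: exactness in the category of coefficient systems is checked objectwise, and $\Sigma F = F(\_ \oplus X)$ is just precomposition of the underlying functor with $(\_ \oplus X) \colon \cM \to \cM$, which preserves objectwise exactness; the structure maps on kernels and cokernels are induced from those on source and target. Second, I will compare the two natural maps $\Sigma F \rightrightarrows \Sigma^2 F$ at play for a coefficient system $F$: the morphism $\Sigma \sigma^F$ obtained by applying $\Sigma$ to the suspension map $\sigma^F \colon F \to \Sigma F$, and the suspension map $\sigma^{\Sigma F}$ of $\Sigma F$. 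Unwinding definitions via \cref{lemma:iteratedsuspension}, one obtains $\sigma^{\Sigma F} = \beta_F \circ \Sigma \sigma^F$, where $\beta_F \colon \Sigma^2 F \to \Sigma^2 F$ is the natural transformation with components $F(A \oplus \braiding_{X,X})$.

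The key technical point is that $\beta_F$ is an isomorphism of coefficient systems: it is pointwise invertible with inverse $F(\_ \oplus \braiding_{X,X}^{-1})$, and to check that it commutes with $\sigma^{\Sigma^2 F}$, one applies the formula from \cref{lemma:iteratedsuspension} together with naturality of $\sigma^F$ to reduce the desired identity to the braid relation \[(\braiding_{X,X} \oplus X)(X \oplus \braiding_{X,X})(\braiding_{X,X} \oplus X) = (X \oplus \braiding_{X,X})(\braiding_{X,X} \oplus X)(X \oplus \braiding_{X,X})\] in $\aut_{\A}(X^{\oplus 3})$.

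With these preparations, part (i) follows by applying the exact functor $\Sigma$ to the defining four-term exact sequence $0 \to \kernel(F) \to F \xrightarrow{\sigma^F} \Sigma F \to \cokernel(F) \to 0$, identifying $\Sigma\kernel(F) \cong \kernel(\Sigma \sigma^F)$ and $\Sigma\cokernel(F) \cong \cokernel(\Sigma \sigma^F)$. Since composing with the isomorphism $\beta_F$ alters neither kernels nor cokernels, these agree with $\kernel(\sigma^{\Sigma F}) = \kernel(\Sigma F)$ and $\cokernel(\sigma^{\Sigma F}) = \cokernel(\Sigma F)$; iteration on $i$ yields the general claim. For part (ii), if $\pi \colon \Sigma F \to F$ splits $\sigma^F$, then $\Sigma \pi \circ \Sigma \sigma^F = \Sigma(\pi \circ \sigma^F) = \id_{\Sigma F}$, so $\Sigma \pi$ splits $\Sigma \sigma^F$. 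Consequently $\Sigma \pi \circ \beta_F^{-1}$ splits $\sigma^{\Sigma F} = \beta_F \circ \Sigma \sigma^F$, and is a morphism of coefficient systems as a composition of such; induction on $i$ finishes.

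Part (iii) is then a clean induction on $r$. The base case $r = -1$ reduces to the observation that if $F_n = 0$ for $n \ge N$, then $(\Sigma F)_n$ evaluates $F$ only in degree $n+1$, hence vanishes for $n \ge N-1$; iterating gives the claim for $\Sigma^i$. For the inductive step, one unpacks the definition of (split) degree $r$ at $N$ and combines the induction hypothesis applied to $\kernel(F)$ and $\cokernel(F)$ with parts (i) (to transport the kernel/cokernel conditions through $\Sigma^i$) and, in the split case, (ii) (for the splitness of $\Sigma^i F$). The main obstacle is really part (ii): na\"ively one would take $\Sigma \pi$ itself as a splitting of $\sigma^{\Sigma F}$, but naturality of $\sigma$ only gives $\Sigma \pi \circ \sigma^{\Sigma F} = \sigma^F \circ \pi \neq \id_{\Sigma F}$, so the correction by $\beta_F^{-1}$ from the braiding twist is essential.
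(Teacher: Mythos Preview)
Your proof is correct and follows essentially the same route as the paper's. Both arguments hinge on the observation that $\sigma^{\Sigma F}$ and $\Sigma\sigma^F$ differ by an automorphism of $\Sigma^2 F$ coming from the braiding; the paper packages this directly for general $i$ via \cref{lemma:iteratedsuspension} (using the automorphism $\Phi=F(\_\oplus\braiding^{-1}_{X^{\oplus i},X})$ on $\Sigma^{i+1}F$), whereas you treat the case $i=1$ with your $\beta_F$ and then iterate, but this is a cosmetic difference.
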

\begin{proof}
Using \cref{lemma:iteratedsuspension} and $(X\oplus\braiding^{-1}_{X^{\oplus i},X})(\braiding_{X^{\oplus i+1},X})=(\braiding_{X^{\oplus i+1},X})(\braiding^{-1}_{X^{\oplus i},X}\oplus X)$,
the natural transformation \[\textstyle{\Sigma^{i+1}F(-)=F(-\oplus X^{\oplus i+1})\xrightarrow{F(-\oplus \braiding^{-1}_{X^{\oplus i},X})} F(-\oplus X^{\oplus i+1})=\Sigma^{i+1}F(-)}\] can be seen to commute with the structure map of $\Sigma^{i+1}F$, so defines an automorphism $\Phi\colon \Sigma^{i+1}F\rightarrow\Sigma^{i+1}F$. \cref{lemma:iteratedsuspension} also implies the relation $\Sigma^i(\sigma^F)=\Phi\sigma^{\Sigma^iF}$ and therefore $\Sigma^i(\operatorname{(co)\kernel}(\sigma^F))=\operatorname{(co)\kernel}(\Phi\sigma^{\Sigma^i F})$. Hence, the coefficient systems in comparison are (co)kernels of morphisms that differ by an automorphism. This proves the first claim. Given a splitting $\mapwoshort{s}{\Sigma F}{F}$ for $F$, the composition $\Sigma^{i}(s)\Phi$ splits $\Sigma^{i} F$, which shows the second. Finally, the third follows from the first two by induction on $r$.
\end{proof}

\begin{rem}\label{remark:coefficientsystemsasmodules}If $\cM$ is a groupoid such that all subcategories $\cM_n$ are connected, then a coefficient system for $\cM$ is equivalently given as a sequence of $\aut(A\oplus X^{\oplus n-g(A)})$-modules $F_n$ for an element $A$ of minimal degree $g(A)$, together with $(-\oplus X)$-equivariant morphisms $\maptwoshort{F_{n}}{F_{n+1}}$ such that the image of $B_m$ in $\aut(X^{\oplus m})$ acts via $(A\oplus X^{\oplus n-g(A)}\oplus-)$ trivially on the image of $F_n$ in $F_{n+m}$ for all $n$ and $m$.
\end{rem}

\begin{rem}\label{remark:RWWcoefficients}A \emph{pre-braided} monoidal category in the sense of \cite{RWW} is a monoidal category $(\cC,\oplus,\braiding,0)$ whose unit $0$ is initial and whose underlying groupoid $\cC^{\sim}$ is braided monoidal satisfying a certain condition (see \cite[Def.\,1.5]{RWW}). In their work, a \emph{coefficient system} for $\cC$ at a pair of objects $(A,X)$ is an abelian group valued functor $F^{\RW}$ defined on the full subcategory $\cC_{A,X}\subseteq\cC$ generated by $A\oplus X^{\oplus n}$ for $n\ge0$. Considering $\cC_{A,X}^\sim$ as a module over the braided monoidal groupoid $\cC_{0,X}^\sim$, such a functor $F^{\RW}$ gives a coefficient system $F$ in our sense by restricting $F^{\RW}$ to $\cC_{A,X}^\sim$ and defining the structure map as $\sigma^F(-)\coloneq F^{\RW}(-\oplus \iota_X)$, where $\mapwoshort{\iota_X}{0}{X}$ is the unique morphism. In \cite{RWW}, the transformation $\mapwoshort{-\oplus \iota_X}{\id_{\cC}}{-\oplus X}$ is denoted by $\sigma^X$, so we have the suggestive identity $F^{\RW}(\sigma^X)=\sigma^F$. Assigning a coefficient system for $\cC$ at $(A,X)$ in the sense of \cite{RWW} to one for $\cC_{A,X}^\sim$ in our sense yields a functor between the respective categories of coefficient systems, which can be seen to preserve the \emph{suspension} and \emph{degree} in the sense of \cite{RWW} and in ours, at least up to isomorphism. See \cref{section:comparisonRWW} for a general comparison between \cite{RWW} and our work.
\end{rem}

\begin{rem}\label{remark:quillencoefficients}The category of coefficient systems for $\cM$ is isomorphic to the category of abelian group-valued functors on a category $\langle\cM,\cB\rangle$. To construct this category, recall \emph{Quillen's bracket construction} $\langle\cE,\cF\rangle$ of a monoidal category $\cF$ that acts via $\mapwoshort{\oplus}{\cE\times\cF}{\cE}$ on a category $\cE$ \cite[p.219]{Quillen}. It has the same objects as $\cE$, and a morphism from $C$ to $C'$ is an equivalence class of pairs $(D,f)$ with $D\in\ob\cF$ and $f\in\cE(C\oplus D,C')$, where $(D,f)$ and $(D',f')$ are equivalent if there is an isomorphism $g\in\cF(D,D')$ satisfying $f'=f(C\oplus g)$. Using this construction, we obtain the category $\langle\cM,\cB\rangle$ encoding coefficient systems by letting the free braided monoidal category on one object $\cB$ act on $\cM$ via the functor $\cB\rightarrow\cA$ induced by $X$, followed by the action of $\cA$ on $\cM$. The multiplication by $X$ on $\cM$ induces an endofunctor \[\mapwo{\Sigma}{\langle\cM,\cB\rangle}{\langle\cM,\cB\rangle}\] by mapping a morphism $\mapwoshort{[D,f]}{C}{C'}$ to $\mapwoshort{[D,(f\oplus X) (C \oplus\braiding_{X,D})]}{C\oplus X}{C'\oplus X}$. This functor comes together with a natural transformation $\mapwoshort{\sigma}{\id}{\Sigma}$ given by $[X,\id]$, such that the suspension of a coefficient system $F$, seen as a functor on $\langle\cM,\cB\rangle$, is the composition $(F\circ \Sigma)$ and its suspension map is $\mapwoshort{F(\sigma)}{F}{(F\circ\Sigma)}$. From this point of view and using the notation of the previous remark, the functor from coefficient systems in the sense of \cite{RWW} to ones in ours, described in the previous remark, is given by precomposition with a functor $\maptwoshort{\langle\cC_{A,X}^\sim,\cB\rangle}{\cC_{A,X}}$ that is the identity on objects and maps a morphism $[X^{\oplus k},f]$ in $\langle\cC_{A,X}^\sim,\cB\rangle$ from $C$ to $C'$ to $f(C\oplus\iota_{X^{\oplus k}})$.
\end{rem}

\subsection{Twisted stability of $\tE_1$-modules over $\tE_2$-algebras}\label{section:twistedstability}
We fix a graded $\tE_1$-module $\cM$ over an $\tE_2$-algebra $\cA$ with stabilising object $X$ for the rest of the section. Recall from \cref{section:modules} that its fundamental groupoid $(\Pi(\cM),\oplus)$ is a graded right-module over the graded braided monoidal category $(\Pi(\cA),\oplus,\braiding,0)$. 

\begin{dfn}\label{definition:coefficientsystemsE1}A \emph{coefficient system} for $\cM$ is a coefficient system for $\Pi(\cM)$ in the sense of \cref{definition:coefficientsystems}.
\end{dfn}

The structure map of a coefficient system $F$ for $\cM$ enhance the stabilisation map $\mapwoshort{s}{\cM}{\cM}$ to a map \[\mapwo{(s;\sigma^F)}{(\cM;F)}{(\cM;F)}\] of spaces with local systems, which stabilises homologically by \cref{theorem:twisted} if the canonical resolution is sufficiently connected and $F$ is of finite degree. This remainder of this section is devoted to the proof.

\begin{rem}In the course of the proof of \cref{theorem:twisted}, it will be convenient to have fixed a notion of a \emph{homotopy commutative square of spaces with local systems}, by which we mean a square
\begin{center}
\begin{tikzcd}[row sep=0.5cm,column sep=1cm]
(X;F)\arrow[r]\arrow[d]&(Y;G)\arrow[d]\\
(X';F')\arrow[r]&(Y';G'),
\end{tikzcd}
\end{center}
together with a specified homotopy between the two compositions, which might be nontrivial, even if the diagram is strictly commutative. Taking singular chains results in a homotopy commutative square of chain complexes (see \cref{section:localcoefficients}), and taking vertical mapping cones of the square induces a morphism \begin{equation}\label{equ:map}\maptwoshort{\oH_*\big((X';F'),(X;F)\big)}{\oH_*\big((Y';G'),(Y;G)\big)},\end{equation}which depends on the homotopy. However, homotopies that are homotopic as homotopies give homotopic morphisms on mapping cones, hence they induce the same morphism \eqref{equ:map}. Horizontal composition of such squares, including the homotopies, induces the respective composition of \eqref{equ:map}. Even though \eqref{equ:map} depends on the homotopy, the long exact sequences of the mapping cones still fit into a commutative ladder.
\end{rem}
We denote by $\Rel_*(F)=\oH_*((\cM;F),(\cM;F))$ the relative groups  with respect to the stabilisation $(s;\sigma^F)$, equipped with the additional grading $\Rel_*(F)=\bigoplus_{n\in\bfNext}\oH_*((\cM_{n+1};F),(\cM_n;F))$. Although the square
\begin{equation}\label{diagram:square}
\begin{tikzcd}[row sep=0.6cm,column sep=2cm]
(\cM;F)\arrow[r,"(s;\sigma^F)"]\arrow[d,swap,"(s;\sigma^F)"]&(\cM;F)\arrow[d,"(s;\sigma^F)"]\\
(\cM;F)\arrow[r,"(s;\sigma^F)"]&(\cM;F),
\end{tikzcd}
\end{equation}
commutes strictly, we consider it as homotopy commutative via the homotopy \eqref{equation:selfhomotopy} of \cref{section:constructionspectralsequence}, which extends to one of spaces with local systems (see \cref{section:localcoefficients}), since the triviality condition on coefficient systems gives $F(-\oplus\braiding_{X,X}^{-1})(\sigma^F)^2=(\sigma^F)^2$. This homotopy commutative square induces a relative stabilisation \[\mapwo{(s;\sigma^F)^{\sim}_*}{ \Rel_*(F)}{\Rel_*(F)}\] of degree $1$, where the superscript $\sim$ indicates the twist by the homotopy. The homotopy commutative square \eqref{diagram:square} admits a factorisation into a composition of homotopy commutative squares
\begin{equation*}
\begin{tikzcd}[column sep=2cm,row sep=0.6cm]
(\cM;F)\arrow[r,"{(\id;\sigma^F)}"]\arrow[d,swap,"{(s;\sigma^F)}"]&(\cM;\Sigma F)\arrow[d,"{(s;\sigma^{\Sigma F})}"] \arrow[r,"{(s;\id)}"]& (\cM;F)\arrow[d,"{(s;\sigma^F)}"]\\
(\cM;F)\arrow[r,"{(\id;\sigma^F)}"]&(\cM;\Sigma F)\arrow[r,"{(s;\id)}"]&(\cM;F)),
\end{tikzcd}
\end{equation*}
in which the square on the left strictly commutes because of the triviality condition, and we equip it with the trivial homotopy. The square on the right is homotopy commutative using the same homotopy as for \eqref{diagram:square}. This induces a factorisation of the relative stabilisation map as
\begin{equation}\label{equation:factorisation}\Rel_*(F)\xlongrightarrow{(\id;\sigma^F)_*}\Rel_*(\Sigma F)\xlongrightarrow{(s;\id)^{\sim}_*}\Rel_*(F),\end{equation} with the first map being of degree $0$ and the second of degree $1$.

\begin{lem}\label{lemma:compositionzero}The composition
$\Rel_*(F)\xrightarrow{(s;\sigma^F)^{\sim}_*}\Rel_*(F)\xrightarrow{(s;\sigma^F)^{\sim}_*}\Rel_*(F)$ is trivial.
\end{lem}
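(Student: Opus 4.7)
\emph{Plan.} My strategy is to work at chain level, reduce the claim to showing that a specific cycle is a boundary, and establish this via the braid relation in $B_3$ combined with the triviality axiom of coefficient systems.

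\emph{Step 1.} Model $\Rel_*(F)$ as the homology of the mapping cone of $f:=(s;\sigma^F)_\#\colon\oC_*(\cM;F)\to\oC_*(\cM;F)$. The homotopy commutative square \eqref{diagram:square} is witnessed by a chain homotopy $h\colon\oC_*(\cM;F)\to\oC_{*+1}(\cM;F)$ realising the self-homotopy \eqref{equation:selfhomotopy} of $s^2$; its extension to a homotopy of spaces with local systems is legitimized by the triviality axiom of \cref{definition:coefficientsystems} applied to $\braiding_{X,X}^{-1}\in B_2$ on the image of $(\sigma^F)^2$. Since the square strictly commutes on spaces, $[d,h]=0$. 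The relative stabilization is the induced self-map of the cone,
\[(s;\sigma^F)^\sim(b,a)=(fb+h(a),\,fa),\]
and iteration gives
\[(s;\sigma^F)^{\sim\sim}(b,a)=\bigl(f^2 b+(fh+hf)(a),\,f^2 a\bigr).\]

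\emph{Step 2.} For a cycle $(b,a)$ in the source cone (so $db=fa$, $da=0$), I choose the candidate primitive $(0,-fb)$ in the target cone. Using $d(-fb)=-f(db)=-f^2a$ and the cone differential $d(\beta,\alpha)=(d\beta-f\alpha,-d\alpha)$, this element satisfies $d(0,-fb)=(f^2b,\,f^2a)$. Consequently, $(s;\sigma^F)^{\sim\sim}(b,a)$ is cohomologous in the target cone to $((fh+hf)(a),\,0)$, and the lemma reduces to showing that the cycle $(fh+hf)(a)\in\oC_{*+1}(\cM;F)$ is a boundary for every cycle $a$.

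\emph{Step 3.} Geometrically, $fh(a)$ and $hf(a)$ are two $1$-prisms over $s^3 a$ in $\cM$, obtained by sweeping $s^3 a$ along the loops in $\cO(3)$ at $c_3$ representing the braids $\braiding_{X,X}^{-1}\oplus X$ and $X\oplus\braiding_{X,X}^{-1}$ in $B_3$ respectively, with coefficient system $F$ twisted via $(\sigma^F)^3$. By the triviality axiom of \cref{definition:coefficientsystems} applied with $m=3$, all of $B_3$ acts trivially on the image of $(\sigma^F)^3$, so the local system is constant along any such loop. The braid relation $(\braiding_{X,X}\oplus X)(X\oplus\braiding_{X,X})(\braiding_{X,X}\oplus X)=(X\oplus\braiding_{X,X})(\braiding_{X,X}\oplus X)(X\oplus\braiding_{X,X})$ in $B_3$ together with the abelianization trick of \cref{lemma:compositionshomotopic}—the abelianization $B_3^{\mathrm{ab}}\cong\bfZ$ identifies the two generators—produces an explicit $2$-cell in $\cO(3)$ interpolating between these two loops. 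Sweeping $s^3 a$ along this $2$-cell yields the required $(\ast+2)$-chain in $\oC_{*+2}(\cM;F)$ whose boundary realizes $(fh+hf)(a)$.

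\emph{Main obstacle.} The principal difficulty is Step 3: producing the explicit $2$-chain witnessing the null-homology of $(fh+hf)(a)$ with the correct orientations and signs. While the braid relation provides a $2$-cell in $\cO(3)\simeq \tB(B_3)$, one must carefully trace through how the three boundary edges assemble into the two desired loops and how, after coupling with $(\sigma^F)^3$ and the triviality axiom, the resulting coefficient-twisted $2$-prism has boundary equal to $(fh+hf)(a)$ rather than some other linear combination; the abelianization argument of \cref{lemma:compositionshomotopic}, which converts the multiplicative braid relation into an additive one modulo the commutator subgroup, is the key algebraic input that aligns the two sign conventions.
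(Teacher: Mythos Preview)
Your chain-level setup in Steps~1--2 is correct up to and including the identity
\[
(s;\sigma^F)^{\sim\sim}(b,a)-d(0,-fb)=\bigl((fh+hf)(a),\,0\bigr).
\]
The gap is the next sentence: you claim the lemma reduces to showing that $(fh+hf)(a)$ is a boundary \emph{in $\oC_{*+1}(\cM;F)$}. That is only a sufficient condition; the correct (and strictly weaker) condition is that $\bigl((fh+hf)(a),0\bigr)$ is a boundary \emph{in the cone}, which is equivalent to $[(fh+hf)(a)]\in\im(f_*)\subseteq\oH_{*+1}(\cM;F)$. Your stronger statement is in fact \emph{false} in general. Take $\cM=\coprod_n\Conf_n(\bfR^2)$ with $F=\bfZ$ constant and let $a$ be the point of $\Conf_0(\bfR^2)$. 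Then $fh(a)$ and $hf(a)$ are the loops in $\Conf_3(\bfR^2)$ representing $\sigma_1^{-1}$ and $\sigma_2^{-1}$ in $\pi_1\cong B_3$, and their sum maps to $-2$ under $\oH_1(\Conf_3(\bfR^2);\bfZ)\cong B_3^{\mathrm{ab}}\cong\bfZ$. So $(fh+hf)(a)$ is not a boundary, and no $2$-chain as in your Step~3 can exist. In particular, the ``$2$-cell in $\cO(3)$'' you invoke does not exist: $\cO(3)\simeq K(B_3,1)$ and $\sigma_1\neq\sigma_2$ in $B_3$; the abelianization identifies them only in $B_3^{\mathrm{ab}}$, which is not $\pi_1(\cO(3))$.

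The fix is purely formal and uses neither braid relations nor any specific feature of $h$. From the cycle condition $db=fa$ and $[d,h]=0$ you get $hf(a)=h(db)=-d(hb)$, so $hf(a)$ is already a boundary. Hence $[(fh+hf)(a)]=[fh(a)]=f_*[h(a)]\in\im(f_*)$, which is exactly what is needed. Concretely, one checks directly that
\[
d\bigl(-hb,\,-fb-ha\bigr)=\bigl(f^2b+(fh+hf)(a),\,f^2a\bigr)=(s;\sigma^F)^{\sim\sim}(b,a),
\]
so the double relative stabilization is null on the nose at the chain level. This is precisely the paper's diagram chase unwound: the paper observes that the horizontal map $\oH_{*-1}(\cM;F)\to\oH_{*-1}(\cM;F)$ in the long exact sequence and the vertical map induced by the square are \emph{both} equal to $f_*$, and then chases; the equality $fa=db$ is the chain-level incarnation of exactness at $\oH_{*-1}$.
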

\begin{proof}
The mapping cones defining $\Rel_*(F)$ induce a commutative diagram of long exact sequences 
\begin{equation*}\begin{tikzcd}[row sep=0.6cm,column sep=0.7cm]
\ldots\arrow[r]&\oH_*(\cM;F)\arrow[d]\arrow[r]&\oH_*(\cM;F)\arrow[r]\arrow[d]&\Rel_*(F)\arrow[r,"h_3"]\arrow{d}{h_4}&\oH_{*-1}(\cM;F)\arrow[r,"h_1"]\arrow[d,"h_2"]&\ldots\\
\ldots\arrow[r]&\oH_*(\cM;F)\arrow[d]\arrow[r]&\oH_*(\cM;F)\arrow[r,"h_6"]\arrow[d,"h_8"]&\Rel_*(F)\arrow[r,"h_5"]\arrow{d}{h_7}&\oH_{*-1}(\cM;F)\arrow[r]\arrow[d]&\ldots\\
\ldots\arrow[r]&\oH_*(\cM;F)\arrow[r,"h_{10}"]&\oH_*(\cM;F)\arrow[r,"h_9"]&\Rel_*(F)\arrow[r]&\oH_{*-1}(\cM;F)\arrow[r]&\ldots
\end{tikzcd}
\end{equation*} in which $h_7h_4$ agrees with the composition in consideration. As $h_1$ and $h_2$ both equal $(s;\sigma^F)_*$, we conclude $0=h_1h_3=h_2h_3=h_5h_4$, so the image of $h_4$ is in the kernel of $h_5$, which is the image of $h_6$. Hence it is enough to show $h_7h_6=0$. Since $h_8=h_{10}$ for the same reason as $h_1=h_2$, we obtain the claim from the identity $h_7h_6=h_9h_8=h_9h_{10}=0$.
\end{proof}

\subsection{The relative spectral sequence}
We prove \cref{theorem:twisted} via a relative analogue of the spectral sequence \eqref{equation:ssconstant} of \cref{section:constructionspectralsequence}, which we derive from a map of augmented $\semisimpthick$-spaces
\begin{equation*}
\begin{tikzcd}[row sep=0.3cm,column sep=0.6cm]
R_\bullet(\cM)\arrow[r,dotted]\arrow[d]&R_\bullet(\cM)\arrow[d]\\
\cM\arrow[r,"s"]&\cM
\end{tikzcd}
\end{equation*} covering the stabilisation map $s$. Indicated by the dotted arrow, this morphism will only be defined up to up to higher coherent homotopy, and we obtain it from replacing the canonical resolution $R_\bullet(\cM)$ with an equivalent bar construction $B(\UB(\bullet,{\smallsquare}),U\cO,B_{\smallsquare}(\cM))$ which admits a strict morphism of the desired form.

To this end, recall from \cref{section:complexes} the homotopy discrete category $U\cO$, the isomorphism $\pi_0(U\cO)\cong \UB$, and the augmented $U\cO$-space $B_\bullet (\cM)$ whose restriction to the subcategory $\semisimpthick\subseteq U\cO$ is $R_\bullet(\cM)$, where $\semisimpthick$ is the union of components hit by the section $\maptwoshort{\semisimp}{\UB}$. Define the $(\semisimpopthick\times U\cO)$-space $U\cO(\bullet,{\smallsquare})$ and the $(\semisimpop\times \UB)$-space $\UB(\bullet,{\smallsquare})$ by restricting the hom-functors of $U\cO$ and $\UB$ appropriately. Taking components gives a weak equivalence $\maptwoshort{\semisimpopthick\times U\cO}{\semisimpop\times \UB}$ of enriched categories and one $\maptwoshort{U\cO(\bullet,{\smallsquare})}{\UB(\bullet,{\smallsquare})}$ of $(\semisimpopthick\times U\cO)$-spaces, which fits into a chain of weak equivalences of $\semisimpthick$-spaces 
\begin{equation}\label{equation:chainofweakequivalences}R_\bullet(\cM)\xleftarrow{\simeq} B\big(U\cO(\bullet,{\smallsquare}),U\cO,B_{\smallsquare}(\cM)\big)\xrightarrow{\simeq}B\big(\UB(\bullet,{\smallsquare}),U\cO,B_{\smallsquare}(\cM)\big),\end{equation} augmented over $\cM$, the left arrow being the restriction of the bar resolution of $B_\bullet(\cM)$ to $\semisimpthick$ (see \cref{section:homotopycolimits}). Consider the functor $\mapwoshort{t}{U\cO}{U\cO}$ that maps $[p]$ to $[p+1]$ and is on morphisms defined as \[\map{t}{U\cO\big([q],[p]\big)}{U\cO\big(t([q]),t([p])\big)}{(d,\mu)}{(d,\gamma(c;\mu,1_{\ocolour{a}})),}\] using the operadic composition $\gamma$ and the element $c$ with which we defined the iterated operations $c_p\in\cO(p)$ in \cref{section:complexes}. Accompanying this functor, there is a morphism of augmented $U\cO$-spaces
\begin{equation}\label{equation:rightmorphism}
\begin{tikzcd}[row sep=0.5cm,column sep=0.6cm]
B_{\smallsquare}(\cM)\arrow[r]\arrow[d]&B_{t({\smallsquare})}(\cM)\arrow[d]\\
\cM\arrow[r,"s"]&\cM,
\end{tikzcd}
\end{equation} defined by making use of the module structure $\theta$ of $\cM$ to assign to a $p$-simplex $(A,\zeta)$ in $B_p(\cM)$ the element $(A,\theta(c;\zeta,X))$ in $B_{p+1}(\cM)$. Last but not least, we define a morphism of $(\semisimpopthick\times U\cO)$-spaces \begin{equation}\label{equation:middlemorphism}\maptwo{\UB\big(\bullet,{\smallsquare}\big)}{\UB\big(\bullet,t({\smallsquare})\big)}\end{equation} by considering the braid groups $\coprod_{n\ge0}B_n$ as the free braided monoidal category in one object $X$ to define 
\[\mapnoname{\UB\big([q],{[p]}\big)=B_{p+1}/B_{p-q}}{B_{p+2}/B_{p-q+1}=\UB\big([q],t([p])\big)}{[b]}{[(b\oplus X)(X^{\oplus p-q}\oplus\braiding^{-1}_{X^{\oplus p+1},X})].}\]

\begin{lem}The assignment \eqref{equation:middlemorphism} defines indeed a morphism of $(\semisimpopthick\times U\cO)$-spaces.
\end{lem}
\begin{proof}
Recall that $U\cB(\bullet,{\smallsquare})$ is induced from a $(\semisimpop\times \UB)$-space via the equivalence $\maptwoshort{\semisimpopthick\times U\cO}{\semisimpop\times \UB}$. The semi-simplicial direction of $U\cB(\bullet,{\smallsquare})$ comes from the section $\maptwoshort{\semisimp}{\UB}$ of \cref{lemma:section}, which maps a face map $d_i$ in $\semisimp([q-1],[q])$ to the class $[\braiding^{-1}_{X^{\oplus i},X}\oplus X^{\oplus q-i}]$ in $\UB([p-1],[p])$, so \eqref{equation:middlemorphism} is natural in the semi-simplicial direction if the two braids
\[(X^{\oplus p-q}\oplus\braiding^{-1}_{X^{\oplus i},X}\oplus X^{\oplus q-i}\oplus X)(X^{\oplus p-q+1}\oplus\braiding^{-1}_{X^{\oplus q},X})\quad\text{and}\quad(X^{\oplus p-q}\oplus\braiding^{-1}_{X^{\oplus q+1},X})(X^{\oplus p-q+1}\oplus\braiding^{-1}_{X^{\oplus i},X}\oplus X^{\oplus q-i})\] define the same class in $\UB([q-1],[p+1])=B_{p+2}/B_{p-q+2}$. An application of braid relations shows that these two braids agree up to right-multiplication with $(X^{\oplus p-q}\oplus\braiding^{-1}_{X,X}\oplus X^{q})$, so coincide in $B_{p+2}/B_{p-q+2}$, which shows the claim since the naturality in the $\UB$-direction is immediate.
\end{proof}

The functor $\mapwoshort{t}{U\cO}{U\cO}$, together with the morphisms \eqref{equation:rightmorphism} and \eqref{equation:middlemorphism}, induces a map \begin{center}
\begin{tikzcd}[row sep=0.5cm,column sep=0.6cm]
B(\UB(\bullet,{\smallsquare}),U\cO,B_{\smallsquare}(\cM))\arrow[r,"t"]\arrow[d]&B(\UB(\bullet,{\smallsquare}),U\cO,B_{\smallsquare}(\cM))\arrow[d]\\
\cM\arrow[r,"s"]&\cM.
\end{tikzcd}
\end{center}
of augmented $\semisimpthick$-spaces. Pulling back a coefficient system $F$ for the graded module $\cM$ along the augmentations, this morphism enhances to one of graded $\semisimpthick$-spaces with local coefficients that covers the stabilization map $\mapwo{(s;\sigma^F)}{(\cM;F)}{(\cM;F)}$. Identifying $R_\bullet(\cM)$ with $B(\UB(\bullet,{\smallsquare}),U\cO,B_{\smallsquare}(\cM))$ via the zig-zag \eqref{equation:chainofweakequivalences} by abuse of notation, we get a tri-graded spectral sequence of the form
\begin{multline}
\label{equation:spectralsequencetwisted}
E_{p,q,n}^1\cong
\begin{cases} \oH_q\big((R_{p}(\cM)_{n+1};F),(R_p(\cM)_n;F)\big) &\mbox{if } p\ge 0 \\  \oH_q\big((\cM_{n+1};F),(\cM_{n};F)\big) & \mbox{if } p=-1 \end{cases} \\
\implies \oH_{p+q+1}\big((\cM_{n+1},|R_\bullet(\cM)|_{n+1};F),(\cM_{n},|R_\bullet(\cM)|_n;F)\big), 
\end{multline} which is a sum of spectral sequences, one for each $n\in\bfNext$ (see Sections~\ref{section:semisimplicial} and~\ref{section:semisimplicialthick}). Using \cref{lemma:iteratedsuspension} and
\[(\braiding_{X,X}^{-1}\oplus X^{\oplus p})(X\oplus\braiding_{X^{\oplus i},X}\oplus X^{\oplus p-i})(\braiding_{X^{\oplus p+1},X})=(X\oplus \braiding_{X^{\oplus p},X})(\braiding_{X^{\oplus i},X}\oplus X^{\oplus p-i+1}),\] one checks that selfhomotopy \eqref{equation:selfhomotopy} of $s^2$ witnesses homotopy commutativity of the square
\begin{equation*}
\begin{tikzcd}[column sep=3cm,row sep=0.6cm]
{(\cM;\Sigma^{p+1}F)}\arrow[d,swap,"{(s;\sigma^{\Sigma^{p+1}F})}"]
\arrow[r,"{(s;F(-\oplus \braiding_{X^{\oplus i},X}\oplus X^{\oplus p-i}))}"]&{(\cM;\Sigma^{p}F)}\arrow[d,"{(s;\sigma^{\Sigma^{p}F})}"]\\
{(\cM;\Sigma^{p+1}F)}\arrow[r,"{(s;F(-\oplus \braiding_{X^{\oplus i},X}\oplus X^{\oplus p-i}))}"]&{(\cM;\Sigma^{p}F)},
\end{tikzcd}
\end{equation*} which thus induces a morphism $\mapwoshort{(s;\eta_i)_*^\sim}{\Rel_*(\Sigma^{p+1}F)}{\Rel_*(\Sigma^{p}F)}$ of degree $1$; the superscript $\sim$ indicates the twist by the homotopy \eqref{equation:selfhomotopy}. This morphism serves us to identify the spectral sequence \eqref{equation:spectralsequencetwisted} as follows.

\begin{lem}\label{lemma:spectralsequenceidentificationtwisted}We have $E^1_{p,q,n+1}\cong \Rel_q({\Sigma^{p+1}F})_{n-p}$, and the $d^1$-differential identifies with
\[\textstyle{\mapwoshort{\sum_{i=0}^p(-1)^i(s;\eta_i)_*^\sim}{\Rel_q({\Sigma^{p+1}F})_{n-p}}{\Rel_q({\Sigma^{p}F})_{n-p+1}}}.\] In particular, the differential $\mapwoshort{d^1}{E^1_{0,*,n+1}}{E^1_{-1,*,n+1}}$ corresponds to the second map of \eqref{equation:factorisation} in degree $n$.
\end{lem}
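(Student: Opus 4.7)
The plan is to mimic, level by level, the argument used for \cref{lemma:differentialszero}, replacing absolute homologies of spaces with local systems by relative homologies (mapping cones) of morphisms of such spaces. Concretely, I would build the identification in three stages, bookkeeping the extra local-system data that now appears.

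First, for the $E^1$-identification, I would fit the horizontal morphism of augmented $\semisimpthick$-spaces $R_\bullet(\cM) \to R_\bullet(\cM) \circ t$ (covering $s \colon \cM \to \cM$) into a homotopy-commutative square of pairs of spaces with local systems
\begin{equation*}
\begin{tikzcd}[column sep=large,row sep=small]
(\cM_{n-p};\Sigma^{p+1}F) \arrow[r,"\iota"] \arrow[d,"{(s;\sigma^{\Sigma^{p+1}F})}",swap]
  & (R_p(\cM)_{n+1};F) \arrow[d] \\
(\cM_{n-p+1};\Sigma^{p+1}F) \arrow[r,"\iota'"]
  & (R_{p+1}(\cM)_{n+2};F),
\end{tikzcd}
\end{equation*}
where $\iota$ is the canonical inclusion $A \mapsto (A,\const_{s^{p+1}(A)})$ used in the proof of \cref{lemma:differentialszero} (and $\iota'$ its analogue one level higher), both being weak equivalences of pairs. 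The right-hand vertical is the morphism built from $t$ together with the pulled-back structure map $\sigma^F$, while the left-hand vertical is the stabilization of the suspended coefficient system $\Sigma^{p+1}F$; the square commutes up to homotopy of spaces with local systems because, via $\iota$ and $\iota'$, both compositions reduce to stabilizing by $X$ and twisting by $\sigma^F$, and any intervening braidings act trivially by the triviality condition built into \cref{definition:coefficientsystems}. Passing to mapping cones of the vertical morphisms then yields $E^1_{p,q,n+1}\cong \Rel_q(\Sigma^{p+1}F)_{n-p}$.

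Second, for the identification of $d^1$, I would use that each candidate $i$-th face map $\tilde{d}_i$ of the canonical resolution is natural with respect to the horizontal morphism coming from $t$, so that the commuting square of \cref{lemma:differentialszero} relating $\tilde{d}_i$ to $(s;\eta_i)$ is available simultaneously on $R_\bullet(\cM)$ and on $R_\bullet(\cM)\circ t$. Taking mapping cones row-wise in the resulting cube converts this into a homotopy-commutative square of pairs with local systems between the two $E^1$-entries; on homology this square is exactly the morphism $(s;\eta_i)^{\sim}_*\colon \Rel_q(\Sigma^{p+1}F)_{n-p} \to \Rel_q(\Sigma^{p}F)_{n-p+1}$ defined via the self-homotopy \eqref{equation:selfhomotopy}, so the alternating sum $\sum_{i=0}^p(-1)^i(s;\eta_i)^{\sim}_*$ recovers the $E^1$-differential.

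The last clause is then immediate: for $p=0$ the sum has only the term with $\eta_0 = F(\_\oplus \braiding_{1,X}\oplus X^{\oplus 0}) = \id$, so $d^1$ identifies with $(s;\id)^{\sim}_*\colon \Rel_q(\Sigma F)_n \to \Rel_q(F)_{n+1}$, which is by construction the second map in the factorization (\ref{equation:factorization}). The main technical obstacle in the above is arranging all the natural transformations of local systems to match up on the nose (not only up to an ambiguous braiding): this requires invoking the fact that the image of the braid group in $\aut_\A(X^{\oplus m})$ acts trivially after applying $(\sigma^F)^m$, so that any two bracketings of iterated structure maps and braidings induce the same morphism on chain complexes with local coefficients and, in particular, the same morphism on mapping cones.
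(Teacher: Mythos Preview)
Your proposal is correct and follows exactly the approach the paper intends: the paper's own proof is the single sentence ``A relative version of the proof of \cref{lemma:differentialszero} establishes the following identification of the spectral sequence,'' and what you have written is precisely that relative version spelled out in detail. Your handling of the last clause (the $p=0$ case reducing to $(s;\id)^\sim_*$) and your remark about the triviality condition ensuring the local-system data matches are both on point.
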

\begin{proof}On $p$-simplices, the first equivalence of \eqref{equation:chainofweakequivalences} has a preferred homotopy inverse induced by the extra degeneracy given by inserting the identity of $U\cO([p],[p])$ (see \cref{section:homotopycolimits}). Composing it with the second equivalence of \eqref{equation:chainofweakequivalences}  yields an equivalence that forms the vertical arrows of a square \begin{center}
\begin{tikzcd}[row sep=0.6cm]
R_p(\cM)\arrow[d,swap,"\simeq"]\arrow[r,"\tilde{t}"]&R_p(\cM)\arrow[d,"\simeq"]\\
B(\UB([p],{\smallsquare}),U\cO,B_{\smallsquare}(\cM))\arrow[r,"t"]&B(\UB([p],{\smallsquare}),U\cO,B_{\smallsquare}(\cM)),
\end{tikzcd}
\end{center}
where $\tilde{t}$ is defined by mapping $(A,\zeta)$ to $(s(A),s(\zeta)\cdot\theta(\alpha_p;A,X^{p+2}))$. Here $\alpha_p\in\Omega_{c_{p+2}}\cO(p+2)$ is any loop that corresponds to $\braiding^{-1}_{X^{\oplus p+1},X}$ under the equivalence $\pi_1(\cO(p+2);c_{p+2})\cong B_{p+2}$ (see \cref{section:complexes}). This choice of $\alpha_p$ guarantees that the previous square commutes, which is why it is sufficient to show that
\begin{center}
\begin{tikzcd}[row sep=0.6cm,column sep=1cm]
{(\cM_{n-p};\Sigma^{p+1}F)}\arrow[d,"{(\iota;\id)}",swap]
\arrow[r,"{(s;\sigma^{\Sigma^{p+1}F})}"]&{(\cM_{n-p+1};\Sigma^{p+1}F)}\arrow[d,"{(\iota;\id)}"]\\
{(R_p(\cM)_{n+1};F)}\arrow[r,"{(\tilde{t};\sigma^F)}"]&{(R_p(\cM)_{n+2};F)}
\end{tikzcd}
\end{center} 
homotopy commutes in order to prove $E^1_{p,q,n+1}\cong \Rel_q({\Sigma^{p+1}F})_{n-p}$, where $\iota$ denotes the canonical equivalence mapping $A$ to $(A,\const_{s^{p+1}(A)})$. On the space-level, the two compositions are given by assigning to $A\in\cM_{n-p}$ the elements $(s(A),\const_{s^{p+2}(A)})$ and $(s(A),\theta(\alpha_p;A,X^{p+2}))$, respectively. As we have $\sigma^{\Sigma^{p+1}F}(-)=F(-\oplus X^{\oplus p+1})\sigma^F(-\oplus X^{\oplus p+1})$ by \cref{lemma:iteratedsuspension}, the homotopy induced by following $\alpha_p$ to its endpoint is one of maps with local cofficients, which implies the first claim of the lemma. A relative version of the proof of \cref{lemma:differentialszero} establishes the identification of the differentials and finishes the proof.
\end{proof}

\begin{lem}\label{lemma:differentialtwistedzero}
The following composition is zero for $n\ge1$,
\[\Rel_*(F)_{n-1}\xlongrightarrow{(\id;\sigma^F)}\Rel_*({\Sigma F})_{n-1}\xlongrightarrow{(\id;\sigma^{\Sigma F})}\Rel_*(\Sigma^2 F)_{n-1}\cong E^1_{1,*,n+1}\xlongrightarrow{d^1}E^1_{0,*,n+1}\cong\Rel_*(\Sigma F)_n.\]
\end{lem}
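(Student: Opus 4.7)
The plan is to identify both maps in the alternating sum defining $d^1$ after pre-composition, and then invoke the triviality condition in \cref{definition:coefficientsystems} to show they coincide. By \cref{lemma:spectralsequenceidentificationtwisted}, the differential $\mapwoshort{d^1}{E^1_{1,*,n+1}}{E^1_{0,*,n+1}}$ identifies with $(s;\eta_0)_*^\sim - (s;\eta_1)_*^\sim$, where $\eta_0 = F(\_ \oplus \braiding_{X^{\oplus 0},X}\oplus X)$ reduces to the identity natural transformation on $\Sigma^2 F$ (because $\braiding_{0,X}$ is the unit isomorphism), and $\eta_1 = F(\_ \oplus \braiding_{X,X})$. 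Each map $(s;\eta_i)^\sim$ arises from a homotopy commutative square whose vertical sides are the stabilization maps $(s;\sigma^{\Sigma^2 F})$ and $(s;\sigma^{\Sigma F})$ and whose horizontal sides are $(s;\eta_i)$, made commutative via the self-homotopy of $s^2$ from \eqref{equation:selfhomotopy}.

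Pre-composing with the strictly commutative map-of-squares $(\id;\sigma^{\Sigma F}) \circ (\id;\sigma^F)$ produces two homotopy commutative outer squares from $(\cM;F)$ to $(\cM;\Sigma F)$, whose horizontal arrows become $(s;\eta_i\circ\sigma^{\Sigma F}\circ\sigma^F)$. By \cref{lemma:iteratedsuspension}, $\sigma^{\Sigma F}\circ\sigma^F = F(\_ \oplus \braiding_{X,X})\circ(\sigma^F)^2$, so the two composite natural transformations equal
\[F(\_ \oplus \braiding_{X,X})\circ(\sigma^F)^2 \quad\text{and}\quad F(\_ \oplus \braiding_{X,X}^2)\circ(\sigma^F)^2\]
respectively. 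Since both $\braiding_{X,X}$ and $\braiding_{X,X}^2$ lie in the image of $\braidgroup_2\to\aut_\A(X^{\oplus 2})$, the triviality condition from \cref{definition:coefficientsystems} forces both to reduce to $(\sigma^F)^2 \colon F \to s^{*}\Sigma F$. Hence the two outer squares have identical top, bottom, and side arrows.

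The remaining subtlety is to verify that the homotopies witnessing commutativity of the outer squares also agree, not merely the underlying strict data. This is again a consequence of the triviality condition: the self-homotopy of $s^2$ is built from the loop in $\cO(2)$ corresponding to $\braiding_{X,X}^{-1}$, so its extension to spaces with local systems involves the action of $\braiding_{X,X}^{-1}$, which acts trivially on the image of $(\sigma^F)^2$. Consequently both outer squares induce the same map on relative homology, and the difference $d^1 \circ (\id;\sigma^{\Sigma F}) \circ (\id;\sigma^F)$ vanishes. The main obstacle throughout is precisely this bookkeeping of homotopies of maps of spaces with local systems; everything else follows from standard manipulations of natural transformations using the braid relations and \cref{lemma:iteratedsuspension}.
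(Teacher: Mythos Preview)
Your proposal is correct and follows essentially the same route as the paper's proof: decompose $d^1$ as a difference of two maps induced by homotopy commutative squares, precompose with the suspension maps to land in the image of $(\sigma^F)^2$, use the triviality condition both to make the homotopies coincide and to identify the horizontal arrows of the resulting outer squares. Your explicit unpacking of $\sigma^{\Sigma F}\circ\sigma^F$ via \cref{lemma:iteratedsuspension} is a slight elaboration the paper leaves implicit, but the argument is otherwise identical.
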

\begin{proof}
Using \cref{lemma:spectralsequenceidentificationtwisted}, the composition in question is the difference between the morphisms in degree $(n-1)$ induced by the compositions of the two homotopy commutative squares
\begin{equation*}
\begin{tikzcd}[column sep=3cm, row sep=0.6cm]
(\cM;F)\arrow[d,swap,"{(s;\sigma^F)}"]\arrow[r,"(\id;\sigma^{\Sigma F}\sigma^F)"]&(\cM;\Sigma^2 F)\arrow[d,"{(s;\sigma^{\Sigma^2 F})}"]\arrow[r,"(s;F(-\oplus\braiding_{X^{\oplus i},X}\oplus X^{\oplus 1-i}))"]&(\cM;\Sigma F)\arrow[d,"{(s;\sigma^{\Sigma F})}"]\\
(\cM_;F)\arrow[r,"{(\id;\sigma^{\Sigma F}\sigma^F)}"]&(\cM;\Sigma^2 F)\arrow[r,"(s;F(-\oplus\braiding_{X^{\oplus i},X}\oplus X^{\oplus 1-i}))"]&(\cM;\Sigma F),
\end{tikzcd}
\end{equation*} for $i=0$ and $i=1$, where the homotopy of the left square is trivial. The nontrivial homotopy of the right square becomes trivial after composing with the left square by the triviality condition for coefficient systems, so the composition in question is the difference of the morphisms induced by the two strictly commutative outer squares. But, again by the triviality condition, we have $F(-\oplus\braiding_{X,X})\sigma^{\Sigma F}\sigma^F=\sigma^{\Sigma F}\sigma^F$, so the two outer squares coincide and hence the difference of the induced morphisms vanishes.
\end{proof}

\subsection{The proof of \cref{theorem:twisted}}
The long exact sequence
\[\ldots \longrightarrow\Rel_{*+1}(F)\longrightarrow\oH_*(\cM;F)\xlongrightarrow{(s;\sigma^F)_*}\oH_*(\cM;F)\longrightarrow \Rel_*(F)\longrightarrow\oH_{*-1}(\cM;F)\longrightarrow\ldots\] exhibits
 \cref{theorem:twisted} as a consequence of the next result.
 
 \begin{thm}\label{theorem:twistedprecise}Let $F$ be a coefficient system for $\cM$ of degree $r$ at $N\ge0$. If the canonical resolution of $\cM$ is graded $(\frac{g_\cM-2+k}{k})$-connected in degrees $\ge1$ for a $k\ge2$, then 
 \begin{enumerate}
 \item the group $\Rel_i(F)_n$ vanishes for $n\ge\max(N+1,k(i+r))$, and
\item if $F$ is of degree split $r$ at $N\ge0$, then $\Rel_i(F)_n$ vanishes for $n\ge\max(N+1,ki+r)$.
\end{enumerate}
 \end{thm}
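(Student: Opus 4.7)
I would prove Theorem~\ref{theorem:twistedprecise} by a double induction: primary on the degree $r$ of the coefficient system $F$, and secondary on the homological degree $i$. The base case $r=-1$ is immediate, since Definition~\ref{definition:degree} then forces $F_n=0$ for $n\ge N$ and thus $\Rel_i(F)_n=0$ for $n\ge N$ by direct inspection of the long exact sequence of the stabilization map. The case $i=0$ is handled by a direct argument at the level of $\pi_0$. The primary hypothesis will be applied to the subquotients $\kernel F$ and $\cokernel F$, which by Definition~\ref{definition:degree} have degree $-1$ at $N$ and degree $r-1$ at $N-1$ respectively, and the secondary hypothesis to the suspensions $\Sigma^{p+1}F$, which by Lemma~\ref{lemma:degreelemma} are still of degree $r$ but based at $N-p-1$.

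The main input is the relative spectral sequence of Lemma~\ref{lemma:spectralsequenceidentificationtwisted},
\[
  E^1_{-1,i,n}=\Rel_i(F)_n,\qquad E^1_{p,q,n}\cong\Rel_q(\Sigma^{p+1}F)_{n-1-p}\text{ for }p\ge 0,
\]
which converges to the relative homology of the pair coming from the canonical resolution between degrees $n-1$ and $n$. The connectivity assumption makes the abutment vanish in total degree $\le i$ for $n$ in the asserted range, while the secondary inductive hypothesis applied to the suspensions kills $E^1_{p,q,n}$ for $q<i$ throughout a large enough region of the plane to force the vanishing of every higher differential that could land at position $(-1,i,n)$. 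Consequently, the surviving $d^1$-differential $(s;\id)^{\sim}_*\colon \Rel_i(\Sigma F)_{n-1}\to\Rel_i(F)_n$ is surjective. In parallel, the two short exact sequences
\[
  0\to\kernel F\to F\to F/\kernel F\to 0\quad\text{and}\quad 0\to F/\kernel F\to \Sigma F\to\cokernel F\to 0
\]
of coefficient systems for $\cM$ together with the primary hypothesis (applied to $\kernel F$ and $\cokernel F$) yield, via the induced long exact sequences, surjectivity of $(\id;\sigma^F)_*\colon \Rel_i(F)_{n-1}\to\Rel_i(\Sigma F)_{n-1}$. The factorization $(s;\sigma^F)^{\sim}_*=(s;\id)^{\sim}_*\circ(\id;\sigma^F)_*$ of~\eqref{equation:factorization} therefore gives surjectivity of the relative stabilization $(s;\sigma^F)^{\sim}_*\colon \Rel_i(F)_{n-1}\to\Rel_i(F)_n$.

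To finish, I would invoke Lemma~\ref{lemma:compositionzero}, which asserts that two consecutive relative stabilizations compose to zero. Applied at consecutive indices together with the surjectivity statement above (used also at $n+1$), this forces the image of $\Rel_i(F)_{n-1}$ in $\Rel_i(F)_{n+1}$ to be simultaneously all of $\Rel_i(F)_{n+1}$ and zero, hence $\Rel_i(F)_n=0$ on shifting indices, closing the induction. For the split case, the short exact sequence $0\to F\to\Sigma F\to\cokernel F\to 0$ already splits and $\Sigma F\cong F\oplus\cokernel F$; routing $\cokernel F$ (of split degree $r-1$ at $N-1$) directly into the primary hypothesis replaces the slope $k$ of the linear term by $1$ and yields the sharper range. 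The main obstacle will be the intricate range bookkeeping: one has to verify at each step that the primary hypothesis for $\kernel F$ or $\cokernel F$ is applied at the correct shifted $N$, that the secondary hypothesis is used only at strictly smaller homological degree, and that the edge case $n=k(i+r)$ of the asserted range is closed---for which Lemma~\ref{lemma:differentialtwistedzero} intervenes to dispose of the residual contributions from the row $q=i$ of the $E^1$-page, which the secondary hypothesis alone does not annihilate.
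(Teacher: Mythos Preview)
Your overall architecture matches the paper's: double induction on $(r,i)$, the relative spectral sequence of Lemma~\ref{lemma:spectralsequenceidentificationtwisted}, the factorization~\eqref{equation:factorization}, and the nilpotency Lemma~\ref{lemma:compositionzero}. The gap is in the direction you run the last step. You argue \emph{surjectivity} of $(s;\sigma^F)^{\sim}_*\colon\Rel_i(F)_{n-1}\to\Rel_i(F)_n$ and then combine two consecutive surjections with Lemma~\ref{lemma:compositionzero}. But to kill $\Rel_i(F)_n$ this way you need surjectivity at $n-1$ \emph{and} $n-2$, i.e.\ two steps \emph{below} the asserted threshold. Chasing your own ranges, surjectivity of the second factor $(s;\id)^\sim_*\colon\Rel_i(\Sigma F)_{m-1}\to\Rel_i(F)_m$ (which only needs $E^1_{p,q}$ with $q<i$ and $E^\infty_{-1,i}$ to vanish) holds for $m\ge\max(N+1,k(i+r)-k+2)$; feeding this back gives $\Rel_i(F)_n=0$ only for $n\ge\max(N+2,k(i+r)-k+3)$, which misses the boundary for $k=2$ and is off by one in the $N$-part for every $k$. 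There is no index-shuffle that repairs this: surjectivity plus nilpotency inherently costs you two units below the range.

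The paper runs the dual argument: it proves \emph{injectivity} of both factors of~\eqref{equation:factorization} at $n$ and $n+1$ (both inside the range), and then injectivity plus Lemma~\ref{lemma:compositionzero} forces $\Rel_i(F)_n=0$. This is exactly where Lemma~\ref{lemma:differentialtwistedzero} enters, and not where you placed it: for the surjectivity of $d^1\colon E^1_{0,i}\to E^1_{-1,i}$ only sources with $q<i$ matter, so row $q=i$ is irrelevant. Lemma~\ref{lemma:differentialtwistedzero} is needed for \emph{injectivity} of that same $d^1$, to annihilate the incoming differential $E^1_{1,i}\to E^1_{0,i}$ (which lives in row $q=i$, beyond the reach of the secondary hypothesis); one checks it applies precisely when $(\id;\sigma^F)_*$ and $(\id;\sigma^{\Sigma F})_*$ are surjective at $n-1$, which your short-exact-sequence argument already supplies. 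Switching your final step from surjectivity to injectivity and relocating Lemma~\ref{lemma:differentialtwistedzero} accordingly closes the induction with the exact ranges claimed. (Your separate ``direct $\pi_0$ argument'' for $i=0$ is also unnecessary: the base $(\mathbf{H}_{r,0})$ is vacuous once $(\mathbf{H}_{r-1,i})$ is known for all $i$.)
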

We prove \cref{theorem:twistedprecise} via a double induction on $r$ and $i\ge 0$ by considering the following statement.
\begin{itemize}
\item[$(\bfH_{r,i})$]The vanishing ranges of \cref{theorem:twistedprecise} hold for all $F$ of degree $<r$ at any $N\ge0$ in all homological degrees $i$, and for all $F$ of degree $r$ at any $N\ge0$ in homological degrees $<i$.
\end{itemize}
The claim $(\bfH_{r,i})$ holds trivially if $r<0$ or if $(r,i)=(0,0)$. If $(\bfH_{r,i})$ holds for a fixed $r$ and all $i$, then $(\bfH_{r+1,0})$ follows, since there is no requirement on coefficient systems of degree $(r+1)$. Hence, to prove the theorem, it is sufficient to show that $(\bfH_{r,i})$ implies $(\bfH_{r,i+1})$ for $i,r\ge0$. As the composition \[\Rel_i(F)_n\xlongrightarrow{(s;\sigma^F)^{\sim}_*}\Rel_i(F)_{n+1}\xlongrightarrow{(s;\sigma^F)^{\sim}_*}\Rel_i(F)_{n+2}\] is zero by \cref{lemma:compositionzero}, it is enough to show injectivity of both maps in the claimed range. Using the factorisation \eqref{equation:factorisation}, this is implied by the following lemma.
\begin{lem}\label{lemma:twistedmainlemma}Let $r\ge0$ and $i\ge0$ satisfying $(\bfH_{r,i})$, and let $F$ be of degree $r$ at some $N\ge0$.
\begin{enumerate}
\item The morphism $\mapwoshort{(\id,\sigma_X)_*}{\Rel_*(F)_n}{\Rel_*(\Sigma F)_n}$ is injective for $n\ge\max(N,k(i+r))$ and surjective for $n\ge\max(N,k(i+r-1))$. If $F$ is of split degree $r$ at $N\ge0$, then the map is split injective for all $n$ and surjective for $n\ge\max(N,ki+r-1)$.
\item The morphism $\mapwoshort{(s,\id)^{\sim}_*}{\Rel_i(\Sigma F)_n}{\Rel_i(F)_{n+1}}$ is injective in degrees $n\ge\max(N+1,k(i+r))$. If $F$ is of split degree $r$ at $N\ge0$, then the map is injective for $n\ge\max(N+1,ki+r)$.
\end{enumerate}
\end{lem}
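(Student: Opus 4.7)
The plan is to prove parts (1) and (2) in order, since the surjectivity from part (1) is used inside the proof of part (2).

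For part (1), I would start from the four-term exact sequence of coefficient systems
\[0 \longrightarrow \kernel(F) \longrightarrow F \xlongrightarrow{\sigma^F} \Sigma F \longrightarrow \cokernel(F) \longrightarrow 0,\]
split into two short exact sequences via $\im(\sigma^F)$. Since taking chains with coefficients in a coefficient system is exact, these yield two long exact sequences of relative groups $\Rel_*$, and $(\id,\sigma^F)_*\colon \Rel_*(F)\to \Rel_*(\Sigma F)$ factors as $\Rel_*(F)\to\Rel_*(\im\sigma^F)\to\Rel_*(\Sigma F)$, the two factors appearing inside the respective long exact sequences. By definition of degree, $\kernel(F)$ has degree $-1$ at $N$ and $\cokernel(F)$ has degree $r-1$ at $N-1$, so $(\bfH_{r,i})$ gives $\Rel_j(\kernel F)_n=0$ for $n\ge N$ and all $j$, and $\Rel_j(\cokernel F)_n=0$ for $n\ge\max(N,k(j+r-1))$. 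A diagram chase in the two long exact sequences then produces injectivity of $(\id,\sigma^F)_*$ on $\Rel_i$ whenever $\Rel_i(\kernel F)_n$ and $\Rel_{i+1}(\cokernel F)_n$ vanish, giving $n\ge\max(N,k(i+r))$, and surjectivity whenever $\Rel_{i-1}(\kernel F)_n$ and $\Rel_i(\cokernel F)_n$ vanish, giving $n\ge\max(N,k(i+r-1))$. In the split case, $\sigma^F$ admits a retraction, so $(\id,\sigma^F)_*$ is split injective for all $n$, and the surjectivity range follows from the same chase using that $\cokernel(F)$ is of split degree $r-1$ at $N-1$.

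For part (2), I would turn to the spectral sequence (\ref{equation:spectralsequencetwisted}). By \cref{lemma:spectralsequenceidentificationtwisted}, the map $(s,\id)^{\sim}_*$ is precisely the differential $d^1\colon E^1_{0,i,n+1}\to E^1_{-1,i,n+1}$. The outgoing differentials $d^r$ from $(0,i)$ for $r\ge2$ vanish trivially, since their target $(-r,i+r-1)$ lies outside the bigrading, so $\ker d^1$ equals the sum of the images of the incoming differentials modulo a subquotient of $E^\infty_{0,i,n+1}$, and it suffices to show each of these contributions vanishes in the claim range. First, the abutment $\oH_{i+1}((\cM_{n+1},|R_\bullet(\cM)|_{n+1};F),(\cM_n,|R_\bullet(\cM)|_n;F))$ vanishes because the connectivity hypothesis on $R_\bullet(\cM)$ together with the long exact sequence of a map of pairs forces both $\oH_{i+1}(\cM_{n+1},|R_\bullet(\cM)|_{n+1};F)$ and $\oH_i(\cM_n,|R_\bullet(\cM)|_n;F)$ to vanish, so $E^\infty_{0,i,n+1}=0$. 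Second, the incoming $d^r$ from $(r,i-r+1)$ for $r\ge2$ are zero because already $E^1_{r,i-r+1,n+1}=\Rel_{i-r+1}(\Sigma^{r+1}F)_{n-r}$ vanishes by $(\bfH_{r,i})$ applied to $\Sigma^{r+1}F$, which is of degree $r$ at $N-r-1$ by \cref{lemma:degreelemma}; here $i-r+1<i$ unlocks the induction, and a short estimate using $k\ge2$ shows the resulting bound is implied by $n\ge\max(N+1,k(i+r))$.

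The remaining and most delicate contribution is the incoming $d^1\colon E^1_{1,i,n+1}\to E^1_{0,i,n+1}$, which cannot be killed by naive induction. Here I would invoke \cref{lemma:differentialtwistedzero}, which asserts that the composition of the double suspension $(\id,\sigma^{\Sigma F})_*(\id,\sigma^F)_*\colon \Rel_i(F)_{n-1}\to\Rel_i(\Sigma^2 F)_{n-1}$ with $d^1$ is zero; combining this with the surjectivity statement from part (1) applied successively to $F$ and to $\Sigma F$ shows that the double suspension is surjective in the claim range, and therefore $d^1$ vanishes on $E^1_{1,i,n+1}$. Assembling the four vanishings forces $\ker d^1=0$, which is the desired injectivity of $(s,\id)^\sim_*$; the split refinement follows from the same argument with the sharper bounds coming from the split part of (1). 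This interlock, whereby controlling $d^1$ out of $(1,i)$ requires the already-established surjectivity of part (1), is the main obstacle and pins down the order in which the two parts must be proved.
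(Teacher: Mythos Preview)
Your proposal is correct and follows essentially the same approach as the paper: part (i) via the two short exact sequences obtained by splitting $0\to\kernel(F)\to F\to\Sigma F\to\cokernel(F)\to 0$ at the image and chasing the induced long exact sequences in $\Rel_*$, and part (ii) via the relative spectral sequence, killing $E^\infty_{0,i,n+1}$ by the connectivity assumption, the $E^1$-terms on the diagonal $p+q=i+1$ with $p\ge2$ by $(\bfH_{r,i})$, and the incoming $d^1$ from $(1,i)$ via \cref{lemma:differentialtwistedzero} together with the surjectivity from part (i). Your phrasing ``$\ker d^1$ equals the sum of the images of the incoming differentials modulo a subquotient of $E^\infty$'' is slightly garbled, but the subsequent breakdown into the four vanishing conditions is exactly the right argument.
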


\begin{proof}We begin by proving the first part of the statement. As $\Rel_*{(-)}$ is functorial in the coefficient system, injectivity of the split case is clear. The remaining claims of the first statement follow from the long exact sequences in $\Rel_*{(-)}$ induced by the short exact sequences
\[0\rightarrow\kernel(F)\rightarrow F\rightarrow \im(F\rightarrow \Sigma F)\rightarrow 0\quad\text{and}\quad 0\rightarrow\im(F\rightarrow \Sigma F)\rightarrow \Sigma F\rightarrow \cokernel(F)\rightarrow 0\]
by applying $(\bfH_{r,i})$, using that $\kernel(F)$ has degree $-1$ at $N$ and that $\cokernel(F)$ has (split) degree $(r-1)$ at $(N-1)$. To prove (ii), we use the spectral sequence \eqref{equation:spectralsequencetwisted} and \cref{lemma:spectralsequenceidentificationtwisted}.
Since $\maptwoshort{|R_{\bullet}(\cM)|_m}{\cM_m}$ is assumed to be $(\frac{m-2+k}{k})$-connected for $m\ge 1$, the groups $\oH_*(\cM_m,|R_\bullet(\cM)|_m;F)$ vanish for $*\le\frac{m-2}{k}$, from which we conclude $E^\infty_{p,q,n+1}=0$ for $p+q\le \frac{n}{k}$. We claim that the differential $\maptwoshort{E^1_{1,i,n+1}}{E^1_{0,i,n+1}}$ vanishes for $n\ge\max(N+1,k(i+r))$ in the nonsplit case, and for $n\ge\max(N+1,ki+r)$ in the split one. By \cref{lemma:differentialtwistedzero}, this is the case if the maps $\Rel_*(F)_{n-1}\rightarrow\Rel_*(\Sigma F)_{n-1}\rightarrow\Rel_*(\Sigma^2 F)_{n-1}$ are surjective in that range, which holds by $(i)$. Since the map we want to prove injectivity of identifies with the differential $\maptwoshort{E^1_{0,i,n+1}}{E^1_{-1,i,n+1}}$ by \cref{lemma:spectralsequenceidentificationtwisted}, it is therefore enough to show that, in the ranges of the statement, $E^\infty_{0,i,n+1}=0$ and $E^2_{p,q,n+1}=0$ holds for $(p,q)$ with $p+q=i+1$ and $q<i$. By the vanishing range of $E^\infty$ noted above, we have $E^\infty_{0,i,n+1}=0$ in the required range. The claimed vanishing of $E^2$ follows from the vanishing even on the $E^1$-page, which is proved by observing that, by $(\bfH_{r,i})$ and \cref{lemma:spectralsequenceidentificationtwisted}, the groups $E^1_{p,q,n+1}\cong\Rel_q(\Sigma^{p+1}F)_{n-p}$ vanish for $(p,q)$ with $q<i$ and $n\ge\max(N-p,k(q+r))$ in the nonsplit, and for $(p,q)$ satisfying $q<i$ and $n\ge\max(N-p,kq+r)$ in the split case, since $\Sigma^{p+1}F$ has (split) degree $r$ at $(N-p-1)$ by \cref{lemma:degreelemma}. 
\end{proof}

\section{Configuration spaces}\label{section:configurationspaces}
The \emph{ordered configuration space} of a manifold $W$ with labels in a Serre fibration $\mapwoshort{\pi}{E}{W}$ is given by \[\FConf^\pi_n(W)=\{(e_1,\ldots,e_n)\in E^n\mid \pi(e_i)\neq \pi(e_j)\text{ for }i\neq j\text{ and }\pi(e_i)\in W\backslash\partial W\},\] and the \emph{unordered configuration space} is the quotient by the canonical action of the symmetric group,
\[\Conf^\pi_n(W)=\FConf^\pi_n(W)/\Sigma_n.\]
To establish an $\tE_1$-module structure on the unordered configuration spaces of $W$, we assume that $W$ has nonempty boundary, fix a collar $\maptwoshort{(-\infty,0]\times\partial W}{W}$, and attach an infinite cylinder to the boundary, \[\widetilde{W}=W\cup_{\{0\}\times \partial W}[0,\infty)\times\partial W.\] Collar and cylinder assemble to an embedding $\bfR\times\partial W\subseteq \widetilde{W}$ of which we make frequent use henceforth. We extend the fibration $\pi$ over $\widetilde{W}$ by pulling it back along the retraction $\maptwoshort{\widetilde{W}}{W}$ and define the space \[\pConf^\pi_n(W)=\{(s,e)\in[0,\infty)\times \Conf^\pi_n(\widetilde{W})\mid \pi(e)\subseteq W\cup(-\infty,s)\times\partial W\},\] which is an equivalent model for $\Conf^\pi_n(W)$, since the inclusion in $\pConf^\pi_n(W)$ as the subspace with $s=0$ can be seen to be an equivalence by choosing an isotopy of $\widetilde{W}$ that pushes $[0,\infty)\times\partial W$ into $(-\infty,0)\times\partial W$. We furthermore fix an embedded cube ${(-1,1)^{d-1}\subseteq \partial W}$ of codimension $0$, together with a section $\mapwoshort{l}{(-1,1)^{d-1}}{E}$ of $\pi$, which we extend canonically to a section $l$ on $[0,\infty)\times(-1,1)^{d-1}\subseteq\bfR\times\partial W\subseteq \widetilde{W}$. 

\begin{lem}\label{lemma:moduleconfigurations}Configurations $\coprod_{n\ge0}\Conf_n(D^d)$ in a disc form a graded $\tE_d$-algebra with configurations $\coprod_{n\ge 0}\pConf^\pi_n(W)$ in a $d$-manifold $W$ with nonempty boundary as an $\tE_1$-module over it, graded by the number of points.\end{lem}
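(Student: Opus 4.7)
The plan is to write down explicit structure maps of the colored operad $\SC_d$ from \cref{definition:SC} acting on the pair $(\A,\cM)=(\coprod_{n\ge0}\Conf_n(D^d),\coprod_{n\ge0}\pConf^\pi_n(W))$, with $\A$ corresponding to the $\ocolor{a}$-color and $\cM$ to the $\ocolor{m}$-color, and then verify that these maps are continuous, additive on the number of points, and satisfy the required equivariance and associativity. The grading $g_\A$ and $g_\cM$ are both given by the number of points; the unit operation $1_{\ocolor{a}}\in\cD^1(D^d)$ and the empty configuration $(0,\emptyset)\in\pConf^\pi_0(W)$ furnish the relevant units.

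For the $\ocolor{a}$-color, the action of $\SC_d(\ocolor{a}^l;\ocolor{a})=\cD^l(D^d)$ on $\A$ is the standard one of the little $d$-discs operad: given disjoint affine embeddings $\phi=(\phi_1,\ldots,\phi_l)$ and configurations $C_i\in\Conf_{n_i}(D^d)$, send them to $\bigcup_{i=1}^l\phi_i(C_i)\in\Conf_{\sum n_i}(D^d)$. For the $\ocolor{m}$-color, an element $(s,\phi)\in\SC_d(\ocolor{m},\ocolor{a}^l;\ocolor{m})$ is a pair with $\phi\in\cD^l((0,s)\times(-1,1)^{d-1})$. Given additionally a point $(r,e)\in \pConf_n^\pi(W)$ and configurations $C_i\in\Conf_{n_i}(D^d)$, let $\tau_r\colon\bfR\times\partial W\to\bfR\times\partial W$ denote translation by $r$ in the first coordinate, and define the image to be
\[\bigl(r+s,\ e\ \cup\ \bigcup_{i=1}^l l\circ\tau_r\circ\phi_i(C_i)\bigr),\]
where we use the fixed embedding $\bfR\times\partial W\hookrightarrow\tilde W$ together with the extended section $l$ of $\pi$ over $[0,\infty)\times(-1,1)^{d-1}$ to land the new points in $E$. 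The new points project into $(r,r+s)\times(-1,1)^{d-1}$, hence are disjoint from $\pi(e)\subseteq W\cup(-\infty,r)\times\partial W$ and lie in $W\cup(-\infty,r+s)\times\partial W$, so the result is a well-defined element of $\pConf_{n+\sum n_i}^\pi(W)$, and the total number of points is additive as required.

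Continuity is immediate from the explicit formulas (composition and union of affine embeddings), and $\Sigma_l$-equivariance with respect to permutations of the $\ocolor{a}$-inputs follows at once from the fact that we quotient out the ordering of the configuration points on both sides. The operadic identities for the purely $\ocolor{a}$-colored composition are the classical ones for little discs. For a mixed composition
\[\gamma\bigl((s,\phi);(s',\psi),(\varphi^1,\ldots,\varphi^l)\bigr)=\bigl(s'+s,(\psi,\phi_1\circ\varphi^1,\ldots,\phi_l\circ\varphi^l)\bigr)\]
of \cref{definition:SC}, the associativity to be checked is that applying $(s',\psi)$ to $(r,e)$ and then $(s,\phi)$ to the result (with the internal $\varphi^i$'s inserted into the $\phi_i$'s) agrees with applying the composite in one step. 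This reduces to the two identities $\tau_r\circ\tau_{s'}=\tau_{r+s'}$ and $\tau_r\circ(\phi_i\circ\varphi^i)=(\tau_r\circ\phi_i)\circ\varphi^i$ together with the observation that the section $l$ is invariant under translation in the first coordinate (being pulled back along the projection to $\partial W$).

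The main obstacle is a bookkeeping one: keeping the shifts $\tau_r$, the cylindrical region $(0,s)\times(-1,1)^{d-1}$, and the insertion of the $\varphi^i$ inside the $\phi_i$ consistent, so that the composite action coincides with the action of the operadic composite. Once the shift formalism above is fixed, there is no further subtlety, and additivity of gradings, together with normalization by the units $1_{\ocolor{a}}$ and $(0,\emptyset)$, completes the verification that $(\A,\cM)$ is an algebra over $\SC_d$ in graded spaces.
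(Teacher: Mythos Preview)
Your proposal is correct and follows essentially the same approach as the paper: both write down the explicit $\SC_d$-structure maps, with the $\ocolor{a}$-action being the standard little discs action on $\Conf_n(D^d)$ and the $\ocolor{m}$-action given by the formula $(r+s,\,e\cup\bigcup_i l(\tau_r\circ\phi_i(C_i)))$. The paper simply states these formulas and refers to a figure, whereas you additionally spell out the verification of associativity (via $\tau_r\circ\tau_{s'}=\tau_{r+s'}$ and translation-invariance of $l$) and equivariance; this extra detail is welcome but does not represent a different strategy.
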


\begin{proof}
The operad $\cD^\bullet(D^d)$ of little $d$-discs acts on $\coprod_{n\ge0}\Conf_n(D^d)$ by
\[\map{\theta}{\cD^k(D^d)\times\big(\coprod_{n\ge0}\Conf_n(D^d)\big)^k}{\coprod_{n\ge0}\Conf_n(D^d)}{\big((\phi_1,\ldots,\phi_k),(\{d^1_i\},\ldots,\{d^k_i\})\big)}{\bigcup_{j=1}^k\phi_j(\{d_i^j\}),}\] and this action extends to one of $\SC_d$ (see \cref{definition:SC}) on the pair $(\coprod_{n\ge 0}\pConf^\pi_n(W),\coprod_{n\ge0}\Conf_n(D^d))$ via 
\[\textstyle{\map{\theta}{\SC_d(\ocolour{m},\ocolour{a}^k;\ocolour{m})\times\coprod_{n\ge 0}\pConf^\pi_n(W)\times\big(\coprod_{n\ge0}\Conf_n(D^d)\big)^k}{\coprod_{n\ge 0}\pConf^\pi_n(W)}{\big((s,\phi_1,\ldots,\phi_k),(s',\{e_i\}),\{d^1_i\},\ldots,\{d^k_i\}\big)}{\big(s'+s,\{e_i\}\cup(\cup_{j=1}^kl(\phi_j(\{d^j_i\})+s'))\big),}}\] using the section $l$ and the translation $(-+s')$ by $s'$ in the $[0,\infty)$-coordinate, as illustrated in Figure~\ref{figure:moduleconfigurations}.
\end{proof}
\begin{figure}[h]
\centering{
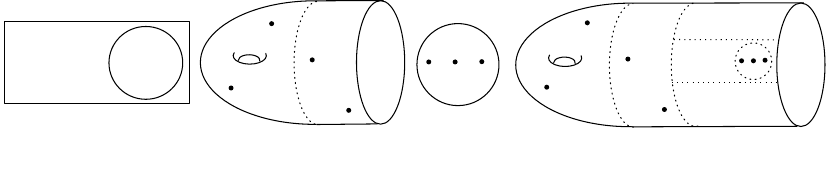}
\caption{The $\tE_1$-module structure on unordered configuration spaces}
\label{figure:moduleconfigurations}
\end{figure}

\subsection{The resolution by arcs}\label{section:arcresolution}
Let $W$ be a smooth connected manifold of dimension $d\ge 2$ with nonempty boundary and $\mapwoshort{\pi}{E}{W}$ a Serre fibration with path-connected fibre. By \cref{lemma:moduleconfigurations}, the configuration spaces $\cM=\coprod_{n\ge 0}\pConf^\pi_n(W)$ form a graded $E_1$-module  over $\cA=\coprod_{n\ge0}\Conf_n(D^d)$ considered as an $E_2$-algebra via the canonical morphism $\maptwoshort{\SC_2}{\SC_d}$ (see \cref{section:modules}). The stabilisation map $\mapwoshort{s}{\cM}{\cM}$ with respect to a chosen stabilising object $X\in \Conf_1(D^d)$, restricted to the subspace of elements of degree $n$, has the form \[\mapwo{s}{\pConf^\pi_n(W)}{\pConf^\pi_{n+1}(W)}.\]

\begin{rem}With respect to the described equivalence $\Conf^\pi_n(W)\simeq\pConf^\pi_n(W)$, the stabilisation map corresponds to the map $\maptwoshort{\Conf^\pi_n(W)}{\Conf^\pi_{n+1}(W)}$ that adds a point ``near infinity" \cite{McDuffConf,SegalConfII}.
\end{rem}

We prove high-connectivity of the canonical resolution of $\cM$ (see \cref{section:complexes}) by identifying it with the \emph{resolution by arcs}---an augmented semi-simplicial space of geometric nature, known to be highly connected. 

\begin{dfn}
The \emph{resolution by arcs} is the augmented semi-simplicial space $\maptwoshort{R^{\arcsymbol}_\bullet(\cM)}{\cM}$ with \[\textstyle{R^{\arcsymbol}_p(\cM)\subseteq\cM\times\big(\Emb([-1,0],\widetilde{W})\times \Maps([-1,0],E)\big)^{p+1}},\] consisting of tuples $((s,\{e_i\}),(\varphi_0,\eta_0),\ldots,(\varphi_p,\eta_p))$ such that
\begin{enumerate}
\item the arcs $\varphi_i$ are pairwise disjoint and connect points in the configuration $\varphi_i(-1)\in\pi(\{e_i\})\subseteq \widetilde{W}$ to points $\varphi(0)\in\{s\}\times(-1,1)\times\{0\}^{d-2}\subseteq[0,\infty)\times\partial W$ in the order $\varphi_0(0)<\ldots<\varphi_{p}(0)$,
\item the interior of the arcs lie in $W\cup[0,s)\times\partial W$ and are disjoint from the configuration $\pi(\{e_i\})$,
\item the path of labels $\eta_i$ satisfies $(\pi\circ\eta_i)=\varphi_i$ and connects the label of $\varphi_i(-1)\in\pi(\{e_i\})$ to $\eta_i(0)=l(\varphi_i(0))$,
\item there exists $\varepsilon\in(0,1)$ with $\varphi_i(t)=(s+t,\varphi_i(0),0,\ldots,0)\in(-\infty,s]\times(-1,1)^{d-1}\subseteq \widetilde{W}$ for $t\in(-\varepsilon,0]$. 
\end{enumerate}
The space $R^{\arcsymbol}_p(\cM)$ is topologised using the compact-open topology on $\Maps([-1,0],E)$ and the $\cC^\infty$-topology on $\Emb([-1,0],\widetilde{W})$.
The $i$th face map forgets $(\varphi_i,\eta_i)$. The rightmost graphic of Figure~\ref{figure:actionconfiguration} depicts an example.
\end{dfn}
\begin{thm}\label{theorem:arcresolutionconnected}The resolution by arcs $\maptwoshort{R^{\arcsymbol}_\bullet(\cM)}{\cM}$ is graded $(g_\cM-1)$-connected.
\end{thm}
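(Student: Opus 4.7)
The plan is to reduce the connectivity of the augmentation $R^{\arcsymbol}_\bullet(\cM)\to\cM$ to the $(n-2)$-connectivity of a labeled arc complex attached to a single fixed configuration, and then to prove the latter by adapting the Hatcher-style surgery arguments from \cite{KM, MillerWilson} to the topologically enriched, labeled setting considered here.

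First, I would replace the augmentation by an equivalent fibrant one using \cref{definition:fibrantreplacement}. Then, by \cref{section:homotopyfiberrealization}, the graded $(g_\cM-1)$-connectivity of $|R^{\arcsymbol}_\bullet(\cM)|\to\cM$ is equivalent to the $(n-2)$-connectivity of $|W^{\arcsymbol}_\bullet(c)|$ for every $n\ge 1$ and every configuration $c=(s,e)\in\pConf^\pi_n(W)$, where $W^{\arcsymbol}_\bullet(c)$ denotes the fiber semi-simplicial space. Its $p$-simplices parametrize $(p+1)$-tuples of pairwise disjoint labeled arcs based on the configuration points of $c$ and ending in order along the boundary window $\{s\}\times(-1,1)\times\{0\}^{d-2}$, satisfying conditions (i)-(iv). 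Since the fibers over points in the same path-component of $\pConf^\pi_n(W)$ are weakly equivalent, I may fix a single representative $c$ per path-component.

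Second, I would establish the $(n-2)$-connectivity of $|W^{\arcsymbol}_\bullet(c)|$ by a Hatcher-style surgery argument. Given a continuous map $S^k\to|W^{\arcsymbol}_\bullet(c)|$ with $k\le n-2$, the goal is to extend it to $D^{k+1}$. This is done by induction on $n$ and on a complexity measure counting excess intersections and tangencies between simplicial arcs, resolving each bad intersection via a small isotopy supported away from the configuration points. Path-connectedness of the fibers of $\pi$ is used to ensure that the labels can be carried along: any isotopy of arcs admits a lift to a compatible homotopy of label paths, and the space of such lifts is nonempty and contractible by the fibration property of $\pi$ applied to the path-space of paths. Consequently, the surgery argument for the unlabeled arc complex carries through verbatim in the labeled topological setting, yielding the claimed connectivity.

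The main obstacle will be the combinatorial bookkeeping in the surgery step, namely ensuring that intersections of arcs can be removed inductively without creating worse ones and simultaneously adjusting the label data in a continuous fashion across all simplicial degrees. This is the content of the surgery arguments in \cite{KM} (for the general $d\ge 2$ case) and \cite{MillerWilson} (for related arc-type complexes); the work left is to verify that these arguments adapt cleanly to the present labeled, parameterized setup, which should follow from Serre-fibration path-lifting applied to each stage of the induction.
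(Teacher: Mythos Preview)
Your proposal is correct and takes essentially the same approach as the paper: both reduce to the known connectivity of the labeled arc resolution from \cite{KM}. The paper's reduction is slightly more direct---rather than passing to homotopy fibers via fibrant replacement, it observes that the $s=0$ sub-semi-simplicial space $\bar{R}^{\arcsymbol}_\bullet(\cM)\to\coprod_n\Conf^\pi_n(W)$ is level-wise weakly equivalent to $R^{\arcsymbol}_\bullet(\cM)\to\cM$ and is literally the standard labeled arc resolution already handled in \cite{KM}, so no adaptation of the surgery argument to labels is required.
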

\begin{proof}
Setting $s=0$ in the definition of $R^{\arcsymbol}_p(\cM)$ yields a sub-semi-simplicial space $\bar{R}^{\arcsymbol}_\bullet(\cM)\subseteq R^{\arcsymbol}_\bullet(\cM)$, augmented over $\bar{\cM}=\coprod_{n\ge0}\Conf^\pi_n(W)$. As the inclusion is a weak equivalence by the same argument as for $\Conf^\pi_n(W)\subseteq \pConf^\pi_n(W)$, the augmented semi-simplicial space $\maptwoshort{R^{\arcsymbol}_\bullet(\cM)}{\cM}$ is as connected as $\maptwoshort{\bar{R}^{\arcsymbol}_\bullet(\cM)}{\bar{\cM}}$ is. The latter is the standard resolution by arcs for configurations of unordered points with labels in $W$, which is known to have the claimed connectivity (see e.g.~the proof of \cite[Thm\,A.1]{KM}).\end{proof}

\begin{thm}\label{theorem:highconnectivityconfigurationspaces} The canonical resolution and the one by arcs are weakly equivalent as augmented $\semisimpthick$-spaces.
\end{thm}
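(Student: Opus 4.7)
The plan is to construct a direct morphism of augmented $\semisimpthick$-spaces $\Phi_\bullet \colon R^{\arcsymbol}_\bullet(\cM) \to R_\bullet(\cM)$ covering the identity of $\cM$ and to verify that it is a levelwise weak equivalence. The geometric content is that a system of $p+1$ disjoint arcs from configuration points to the boundary slots encodes exactly the data needed to exhibit a configuration as a $(p+1)$-fold stabilization, plus a canonical path witnessing this.

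First I would define $\Phi_p$ as follows. Given an arc configuration $((s,e),(\varphi_i,\eta_i)_{i=0}^p)$ in $R^{\arcsymbol}_p(\cM)$ with $g_\cM((s,e)) = n$, let $A \in \pConf^\pi_{n-p-1}(W)$ denote the configuration obtained by deleting the $p+1$ labelled points $(\varphi_i(-1),\eta_i(-1))$. Using a canonical family of ambient isotopies supported in a neighbourhood of the arcs $\varphi_i$ (and lifted to an isotopy of labels along $\eta_i$), I would slide the deleted points along the arcs into the cylindrical end $[s,\infty) \times (-1,1)^{d-1}$, placing them into the positions prescribed by the iterated operation $c_{p+1} \in \cO(p+1)$. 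This yields a Moore path $\zeta$ in $\cM$ from $(s,e)$ to $\theta(c_{p+1};A,X^{\oplus p+1}) = s^{p+1}(A)$, so $(A,\zeta) \in R_p(\cM)$. The ordering condition $\varphi_0(0) < \ldots < \varphi_p(0)$ on the boundary endpoints, combined with the planar nature of the cylindrical end, guarantees that the braid recorded by the construction is the one coming from the section $\semisimp \to \UB$ of \cref{section:complexes}; this is exactly what is needed for $\Phi_\bullet$ to extend to a $\semisimpthick$-equivariant map, with the $i$th face map on both sides corresponding to forgetting the $i$th arc. Compatibility with augmentations is clear since $\zeta(0) = (s,e)$.

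Next I would verify that $\Phi_p$ is a weak equivalence for each $p$. Since $R_\bullet(\cM) \to \cM$ is fibrant (\cref{remark:connectivityresolutionbyspacesofdestabilization}), its fibre at a point $C \in \cM$ is the space of destabilizations $W_p(C) \simeq \hofib_C(s^{p+1})$. By an application of \cref{section:homotopyfiberrealization} and a fibrancy argument for the evaluation $R^{\arcsymbol}_p(\cM) \to \cM$ (which follows from isotopy-extension for smooth arcs with prescribed boundary behaviour together with path-lifting for the Serre fibration $\pi$), it suffices to compare (homotopy) fibres over each $C$. The fibre of $R^{\arcsymbol}_p(\cM)$ at $C$ is the space $A_p(C)$ of ordered $(p+1)$-tuples of disjoint labelled arcs from $C$ to boundary slots satisfying (i)--(iv); the map induced by $\Phi_p$ sends such a system to the destabilization of $C$ obtained by sliding the $p+1$ marked points out along the arcs. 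A scanning-style deformation argument, essentially the one used to prove high-connectivity of arc resolutions as in \cite{KM} and invoked in the proof of \cref{theorem:arcresolutionconnected}, shows that $A_p(C) \to W_p(C)$ is a weak equivalence: both spaces parametrize the same homotopical data, namely presentations of $C$ as a stabilization of a smaller configuration together with a path witnessing it.

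The main obstacle will be Step 1, specifically making the sliding construction \emph{continuously and equivariantly} in families so that $\Phi_\bullet$ is genuinely a morphism of $\semisimpthick$-spaces, rather than only up to higher homotopy. This forces a careful choice of tubular neighbourhoods of the arcs (depending continuously on the arc data) and a careful bookkeeping of how the resulting path in $\cO(p+1)$ compares with the iterated operation $c_{p+1}$ and the canonical section $\semisimp \to \UB$; the planar ordering of arc endpoints on the boundary is precisely what pins down this comparison and produces the trivial braid class needed to land in $\semisimpthick \subseteq U\cO$. Once this is in place, Steps 2 and 3 are formal.
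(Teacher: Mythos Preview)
Your plan correctly isolates the essential content---an arc system records a destabilization plus a path witnessing it---and the levelwise equivalence you sketch is essentially the content of \cref{lemma:levelwisethesame}. However, the gap you flag in Step~1 is real and you do not propose a way around it; the paper's proof exists largely to solve exactly this problem by a different route.

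Concretely: for $\Phi_\bullet$ to be a morphism of $\semisimpthick$-spaces you need strict commutativity with every morphism $(d,\mu)\in\semisimpthick([q],[p])$, not just with a chosen set of face maps. On the target side such a morphism acts by $(A,\zeta)\mapsto(\theta(d;A,X^{p-q}),\zeta\cdot\theta(\mu;A,X^{p+1}))$, so your sliding path must satisfy an exact concatenation identity for \emph{every} $\mu$ in the contractible space of paths lying over the appropriate braid. Choosing tubular neighbourhoods continuously in the arc data does not produce this: the path $\zeta$ you construct from an arc system depends on the arcs, while the path the face map appends depends on $\mu$, and matching them on the nose (rather than up to a further homotopy, and then a homotopy of homotopies, \ldots) is precisely the higher coherence problem that $\semisimpthick$ is meant to encode. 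Your closing sentence ``once this is in place'' is therefore assuming the conclusion.

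The paper avoids constructing any direct map. Instead it builds a category $U^{\square}\SC_2$ (\cref{definition:arccategory}) which thickens $\semisimpop\times[1]$: the $0$-column is $U\SC_2$, the $1$-column is $\semisimp$, and the mixed morphism spaces $U^{\square}\SC_2([q^1],[p^0])$ are homotopy fibres of the free arc resolution $R^{\arcsymbol}_\bullet(\cM^{\free})$. The simplicial map $\Phi$ of (\ref{equation:simplicialactionnonfree}) then extends both $R^{\arcsymbol}_\bullet(\cM)^{\fib}$ and $B_\bullet(\cM)$ to a single functor $B^{\square}(\cM)$ on $(U^{\square}\SC_2)^{\op}$. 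Restricting to the subcategory $\widetilde{\semisimpop\times[1]}$ weakly equivalent to $\semisimpop\times[1]$ and applying the rectification \cref{lemma:rectification} produces a \emph{zig-zag} of weak equivalences between the two resolutions; no strict comparison map is ever built. In other words, the coherence you would need to supply by hand is absorbed into the morphism spaces of $U^{\square}\SC_2$ and discharged by rectification.
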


Assuming \cref{theorem:highconnectivityconfigurationspaces}, \cref{theorem:arcresolutionconnected} ensures graded $(g_\cM-1)$-connectivity of the canonical resolution $\maptwoshort{R_\bullet(\cM)}{\cM}$ (see \cref{section:complexes}), which in turn implies  \cref{theorem:configurationspaces} by an application of Theorems~\ref{theorem:constant} and~\ref{theorem:twisted}.

We prove \cref{theorem:highconnectivityconfigurationspaces} by constructing a zig-zag of weak equivalences of augmented $\semisimpthick$-spaces
\begin{equation}\label{equation:zigzag}R_\bullet(\cM)\xlongleftarrow{\circled{$1$}} \tB\big(U\cO(\bullet,{\smallsquare}),U\cO,B_{\smallsquare}(\cM)\big)\stackrel{\circled{$2$}}{\simeq}\tB\big(U\cO^{\arcsymbol}_{\bullet,{\smallsquare}},U\cO,B_{\smallsquare}(\cM)\big)\xlongrightarrow{\circled{$3$}} R^{\arcsymbol}_\bullet(\cM)^{\fib}\end{equation} between the canonical resolution $R_\bullet(\cM)$ and the fibrant replacement $R^{\arcsymbol}_\bullet(\cM)^{\fib}$ of the resolution by arcs, which is weakly equivalent to the resolution by arcs itself (see \cref{section:homotopycolimits}). The remainder of this subsection serves to explain the weak equivalences \circled{$1$}--\circled{$3$}. We abbreviate the $\tE_{1,2}$-operad $\SC_2$ by $\cO$.

\addtocontents{toc}{\protect\setcounter{tocdepth}{0}}

\subsubsection*{$\circled{$1$}$}Recall from \cref{section:complexes} the category $U\cO$ and the contravariant $U\cO$-space $B_\bullet(\cM)$ over $\cM$ whose restriction to the subcategory $\semisimpthick\subseteq U\cO$ is $R_\bullet(\cM)$. Using the $(\semisimpopthick\times U\cO)$-space $U\cO(\bullet,{\smallsquare})$ obtained by restricting the hom-functor of $U\cO$, the equivalence $\circled{$1$}$ is defined as the restriction of the bar resolution of $B_\bullet(\cM)$ to $\semisimpthick$ (see \cref{section:homotopycolimits}).

For the other parts of the zig-zag \eqref{equation:zigzag}, we define an analogue of the resolution by arcs for the free graded $\tE_1$-module $\cO^\ocolour{m}=\coprod_{n\ge0}\cO(\ocolour{m},\ocolour{a}^n;\ocolour{m})/\Sigma_n$ (see \cref{example:freealgebra}). For simplification, we choose the centre $X=\{0\}\in\Conf_1(D^d)$ as stabilising object and write $s_d$ for the parameter of elements $d=(s_d,\{\phi_i\})\in\cO^\ocolour{m}$ and $g(d)$ for their degree, i.e.~the cardinality of the set of embeddings $\{\phi_i\}$.

\begin{dfn}\label{definition:tethereddiscs}
Define the augmented semi-simplicial space $\maptwoshort{R^{\arcsymbol}_\bullet(\cO^\ocolour{m})}{\cO^\ocolour{m}}$ with $p$-simplices \[R_p^{\arcsymbol}(\cO^\ocolour{m})\subseteq{\cO^\ocolour{m}}\times\Emb\big([-1,0],(0,\infty)\times(-1,1)\big)^{p+1},\] consisting of tuples $((s,\{\phi_j\}),\varphi_0,\ldots,\varphi_{p})$ such that
\begin{enumerate}
\item the arcs $\varphi_i$ are pairwise disjoint and connect centre points  $\varphi_i(-1)\in\{\phi_j(0)\}$ of the discs to $\varphi_i(0)\in\{s\}\times(-1,1)$ in the order $\varphi_0(0)<\ldots<\varphi_{p}(0)$,
\item the interior of the arcs lie in $(0,s)\times(-1,1)$ and are disjoint from the centre points $\{\phi_j(0)\}$, and
\item there exists an $\varepsilon\in(0,s)$ such that $\varphi_i(t)=(s+t,\varphi_i(0))\in(0,s]\times(-1,1)$ holds for all $t\in(-\varepsilon,0]$. 
\end{enumerate}
The third graphic of Figure~\ref{figure:actionconfiguration} exemplifies a $0$-simplex in $R^{\arcsymbol}_\bullet(\cO^\ocolour{m})$.\end{dfn}

\subsubsection*{$\circled{$2$}$}To explain the second equivalence of \eqref{equation:zigzag}, we note that $\cO^{\ocolour{m}}$ becomes a topological monoid by multiplying elements $d$ and $e$ in $\cO^{\ocolour{m}}$ by $\gamma(e;d,1^{g(e)})$. The multiplication map is covered by a simplicial action
\begin{equation*}\map{\Psi}{\cO^\ocolour{m}\times R_\bullet^{\arcsymbol}(\cO^\ocolour{m})}{R_\bullet^{\arcsymbol}(\cO^\ocolour{m})}{\big(d,(e,\varphi_0,\ldots,\varphi_{p})\big)}{\big(\gamma(e;d,1^{g(e)}),\varphi_0+s_d,\ldots,\varphi_p+s_d\big),}\end{equation*} where $(-+s_d)$ is the translation in the $(0,\infty)$-coordinate. This action leads to a $(\semisimpopthick\times U\cO)$-space $U\cO^{\arcsymbol}_{\bullet,{\smallsquare}}$, serving us as mediator between the canonical resolution and the one by arcs. On objects $([p],[k])$, it is \[U\cO^{\arcsymbol}_{p,{k}}=\hofib_{c_{k+1}}\big(\maptwoshort{R^{\arcsymbol}_p(\cO^{\ocolour{m}})}{\cO^{\ocolour{m}}}\big)=\{(e,\varphi_0,\ldots,\varphi_p,\mu)\in R^{\arcsymbol}_p(\cO^{\ocolour{m}})\times\Path_{c_{k+1}}\cO^{\ocolour{m}}\mid\omega(\mu)=e\},\] where $\omega(-)$ denotes the endpoint of a Moore path and the elements $c_{i}\in \cO^\ocolour{m}$ are defined as in \cref{section:complexes}. The $\semisimpopthick$-direction of $U\cO^{\arcsymbol}_{\bullet,{\smallsquare}}$ is induced by the semi-simplicial structure of $R^{\arcsymbol}_\bullet(\cO^{\ocolour{m}})$ via the functor $\maptwoshort{\semisimpthick}{\semisimp}$ (see \cref{section:complexes}). The $U\cO$-direction is defined by\begin{equation*}\mapnoname{U\cO([k],[l])\times U\cO^{\arcsymbol}_{\bullet,{k}}}{U\cO^{\arcsymbol}_{\bullet,{l}}}{\big((d,\mu),(e,\varphi_0,\ldots,\varphi_p,\zeta)\big)}{\big(\Psi(d,(e,\varphi_0,\ldots,\varphi_p)),\mu\cdot\gamma(\zeta;d,1^{k+1})\big).}\end{equation*}The claimed functoriality of $\cO^{\arcsymbol}_{\bullet,{\smallsquare}}$ follows directly from the associativity of the operadic composition $\gamma$. Having introduced the objects involved, the following lemma provides the weak equivalence $\circled{$2$}$.

\begin{lem}\label{lemma:frompathstoarcs}The $(\semisimpopthick\times U\cO)$-spaces $U\cO^{\arcsymbol}_{\bullet,{\smallsquare}}$ and $U\cO(\bullet,{\smallsquare})$ are weakly equivalent.
\end{lem}
\begin{proof}
Choose arcs $\varphi^p=(\varphi^p_0,\ldots,\varphi^p_p)\in\Emb([-1,0],(0,\infty)\times(-1,1))^{p+1}$ such that $(c_{p+1},\varphi^p)$ forms an element of $R_p^{\arcsymbol}(\cO^{\ocolour{m}})$ for which the order of the embeddings $\{\phi_i\}$ in $c_{p+1}=(s_{c_{p+1}},\{\phi_i\})$, induced by the order of the arcs $\varphi^p_i$ they are connected to, agrees with the order of $\{\phi_i\}$ induced by the $(0,\infty)$-coordinate. Acting on $(c_{p+1},\varphi^p,\const_{c_{p+1}})\in U\cO^{\arcsymbol}_{p,{p}}$, the $(\semisimpopthick\times U\cO)$-space $U\cO^{\arcsymbol}_{\bullet,{\smallsquare}}$ induces a morphism of $U\cO$-spaces \begin{equation}\label{equation:inducedmorphism}\maptwoshort{U\cO([p],{\smallsquare})}{U\cO^{\arcsymbol}_{p,{\smallsquare}}}, \end{equation} agreeing on $[k]\in\ob(U\cO)$ with the induced map on diagonal homotopy fibres of the commuting triangle
\begin{center}
\begin{tikzcd}[row sep=0.1cm]\cO^{\ocolour{m}}\arrow[rr]\arrow[dr,"{\gamma(c_{p+1};-,1^{p+1})}",swap]&&R_p^{\arcsymbol}(\cO^{\ocolour{m}})\arrow[ld]\\
&\cO^{\ocolour{m}}&
\end{tikzcd}
\end{center} at $c_{k+1}$, where the right diagonal map is the augmentation and the horizontal arrow is given by acting on $(c_{p+1},\varphi^p)$ via $\Psi$. There is a map $\maptwoshort{R_p^{\arcsymbol}(\cO^{\ocolour{m}})}{\cO^{\ocolour{m}}}$ that forgets the arcs and the discs attached to them using which the horizontal map can be seen to be an equivalence by following discs along arcs they are attached to. Hence, \eqref{equation:inducedmorphism} is an equivalence of $U\cO$-spaces, which in particular shows that $U\cO^{\arcsymbol}_{\bullet,{\smallsquare}}$ is homotopy discrete, as $U\cO(\bullet,{\smallsquare})$ is so by \cref{lemma:homotopydiscrete}. Therefore, to prove the claim, it is sufficient to show that the equivalence \eqref{equation:inducedmorphism} is natural in $[p]$ up to homotopy, which would follow from the homotopy commutativity of \begin{center}
\begin{tikzcd}[row sep=0.6cm]
U\cO([p],[k])\arrow[d,"(\tilde{d_i})_*",swap]\arrow[r]&U\cO^{\arcsymbol}_{p,{k}}\arrow[d,"(d_i)_*"]\\
U\cO([p-1],[k])\arrow[r]&U\cO^{\arcsymbol}_{p-1,{k}},
\end{tikzcd}
\end{center}using the choice of face maps $\tilde{d_i}=(c,\mu_i)\in\semisimpthick([p-1],[p])$ provided by \cref{lemma:facemaps}. The two compositions of the latter diagram map an element $(d,\zeta)$ in $U\cO([p],[k])$ to \[\Big(\Psi\big(d,(c_{p+1},\varphi^p_0,\ldots,\widehat{\varphi^p_i},\ldots,\varphi^p_p)\big),\zeta\Big)\quad\text{and}\quad \Big(\Psi\big(d,(c_{p+1},\varphi^{p-1}_0+s_c,\ldots,\varphi^{p-1}_{p-1}+s_c)\big),\zeta\cdot\gamma(\mu_i;d,1^{p+1})\Big),\] respectively, where $\widehat{(-)}$ indicates that the element is omitted. Recalling that, via the isomorphism $\pi_1(\cO^{\ocolour{m}},c_{p+1})\cong B_{p+1}$ fixed in \cref{section:complexes}, the loop $\mu_i\in\Omega_{c_{p+1}}\cO^{\ocolour{m}}$ corresponds to the braid $\braiding_{X^{\oplus i},X}^{-1}\oplus X^{\oplus p-i}$ in $B_{p+1}$, we see that our choice of the arcs $\varphi^j_i$ ensures the existence of a path in $R_{p-1}^{\arcsymbol}(\cO^{\ocolour{m}})$ between \[(c_{p+1},\varphi^p_0,\ldots,\widehat{\varphi^p_i},\ldots,\varphi^p_p)\quad\text{and}\quad(c_{p+1},\varphi^{p-1}_0+s_{c},\ldots,\varphi^{p-1}_{p-1}+s_c)\] that maps via the augmentation $\maptwoshort{R_{p-1}^{\arcsymbol}(\cO^{\ocolour{m}})}{\cO^{\ocolour{m}}}$ to $\mu_i$, or at least to its homotopy class. Such a path induces a homotopy between the two compositions of the square, which finishes the proof.
\end{proof}

\subsubsection*{$\circled{$3$}$}For the rightmost equivalence of \eqref{equation:zigzag}, we use the module structure $\theta$ to define the simplicial map \begin{equation}\label{equation:themapphi}\map{\Phi}{R_\bullet^{\arcsymbol}(\cO^{\ocolour{m}})\times\cM}{R_\bullet^{\arcsymbol}(\cM)}{\big((e,\varphi_0,\ldots, \varphi_p),A\big)}{\big(\theta(e,A,1^{g(e)}),(\varphi_0+s_A,l(\varphi_0+s_A)),\ldots,(\varphi_0+s_A,l(\varphi_p+s_A))\big),}\end{equation} using the embedding $[0,\infty)\times(-1,1)\times{\{0\}^{d-2}}\subseteq [0,\infty)\times \partial W$, the section $\mapwoshort{l}{[0,\infty)\times(-1,1)^{d-1}}{E}$, and the translation in the $[0,\infty)$-coordinate, as illustrated in Figure~\ref{figure:actionconfiguration}. This yields simplicial maps for $k \ge0$, 
\begin{equation}\label{equation:includingarcs}\mapnoname{U\cO^{\arcsymbol}_{\bullet,{k}}\times B_k(\cM)}{R^{\arcsymbol}_\bullet(\cM)^{\fib}}{\big((e,\varphi_0,\ldots,\varphi_p,\mu),(A,\zeta)\big)}{\big(\Phi((e,\varphi_0,\ldots,\varphi_p),A),\zeta\cdot\theta(\mu;A,X^{k+1})\big),}\end{equation} which 
induce a morphism $\tB\big(U\cO^{\arcsymbol}_{\bullet,{\smallsquare}},U\cO,B_{\smallsquare}(\cM)\big)\rightarrow R^{\arcsymbol}_\bullet(\cM)^{\fib}$, since they equalise the diagram 
\begin{equation*}\begin{tikzcd}[column sep=1cm]
\coprod_{f\in U\cO([k],[l])}U\cO^{\arcsymbol}_{\bullet,{k}}\times B_l(\cM)\arrow[r, shift left,"\id\times f^*"]\arrow[r, shift right,"f_*\times\id",swap]&\coprod_{[k]}U\cO^{\arcsymbol}_{\bullet,{k}}\times B_k(\cM).
\end{tikzcd}
\end{equation*} This explains the morphism $\circled{$3$}$, which is a weak equivalence by the following lemma that completes the proof of \cref{theorem:highconnectivityconfigurationspaces}, as the morphisms $\circled{$1$}$--$\circled{$3$}$ are all compatible with the augmentation to $\cM$.
\begin{lem}\label{lemma:thirdequivalence}The morphism $\circled{$3$}$ is a weak equivalence.
\end{lem}
\begin{proof} On $p$-simplices, the weak equivalence $\circled{$1$}$ and the morphism $\circled{$3$}$ fit into a commutative square
\begin{center}
\begin{tikzcd}[row sep=0.5cm]
\tB\big(U\cO([p],{\smallsquare}),U\cO,B_{\smallsquare}(\cM)\big)\arrow[r,"\textcircled{$1$}"]\arrow[d,"\simeq",swap]& R_p(\cM)\arrow[d,"\simeq"]\\
\tB\big(U\cO^{\arcsymbol}_{p,{\smallsquare}},U\cO,B_{\smallsquare}(\cM)\big)\arrow[r,"\textcircled{$3$}"]& R^{\arcsymbol}_p(\cM)^{\fib},
\end{tikzcd}
\end{center} in which the left morphism is induced by the equivalence \eqref{equation:inducedmorphism} which has the form $\maptwoshort{U\cO([p],{\smallsquare})}{U\cO^{\arcsymbol}_{p,{\smallsquare}}}$ and was defined via the action of $U\cO^{\arcsymbol}_{\bullet,{\smallsquare}}$ on a certain element $(c_{p+1},\varphi^p,\const_{c_{p+1}})\in U\cO^{\arcsymbol}_{p,{p}}$. The right map is induced by the same element, but using \eqref{equation:includingarcs}, and it is a weak equivalence by an analogous argument as for \eqref{equation:inducedmorphism}. Consequently, the arrow $\circled{$3$}$ is a weak equivalence as well and we conclude the assertion.
\end{proof}

\begin{rem}As demonstrated in the previous proof, the right vertical arrow of the preceding square is a weak equivalence for all $p\ge0$. However, it does not form a (strict) morphism of $\semisimpthick$-spaces between $R_\bullet(\cM)$ and $R^{\arcsymbol}_\bullet(\cM)^{\fib}$. There is an alternative proof of \cref{theorem:highconnectivityconfigurationspaces} by showing that these weak equivalences can be enhanced to a morphism up to higher coherent homotopy between $R_\bullet(\cM)$ and $R^{\arcsymbol}_\bullet(\cM)^{\fib}$.
\end{rem}

\begin{figure}[h]
\centering{
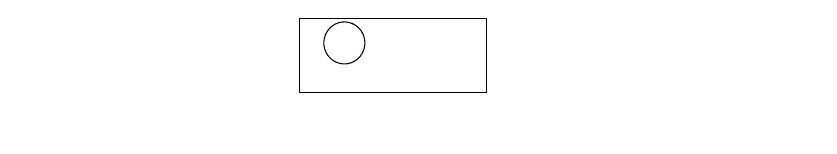}
\caption{The resolution by arcs and the map $\Phi$}
\label{figure:actionconfiguration}
\end{figure}

\addtocontents{toc}{\protect\setcounter{tocdepth}{3}}

\subsection{Coefficient systems for configuration spaces}\label{section:coefficientsystemsconfigurationspaces}Recall from \cref{section:modules} that the $\tE_1$-module structure on $\cM=\coprod_{n\ge 0}\pConf^\pi_n(W)$ over $\cA=\coprod_{n\ge0}\Conf_n(D^d)$ induces a right-module structure $\oplus$ on the fundamental groupoid $\Pi(\cM)$ over the braided monoidal category $(\Pi(\cA),\oplus,\braiding,0)$ and hence, after fixing a stabilising object $X\in\Conf_1(D^n)$, also one over the free braided monoidal category $\cB$ on one object. Denoting by $A\in\Conf_0^{\pi}(W)$ the empty configuration, a coefficient system for $\coprod_{n\ge 0}\pConf^\pi_n(W)$ is by \cref{remark:coefficientsystemsasmodules} specified by 
\begin{enumerate}
\item a $\pi_1(\pConf^\pi_n(W),A\oplus X^{\oplus n})$-module $M_n$ for each $n\ge 0$, together with 
\item $(-\oplus X)$-equivariant morphisms $\mapwoshort{\sigma}{M_n}{M_{n+1}}$ such that $B_m$ acts via $(A\oplus X^{\oplus n}\oplus-)$ trivially on the image of $\mapwoshort{\sigma^m}{M_n}{M_{n+m}}$. 
\end{enumerate}
Equivalently, a coefficient system is an abelian group-valued functor on Quillen's bracket construction \[\cC^\pi(W)\coloneqq\textstyle{\langle\coprod_{n\ge0}\pi_1(\pConf^\pi_n(W)),\cB\rangle},\] compare \cref{remark:quillencoefficients}. Using the ordering of $A\oplus X^{\oplus n}$ induced by the $[0,\infty)$-coordinate, a loop $\gamma$ in $\pConf^\pi_n(W)$ induces a permutation in $n$ letters, as well as $n$ ordered loops in $E$ by connecting the  paths in $E$ forming $\gamma$ to a basepoint in $E$ via paths in the image of the section $\mapwoshort{l}{[0,\infty)\times(-1,1)^{d-1}}{E}$. This induces a morphism \begin{equation}\label{equation:fundamentalgroupsconfig}\maptwo{\pi_1(\pConf^\pi_n(W))}{\pi_1(E)\wr\Sigma_n}\end{equation} to the wreath product, which we use to relate $\cC^\pi(W)$ to other categories via a commutative diagram
\begin{equation}\label{diagram:coefficientsystems}
\begin{tikzcd}[row sep=0.4cm]
\cC^\pi(W)\arrow[d,]\arrow[r]&\langle\pi_1(E)\wr\Sigma,\Sigma\rangle\arrow[d]&\\
\cB^\pi(W)\arrow[d,description,phantom,"\rotatebox{270}{$\subseteq$}"]\arrow[r]&\FI_{\pi_1(E)}\arrow[d,description,phantom,"\rotatebox{270}{$\subseteq$}"]\arrow[r]&\FI\arrow[d,description,phantom,"\rotatebox{270}{$\subseteq$}"]\\
\cB^\pi(W)^\sharp\arrow[r]&\FI^\sharp_{\pi_1(E)}\arrow[r]&\FI^\sharp
\end{tikzcd}
\end{equation}
on which we elaborate in the following.

The category $\langle\pi_1(E)\wr\Sigma,\Sigma\rangle$ results from the action of $\Sigma=\coprod_{n\ge0}\Sigma_n$ on $\pi_1(E)\wr\Sigma=\coprod_{n\ge0} \pi_1(E)\wr \Sigma_n$. It receives a functor from $\cC^\pi(W)$, induced by the morphisms \eqref{equation:fundamentalgroupsconfig}. The category $\FI_{\pi_1(E)}$ of finite sets and injective $\pi_1(E)$-maps \cite{Casto,GanLi,Ramos,SamSnowden} is isomorphic to $\langle\pi_1(E)\wr\Sigma,\pi_1(E)\wr\Sigma\rangle$, so is the target of a functor from $\langle\pi_1(E)\wr\Sigma,\Sigma\rangle$, induced by the inclusion $\Sigma\subseteq\pi_1(E)\wr\Sigma$. By forgetting $\pi_1(E)$, the category $\FI_{\pi_1(E)}$ maps to the category $\FI$ of finite sets and injections, on which functors are studied in the context of representation stability (see e.g.~\cite{ChurchEllenbergFarb,ChurchEllenbergFarbNagpal}). Both $\FI$ and $\FI_{\pi_1(E)}$ are subcategories of larger categories $\FI^\sharp$ and $\FI_{\pi_1(E)}^\sharp$ of partially defined ($\pi_1(E)$-)-injections \cite{ChurchEllenbergFarb,SamSnowden}. The category of \emph{partial braids} $\cB^\pi(W)^\sharp$ has the nonnegative integers as its objects and a morphism from $n$ to $m$ is a pair $(k,\mu)$ with $k\le\text{min}(n,m)$ and $\mu$ a morphism in $\Pi(\pConf_k^\pi(W))$ from a subset of $A\oplus X^{\oplus n}$ to one of $A\oplus X^{\oplus m}$. For trivial $\pi$, the category $\cB^\pi(W)^\sharp$ was studied by Palmer \cite{Palmer}, who also introduced the subcategory $\cB^\pi(W)\subseteq\cB^\pi(W)^\sharp$ of \emph{full braids}, consisting of morphisms $\mapwoshort{(k,\mu)}{n}{m}$ with $k=n$. There is a functor $\maptwoshort{\cC^\pi(W)}{\cB^\pi(W)}$ which is the identity on objects and maps a morphism \[[\gamma]\in\cC^\pi(W)(n,m)=\pi_1(\pConf^\pi_m(W),A\oplus X^{\oplus m})/B_{m-n}\] to the path in $\pConf_n^\pi(W)$ that forms the first $n$ paths in $E$ of $\gamma$, i.e.~the ones starting at $A\oplus X^{\oplus n}\subseteq A\oplus X^{\oplus m}$. For $W=D^2$ and $\pi=\id_{D^2}$, the category $\cB^\pi(W)$ was considered by Schlichtkrull and Solberg \cite{SchlichtkrullSolberg}.

\begin{rem}If $W$ is of dimension $d\ge3$, then the morphisms \eqref{equation:fundamentalgroupsconfig} are isomorphisms \cite[Lem.\,4.1]{Tillmann}, from which it follows that the three left horizontal functors in the diagram \eqref{diagram:coefficientsystems} are isomorphisms. If $E$ is in addition simply connected, then all functors except for the lower vertical inclusions are isomorphisms. 
\end{rem}

We call an abelian group valued functor on a category $\cC$ of the diagram \eqref{diagram:coefficientsystems} a \emph{coefficient system} on $\cC$. There is a notion of being of \emph{(split) degree} $r$ at an integer $N$ for coefficient systems on any of the categories $\cC$, defined analogously to \cref{definition:degree} by using an endofunctor $\Sigma$ on $\cC$ together with a natural transformation $\mapwoshort{\sigma}{\id}{\Sigma}$, similar to $\cC^\pi(W)$ (see \cref{remark:quillencoefficients}).  Most categories of the diagram are of the form $\langle\cN,\cG\rangle$ for a braided monoidal groupoid $\cG$ acting on a category $\cN$ and for such, $\Sigma$ and $\sigma$ are defined as in \cref{remark:quillencoefficients}. For $\cB^\pi(W)^\sharp$, the functor $\Sigma$ maps a morphism $(k,\mu)$ to $(k+1,s(\mu))$ using the stabilisation, and $\sigma$ consists of the constant paths at $A\oplus X^{\oplus n}$. For $\cB^{\pi}(W)$, we obtain $\Sigma$ and $\sigma$ by restriction from $\cB^\pi(W)^\sharp$. For $\FI^\sharp$ and $\FI_{\pi_1(E)}^\sharp$, the definition is analogous. Note that the morphisms $\sigma$ of the categories with a $\sharp$-superscript admit left-inverses, which results in all coefficient systems on them being split.

As all functors in the diagram are compatible with $\Sigma$ and $\sigma$, the property of being of (split) degree $r$ at $N$ is preserved by pulling back coefficient systems along them. In conclusion, by pulling back to $\cC^\pi(W)$, all coefficient systems of finite degree on any of the categories in the diagram induce coefficient systems for which the homology of $\pConf_n^\pi(W)$ stabilises by \cref{theorem:configurationspaces}. The degree of coefficient systems on some of the categories has been examined before, providing us with a wealth of examples.

\begin{ex}\label{examples:finitedegreesystemsconf}
\begin{enumerate}
\item In \cite{RWW}, the (split) degree of coefficient systems on \emph{prebraided monoidal categories} was introduced. This includes $\langle\pi_1(E)\wr\Sigma,\Sigma\rangle$, $\FI_{\pi_1(E)},\FI_{\pi_1(E)}^{\sharp},\FI$, and $\FI^{\sharp}$.

\item A \emph{finitely generated} coefficient system $F$ on $\FI_{\pi_1(E)}$ in the sense of \cite{SamSnowden} is of finite degree, provided that $\pi_1(E)$ is finite (see \cite[Prop.\,3.4.2]{SamSnowden}). By \cite[Rem.\,3.4.3]{SamSnowden}, this implication remains valid if $\pi_1(E)$ is virtually polycyclic (see the introduction for a definition) and even holds for arbitrary $\pi_1(E)$ if $F$ is \emph{presented in finite degree} or if $F$ extends to $\FI_{\pi_1(E)}^\sharp$.

\item More quantitatively, coefficient systems on $\FI$ that are \emph{generated in degree} $\le k$ and \emph{related in degree} $\le d$, as defined in \cite[Def.\,4.1]{ChurchEllenberg}, are of degree $k$ at $d+\text{min}(k,d)$ by \cite[Prop.\,4.18]{RWW}.

\item The degree of a coefficient system on $\cB^\pi(W)^\sharp$ has been studied by Palmer \cite{Palmer}, who also provides examples of finite degree coefficient systems on $\FI^\sharp$ (see \cite[Sect.\,4]{Palmer}). Note that the degree and the split degree of coefficient systems on these categories coincide.

\item For $W=D^2$ and $\pi=\id_{D^2}$, the category $\cC^\pi(W)$ is isomorphic to the category $\UB$ as recalled in \cref{definition:UB}. The \emph{Burau representation} gives rise to an example of a coefficient system of degree $1$ at $0$ on $\UB$ \cite[Ex.\,3.14]{RWW}. On the basis of this example, Soulié \cite{Soulie} has constructed coefficient systems on $\UB$ of arbitrary degree, using the so-called \emph{Long-Moody construction}.
\end{enumerate}
\end{ex}

\begin{rem}\label{remark:comparisonwithpalmer}Inspired by work of Betley \cite{Betley}, Palmer \cite{Palmer} proved homological stability for $\Conf^\pi_n(W)$ for trivial fibrations $\pi$ and coefficient systems of finite degree on $\cB^\pi(W)^\sharp$. His a surjectivity range agreeing with ours, but his result includes split injectivity in all degrees---a phenomenon special to configuration spaces and not captured by our general approach. In Remark 1.13, Palmer suspects stability for coefficient systems of finite degree on $\cB^\pi(W)$. \cref{theorem:configurationspaces} confirms this and extends his result to a larger class of coefficient systems and nontrivial labels.
\end{rem}

\subsection{Applications}\label{section:applicationsconfigurationspaces}
We complete the proofs of Corollaries~\ref{corollary:configurationspacesofdiscs} and~\ref{corollary:representationstability} sketched in the introduction. Unless stated otherwise, $W$ denotes a manifold satisfying the assumptions of \cref{theorem:configurationspaces}.

\subsubsection{Configuration spaces of embedded discs}\label{section:embeddeddiscs}
Recall from the introduction the configuration spaces of (un)ordered $k$-discs $\Conf^k_n(W)$ and $\FConf^k_n(W)$ of $W$, the related subgroups $\PDiff^k_{\partial,n}(W)\subseteq \Diff^k_{\partial,n}(W)\subseteq\Diff_\partial(W)$ of diffeomorphisms fixing or permuting $n$ chosen $k$-discs in $W$, respectively, and the orientation-preserving variants denoted with a (+)-superscript for $k=d$ and oriented $W$. The action of $\Diff_\partial(W)$ on $\Conf^{\pi_k}_n(W)$ extends to one on $\coprod_{n\ge0}\pConf^{\pi_k}_n(W)$ by extending diffeomorphisms of $W$ to $\widetilde{W}$ via the identity. This action commutes with the $\tE_d$-action of $\coprod_{n}\Conf_n(D^d)$, so the Borel construction $E\Diff_\partial(W)\times_{\Diff_\partial(W)}\cM$ inherits a graded $\tE_1$-module structure whose canonical resolution is highly-connected by \cref{example:groupaction}. Consequently, Theorems~\ref{theorem:constant} and~\ref{theorem:twisted} imply (twisted) stability for $E\Diff_\partial(W)\times_{\Diff_\partial(W)}\pConf^{\pi_k}_n(W)$ for $k<d$ and, as the equivalence $\Conf^{k}_n(W)\to \Conf^{\pi_k}_n(W)\subseteq \pConf^{\pi_k}_n(W)$ (see the introduction for the first map) is equivariant, also for $E\Diff_\partial(W)\times_{\Diff_\partial(W)}\Conf^{k}_n(W)$. The same argument applies to $E\Diff^{\plus}_\partial(W)\times_{\Diff^{\plus}_\partial(W)}\Conf^{d^{\plus}}_n(W)$. As announced in the introduction, we identify these homotopy quotients with classifying spaces of certain diffeomorphism groups. This proves \cref{corollary:configurationspacesofdiscs}.

\begin{lem}\label{lemma:diffeospermutingdiscs}For $k<d$, the Borel constructions $E\Diff_\partial(W)\times_{\Diff_\partial(W)}\FConf^k_n(W)$ and $E\Diff_\partial(W)\times_{\Diff_\partial(W)}\Conf^k_n(W)$ are models for the classifying spaces $\tB \PDiff^k_{\partial,n}(W)$ and $\tB \Diff^k_{\partial,n}(W)$, respectively. For $k=d$ and $W$ being oriented, the analogue identifications for the variants with (+)-superscripts hold.\end{lem}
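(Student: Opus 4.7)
The plan is to derive both identifications from a single fibre sequence produced by evaluating the diffeomorphism group at a chosen configuration of embedded discs. First, I would choose a basepoint embedding $\varphi_0 \in \FConf^k_n(W) = \Emb(\coprod^n D^k, W\setminus\partial W)$ whose image is the collection of $n$ chosen $k$-discs defining the groups $\PDiff^k_{\partial,n}(W)$ and $\Diff^k_{\partial,n}(W)$, and consider the orbit map $\mathrm{ev}\colon \Diff_\partial(W)\to \FConf^k_n(W)$, $f\mapsto f\circ\varphi_0$, induced by the action of $\Diff_\partial(W)$ on embeddings by post-composition. By construction the stabiliser of $\varphi_0$ is $\PDiff^k_{\partial,n}(W)$, while the stabiliser of the underlying unordered configuration is $\Diff^k_{\partial,n}(W)$.

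Second, I would invoke the parametrised isotopy extension theorem for spaces of embeddings (a classical result going back to Palais and Cerf) to conclude that $\mathrm{ev}$ is a locally trivial principal $\PDiff^k_{\partial,n}(W)$-bundle onto its image $\mathcal{O}\subseteq \FConf^k_n(W)$, which is the union of path components of $\varphi_0$. Hence
\[
\PDiff^k_{\partial,n}(W) \longrightarrow \Diff_\partial(W) \longrightarrow \mathcal{O}
\]
is a fibre sequence, and applying $E\Diff_\partial(W)\times_{\Diff_\partial(W)}(-)$ to $\mathcal{O} \cong \Diff_\partial(W)/\PDiff^k_{\partial,n}(W)$ yields $E\Diff_\partial(W)/\PDiff^k_{\partial,n}(W) \simeq \tB\PDiff^k_{\partial,n}(W)$. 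The full Borel construction on $\FConf^k_n(W)$ decomposes as a disjoint union of such pieces indexed by $\Diff_\partial(W)$-orbits, and each such piece is classified (in the sense of the lemma) by a choice of representative embedded configuration, giving the first identification.

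Third, the unordered case is entirely parallel: replacing $\FConf^k_n(W)$ by $\Conf^k_n(W) = \FConf^k_n(W)/\Sigma_n$ and running the same argument, the evaluation at the unordered configuration determined by $\varphi_0$ has stabiliser $\Diff^k_{\partial,n}(W)$, yielding $E\Diff_\partial(W)\times_{\Diff_\partial(W)}\Conf^k_n(W) \simeq \tB\Diff^k_{\partial,n}(W)$. For $k=d$ with $W$ oriented, one carries out the same argument with $\Diff_\partial^+(W)$ acting on the space of orientation-preserving embeddings of $n$ ordered $d$-discs, giving the analogous identifications with $\tB\PDiff_{\partial,n}^{d,+}(W)$ and $\tB\Diff_{\partial,n}^{d,+}(W)$.

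The only nontrivial input is verifying that $\mathrm{ev}$ is a locally trivial fibration with the claimed fibre; this is standard input from the theory of spaces of embeddings and is used routinely in homological-stability arguments for moduli of manifolds. Everything else is a formal manipulation of Borel constructions combined with the identification of stabilisers dictated by the definitions of the diffeomorphism subgroups.
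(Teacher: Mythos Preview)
Your approach is the same as the paper's: evaluate $\Diff_\partial(W)$ at a fixed configuration of embedded discs, use Palais's isotopy extension theorem to see the orbit map is a fibre bundle with the correct stabiliser, and pass to the Borel construction. The one point you leave implicit but the paper makes explicit is transitivity: the paper observes that $\Emb(\coprod^n D^k, W\setminus\partial W)\simeq \FConf_n^{\pi_k}(W)$ is path-connected for $k<d$ (connected $W$, connected fibre of the $k$-frame bundle), so there is a single orbit and the Borel construction is a single classifying space; your hedge about a disjoint union over orbits is unnecessary, and since the lemma asserts a single classifying space you do need this connectedness.
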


\begin{proof}It suffices to show that $\Diff_\partial(W)$ acts transitively on $\FConf^{k}_n(W)$ and $\Conf^{k}_n(W)$, since the stabilisers of these actions are precisely the subgroups $\PDiff^k_{\partial,n}(W)$ and $\Diff^k_{\partial,n}(W)$, respectively. The required transitivity follows from the fact that the map $\Diff_\partial(W)\rightarrow \Emb(\coprod^nD^k,W\backslash\partial W)$, given by acting on $n$ fixed disjoint parametrised $k$-discs, is by \cite{Palais} a fibre bundle with path-connected base space $\Emb(\coprod^nD^k,W\backslash\partial W)\simeq \FConf_n^{\pi_k}(W)$. This same argument applies to $\PDiff^{d^{\plus}}_{\partial,n}(W)$ and $\Diff^{d^{\plus}}_{\partial,n}(W)$ by using orientation preserving diffeomorphisms and embeddings, as the fibre of the bundle $\pi_d^{\plus}$ of oriented $d$-frames is path-connected.
\end{proof}

\subsubsection{Representation stability}
\label{section:representationstability} We prove \cref{corollary:representationstability}, using the notation of the introduction.

\begin{lem}\label{lemma:multiplicityistwistedhomology}Let $W$ and $\pi$ be as in \cref{theorem:configurationspaces} and $\lambda \vdash n$ a partition. The $V_\lambda$-multiplicity in $\oH^i(\FConf^\pi_n(W);\bfQ)$ is the dimension of $\oH_i(\Conf^\pi_n(W);V_\lambda)$, where $\pi_1(\Conf^\pi_n(W))$ acts on $V_\lambda$ via the morphism $\maptwoshort{\pi_1(\Conf^\pi_n(W))}{\Sigma_n}$.
\end{lem}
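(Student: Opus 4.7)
The plan is to use the regular $\Sigma_n$-cover $\FConf^\pi_n(W)\to\Conf^\pi_n(W)$ (the $\Sigma_n$-action on $\FConf^\pi_n(W)$ is free). This presents $\Conf^\pi_n(W)$ as the homotopy quotient $\FConf^\pi_n(W)\dslash\Sigma_n$, so we obtain a fibration sequence
\[\FConf^\pi_n(W)\longrightarrow\Conf^\pi_n(W)\longrightarrow\tB\Sigma_n,\]
whose classifying map realises the homomorphism $\maptwoshort{\pi_1(\Conf^\pi_n(W))}{\Sigma_n}$ used to define the $\pi_1(\Conf^\pi_n(W))$-action on $V_\lambda$.

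First I would consider the local system on $\Conf^\pi_n(W)$ associated to $V_\lambda$ as the pullback of a local system on $\tB\Sigma_n$. Its restriction to the total space $\FConf^\pi_n(W)$ of the fibration is trivial, giving the constant coefficient system $V_\lambda$. The Serre spectral sequence in homology of this fibration with coefficients in $V_\lambda$ thus takes the form
\[E^2_{p,q}\cong \oH_p\bigl(\tB\Sigma_n;\oH_q(\FConf^\pi_n(W);\bfQ)\otimes V_\lambda\bigr)\implies \oH_{p+q}(\Conf^\pi_n(W);V_\lambda),\]
where the local system on $\tB\Sigma_n$ is built from the natural $\Sigma_n$-action on each tensor factor.

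Next, because $\Sigma_n$ is finite and we work over $\bfQ$, the homology of $\tB\Sigma_n$ with any rational local coefficients is concentrated in degree $0$, where it computes coinvariants. Hence the spectral sequence degenerates on the $E^2$-page to give an isomorphism
\[\oH_i(\Conf^\pi_n(W);V_\lambda)\;\cong\;\bigl(\oH_i(\FConf^\pi_n(W);\bfQ)\otimes V_\lambda\bigr)_{\Sigma_n}\;\cong\;\bigl(\oH_i(\FConf^\pi_n(W);\bfQ)\otimes V_\lambda\bigr)^{\Sigma_n},\]
the last isomorphism being the averaging map, again available because $|\Sigma_n|$ is invertible in $\bfQ$.

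Finally, since the irreducible $\Sigma_n$-representations $V_\lambda$ are self-dual over $\bfQ$, the dimension of $(M\otimes V_\lambda)^{\Sigma_n}=\Fun_{\Sigma_n}(V_\lambda,M)$ equals the multiplicity of $V_\lambda$ in the $\Sigma_n$-module $M$. Applying this with $M=\oH_i(\FConf^\pi_n(W);\bfQ)$, and using that homology and cohomology of $\FConf^\pi_n(W)$ over $\bfQ$ are dual as $\Sigma_n$-modules (so that multiplicities of the self-dual $V_\lambda$ agree in each), we deduce the claimed equality. The argument is essentially formal given standard properties of the Serre spectral sequence over $\bfQ$, and I do not expect a genuine obstacle; the only point to mind is checking that the transport of $V_\lambda$ along the fibration does factor through the natural map $\maptwoshort{\pi_1(\Conf^\pi_n(W))}{\Sigma_n}$, which is exactly the monodromy of the classifying map of the cover.
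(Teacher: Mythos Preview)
Your proof is correct and follows essentially the same approach as the paper: both use the Serre spectral sequence of the fibration $\FConf^\pi_n(W)\to\Conf^\pi_n(W)\to\tB\Sigma_n$ with coefficients in $V_\lambda$, observe that it collapses over $\bfQ$ because $\Sigma_n$ is finite, identify the resulting (co)invariants, and pass between homology and cohomology by duality. The only cosmetic difference is that the paper phrases the multiplicity count via the fact that $(V_\mu\otimes V_\lambda)^{\Sigma_n}$ is nonzero (and then one-dimensional) only when $\mu=\lambda$, whereas you invoke self-duality of $V_\lambda$; these are equivalent formulations.
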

\begin{proof}
Delooping the covering space 
$\Sigma_n\rightarrow\FConf^\pi_n(W)\rightarrow \Conf^\pi_n(W)$ once results in a fibration sequence with base space $\tB \Sigma_n$. We consider the induced Serre spectral sequence, twisted by the local system $V_\lambda$ on $\tB \Sigma_n$,
\[E_{p,q}^2\cong \oH_{p}\big(\tB \Sigma_n;\oH_q(\FConf^\pi_n(W);V_\lambda)\big)\implies \oH_{p+q}\big(\Conf^\pi_n(W);V_\lambda\big).\] Since the action of $\pi_1(\FConf^\pi_n(W))$ on $V_\lambda$ is trivial, we conclude \[\oH_{p}\big(\tB \Sigma_n;\oH_q(\FConf^\pi_n(W);V_\lambda)\big)\cong \oH_{p}\big(\tB \Sigma_n;\oH_q(\FConf^\pi_n(W);\bfQ)\otimes V_\lambda\big).\] These groups vanishes for $p\neq 0$ as $\Sigma_n$ has no rational cohomology in positive degree. Hence, the $E_2$-page is trivial, except for the $0$th column, which is isomorphic to the coinvariants $(\oH_q(\FConf^\pi_n(W);\bfQ)\otimes V_\lambda)_{\Sigma_n}$, which are in turn isomorphic to the invariants $(\oH_q(\FConf^\pi_n(W);\bfQ)\otimes V_\lambda)^{\Sigma_n}$. As a result of this, the spectral sequence collapses and we can identify $\oH_{q}(\Conf^\pi_n(W);V_\lambda)$ with $(\oH_q(\FConf^\pi_n(W);\bfQ)\otimes V_\lambda)^{\Sigma_n}$, whose dimension equals the $V_\lambda$-multiplicity in $\oH_q(\FConf^\pi_n(W);\bfQ)$, since $V_\mu\otimes V_\lambda$ for a partition $\mu\vdash n$ contains a trivial representation if and only if $\mu=\lambda$ and, in that case, it is $1$-dimensional (see \cite[Ex.\,4.51]{FultonHarris}). This proves the claim, because the $V_\lambda$-multiplicity in $\oH^i(\FConf^\pi_n(W);\bfQ)$ equals the one in $\oH_i(\FConf^\pi_n(W);\bfQ)$ by the universal coefficient theorem.
\end{proof}

\begin{cor}\label{corollary:representationstabilityconfig}For $W$ and $\pi$ as in \cref{theorem:configurationspaces}, the $V_{\lambda[n]}$-multiplicity in $\oH^i(\FConf^\pi_n(W);\bfQ)$ is independent of $n$ for $n$ large relative to $i$.
\end{cor}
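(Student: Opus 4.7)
The strategy is to encode the sequence of representations $V_{\lambda[n]}$ as a coefficient system of finite degree for the module $\coprod_{n\ge 0}\pConf^\pi_n(W)$ and then combine \cref{theorem:configurationspaces} with \cref{lemma:multiplicityistwistedhomology}. The homological stability in the latter theorem, transported through the spectral sequence argument of the lemma, will pin down the multiplicities.

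First I would construct an $\FI$-module $M(\lambda)$ whose value on $[n]$ is $V_{\lambda[n]}$ (for $n\ge |\lambda|+\lambda_1$) equipped with the standard $\Sigma_n$-action; this is the classical padded-partition $\FI$-module, which is finitely generated in degree $|\lambda|$ and finitely presented, these facts being standard applications of Pieri's rule and Schur--Weyl duality. Via the composite $\cC^\pi(W)\longrightarrow\langle\pi_1(E)\wr\Sigma,\Sigma\rangle\longrightarrow\FI_{\pi_1(E)}\longrightarrow\FI$ appearing in diagram \eqref{diagram:coefficientsystems}, pulling back $M(\lambda)$ produces a coefficient system $F$ for the configuration module. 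By construction, on objects of degree $n$ the value of $F$ is $V_{\lambda[n]}$ and the $\pi_1(\Conf^\pi_n(W))$-action factors through the morphism to $\Sigma_n$ used in \cref{lemma:multiplicityistwistedhomology}.

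Next I would verify that $F$ has finite degree. Since $M(\lambda)$ is generated in degree $|\lambda|$ and related in some finite degree $d$, item (iii) of \cref{examples:finitedegreesystemsconf} applies and shows that $M(\lambda)$ is of degree $|\lambda|$ at $d+\min(|\lambda|,d)$ as an $\FI$-module. As remarked in \cref{section:coefficientsystemsconfigurationspaces}, the functors in diagram \eqref{diagram:coefficientsystems} are compatible with the suspension endofunctor and its structure map, so pulling back along them preserves the property of being of finite degree. Therefore $F$ is a coefficient system of finite degree for $\coprod_{n\ge 0}\pConf^\pi_n(W)$.

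With $F$ in hand, \cref{theorem:configurationspaces}(ii) yields that the stabilization map induces isomorphisms $\oH_i(\Conf^\pi_n(W);F)\cong\oH_i(\Conf^\pi_{n+1}(W);F)$ for $n$ large compared to $i$; in particular, $\dim_{\bfQ}\oH_i(\Conf^\pi_n(W);V_{\lambda[n]})$ stabilizes. By \cref{lemma:multiplicityistwistedhomology}, this dimension equals the $V_{\lambda[n]}$-multiplicity in $\oH^i(\FConf^\pi_n(W);\bfQ)$, so the corollary follows. The only genuinely nontrivial step is the bookkeeping needed to pass $M(\lambda)$ to a coefficient system on $\cC^\pi(W)$ with the correct $\Sigma_n$-action and to confirm that finite degree is preserved under pullback; both of these reduce to unwinding the definitions already set up in \cref{section:coefficientsystemsconfigurationspaces}, so the substance of the corollary is carried by \cref{theorem:configurationspaces}.
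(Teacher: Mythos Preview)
Your proposal is correct and follows essentially the same route as the paper: build the finitely generated $\FI$-module $V(\lambda)$ with $V(\lambda)_n\cong V_{\lambda[n]}$, pull it back along $\cC^\pi(W)\to\FI$ to obtain a coefficient system of finite degree, apply the twisted stability theorem for configuration spaces, and conclude via \cref{lemma:multiplicityistwistedhomology}. The only cosmetic differences are that the paper cites \cite[Prop.~3.4.1]{ChurchEllenbergFarb} for the $\FI$-module and invokes item~(ii) of \cref{examples:finitedegreesystemsconf} rather than item~(iii); also note that $V(\lambda)$ is generated in degree $|\lambda|+\lambda_1$ rather than $|\lambda|$ (cf.~\cref{remark:representationstability}), though this does not affect your argument.
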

\begin{proof}
By \cite[Prop.\,3.4.1]{ChurchEllenbergFarb}, the $\Sigma_n$-representations $V_{\lambda[n]}$ assemble into a finitely generated $\FI$-module $V(\lambda)$ with $V(\lambda)_n\cong V_{\lambda[n]}$, which pulls back along $\maptwoshort{\cC^\pi(W)}{\FI}$ of \eqref{diagram:coefficientsystems} to a coefficient system of finite degree for $\coprod_{n\ge0}\pConf_n(M)$ by \cref{examples:finitedegreesystemsconf} ii). Combining \cref{theorem:twisted} with \cref{lemma:multiplicityistwistedhomology} gives the claim.
\end{proof}

\begin{proof}[Proof of \cref{corollary:representationstability}] \cref{corollary:representationstabilityconfig} settles the statement for $\FConf^\pi_n(W)$. To derive the claim about $\FConf^k_n(W)$, observe that the equivalence $\maptwoshort{\Conf^k_n(W)}{\Conf_n^{\pi_k}(W)}$ discussed in the introduction is covered by a $\Sigma_n$-equivariant equivalence $\maptwoshort{\FConf^k_n(W)}{\FConf^{\pi_k}_n(W)}$, so we have $\oH^i(\FConf^k_n(W);\bfQ)\cong \oH^i(\FConf^{\pi_k}_n(W);\bfQ)$ as $\Sigma_n$-modules. The remaining part concerning $\tB \PDiff^k_{\partial,n}(W)$ is shown by using the model $\tB \PDiff^k_{\partial,n}(W)\simeq E\Diff_\partial(W)\times_{\Diff_\partial(W)}\FConf^k_n(W)$ provided by \cref{lemma:diffeospermutingdiscs}, and adapting the argument of Lemmas~\ref{lemma:multiplicityistwistedhomology} and~\ref{corollary:representationstabilityconfig} by replacing the covering space $\Sigma_n\rightarrow\FConf^\pi_n(W)\rightarrow \Conf^\pi_n(W)$ with \[\Sigma_n\rightarrow E\Diff_\partial(W)\times_{\Diff_\partial(W)}\FConf^k_n(W)\rightarrow E\Diff_\partial(W)\times_{\Diff_\partial(W)} \Conf^k_n(W).\] The statements about the variants $\FConf^{d^{\plus}}_n(W)$ and $\PDiff^{d^{\plus}}_n(W)$ are proved in the same way.
\end{proof}

The following ranges resulted from a discussion with Peter Patzt whom the author would like to thank.

\begin{rem}\label{remark:representationstability}
To obtain explicit ranges for \cref{corollary:representationstability}, one can show that the $\FI$-module $V(\lambda)$, used in the proof of \cref{corollary:representationstabilityconfig}, is generated in degree $|\lambda|+\lambda_1$ and related in degree $|\lambda|+\lambda_1+1$, so the corresponding coefficient system has degree $|\lambda|+\lambda_1$ at $2|\lambda|+2\lambda_1+1$ by \cref{examples:finitedegreesystemsconf} iii). Consequently, one deduces that the $V_{\lambda[n]}$-multiplicities in the cohomology groups of \cref{corollary:representationstability} are constant for $i\le\frac{n}{2}-(|\lambda|+\lambda_1+1)$. Note that our range is not uniform, i.e.~is dependent on the partition. In contrast, the range for $\oH^{i}(\FConf(W);\bfQ)$ obtained by Church \cite{Church} is $i\le\frac{n}{2}$ if the dimension is $d\ge3$ and $i\le\frac{n}{4}$ for $d=2$, at least for the manifolds $W$ to which his result applies.\end{rem}

\section{Moduli spaces of manifolds}\label{section:modulispacesofmanifolds}
Throughout the section, we fix a closed manifold $P$ of dimension $(d-1)$, together with an embedding \[P\subseteq \bfR^{d-1}\times \bfR^{\infty}\] which contains the open unit cube $(-1,1)^{d-1}\times\{0\}\subseteq\bfR^{d-1}\times \bfR^{\infty}$ and satisfies $P\subseteq \bfR^{d-1}\times [0,\infty)^{\infty}$. We consider compact manifolds $W$ with a specified identification $\partial W=P$ and denote by $\Diff_\partial(W)$ the group of diffeomorphisms fixing a neighbourhood of the boundary, equipped with the $C^\infty$-topology. To construct our preferred model of its classifying space, we choose a collar $\mapwoshort{c}{(-\infty,0]\times P}{W}$ and denote by $\Emb_\varepsilon(W,(-\infty,0]\times\bfR^d\times\bfR^{\infty})$ for $\varepsilon>0$ the space of embeddings $e$ satisfying $(e \circ c)(t,x)=(t,x)$ for $t\in(-\varepsilon,0]$, using the $C^\infty$-topology.

We define the \emph{moduli space of $W$-manifolds} $\cM(W)$ as the space of submanifolds \[W'\subseteq (-\infty,0]\times\bfR^{d-1}\times\bfR^{\infty}\] such that
\begin{enumerate}
\item there is an $\varepsilon>0$ with $W'\cap (-\varepsilon,0]\times\bfR^{d-1}\times\bfR^{\infty}=(-\varepsilon,0]\times P$ and
\item there is a diffeomorphism $\mapwoshort{\phi}{W}{W'}$ that satisfies 
$\phi\circ c|_{(-\varepsilon,0]\times P}=\text{inc}_{(-\varepsilon,0]\times P}$, 
\end{enumerate}
where $\text{inc}$ denotes the inclusion ensured by (i). 
The space $\cM(W)$ is topologised as the quotient of \[\Emb_\partial(W,(-\infty,0]\times\bfR^{d-1}\times\bfR^{\infty})=\colim_{\varepsilon \to 0}\Emb_\varepsilon(W,(-\infty,0]\times\bfR^{d-1}\times\bfR^{\infty})\] by the action of $\Diff_\partial(W)$ via precomposition. The space $\Emb_\partial(W,(-\infty,0]\times\bfR^{d-1}\times\bfR^{\infty})$ is weakly contractible by Whitney's embedding theorem, and as the action of $\Diff_\partial(W)$ is free and admits slices by \cite{BinzFischer}, the moduli space $\cM(W)$ provides a model for the classifying space $\tB\Diff_\partial(W)$. In the case of $P$ being the sphere $S^{d-1}$, we define a weakly equivalent variant $\cM^s(W)$ of $\cM(W)$, consisting of submanifolds \[W'\subseteq D^d\times\bfR^{\infty}\] such that 
\begin{enumerate}
\item the interior of $W'$ lies in $(D^d\backslash \partial D^d)\times(-\infty,0]^\infty$,
\item there exists an $\varepsilon>0$ for which $\mapwoshort{c'}{(-\varepsilon,0]\times S^{d-1}}{W'}$, mapping $(t,x)$ to $((1+t)x,0)$, is a collar, and  
\item there is a diffeomorphism $\mapwoshort{\phi}{W}{W'}$ satisfying 
$\phi\circ c|_{(-\varepsilon,0]\times P}=c'|_{(-\varepsilon,0]\times P}$.\end{enumerate}
We call $\textstyle{\cM=\coprod_{[W]}\cM(W)}$ the \emph{moduli space of manifolds with $P$-boundary}, the union taken over compact manifolds $W$ with an identification $\partial W=P$, one in each diffeomorphism class relative to $P$. Analogously, the \emph{moduli space of manifolds with sphere boundary} is $\cA=\coprod_{[N]}\cM^s(N)$ for $N$ with $\partial N=S^{d-1}$.

\begin{lem}\label{lemma:modulestructuremanifolds}
The moduli space $\cA$ of manifolds with sphere boundary forms an $\tE_d$-algebra with the moduli space $\cM$ of manifolds with $P$-boundary as an $\tE_1$-module over it.
\end{lem}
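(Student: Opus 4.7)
The plan is to exhibit an explicit action of the operad $\SC_d$ (\cref{definition:SC}) on the pair $(\cM,\A)$, with $\cM$ in the $\ocolor{m}$-color and $\A$ in the $\ocolor{a}$-color, and then verify the algebra axioms. Both moduli spaces are graded additively (by Euler characteristic, or by any other additive invariant; for path-component-valued gradings, simply by the closed manifold obtained after capping), so the gradings are automatically preserved.

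For the $\ocolor{a}$-color structure $\theta\colon\SC_d(\ocolor{a}^k;\ocolor{a})\times\A^k\to\A$, given $(\phi_1,\ldots,\phi_k)\in\cD^k(D^d)$ and $N_1,\ldots,N_k\in\A$, fix once and for all a sequence of affine embeddings $j_i\colon\bfR^\infty\hookrightarrow\bfR^\infty$ with pairwise disjoint images; these exist since $\bfR^\infty$ admits countably many disjoint affine copies of itself. Define
\[
\theta(\phi_1,\ldots,\phi_k;N_1,\ldots,N_k)=\bigl((D^d\setminus\textstyle{\bigcup_i\mathring{\phi_i(D^d)}})\times\{0\}\times\{0\}\bigr)\cup\bigcup_{i=1}^k(\phi_i\times\id_{(-\infty,0]}\times j_i)(N_i),
\]
so that each scaled $N_i$ is slotted into its disc $\phi_i(D^d)\subseteq D^d$ and the complement is filled by the flat slice. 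The radial collar assumption in the definition of $\cM^s(N_i)$ matches the flat slice across the gluing sphere $\phi_i(\partial D^d)\times\{0\}\times\{0\}$, ensuring a smooth submanifold of $D^d\times(-\infty,0]\times\bfR^\infty$ with boundary $S^{d-1}\times\{0\}\times\{0\}$ and its own radial collar. For the mixed operation $\theta\colon\SC_d(\ocolor{m},\ocolor{a}^l;\ocolor{m})\times\cM\times\A^l\to\cM$ with datum $(s,\phi)$, translate $W\in\cM$ by $-s$ in the collar direction, attach the flat collar extension $[-s,0]\times P$, and insert each $N_i$ into the region $\phi_i(D^d)\subseteq(0,s)\times(-1,1)^{d-1}$ reinterpreted via the reflection $t\mapsto -t$ as sitting in $(-s,0)\times(-1,1)^{d-1}\subseteq[-s,0]\times P$; the $(-\infty,0]\times\bfR^\infty$-depth of $N_i$ is absorbed into a tubular neighborhood of this reflected disc in $[-s,0]\times\bfR^d\times\bfR^\infty$, using the $j_i$ to keep the different $N_i$'s disjoint in the $\bfR^\infty$-factor. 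Again the matching collar conditions on $W$, on $[-s,0]\times P$, and on each $N_i$ ensure that the result is a smooth submanifold, and hence a genuine element of $\cM$.

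The verification that these formulas assemble into an algebra over $\SC_d$ reduces to three standard checks. First, continuity in the operad and module inputs is immediate from the $\cC^\infty$-topology on the embedding spaces and on the moduli spaces. Second, $\Sigma_k$-equivariance in the $N_i$-slots follows by choosing the disjoint embeddings $j_i$ operadically, e.g.\ as the disjoint images of an $\tE_\infty$-structure on $\bfR^\infty$, so that permuting the $N_i$'s corresponds to permuting their $\bfR^\infty$-slots coherently. Third, compatibility with the operadic composition $\gamma$ of \cref{definition:SC} is checked by unwinding the definition: composition in $\SC_d$ adds the parameters $s,s'$ and composes the disc embeddings $\phi,\psi$, which matches exactly the inductive insertion of already-glued manifolds and the concatenation of collar extensions in our construction.

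The principal obstacle is arranging the gluing to be strictly $\cC^\infty$ and strictly associative on the nose rather than just up to isotopy. Both issues are handled by the flat-collar normal form built into the definitions of $\cM$ and $\cM^s$: the standard gluing lemma for manifolds with matching cylindrical collars produces the smooth structure, and the operations are strictly associative because they are defined by substitution of submanifolds into open regions of a fixed ambient. This parallels the construction of $\tE_d$-algebra structures on moduli spaces of high-dimensional manifolds in \cite{GRWI,GRWII}, which we follow; for this reason we will not spell out the lengthy bookkeeping of the $\bfR^\infty$-embeddings $j_i$ in detail.
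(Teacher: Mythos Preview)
Your approach is essentially the same as the paper's: define an explicit $\SC_d$-action by gluing the $N_i$ into the slots prescribed by the little $d$-discs and attaching a collar on the $\ocolor{m}$-side. However, you introduce an unnecessary complication that in fact endangers the strict associativity you claim at the end. The disjoint affine embeddings $j_i\colon\bfR^\infty\hookrightarrow\bfR^\infty$ are not needed: since $N_i\subseteq D^d\times(-\infty,0]\times\bfR^\infty$ and the images $\phi_i(D^d)\subseteq D^d$ are already pairwise disjoint, applying the affine map $x\mapsto r_ix+b_i$ (with $r_i,b_i$ the radius and center of $\phi_i$) to the $D^d$-coordinate alone yields pairwise disjoint submanifolds $r_iN_i+b_i\subseteq\phi_i(D^d)\times(-\infty,0]\times\bfR^\infty$. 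This is exactly what the paper does.

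With the $j_i$'s in place, strict associativity actually fails: under $\theta(\psi;\theta(\phi;N_1,N_2),N_3)$ the manifold $N_1$ lands in the $(j_1\circ j_1)$-slot of $\bfR^\infty$, whereas under $\theta(\gamma(\psi;\phi,1);N_1,N_2,N_3)$ it lands in the $j_1$-slot. Your appeal to an $\tE_\infty$-structure on $\bfR^\infty$ would repair this only up to coherent homotopy, not on the nose, so you would end up with an algebra over a larger operad than $\SC_d$. Simply drop the $j_i$'s (equivalently, set $j_i=\id$); then associativity is immediate because the operations are literal substitution of submanifolds into disjoint regions of the $D^d$-factor, and the rest of your argument goes through. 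The paper's treatment of the mixed operation is also more direct than your tubular-neighborhood description: the $(-\infty,0]$-depth of each $N_i$ is absorbed by the extra $[0,\infty)$-coordinate already present in the fixed embedding $P\subseteq\bfR^{d-1}\times[0,\infty)\times\bfR^\infty$, not by a choice of tubular neighborhood.
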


\begin{proof}The operad $\cD^\bullet(D^d)$ of little $d$-discs acts on $\coprod_{[N]}\cM^s(N)$ by gluing manifolds along their sphere boundary into a disc, instructed by little $d$-discs. Formally, define
\[\map{\theta}{\cD^k(D^d)\times (\coprod_{[N]}\cM^s(N))^k}{\coprod_{[N]}\cM^s(N)}{\big((\phi_1,\ldots,\phi_k),(N_1,\ldots,N_k)\big)}{\big((D^d\backslash\cup_{i=1}^k\im\phi_i)\times\{0\}\big)\cup\big(\cup_{i=1}^kr_iN_i+b_i\big),}\] where $r_i$ is the radius and $b_i$ the centre of $\mapwoshort{\phi_i}{D^d}{D^d}$, and $r_iN_i+b_i$ is obtained from $N_i$ by scaling by $r_i$ and translating by $b_i$, both in the $D^d$-coordinate. The conditions (i) and (ii) in the definition of $\cM^s(N)$ ensure that $\theta$ is well-defined. This action extends to an action of $\SC_d$ (see \cref{section:modules}) on $(\coprod_{[W]}\cM(W),\coprod_{[N]}\cM^s(N))$ via \[\textstyle{\mapwo{\theta}{\SC_d(\ocolour{m},\ocolour{a}^k,\ocolour{m})\times\coprod_{[W]}\cM(W)\times\big(\coprod_{[N]}\cM^s(N)\big)^k}{\coprod_{[W]}\cM(W)}},\] mapping $((s,\phi_1,\ldots,\phi_k),M,N_1,\ldots, N_k)$ to the submanifold obtained from \begin{equation}\label{equation:unionmanifold}M\cup\big(([0,s]\times P)\backslash (\cup_{i=1}^k \im\phi_i\times\{0\})\big)\cup\big(\cup_{i=1}^kr_iN_i+b_i\big)\end{equation} by translating in the first coordinate by $s$ to the left, where $r_iN_i+b_i$ is obtained from $N_i$ by scaling by the radius $r_i$ of $\mapwoshort{\phi_i}{D^d}{(0,1)\times(-1,1)^{d-1}}$ and translating by the centre $b_i$ of $\phi_i$, both in the $\bfR^d$-coordinate. Loosely speaking, we attach a cylinder to the boundary of $W$, glue in the $N_i$ via the little $d$-discs, and shift everything to the left, as in Figure~\ref{figure:modulemanifolds}. This yields indeed a smooth submanifold, since the threefold union \eqref{equation:unionmanifold} is one: our conditions on $P\subseteq \bfR^{d-1}\times[0,\infty)^{\infty}$ and on the manifolds $N_i\subseteq D^d\times(-\infty,0]^{\infty}$ in $\cM^s(N)$ ensure that $[0,s]\times P$ and $\cup_{i=1}^kr_iN_i+b_i$ intersect only in $(0,s)\times(-1,1)^{d-1}\times\{0\}$, which, together with properties (i)--(ii) of $\cM^s(N)$, implies that the second union of \eqref{equation:unionmanifold} is a smooth submanifold. This manifold intersects with $M$ only in $\{0\}\times P$, so the whole union \eqref{equation:unionmanifold} forms by property (i) of $\cM(W)$ and (ii) of $\cM^s(N)$ a smooth submanifold as well.
\end{proof}
\begin{figure}[h]
\centering{
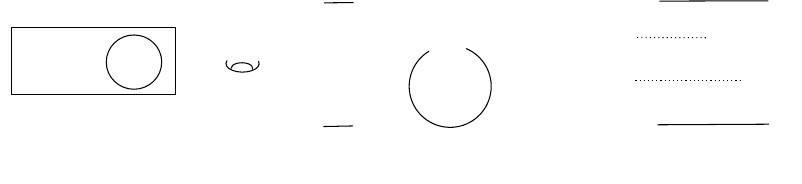}
\caption{The $\tE_1$-module structure on the moduli space of manifolds}
\label{figure:modulemanifolds}
\end{figure}

\subsection{The resolution by embeddings}\label{section:resolutionbyembeddings}
By virtue of \cref{lemma:modulestructuremanifolds}, the moduli space $\cM$ of manifolds with $P$-boundary forms in dimensions $d\ge2$ an $\tE_1$-module over the one of manifolds with sphere boundary $\cA$, considered as an $\tE_2$-algebra via the embedding $\maptwoshort{\SC_2}{\SC_d}$ of \cref{section:modules}. For $A\in\cM$ and $X\in\cA$, the stabilised manifold $A\oplus X$ is a model for the boundary connected sum $A\natural X$ of $A$ and $X$. But in contrast to the usual construction of the boundary connected sum, the manifold $A\oplus X$ contains $A$ as a canonically embedded submanifold, and the boundary of $A\oplus X$ is canonically identified with the boundary of $X$ (cf.~Figure~\ref{figure:modulemanifolds}). On components, the stabilisation takes the form $\mapwoshort{s}{\cM(A)}{\cM(A\natural X)}$, modeling the map \[\mapwo{s}{\tB\Diff_\partial(A)}{\tB\Diff_\partial(A\natural X)}\] induced by extending diffeomorphisms by the identity.

As we did for configuration spaces in \cref{section:arcresolution}, we identify the canonical resolution of $\cM$ with an augmented semi-simplicial space $R^{\stripsymbol}_\bullet(\cM)$ of geometric nature, which is a generalisation of one introduced by Galatius and Randal-Williams in \cite{GRWI}. To that end, denote by $H_X$ for $X\in\cA$ the manifold obtained from $X$ by gluing in $[-1,0]\times D^{d-1}$ along the embedding \[\mapnoname{\{-1\}\times D^{d-1}}{\partial X=S^{d-1}}{x}{(\sqrt{1-|x|},x).}\] The resulting manifolds is, after smoothing corners, diffeomorphic to $X$, but contains a canonically embedded strip $[-1,0]\times D^{d-1}\subseteq H_X$. When considering embeddings of $H_X$ into a manifold with boundary, we always implicitly require that $\{0\}\times D^d$ is sent to the boundary and the rest of $H_X$ to the interior.

\begin{dfn}Let $W$ be a $d$-manifold, equipped with an embedding $\mapwoshort{e}{(-\varepsilon,0]\times\bfR^{d-1}}{W}$ for an $\varepsilon>0$, satisfying $e^{-1}(\partial W)=\{0\}\times \bfR^{d-1}$. Define a semi-simplicial space $K^X_\bullet(W)$ with the space of $p$-simplices given by tuples $((\varphi_0,t_0),\ldots,(\varphi_p,t_p))\in(\Emb(H_X,W)\times\bfR)^{p+1}$ of embeddings with parameters, such that
\begin{enumerate}
\item the embeddings $\varphi_i$ are pairwise disjoint,
\item there exists an $\delta\in(0,\varepsilon)$ such that $\phi_i(s,p)=e(s,p+t_ie_1)$ holds for $(s,p)\in(-\delta,0]\times D^{d-1}\subseteq H_X$, where $e_1\in\bfR^{d-1}$ is the first basis vector, and
\item the parameters are ordered by $t_0<\ldots <t_p$. 
\end{enumerate}
The embedding space is topologised in the $\cC^\infty$-topology. The $i$th face map forgets $(\varphi_i,t_i)$.
\end{dfn}
For submanifolds $W\in\cM$, we use the embedding $\mapwoshort{e}{(-\varepsilon,0]\times \bfR^{d-1}}{W}$ that is obtained from the canonically embedded cube $(-\varepsilon,0]\times(-1,1)^{d-1}\subseteq (-\varepsilon,0]\times P\subseteq W$ by use of the diffeomorphism \begin{equation}\label{equation:diffeomorphismreallinecube}\mapnoname{\bfR}{(-1,1)}{x}{\frac{2}{\pi}\operatorname{arctan}(x).}\end{equation}
The group $\Diff_\partial(W)$ acts simplicially on $K^X_\bullet(W)$ by precomposition, so the levelwise Borel construction results in an augmented semi-simplicial space \begin{equation}\label{equation:partofembeddingresolution}\maptwo{\Emb_\partial(W,(-\infty,0]\times\bfR^d\times\bfR^\infty)\times_{\Diff_\partial(W)}K^X_\bullet(W)}{\cM(W)}\end{equation} in terms of which we define the \emph{resolution by embeddings} as the augmented semi-simplicial space
\[\maptwoshort{R^{\stripsymbol}_\bullet(\cM)}{\cM}\] obtained by taking coproducts of the semi-simplicial spaces \eqref{equation:partofembeddingresolution} over compact manifolds $W$ with $P$-boundary, one in each diffeomorphism class relative $P$. This is the analogue of the resolution by arcs for configuration spaces. A point in $R^{\stripsymbol}_\bullet(\cM)$ consists of a manifold $W\in\cM$ and $(p+1)$ embeddings of $H_X$ into $W$ that form an element of $K^X_p(W)$ (see the rightmost graphic of Figure~\ref{figure:manifoldresolution} for an example). Since the augmentation is by construction a levelwise fibre bundle, the resolution by embeddings is fibrant. In particular, its fibre $K^X_\bullet(A)$ at $A\in\cM$ is equivalent to the respective homotopy fibre.

\begin{thm}\label{theorem:identificationcomplexformanifolds}The canonical resolution and the resolution by embeddings are weakly equivalent as augmented $\semisimpthick$-spaces. In particular, $K^X_\bullet(A)$ for $A\in\cM$ is weakly equivalent to the space of destabilisations $W_\bullet(A)$ of $A$.\end{thm}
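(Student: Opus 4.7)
I plan to mirror the strategy used to prove Theorem~\ref{theorem:highconnectivityconfigurationspaces} for configuration spaces, replacing the resolution by arcs with the resolution by embeddings. Fix the stabilizing object $X \in \A$. The first stage is to build an auxiliary model of the resolution by embeddings over the free $\tE_1$-module $\cM^{\free} = \coprod_{n\ge 0}\SC_2(\ocolor{m};\ocolor{a}^n;\ocolor{m})/\Sigma_n$: a semi-simplicial space $R^{\stripsymbol}_\bullet(\cM^{\free})$ whose $p$-simplices consist of an operation $e \in \cM^{\free}$ together with $p+1$ pairwise disjoint strips $[-1,0]\times D^{d-1}$ embedded into $[0,s_e)\times(-1,1)^{d-1}$, with boundary slices in $\{s_e\}\times(-1,1)^{d-1}$ ordered in the first coordinate. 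The operadic composition $\gamma(e;d,1^{g(d)})$ turns $\cM^{\free}$ into a topological monoid whose multiplication is covered by a simplicial action on $R^{\stripsymbol}_\bullet(\cM^{\free})$, and the module structure $\theta$ then furnishes a simplicial map $\Phi\colon \cM \times R^{\stripsymbol}_\bullet(\cM^{\free}) \to R^{\stripsymbol}_\bullet(\cM)$ analogous to (\ref{equation:simplicialactionnonfree}): a manifold $A \in \cM$ is sent to $\theta(e;A,X^{\oplus g(e)})$, with the free strips translated into the attached cylinder and interpreted as embedded copies of $H_X$ inside the glued-in $X$-summands.

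The second stage is the levelwise comparison, analogous to Lemma~\ref{lemma:levelwisethesame}. For any $(e,\psi_i) \in R^{\stripsymbol}_p(\cM^{\free})$ of degree $p+1$, the triangle
\begin{equation*}
\begin{tikzcd}[baseline={([yshift=0.5em]current bounding box.south)}, column sep=1cm, row sep=0.1cm]
\cM\arrow[rr,"{\Phi(\ \_\ ,(e,\psi_i))}"]\arrow[dr,"s^{p+1}",swap]&& R^{\stripsymbol}_p(\cM)\arrow[dl]\\
& \cM &
\end{tikzcd}
\end{equation*}
commutes up to homotopy, where the right-hand diagonal forgets the strips. The horizontal map is a weak equivalence: cutting $W'$ along the $p+1$ embedded copies of $H_X$ canonically decomposes $W'$ as a submanifold in $\cM$ with $p+1$ $X$-summands glued in via the strips, and this decomposition is inverse to the stabilization up to a contractible space of choices (controlled via the parametrized isotopy extension theorem and the contractibility of spaces of collars). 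Selecting a path in $R^{\stripsymbol}_p(\cM^{\free})$ from $(e,\psi_i)$ to an element with $\cM^{\free}$-coordinate $c_{p+1}$ then exhibits $R_p(\cM)$ and $R^{\stripsymbol}_p(\cM)^{\fib}$ as weakly equivalent over $\cM$.

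The third stage assembles these pointwise comparisons into an equivalence of augmented $\semisimpthick$-spaces, following the recipe of Definitions~\ref{definition:arccategory} and~\ref{definition:semisimplicialthickening}. Define a topologically enriched category $U^{\square}\SC_2$ with objects $\ob\semisimp \times \{0,1\}$, whose restriction to $\{0\}$ is $U\SC_2$, whose restriction to $\{1\}$ is $\semisimp$, and with $U^{\square}\SC_2([q^1],[p^0]) = \hofib_{c_{p+1}}(R^{\stripsymbol}_q(\cM^{\free}) \to \cM^{\free})$; the compositions are induced by the monoidal action of $\cM^{\free}$ and by the face maps of $R^{\stripsymbol}_\bullet$. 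As in Lemma~\ref{lemma:arccategoryhomotopydiscrete}, this category is homotopy discrete with the same $\pi_0$ as in the configuration case, and the simplicial map $\Phi$ assembles into a functor $B^{\square}(\cM)\colon (U^{\square}\SC_2)^{\op}\to \TT/\cM$ interpolating $R_\bullet(\cM)$ on the $\{0\}$-side and $R^{\stripsymbol}_\bullet(\cM)^{\fib}$ on the $\{1\}$-side. Restricting to the subcategory $\widetilde{\semisimp^{\op}\times[1]}$ hit by the section $\semisimp^{\op}\times [1] \to U^{\square}\SC_2^{\pi_0}$ and applying the rectification result of Lemma~\ref{lemma:rectification} to the weak equivalence $\widetilde{\semisimp^{\op}\times[1]} \to \semisimp^{\op}\times[1]$ produces a zig-zag of weak equivalences; stage~2 ensures that the interpolating morphism is a levelwise equivalence, which combined with Lemma~\ref{lemma:hocolimrealization} and the weak equivalence $R^{\stripsymbol}_\bullet(\cM) \simeq R^{\stripsymbol}_\bullet(\cM)^{\fib}$ identifies the two augmented $\semisimpthick$-spaces. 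The "in particular" statement about $K^X_\bullet(A)$ is then immediate, since the fiber of $R^{\stripsymbol}_\bullet(\cM) \to \cM$ at $A$ is, by construction, $K^X_\bullet(A)$, and the augmentation is a fiber bundle so that this fiber agrees with the homotopy fiber of the canonical resolution, namely $W_\bullet(A)$.

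The main obstacle is stage~2: producing a natural, up-to-contractible-choice identification between cutting a manifold along an embedded $H_X$ and undoing the connected-sum stabilization. For configuration spaces this was transparent because an arc literally transports a point to the boundary, but here one must verify that the "cut-along-$H_X$" operation assembles into a well-defined homotopy inverse to stabilization on the level of parametrized moduli, which requires careful use of isotopy extension and the contractibility of the space of tubular neighborhoods of the strips.
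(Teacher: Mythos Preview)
Your proposal follows essentially the same three-stage route as the paper, and your identification of stage~2 as the crux is accurate. The paper's main refinement over your sketch lies in the definition of $R^{\stripsymbol}_\bullet(\cM^{\free})$: rather than bare strips $[-1,0]\times D^{d-1}$, the paper uses embeddings of $H_{D^d}$ (disc-with-strip) subject to an explicit \emph{two-dimensional} condition---the little $d$-disc must be the image of a little $2$-disc under $\SC_2\to\SC_d$, the tether must lie in the slice $(0,s)\times(-1,1)\times\{0\}^{d-2}$, and the induced normal framing must be, up to scaling, the parallel transport of the standard frame. This condition is precisely what makes $R^{\stripsymbol}_\bullet(\cM^{\free})$ weakly equivalent to the arc resolution $R^{\arcsymbol}_\bullet(\cM^{\free})$ from the configuration-space section, so that the homotopy discreteness of $U^{\square}\SC_2$ in your stage~3 is inherited directly from \cref{lemma:arccategoryhomotopydiscrete} rather than requiring a fresh argument; your sketch asserts this discreteness but does not supply the mechanism. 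For stage~2, the paper isolates the weak-equivalence claim as \cref{lemma:simplicialactionmanifoldsgivesweakequivalence} and dispatches it by noting that the argument of \cite[Lem.\,6.11]{GRWI} (originally written for $\bar X=S^p\times S^p$) generalizes verbatim---so the obstacle you flag has an off-the-shelf resolution and does not need the cut-and-decompose verification you outline.
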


We closely follow the proof of \cref{theorem:highconnectivityconfigurationspaces} for configuration spaces to prove \cref{theorem:identificationcomplexformanifolds}, adopting the notation of \cref{section:arcresolution}. More specifically, we construct a zig-zag of weak equivalences
\begin{equation}\label{equation:zigzagmanifolds}R_\bullet(\cM)\xlongleftarrow{\circled{$1$}} \tB\big(U\cO(\bullet,{\smallsquare}),U\cO,B_{\smallsquare}(\cM)\big)\stackrel{\circled{$2$}}{\simeq}\tB\big(U\cO^{\stripsymbol}_{\bullet,{\smallsquare}},U\cO,B_{\smallsquare}(\cM)\big)\xlongrightarrow{\circled{$3$}} R^{\stripsymbol}_\bullet(\cM)^{\fib}\end{equation} of augmented $\semisimpthick$-spaces between the canonical resolution and the fibrant replacement of the resolution by embeddings---analogous to the one for configuration spaces, labelled by \eqref{equation:zigzag}. The first equivalence $\circled{$1$}$ of \eqref{equation:zigzag} carries over to \eqref{equation:zigzagmanifolds} verbatim. To construct $\circled{$2$}$, we replace the semi-simplicial space $R_\bullet^{\arcsymbol}(\cO^\ocolour{m})$ with an equivalent variant $R_\bullet^{\stripsymbol}(\cO^\ocolour{m})$, essentially by including a contractible choice of tubular neighbourhoods of the arcs. To this end, consider for $s>0$ the simplicial space $K^{D^d}_\bullet((0,s]\times(-1,1)^{d-1})$ for which we use the embedding $\mapwoshort{e}{(-s,0]\times\bfR^{d-1}}{(0,s]\times(-1,1)^{d-1}}$ obtained from \eqref{equation:diffeomorphismreallinecube} and the translation by $s$. Call a $0$-simplex $(\varphi,t)$ therein a \emph{little $d$-disc with thickened tether} if the restriction $\mapwoshort{\varphi|_{D^d}}{D^d}{(0,s)\times(-1,1)^{d-1}}$ is a composition of a scaling and a translation. The embedding $\mapwoshort{\varphi}{H_{D^d}}{(0,s]\times(-1,1)^{d-1}}$ induces an arc \[\mapwoshort{\varphi^{\arcsymbol}\coloneq\varphi|_{[-1,0]\times\{0\}}}{[-1,0]}{(0,s)\times(-1,1)^{d-1}},\] called the \emph{tether} of $\varphi$, which connects the little $d$-disc to the boundary. The embedding $\varphi$ furthermore induces a trivialisation of the normal bundle of the tether, which we consider as a map $\maptwoshort{[-1,0]}{V_{d-1}(\bfR^d)}$ to the space of $(d-1)$-frames in $\bfR^{d}$. We call a little $d$-disc with thickened tether $(\varphi,t)$ \emph{two-dimensional}, if
\begin{enumerate}
\item the little $d$-disc $\varphi|_{D^d}$ is the image of a little $2$-disc in $(0,s)\times(-1,1)$ under $\SC_2\rightarrow\SC_d$ (see \cref{section:modules}),
\item the induced tether $\varphi^{\arcsymbol}$ lies in the slice $(0,s)\times(-1,1)\times\{0\}^{d-2}$, and
\item the induced trivialisation $\maptwoshort{[-1,0]}{V_{d-1}(\bfR^d)}$ equals, up to scaling by a smooth function $\maptwoshort{[-1,0]}{(0,\infty)}$, the parallel transport of the frame $(e_2,\ldots,e_d)\in V_{d-1}(\bfR^d)$ at $\varphi^{\arcsymbol}(0)$ along the tether $\varphi^{\arcsymbol}$, where $e_i\in\bfR^d$ denotes the $i$th basis vector.
\end{enumerate}

\begin{dfn}Define the augmented semi-simplicial space $\maptwoshort{R^{\stripsymbol}_\bullet(\cO^\ocolour{m})}{\cO^\ocolour{m}}$ with $p$-simplices
\[R_p^{\stripsymbol}(\cO^\ocolour{m})\subseteq\cO^\ocolour{m}\times\big(\Emb(H_{D^d},(0,\infty)\times(-1,1))\times\bfR\big)^{p+1}\] consisting of
$((s,\{\phi_j\}),(\varphi_0,t_0),\ldots,(\varphi_p,t_p))$ such that $(\varphi_i,t_i)\in K^{D^d}_p((0,s]\times(-1,1)^{d-1})$ and all $(\varphi_i,t_i)$ are two-dimensional little $d$-discs with thickened tethers whose induced little $2$-disc is one of the $\phi_j$. The third graphic of Figure~\ref{figure:manifoldresolution} illustrates a $0$-simplex of this semi-simplicial space.
\end{dfn}
As a two-dimensional little $2$-disc with thickened tether is, up to a contractible choice of a thickening, determined by the associated little $2$-disc and its tether, the $\semisimpthick$-spaces $R_\bullet^{\stripsymbol}(\cO^\ocolour{m})$ and $R_\bullet^{\arcsymbol}(\cO^\ocolour{m})$ are weakly equivalent. The $(\semisimpopthick\times U\cO)$-space $U\cO^{\stripsymbol}_{\bullet,{\smallsquare}}$ in \eqref{equation:zigzagmanifolds} is defined in the same way as $U\cO^{\arcsymbol}_{\bullet,{\smallsquare}}$, but using $R^{\stripsymbol}_\bullet(\cO^\ocolour{m})$ instead of $R^{\arcsymbol}_\bullet(\cO^\ocolour{m})$. Making use of the equivalence between $R_\bullet^{\stripsymbol}(\cO^\ocolour{m})$ and $R_\bullet^{\arcsymbol}(\cO^\ocolour{m})$, the proof of \cref{lemma:frompathstoarcs} carries over to the manifold case and shows that $U\cO^{\stripsymbol}_{\bullet,{\smallsquare}}$ and $U\cO(\bullet,\smallsquare)$ weakly equivalent $(\semisimpopthick\times U\cO)$-spaces, which establishes the equivalence $\circled{$2$}$. Finally, we construct the remaining equivalence $\circled{$3$}$ via an analogue
\begin{equation}
\label{equation:simplicialactionnonfreemanifolds}\mapwo{\Phi}{\cM\times R^{\stripsymbol}_\bullet(\cO^\ocolour{m})}{R^{\stripsymbol}_\bullet(\cM)},
\end{equation}
of the simplicial map \eqref{equation:themapphi}, mapping $(A,(e,(\varphi_0,t_0),\ldots,(\varphi_p,t_p)))$ to the manifold $\theta(e;A,X^{p+1})$, equipped with the embeddings of $H_X$ obtained from the $\varphi_i$ by replacing $D^d$ by $X$ (see Figure~\ref{figure:manifoldresolution}). Using \eqref{equation:simplicialactionnonfreemanifolds} instead of \eqref{equation:themapphi} in the definition of \eqref{equation:includingarcs}, we obtain simplicial maps
$\maptwoshort{U\cO^{\stripsymbol}_{\bullet,{k}}\times B_k(\cM)}{R_\bullet^{\stripsymbol}(\cM)^{\fib}},$ which induce a morphism of the form $\tB\big(U\cO^{\stripsymbol}_{\bullet,{\smallsquare}},U\cO,B_{\smallsquare}(\cM)\big)\rightarrow R^{\stripsymbol}_\bullet(\cM)^{\fib}$, as in the case of configuration spaces. This is the last morphism $\circled{$3$}$ in the zig-zag \eqref{equation:zigzagmanifolds}, and it is a weak equivalence by the argument of the proof of \cref{lemma:thirdequivalence}, minorly modified using the following lemma, which completes the proof of \cref{theorem:identificationcomplexformanifolds}.

\begin{lem}\label{lemma:simplicialactionmanifoldsgivesweakequivalence}For all $p\ge0$ and elements of the form $(c_{p+1},(\varphi_0,t_0\ldots,\varphi_p,t_p))\in R^{\stripsymbol}_p(\cO^\ocolour{m})$, the simplicial map $\Phi$ induces a weak equivalence $\maptwoshort{\cM}{R_p^{\stripsymbol}(\cM)}$.
\end{lem}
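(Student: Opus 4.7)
The plan is to analyze $\Phi(\_,(e,\phi_i))$ as a map over $\cM$, where the target is equipped with the augmentation of the resolution by embeddings and the source maps to $\cM$ via the $(p+1)$-fold stabilization $s^{p+1}$. Both structure maps are (homotopy) fiber bundles---the augmentation of $R^{\stripsymbol}_\bullet(\cM)$ is a level-wise fiber bundle with fiber $K^X_p(W)$ by the definition via level-wise Borel constructions, and on path components the stabilization is modeled by the inclusion $\maptwoshort{\tB\Diff_\partial(A)}{\tB\Diff_\partial(A\sharp\bar{X}^{\sharp(p+1)})}$ induced by extension by the identity. Thus it suffices to show that $\Phi(\_,(e,\phi_i))$ induces a weak equivalence on (homotopy) fibers over each point of $\cM$.

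Decomposing both spaces by path components, the map restricts on $\cM(A)\subseteq\cM$ to a map into $R^{\stripsymbol}_p(\cM)(W)=\Emb_\partial(W,(-\infty,0]\times\bfR^d\times\bfR^\infty)\times_{\Diff_\partial(W)}K^X_p(W)$, where $W=\theta(e;A,X^{p+1})$ is diffeomorphic rel $P$ to $A\sharp\bar{X}^{\sharp(p+1)}$. Applying Palais's theorem on embedding spaces \cite{Palais} to the postcomposition action of $\Diff_\partial(W)$ on $K^X_p(W)$---as in the proof of \cref{lemma:diffeospermutingdiscs}---the orbit map of a fixed tuple $(\varphi^0_i)\in K^X_p(W)$ is a principal bundle onto its path component with structure group the stabilizer. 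This stabilizer is the subgroup of diffeomorphisms fixing a neighborhood of $\bigcup_i\varphi^0_i(H_X)$ pointwise, and is canonically isomorphic to $\Diff_\partial(A')$ for the manifold $A'$ with $P$-boundary obtained from $W\setminus\bigcup_i\mathrm{int}(\varphi^0_i(H_X))$ after smoothing corners. Passing to Borel constructions, every path component of $R^{\stripsymbol}_p(\cM)(W)$ is thus weakly equivalent to $\tB\Diff_\partial(A')$ for a unique such $A'$.

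On path components, the map $\Phi(\_,(e,\phi_i))$ takes $\cM(A)=\tB\Diff_\partial(A)$ into the path component corresponding to $A'=A$, and by construction agrees (up to compatible basepoint choices) with the weak equivalence just described. Surjectivity on $\pi_0$ holds because any tuple $(W,(\varphi^0_i))\in R^{\stripsymbol}_p(\cM)$ yields a manifold $A'=W\setminus\bigcup_i\mathrm{int}(\varphi^0_i(H_X))\in\cM$ mapping into its component via $\Phi(\_,(e,\phi_i))$; conversely, every path component of $K^X_p(W)$ is the orbit of some $(p+1)$-tuple corresponding to a connect-sum decomposition of $W$.

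The main technical obstacle will be the canonical identification of the stabilizer with $\Diff_\partial(A')$. One must verify that cutting $W$ along the $\varphi^0_i(H_X)$---using that the strip parts of $H_X$ meet $\partial W$ exactly along embedded $D^{d-1}$'s in the cube $(-1,1)^{d-1}\subseteq P$---produces a smooth manifold with $P$-boundary in a canonical way, so that diffeomorphisms of $A'$ fixing a neighborhood of $P$ correspond bijectively with diffeomorphisms of $W$ fixing neighborhoods of $\partial W$ and of $\bigcup_i\varphi^0_i(H_X)$. This is essentially parametric isotopy extension combined with the observation that the boundary conditions built into the definitions of $R^{\stripsymbol}_p(\cM)$ and $K^X_p(W)$ were designed precisely to make this identification natural.
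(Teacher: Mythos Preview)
Your proposal is correct and follows essentially the same line of argument as the one the paper invokes from \cite[Lem.\,6.11]{GRWI}: identify each path component of $R^{\stripsymbol}_p(\cM)$ over $\cM(W)$ with a classifying space $\tB\Diff_\partial(A')$ via the stabilizer of a tuple of embeddings, observe that $A'$ is the complement of the embedded copies of $H_X$, and check that $\Phi(\ \_,(e,\phi_i))$ realizes the canonical equivalence $\cM(A)\simeq\tB\Diff_\partial(A)$ onto the appropriate component. One small correction: the literal stabilizer of $(\varphi^0_i)$ consists of diffeomorphisms that are the identity on the images $\varphi^0_i(H_X)$, not on a neighbourhood of them; since $H_X$ has codimension zero this restricts to a diffeomorphism of $A'$ fixing $\partial A'$ pointwise, and the usual collaring argument shows this group is homotopy equivalent to $\Diff_\partial(A')$---which is exactly the technical point you flag at the end.
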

\begin{proof}The line of argument of \cite[Lem.\,6.10]{GRWI} for $X=D^{2p}\sharp S^p\times S^p$ generalises verbatim.
\end{proof}

\begin{figure}[h]
\centering{
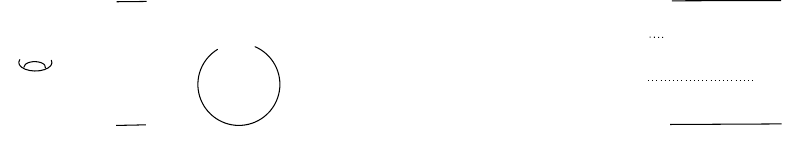}
\caption{The resolution by embeddings and the map $\Phi$}
\label{figure:manifoldresolution}
\end{figure}

Galatius and Randal-Williams \cite{GRWI} proved high-connectivity of $K_\bullet^X(A)$ if $A$ is simply connected and $X\cong D^{2p}\sharp (S^p\times S^p)$ for $p\ge3$. On the basis of this, Friedrich \cite{Friedrich} and Perlmutter \cite{Perlmutter} proved connectivity results for other choices of $A$ and $X$. To state their results, recall the stable $X$-genus $\bar{g}^X$, as introduced in \cref{section:stablegenus}, and denote by $\usr(\bfZ[G])$ for a group $G$ the \emph{unitary stable rank} \cite[Def.\,6.3]{MirzaiiVanderkallen} of its group ring $\bfZ[G]$, considered as a ring with an anti-involution.

\begin{thm}\label{theorem:spaceofdestabilisationhighlyconnectedmanifolds}The realisation of $K_\bullet^X(A)$ for a connected manifold $A\in\cM$ is
\begin{enumerate}
\item $\frac{1}{2}(\bar{g}^X(A)-4)$-connected if $X\cong D^{2p}\sharp (S^p\times S^p)$, $p\ge 3$, and $A$ is simply connected,
\item $\frac{1}{2}(\bar{g}^X(A)-\usr(\bfZ[\pi_1(A)])-3)$-connected if $X\cong D^{2p}\sharp(S^p\times S^p)$ and $p\ge 3$, and
\item $\frac{1}{2}(\bar{g}^X(A)-4-m)$-connected if $X\cong D^{p+q}\sharp(S^p\times S^q)$, $0<p<q<2p-2$, and $A$ is $(q-p+2)$-connected, where $m$ is the smallest number such that there exists an epimorphism of the form $\maptwoshort{\bfZ^m}{\pi_q(S^p)}$. 
\end{enumerate}
\end{thm}
\begin{proof}
The first two parts are \cites[Thm\,4.7]{Friedrich}[Cor.\,5.10]{GRWI}. Corollary 7.3.1 of \cite{Perlmutter} proves the third claim for the genus $g^{X}(B)$ instead of its stable variant $\bar{g}^X(B)$. However, the proof given therein goes through for $\bar{g}^X(B)$ if one replaces the relation between the genus of a manifold $B$ satisfying the assumption in (ii) and the rank of its associated Wall form (see \cite[Prop.\,6.1]{Perlmutter}) with the analogous statement relating the stable genus to the stable rank.
\end{proof}

We denote by $\bar{g}^X_A$ for a manifold $A\in\cM$ the grading of $\cM$ obtained by localising the stable $X$-genus at objects stably isomorphic to $A$ (see \cref{remark:localisation}). Combining Theorems~\ref{theorem:identificationcomplexformanifolds} and~\ref{theorem:spaceofdestabilisationhighlyconnectedmanifolds} implies the following.

\begin{cor}\label{lemma:canonicalresolutionconnectedsmanifolds}The canonical resolution $\maptwoshort{R_\bullet(\cM)}{\cM}$ is graded
\begin{enumerate}
\item $\frac{1}{2}(\bar{g}^X_A-2)$-connected for $X\cong D^{2p}\sharp (S^p\times S^p)$, $p\ge3$, and any simply-connected $A\in\cM$.
\item $\frac{1}{2}(\bar{g}^X_A-\usr(\bfZ[\pi_1(A)])-1)$-connected for $X\cong D^{2p}\sharp(S^p\times S^p)$, $p\ge3$, and any connected $A\in\cM$.
\item $\frac{1}{2}(\bar{g}^X_A-2-m)$-connected for $X\cong D^{p+q}\sharp(S^p\times S^q)$, $0<p<q<2p-2$, and any $(q-p+2)$-connected $A\in\cM$ with $m$ defined as in \cref{theorem:spaceofdestabilisationhighlyconnectedmanifolds}.
\end{enumerate}
\end{cor}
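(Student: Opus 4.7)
The plan is to combine the three inputs already established in the excerpt: the identification of the canonical resolution with the resolution by embeddings (Theorem~\ref{theorem:identificationcomplexformanifolds}), the high-connectivity estimates for $|K^X_\bullet(A)|$ (Theorem~\ref{theorem:spaceofdestabilizationhighlyconnectedmanifolds}), and the translation between connectivity of spaces of destabilizations and graded connectivity of the canonical resolution (Remark~\ref{remark:connectivityresolutionbyspacesofdestabilization}). The one subtlety is that the grading $\bar{g}^X_A$ is the localization of the stable $X$-genus at the union $\cM'\subseteq\cM$ of path components of objects stably isomorphic to $A$, so only these components carry a non-trivial condition; for every other $B\in\cM$ the grading $\bar{g}^X_A(B)=\infty$ and no connectivity is imposed.

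First I would fix $B\in\cM'$ and use Theorem~\ref{theorem:identificationcomplexformanifolds} to replace $W_\bullet(B)$ by $K^X_\bullet(B)$, which allows us to feed the geometric connectivity bounds of Theorem~\ref{theorem:spaceofdestabilizationhighlyconnectedmanifolds} into the translation of Remark~\ref{remark:connectivityresolutionbyspacesofdestabilization}. Before doing so, one has to check that the hypotheses of the relevant case of Theorem~\ref{theorem:spaceofdestabilizationhighlyconnectedmanifolds} are invariant under stable connected sum with $\bar{X}$, so they pass from $A$ to every $B\in\cM'$. In case (i) this follows because $\bar X\cong S^p\times S^p$ is simply-connected and a connected sum along a disc in the interior preserves $\pi_1$; in case (ii) the same observation shows $\pi_1(B)\cong\pi_1(A)$, hence $\usr(\bfZ[\pi_1(B)])=\usr(\bfZ[\pi_1(A)])$; and in case (iii) the factor $S^p\times S^q$ is $(p-1)$-connected, so since $q-p+2\le p-1$ (which follows from the hypothesis $q<2p-2$), connected sum with $\bar X$ preserves $(q-p+2)$-connectivity.

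Next, given $B\in\cM'$ with $\bar{g}^X(B)=n<\infty$, Theorem~\ref{theorem:spaceofdestabilizationhighlyconnectedmanifolds} in the respective case yields that $|K^X_\bullet(B)|$ is $\tfrac{n-4}{2}$-connected, $\tfrac{n-u-3}{2}$-connected (with $u=\usr(\bfZ[\pi_1(A)])$), or $\tfrac{n-4-m}{2}$-connected. By Remark~\ref{remark:connectivityresolutionbyspacesofdestabilization}, graded $\varphi$-connectivity of $R_\bullet(\cM)$ is equivalent to $(\lfloor\varphi(n)\rfloor-1)$-connectivity of $W_\bullet(B)$ for all $B$ with $\bar{g}^X_A(B)=n$. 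Solving for the largest admissible $\varphi$ gives the shift by $+1$ in each case, producing exactly the bounds $\tfrac{1}{2}(\bar{g}^X_A-3)$, $\tfrac{1}{2}(\bar{g}^X_A-u-1)$ and $\tfrac{1}{2}(\bar{g}^X_A-m-2)$ claimed.

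The main (and essentially the only) obstacle is bookkeeping: tracking the indexing shift between connectivity of $W_\bullet$ and graded connectivity of $R_\bullet(\cM)$, and confirming that the hypotheses in Theorem~\ref{theorem:spaceofdestabilizationhighlyconnectedmanifolds} are stable under taking connected sum with $\bar X$ so that the bound applies uniformly to every $B\in\cM'$ rather than only to $A$ itself. Once these two points are checked, the three statements follow by a direct substitution into Remark~\ref{remark:connectivityresolutionbyspacesofdestabilization}.
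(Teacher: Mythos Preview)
Your proposal is correct and follows the same approach as the paper, which simply states that the corollary follows by combining \cref{theorem:identificationcomplexformanifolds} with \cref{theorem:spaceofdestabilizationhighlyconnectedmanifolds}. You have merely spelled out in detail what that combination entails---the translation via \cref{remark:connectivityresolutionbyspacesofdestabilization} and the verification that the connectivity hypotheses on $A$ propagate to every $B$ stably isomorphic to $A$---which the paper leaves implicit.
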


\begin{rem}\label{rem:resolutionbyembeddingssurfaces}In the case $d=2$, one can use \cite[Prop.\,5.1]{HatcherVogtmann} to show that $K_\bullet^X(A)$ is $\frac{1}{2}(\bar{g}^X(A)-3)$-connected for $X\cong D^2\sharp (S^1\times S^1)$ and $A\in\cM$ an orientable surface, which implies stability results for diffeomorphism groups of surfaces. Their homotopy discreteness \cite{EarleEells,Gramain} ensures their equivalence to their mapping class group for which stability has a longstanding history, going back to a breakthrough result by Harer \cite{Harer}, improved in manifold ways since then \cite{Boldsen,CohenMadsen,Ivanov,RWresolutions,RWW, WahlNonorientable}.
\end{rem}

By \cref{remark:improvement}, Theorems~\ref{theorem:constant} and~\ref{theorem:twisted} apply to $\cM$ when graded by $\bar{g}^X_A+2$, by $\bar{g}^X_A+\usr(\bfZ[\pi_1(A)])+1$, or by $\bar{g}^X_A+m+2$ for $X$ and $A$ as in the respective three cases of \cref{lemma:canonicalresolutionconnectedsmanifolds}. On path components, this implies \cref{theorem:manifolds}, noting that in the relevant ranges, the genus and the stable genus agree (see \cref{remark:genusisstablegenus}).

\subsection{Coefficient systems for moduli spaces of manifolds}\label{section:coefficientsystemsmanifolds}Recall from \cref{section:twistedstability} that coefficient systems for the moduli space of manifolds with $P$-boundary $\cM$ are defined in terms of the module structure of the fundamental groupoid $(\Pi(\cM),\oplus)$ over the braided monoidal category $(\Pi(\cA),\oplus, \braiding, 0)$, induced by the $\tE_1$-module structure of $\cM$ over the moduli space of manifolds with sphere boundary $\cA$. In the following, we provide an alternative description for the fundamental groupoids $\Pi(\cM)$ and $\Pi(\cA)$ that is more suitable to construct coefficient systems on $\cM$. 

Define the categories $\mcg(\cM)$ and $\mcg(\cA)$ having the same objects as $\Pi(\cM)$ and $\Pi(\cA)$, respectively, and the set of mapping classes $\pi_0(\Diff_\partial(M,N))$ as morphisms between $M$ and $N$, where $\Diff_\partial(M,N)$ is the space of diffeomorphisms that preserve a germ of the canonical collars of $M$ and $N$ ensured by condition i) in the definition of $\cM(W)$. The composition in $\mcg(\cM)$ and $\mcg(\cA)$ is the evident one.

\begin{lem}The category $\mcg(\cM)$ is canonically isomorphic to $\Pi(\cM)$, and $\mcg(\cA)$ to $\Pi(\cM)$.
\end{lem}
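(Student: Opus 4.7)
The plan is to exhibit a canonical functor $F\colon \mcg(\cM)\to\Pi(\cM)$ which is the identity on objects and a bijection on morphism sets, and to run the same argument for $\A$ using $\cM^s(N)$ in place of $\cM(W)$. The starting point is that $\Emb_\partial(W,(-\infty,0]\times\bfR^d\times\bfR^\infty)$ is weakly contractible (by Whitney embedding) and carries a free $\Diff_\partial(W)$-action admitting slices \cite{BinzFischer}, so that the quotient projection to $\cM(W)$ realizes $\cM(W)$ as a classifying space $\tB\Diff_\partial(W)$ via a principal $\Diff_\partial(W)$-bundle with contractible total space.

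To construct $F$, take $M,N\in\cM(W)$ and choose an embedding $e_M\in\Emb_\partial$ representing $M$. For $[\phi]\in\pi_0(\Diff_\partial(M,N))$, the composition $\phi\circ e_M$ is again an embedding with image $N$ lying in $\Emb_\partial$, because $\phi$ preserves a germ of the canonical collar. Any path from $e_M$ to $\phi\circ e_M$ in the path-connected space $\Emb_\partial$ projects to a path from $M$ to $N$ in $\cM(W)$, and its homotopy class is declared to be $F([\phi])\in\Pi(\cM)(M,N)$. Well-definedness is automatic: two such paths are homotopic rel endpoints since $\Emb_\partial$ is weakly contractible and hence simply connected; a different choice of $e_M$ differs by a $\Diff_\partial(W)$-action on both endpoints and so projects to the same path in $\cM(W)$; and an isotopy $\phi_t$ between representatives of $[\phi]$ provides the required homotopy $\phi_t\circ e_M$ between lifts. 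Functoriality is verified by concatenation: for $\phi\colon M\to N$ and $\psi\colon N\to P$, using $\phi\circ e_M$ as the chosen lift of $N$ converts the construction for $[\psi]$ into a path from $\phi\circ e_M$ to $\psi\phi\circ e_M$, which glues with the path for $[\phi]$ to yield the one for $[\psi\phi]$.

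Bijectivity on morphisms reduces to the case $M=N$, since both $\mcg(\cM)(M,N)$ and $\Pi(\cM)(M,N)$ are torsors over the respective automorphism groups of $M$ once nonempty, and $F$ is clearly compatible with these torsor structures. For $M=N$, the map $F$ on $\pi_0(\Diff_\partial(W))\to\pi_1(\cM(W),M)$ agrees with the standard connecting isomorphism in the long exact sequence of the principal bundle $\Diff_\partial(W)\to\Emb_\partial\to\cM(W)$, because that connecting map is defined by precisely the same path-lifting recipe; since $\Emb_\partial$ is weakly contractible, this connecting map is an isomorphism. The argument for $\mcg(\A)\cong\Pi(\A)$ is identical after replacing $\Emb_\partial(W,(-\infty,0]\times\bfR^d\times\bfR^\infty)$ by its analogue $\Emb_\partial(N,D^d\times(-\infty,0]\times\bfR^\infty)$ used to topologize $\cM^s(N)$.

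The main obstacle, such as it is, lies in the bookkeeping needed to check that the informal identification of the connecting isomorphism with $F$ is genuine, i.e.\,that our explicit choice of lifts and paths is the one underlying the standard bundle-theoretic isomorphism $\pi_0(\Diff_\partial(W))\cong\pi_1(\tB\Diff_\partial(W))$; once this is made precise, everything else is formal.
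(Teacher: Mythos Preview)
Your proposal is correct and follows essentially the same approach as the paper: both arguments exploit the principal bundle $\Diff_\partial(W)\to\Emb_\partial\to\cM(W)$ with weakly contractible total space, and construct the functor $\mcg(\cM)\to\Pi(\cM)$ by choosing paths in $\Emb_\partial$ between an embedding and its postcomposition with a diffeomorphism. The only cosmetic difference is that the paper sketches an explicit inverse via path lifting in the bundle, whereas you reduce bijectivity on automorphisms to the connecting isomorphism in the long exact sequence of the fibration; these are two phrasings of the same fact.
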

\begin{proof}Recall the fibre bundle from the construction of $\cM(\cA)$ in the beginning of the chapter, \[\Diff_\partial(A)\rightarrow \Emb_\partial(A,(-\infty,0]\times\bfR^d\times\bfR^{\infty})\rightarrow \cM(A).\] Lifting a path from $A$ to $B$ in $\cM$ to a path in the total space starting at the inclusion $A\subseteq(-\infty,0]\times\bfR^d\times\bfR^{\infty}$ gives a path of embeddings that ends at an embedding with image $B$ and hence provides a diffeomorphism from $A$ to $B$ by restricting to the image. This provides a functor from $\mcg(\cM)$ to $\Pi(\cM)$, whose inverse is induced by considering a diffeomorphism as an embedding, choosing a path in the contractible space $\Emb_\partial(A,(-\infty,0]\times\bfR^d\times\bfR^{\infty})$ from the inclusion $A\subseteq(-\infty,0]\times\bfR^d\times\bfR^{\infty}$ to the embedding obtained from the diffeomorphism, and mapping this path to $\cM(A)$. The argument for $\mcg(\cA)\cong \Pi(\cA)$ is analogous.
\end{proof}

The module structure of $\Pi(\cM)$ over $\Pi(\cA)$ can be transported via the identification of the preceding lemma to one of $\mcg(\cM)$ over $\mcg(\cA)$, considered as a braided monoidal category by making use of the isomorphism $\mcg(\cA)\cong\Pi(\cA)$. In concrete terms, the monoidal structure on $\mcg(\cA)$ is on objects given by the one of $\Pi(\cA)$ induced by the $\tE_2$-multiplication and on morphisms by multiplying $f\in\Diff_\partial(A,B)$ and $g\in\Diff_\partial(A',X')$ as $f\oplus g\in\Diff_\partial(A\oplus A',B\oplus B')$, defined by extending $f$ and $g$ via the identity. The description of the module structure on $\mcg(\cM)$ is analogous. Coefficient systems for $\cM$ are then given by coefficient systems for the module $\mcg(\cM)$ over $\mcg(\cA)$ in the sense of \cref{definition:coefficientsystems}.

To illustrate how this identification can be used to construct coefficient systems on $\cM$, we discuss one example in detail. Consider for $i\ge0$ the functor $\mapwoshort{\oH_i}{\mcg(\cM)}{\Ab}$ that assigns a manifold $A\in\cM$ its $i$th singular homology group $\oH_i(A)$. The inclusions $A\subseteq A\oplus X$ induce a natural transformation $\mapwoshort{\sigma^{\oH_i}}{\oH_i(-)}{\oH_i(-\oplus X)}$ that satisifies the triviality condition for coefficient systems (see \cref{definition:coefficientsystems}). To calculate the degree of $\oH_i$, we consider the commutative diagram with exact rows \begin{center}
\begin{tikzcd}[row sep=0.6cm]
0\arrow[r]& \oH_i(A)\arrow[r] \arrow[d,"{\sigma^{\oH_i}}"]&\oH_i(A\oplus X)\arrow[r]\arrow[d,"{\sigma^{\Sigma\oH_i}}"]&\widetilde{\oH}_i(\operatorname{Cone}(P)\natural X)\arrow[r]\arrow[d,"\id"]&0\\
0\arrow[r]& \oH_i(A\oplus X)\arrow[r]&\oH_i(A\oplus X\oplus X)\arrow[r]&\widetilde{\oH}_i(\operatorname{Cone}(P)\natural X)\arrow[r]&0
\end{tikzcd}
\end{center}
induced by the long exact sequence of pairs together with the equivalences $\oH_i(A\oplus X^{\oplus k},A\oplus X^{\oplus k-1})\cong\widetilde{\oH}_i(\operatorname{Cone}(P)\natural X)$ obtained by collapsing $A\oplus X^{\oplus k-1}$. The leftmost vertical map is induced by the inclusion and the second one by the inclusion followed by $A\oplus\braiding_{X,X}$. Naturality of the diagram in $A$ implies triviality of the kernel of $\oH_i$, and also that its cokernel is constant, so of degree $0$ if $\widetilde{\oH}_i(\operatorname{Cone}(P)\natural X)\neq0$ and of degree $-1$ else wise. Hence, $\oH_i$ is of degree $1$ at $0$ if $\widetilde{\oH}_i(\operatorname{Cone}(P)\natural X)\neq 0$ and of degree $0$ at $0$ if $\widetilde{\oH}_i(\operatorname{Cone}(P)\natural X)= 0$, from which \cref{corollary:homologyrepresentation} is implied by an application of \cref{theorem:manifolds}.

\subsection{Extensions}\label{section:extensionsmanifolds}
\subsubsection{Stabilisation by $(2n-1)$-connected $(4n+1)$-manifolds}
Perlmutter \cite{PerlmutterLinkingforms} established high-connectivity of the semi-simplicial spaces $K^{X}_\bullet(A)$ for $2$-connected manifolds $A$ of dimension $(4n+1)$ with $n\ge2$ and certain specific $(2n-1)$-connected stably-parallelisable manifolds $X$ with finite $\oH_{2n}(X;\bfZ)$ and trivial $\oH_{2n}(X,\bfZ/2\bfZ)$. From this, he derived homological stability with constant coefficients of \begin{equation}\label{equation:stabilisationstableparallisable}\maptwoshort{\tB\Diff_\partial(A)}{\tB\Diff_\partial(A\natural X)}\end{equation} for these specific $A$ and $X$. By using classification results of closed $(2n-1)$-connected stably parallelisable $(4n+1)$-manifolds due to Wall \cite{Wall} and De Sapio \cite{DeSapio}, he furthermore concluded that \eqref{equation:stabilisationstableparallisable} stabilises in fact for all $X$ with $X$ having the properties described above and not just the specific ones considered before. The methods of this section can be used to extend his homological stability result to abelian coefficients and coefficient systems of finite degree.

\subsubsection{Automorphisms of topological and piecewise linear manifolds}
In \cite{Kupers}, Kupers explains how one can adapt the methods of Galatius and Randal-Williams \cite{GRWI} to prove high-connectivity of the relevant semi-simplicial spaces of locally flat embeddings to prove homological stability for classifying spaces of homeomorphisms of topological manifolds and PL-automorphisms of piecewise linear manifolds. By extending the ideas of this section, our framework applies also to these examples, resulting in an extension of Kupers' stability results to coefficient systems of finite degree. 

\section{Homological stability for modules over braided monoidal categories}\label{section:modulesovercategories}
We explain the applicability of our framework to modules over braided monoidal categories, and make a comparison to the theory for braided monoidal groupoids developed by Randal-Williams--Wahl \cite{RWW}.
\subsection{$\tE_1$-modules over $\tE_2$-algebras from modules over braided monoidal categories}
\label{section:modulesoverbmcs}
Recall the categorical \emph{operad of coloured braids} $\CoB$ (see e.g.~\cite[Ch.\,5]{Fresse}). The category of $n$-operations is the groupoid $\CoB(n)$ with linear orderings of $\{1,\ldots,n\}$ as objects and braids connecting the spots as prescribed by the orderings as morphisms. The operadic composition is given by ``cabling". Algebras over $\CoB$ are exactly strict braided monoidal categories, and the topological operad obtained by taking classifying spaces is $\tE_2$ (see e.g. \cites[Thm\,5.2.12]{Fresse}[Ch.\,8]{FiedorowiczStelzerVogt}). Extending this, we construct a two-coloured operad whose algebras are modules over braided monoidal categories and whose classifying space is $\tE_{1,2}$ (see \cref{section:modules}).
\begin{dfn}
Define a categorical operad $\CoBM$ with colours $\ocolour{m}$ and $\ocolour{a}$ whose operations $\CoBM(\ocolour{m}^k,\ocolour{a}^l;\ocolour{m})$ are empty for $k\neq 1$ and equal $\CoB(l)$ otherwise. The operations $\CoBM(\ocolour{m}^k,\ocolour{a}^l;\ocolour{a})$ are empty for $k\neq 0$ and equal $\CoB(l)$ elsewise. Restricted to the $\ocolour{a}$-colour, $\CoBM$ is defined as $\CoB$. Requiring commutativity of 
\begin{equation*}
\begin{tikzcd}[row sep=0.5cm]
\CoBM(\ocolour{m},\ocolour{a}^l;\ocolour{m})\times\CoBM(\ocolour{m},\ocolour{a}^k;\ocolour{m})\times\big(\bigtimes\limits_{j=1}^l\CoBM(\ocolour{a}^{i_j};\ocolour{a})\big)\arrow[rr,"\gamma_{\CoBM}"]\arrow[d,"\tau",swap]&[-1.8cm]&[-0.6cm]
\CoBM(\ocolour{m},\ocolour{a}^{k+\sum_j i_j};\ocolour{m})\\
\CoB(k)\times\CoB(l)\times\big(\bigtimes\limits_{j=1}^l\CoB(i_j)\big)\arrow[r,"\id\times\gamma_{\CoB}"]&\CoB(k)\times\CoB(\sum_j i_j)\arrow[r,"\oplus"]&\CoB(k+\sum_j i_j)\arrow[u,equal]
\end{tikzcd}
\end{equation*}
defines the remaining composition $\gamma_\CoBM$, where $\tau$ interchanges the first two factors, $\gamma_{\CoB}$ is the composition of $\CoB$, and $\oplus$ is $\gamma_{\CoB}(\id_{\{1<2\}};-,-)$, i.e.~puts braids next to each other (see Figure~\ref{figure:braidoperad} for an example).

\end{dfn}
\begin{figure}[h]
\begin{subfigure}{.19\linewidth}
\centering
\begin{tikzpicture}[scale=0.7,fat_node/.style={circle,fill=.,draw,inner sep=0pt,minimum size=5pt]}]
\node[fat_node] (1) at (0,0){};
\node[fat_node] (2) at (0.5,0){};
\node[fat_node] (3) at (0,-2){};
\node[fat_node] (4) at (0.5,-2){};
  \draw[-] (2) to [out=-90,in=90]  (3);
   \draw[preaction={draw, line width=3pt, white}][-] (1) to [out=-90,in=90]  (4);
\end{tikzpicture}
\caption*{$d\in \CoBM(\ocolour{m},\ocolour{a}^2;\ocolour{m})$}
\end{subfigure}
\begin{subfigure}{.19\linewidth}
\centering
\begin{tikzpicture}[scale=0.7,fat_node/.style={circle,fill=.,draw,inner sep=0pt,minimum size=5pt]}]
\node[fat_node] (1) at (0,0){};
\node[fat_node] (2) at (0.5,0){};
\node[fat_node] (3) at (1,0){};
\node[fat_node] (4) at (0,-2){};
\node[fat_node] (5) at (0.5,-2){};
\node[fat_node] (6) at (1,-2){};
\draw[-] (1) to [out=-90,in=90]  (5);
\draw[-] (2) to [out=-90,in=90]  (6);
 \draw[preaction={draw, line width=3pt, white}][-] (3) to [out=-90,in=90]  (4);
\end{tikzpicture}
\caption*{$e\in \CoBM(\ocolour{m},\ocolour{a}^3;\ocolour{m})$}
\end{subfigure}
\begin{subfigure}{.16\linewidth}
\centering
\begin{tikzpicture}[scale=0.7,fat_node/.style={circle,fill=.,draw,inner sep=0pt,minimum size=5pt]}]
\node[fat_node] (1) at (0,0){};
\node[fat_node] (2) at (0.5,0){};
\node[fat_node] (3) at (0,-2){};
\node[fat_node] (4) at (0.5,-2){};
 \draw[-] (0.5,-1) to [out=-90,in=90]  (3);
 \draw[-] (2) to [out=-90,in=90]  (0,-1);
 \draw[preaction={draw, line width=3pt, white}][-] (1) to [out=-90,in=90]  (0.5,-1);
 \draw[preaction={draw, line width=3pt, white}][-] (0,-1) to [out=-90,in=90]  (4);
\end{tikzpicture}
\caption*{$f\in \CoBM(\ocolour{a}^2;\ocolour{a})$}
\end{subfigure}
\begin{subfigure}{.16\linewidth}
\centering
\begin{tikzpicture}[scale=0.7,fat_node/.style={circle,fill=.,draw,inner sep=0pt,minimum size=5pt]}]
\node[fat_node] (1) at (0,0){};
\node[fat_node] (2) at (0,-2){};
 \draw[-] (1) to [out=-90,in=90]  (2);
\end{tikzpicture}
\caption*{$g\in \CoBM(\ocolour{a}^1;\ocolour{a})$}
\end{subfigure}
\begin{subfigure}{.27\linewidth}
\centering
\begin{tikzpicture}[scale=0.7,fat_node/.style={circle,fill=.,draw,inner sep=0pt,minimum size=5pt]}]
\node[fat_node] (1) at (0,0){};
\node[fat_node] (2) at (0.5,0){};
\node[fat_node] (3) at (1,0){};
\node[fat_node] (4) at (0,-2){};
\node[fat_node] (5) at (0.5,-2){};
\node[fat_node] (6) at (1,-2){};
\draw[-] (1) to [out=-90,in=90]  (5);
\draw[-] (2) to [out=-90,in=90]  (6);
 \draw[preaction={draw, line width=3pt, white}][-] (3) to [out=-90,in=90]  (4);
 \node[fat_node] (7) at (1.5,0){};
\node[fat_node] (8) at (2,0){};
\node[fat_node] (9) at (2.5,0){};
\node[fat_node] (10) at (1.5,-2){};
\node[fat_node] (11) at (2,-2){};
\node[fat_node] (12) at (2.5,-2){};
\draw[-] (9) to [out=-90,in=90]  (10);
\draw[preaction={draw, line width=3pt, white}][-] (2.25,-1) to [out=-90,in=90]  (11);
\draw[preaction={draw, line width=3pt, white}][-] (8) to [out=-90,in=90]  (1.75,-1);
\draw[preaction={draw, line width=3pt, white}][-] (7) to [out=-90,in=90]  (2.25,-1);
\draw[preaction={draw, line width=3pt, white}][-] (1.75,-1) to [out=-90,in=90]  (12);
\end{tikzpicture}
\caption*{$\gamma(d;e,f,g)\in\CoBM(\ocolour{m},\ocolour{a}^6;\ocolour{m})$}
\end{subfigure}
\caption{The operadic composition in $\CoBM$}
\label{figure:braidoperad}
\end{figure}
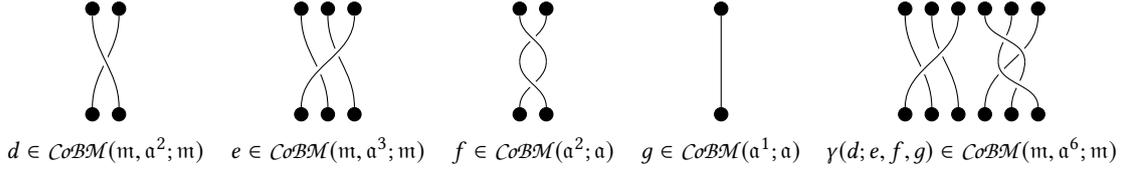
Recall the notion of a right-module $(\cM,\oplus)$ over a monoidal category $(\cA,\oplus,0)$: a category $\cM$ with a functor $\mapwoshort{\oplus}{\cM\times\cA}{\cM}$ that is unital and associative up to coherent isomorphism (see \cref{section:gradings}).

\begin{lem}\label{lemma:classifyingspacesofmodules}The structure of a (graded) $\CoBM$-algebra on a pair of categories $(\cM,\cA)$ is equivalent to a strict (graded) braided monoidal structure on $\cA$ and a strict (graded) right-module structure on $\cM$ over it. Furthermore, the topological operad obtained from $\CoBM$ by taking levelwise classifying spaces is $\tE_{1,2}$.
\end{lem}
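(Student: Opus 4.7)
The plan is to handle the two assertions separately, using in both cases the known one-color analogue for the braided monoidal operad $\CoB$ as a template.

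For the first assertion, I would first unpack what a $\CoBM$-algebra structure on a pair $(\cM,\A)$ amounts to. Restricting to the color $\ocolor{a}$, the operad $\CoBM$ reduces to $\CoB$, so $\A$ inherits the structure of a $\CoB$-algebra; by the classical recognition for $\CoB$ (see \cite[Thm.\,5.2.4]{Fresse}), this is precisely the same datum as a strict braided monoidal structure $(\A,\oplus,\braiding,0)$. The operations $\CoBM(\ocolor{m},\ocolor{a}^l;\ocolor{m})=\CoB(l)$ then yield functors $\cM\times\A^l\to\cM$. Singling out the operation in $\CoBM(\ocolor{m},\ocolor{a};\ocolor{m})=\CoB(1)$ corresponding to the unique ordering of $\{1\}$ gives a functor $\oplus\colon\cM\times\A\to\cM$, and the defining diagram of $\gamma_{\CoBM}$ forces $(A,X_1,\ldots,X_l)\mapsto A\oplus X_1\oplus\ldots\oplus X_l$ after iterating, with the strict associativity/unitality of the module action falling out of the associativity/unitality of $\gamma_{\CoBM}$. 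Conversely, given a strict right-module structure on $\cM$ over a strict braided monoidal $(\A,\oplus,\braiding,0)$, I would define the functor out of $\CoBM(\ocolor{m},\ocolor{a}^l;\ocolor{m})\times\cM\times\A^l$ by sending an ordering $\sigma$ to $(A,X_1,\ldots,X_l)\mapsto A\oplus X_{\sigma(1)}\oplus\ldots\oplus X_{\sigma(l)}$ and a braid to the corresponding composition of $\oplus$-applied braidings in $\A$ (these can be slid through the outer $\cM$-factor because the action is a functor). The defining diagram for $\gamma_{\CoBM}$ commutes by the strict associativity of $\oplus$, and this inverse construction is manifestly a two-sided inverse of the first.

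For the second assertion I would proceed by comparing $B\CoBM$ and $\SC_2$ color-by-color. Restricted to $\ocolor{a}$, this is the well-known statement that $B\CoB\simeq \cD^{\bullet}(D^2)$ is an $\tE_2$-operad \cite[Thm.\,5.2.12]{Fresse}, and the canonical morphism $\maptwoshort{\SC_2}{\SC_d}$ used earlier in the paper is modelled on the inclusion $\cD^\bullet(D^2)\subseteq\SC_2$ restricted to the $\ocolor{a}$-color, which is a levelwise weak equivalence by construction of $\SC_n$ in \cref{definition:SC}. For the mixed operations, the space $\SC_2(\ocolor{m},\ocolor{a}^l;\ocolor{m})=[0,\infty)\times\cD^l((0,s)\times(-1,1))$ deformation retracts onto the configuration space $\Conf_l((0,1)\times(-1,1))\simeq K(\braidgroup_l,1)$, while $B\CoBM(\ocolor{m},\ocolor{a}^l;\ocolor{m})=B\CoB(l)\simeq K(\braidgroup_l,1)$ as well. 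To upgrade this to an equivalence of colored operads, I would construct a zig-zag: one leg is the natural comparison map from $B\CoBM$ to the fundamental-groupoid operad of $\SC_2$ (which is $\CoBM$ by tracking how the braid group appears as $\pi_1$ of each operation space), and the other leg uses asphericity of the spaces $\SC_2(\ocolor{m},\ocolor{a}^l;\ocolor{m})$ and $\SC_2(\ocolor{a}^l;\ocolor{a})$ to factor the classifying-space-of-fundamental-groupoid map through the original operad. Concretely, aspherical colored operads in spaces are determined by their fundamental groupoid operad up to weak equivalence, which I would either invoke as a fact or make explicit by a direct homotopy.

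The routine part is the algebraic repackaging in the first assertion; the genuine work is in the second, where I expect the main obstacle to be the operadic coherence of the equivalence $B\CoBM\simeq \SC_2$. Checking aspherity of each operation space of $\SC_2$ is straightforward (configuration spaces of an open disc are $K(\pi,1)$'s, and the product with $[0,\infty)$ is contractible in the extra factor), and identifying $\pi_1$ levelwise with $\CoB(l)$ is also standard. The delicate point is verifying that the equivalences on operation spaces respect operadic composition up to coherent homotopy; this I would handle by following Fresse's argument for $\CoB$ and extending it to the mixed operations, where the composition $\gamma_{\CoBM}$ was defined precisely so as to model cabling of colored braids inside the strip operad $\SC_2$.
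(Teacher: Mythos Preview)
Your proposal is correct and takes essentially the same approach as the paper, which simply asserts that the proofs of the corresponding statements for $\CoB$ in \cite[Ch.~5]{Fresse} carry over to $\CoBM$ mutatis mutandis. You have in effect unpacked what that carrying-over consists of---reducing to the $\ocolor{a}$-color for the known $\CoB$-statements and checking the mixed operations by hand---so your sketch is a faithful elaboration of the paper's one-line proof.
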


\begin{proof} The proof of the corresponding result for $\CoB$ in \cite[Ch.\,5]{Fresse} carries over mutatis mutandis. 
\end{proof}

As a consequence of the previous lemma, the classifying space of a graded module over a braided monoidal category carries the structure of a graded $\tE_1$-module over an $\tE_2$-algebra.

\begin{rem}The operad of \emph{parenthesised coloured braids} encodes non-strict braided monoidal categories, and its classifying space operad is $\tE_2$ as well \cite[Ch.\,6]{Fresse}. By considering a parenthesised version of $\CoBM$, this extends in a similar fashion to non-strict right-modules over non-strict braided monoidal categories, whose classifying spaces hence also give $\tE_1$-modules over $\tE_2$-algebras.
\end{rem}

\subsection{Homological stability for groups and monoids}
Let $(\cM,\oplus)$ be a graded right-module over a braided monoidal category $(\cA,\oplus,\braiding,0)$ with a stabilising object $X$, i.e.~an object of $\cA$ of degree $1$. Taking classifying spaces results by \cref{lemma:classifyingspacesofmodules} in a graded $\tE_1$-module $\tB\cM$ over the $\tE_2$-algebra $\tB \cA$ with stabilising object $X\in\tB \cA$, hence provides a suitable input for Theorems~\ref{theorem:constant} and~\ref{theorem:twisted}. In the following, we introduce a condition on $\cM$ that ensures a simplification of the canonical resolution of $\tB\cM$.

\begin{dfn}The module $(\cM,\oplus)$ is called \emph{injective} at an object $A$ of $\cM$ if the stabilisation \[\mapwoshort{(-\oplus X^{\oplus p+1})}{\aut(B)}{\aut(B\oplus X^{\oplus p})}\] is injective for all objects $B$ for which $B\oplus X^{\oplus p}$ is isomorphic to $A$ for a $p\ge 0$.
\end{dfn}

\begin{dfn}\label{definition:RWWsimplicialsets}Define for an object $A$ of $\cM$ a semi-simplicial set $W_\bullet^{\RW}(A)$ with $p$-simplices given as equivalence classes of pairs $(B,f)$ of an object $B$ of $\cM$ and a morphism $f\in\cM(B\oplus X^{\oplus p+1},A)$, where $(B,f)$ and $(B',f')$ are equivalent if there is an isomorphism $g\in\cM(B,B')$ satisfying $f'\circ (g\oplus X^{\oplus p+1})=f$. The $i$th face of a $p$-simplex $[B,f]$ is defined as $[B\oplus X,f\circ (B\oplus\braiding_{X^{\oplus i},X}^{-1}\oplus X^{\oplus p-i})]$.
\end{dfn}

Recall the spaces of destabilisations $W_\bullet(A)$, i.e.~the fibres of the canonical resolution (see \cref{definition:canonicalresolution}).

\begin{lem}\label{lemma:equivalenttosimplicialsets}If $\cM$ is a groupoid, then the semi-simplicial set of path components $\pi_0(W_\bullet(A))$ for an object $A$ of $\cM$ is isomorphic to $W_\bullet^{\RW}(A)$. Moreover, $W_\bullet(A)$ is homotopy discrete if and only if $\cM$ is injective at $A$.
\end{lem}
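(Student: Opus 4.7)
My plan is to exploit the fact that, when $\cM$ is a groupoid, the $(p+1)$-fold stabilization $s^{p+1}\colon \tB\cM \to \tB\cM$ is the classifying map of the endofunctor $(\ \_\oplus X^{\oplus p+1})\colon\cM\to\cM$, whose homotopy fiber at $A$ admits a standard model as the classifying space of an iso-comma groupoid. The space of destabilizations $W_p(A)$ is, by construction and \cref{remark:connectivityresolutionbyspacesofdestabilization}, equivalent to $\hofib_A(s^{p+1})$, so this reduces everything to a direct computation with groupoids.

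First I would recall that for a functor $F\colon \cM\to\cN$ between groupoids, $\hofib_A(\tB F)$ is weakly equivalent to the classifying space of the groupoid $F/A$ whose objects are pairs $(B,f)$ with $B\in\ob\cM$ and $f\in\cN(F(B),A)$, and whose morphisms $(B,f)\to(B',f')$ are isomorphisms $g\in\cM(B,B')$ with $f'\circ F(g)=f$. Applied to $F=(\ \_\oplus X^{\oplus p+1})$, the path components of $F/A$ are precisely the equivalence classes $[B,f]$ of \cref{definition:RWWsimplicialsets}, yielding a bijection $\pi_0(W_p(A))\cong W^{\RW}_p(A)$.

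Next I would verify compatibility with the face maps. According to \cref{remark:unwrapresolution}, the $i$th face map of $R_\bullet(\cM)$ multiplies the path $\zeta$ by a loop corresponding to the braid $\braiding^{-1}_{X^{\oplus i},X}\oplus X^{\oplus p-i}\in B_{p+1}$. Under the identification above, this descends on path components to the assignment $[B,f]\mapsto[B\oplus X,f\circ(B\oplus \braiding^{-1}_{X^{\oplus i},X}\oplus X^{\oplus p-i})]$, matching \cref{definition:RWWsimplicialsets} and establishing the claimed isomorphism of semi-simplicial sets.

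For the second claim, I would use that each path component of $\tB(F/A)$ containing $(B,f)$ is homotopy equivalent to $\tB\aut_{F/A}(B,f)$, where $\aut_{F/A}(B,f)=\{g\in\aut(B)\mid f\circ(g\oplus X^{\oplus p+1})=f\}$. Since $f$ is an isomorphism in the groupoid $\cM$, this subgroup is precisely the kernel of the stabilization $\mapwoshort{(\ \_\oplus X^{\oplus p+1})}{\aut(B)}{\aut(B\oplus X^{\oplus p+1})}$. Consequently $W_p(A)$ is homotopy discrete for every $p\ge0$ if and only if all these kernels vanish for all $p\ge0$ and all $B$ with $B\oplus X^{\oplus p+1}\cong A$, which is exactly the injectivity of $\cM$ at $A$. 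The main delicacy will lie in making the standard iso-comma-groupoid model of the homotopy fiber compatible with the specific fibrant replacement used in \cref{definition:canonicalresolution}, i.e.\,verifying that the path components of the fiber $W_p(A)$ of $R_p(\cM)\to\cM$ genuinely agree with those of $F/A$ once one unpacks the path-space construction; this is largely bookkeeping but needs care with the braiding conventions fixed in \cref{section:complexes}.
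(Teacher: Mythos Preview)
Your proposal is correct and follows essentially the same approach as the paper: both identify $W_p(A)=\hofib_A(\tB(\ \_\oplus X^{\oplus p+1}))$ with (the classifying space of) the iso-comma groupoid $F/A$, read off $\pi_0$ and the face maps, and characterize homotopy discreteness via the kernel of the stabilization on automorphism groups. The paper phrases the first step more concretely—using that every path between $0$-simplices in $\tB\cM$ is homotopic rel endpoints to a $1$-simplex—whereas you invoke the standard iso-comma model; these are the same argument in different language, and your closing remark about compatibility with the path-space fibrant replacement is exactly the bookkeeping the paper's formulation absorbs.
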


\begin{proof}
The inclusion of the $0$-simplices $\ob\cM\subseteq \tB\cM$, together with the natural map $\maptwoshort{\mor\cM}{\Path\cM}$, induces a preferred bijection $\maptwoshort{W_p^{\RW}(A)}{\pi_0(W_p(A))}$ for all $p\ge0$, since every path in $\tB\cM$ between $0$-simplices is homotopic relative to its endpoints to a one simplex, i.e.~to a path in the image of $\maptwoshort{\mor\cM}{\Path\cM}$. By the definition of the respective face maps, these bijections assemble to an isomorphism of simplicial sets, which proves the first claim. The homotopy fibre $W_p(A)$ of the map $\mapwoshort{\tB(-\oplus X^{\oplus p+1})}{\tB\cM}{\tB\cM}$ at $A$ is homotopy discrete if and only if the induced morphisms on $\pi_1$ based at all objects $B$ with $B\oplus X^{\oplus p+1}\cong A$ for $p\ge0$ are injective, which is clearly equivalent to $\cM$ being locally injective at $A$.
\end{proof}

\begin{rem}
\label{remark:coefficientsgroupoid}If $\cM$ and $\cA$ are groupoids, then $\cM\simeq\Pi(\tB \cM)$ holds naturally as a module over $\cA\simeq\Pi(\tB\cA)$, so coefficient systems for $\tB \cM$ (see \cref{definition:coefficientsystemsE1}) are coefficient systems for $\cM$ as in \cref{section:coefficientsystems}.
\end{rem}

\begin{rem}\label{remark:discreteframework}Since the connectivity of the canonical resolution can be tested on the spaces of destabilisations $W_\bullet(A)$ (see \cref{remark:connectivityresolutionbyspacesofdestabilisation}), Lemmas~\ref{lemma:equivalenttosimplicialsets} and~\ref{remark:coefficientsgroupoid} imply a version of Theorems~\ref{theorem:constant} and~\ref{theorem:twisted} that is phrased entirely in terms of (discrete) categories and semi-simplicial sets. This provides a simplified toolkit for proving homological stability for graded modules over braided monoidal categories with a stabilising object $X$ for which the multiplication $\mapwoshort{(-\oplus X)}{\aut(B)}{\aut(B\oplus X)}$ is injective for all objects $B$ of finite degree.
\end{rem}

\subsection{Comparison with the work of Randal-Williams and Wahl} \label{section:comparisonRWW}Let $(\cG,\oplus,\braiding,0)$ be a braided monoidal groupoid. In \cite{RWW}, it is shown that, for objects $A$ and $X$ in $\cG$, the maps  \begin{equation}\label{equation:groupoidstabilisation}\mapwo{\tB(-\oplus X)}{\tB\aut_\cG(A\oplus X^{\oplus n})}{\tB\aut_\cG(A\oplus X^{\oplus n+1}})\end{equation} satisfy homological stability with constant, abelian, and a class of coefficient systems if a certain family of associated semi-simplicial sets $W_n(A,X)_\bullet$ (see \cite[Def.\,2.1]{RWW}) is sufficiently connected and $\cG$ satisfies
\begin{enumerate}
\item injectivity of the stabilisation map $\mapwoshort{(-\oplus X)}{\aut_\cG(A\oplus X^{\oplus n})}{\aut_\cG(A\oplus X^{\oplus n+1})}$ for all $n\ge 0$,
\item \emph{local cancellation at $(A,X)$}, i.e.~$Y\oplus X^{\oplus m}\cong A\oplus X^{\oplus n}$ for $Y\in\cG$ and $1\le m\le n$ implies $Y\cong A\oplus X^{\oplus m-n}$,
\item no zero-divisors, i.e.~$U\oplus V\cong 0$ implies $U\cong 0$, and
\item the unit $0$ has no nontrivial automorphisms.
\end{enumerate}
As indicated by our choice of notation, if we consider $\cG$ as a module over itself, the simplicial set $W_n(A,X)_\bullet$ of \cite{RWW} equals $W^{\RW}_\bullet(A\oplus X^{\oplus n})$ as specified in \cref{definition:RWWsimplicialsets}. To compare \cite{RWW} with our work, define the module $\cG_{A,X}=\coprod_{n\ge0}\aut_\cG(A\oplus X^{\oplus n})$ over the braided monoidal category $\cG_{X}=\coprod_{n\ge0}\aut_\cG(X^{\oplus n})$, both graded in the evident way. By Theorems~\ref{theorem:constant} and~\ref{theorem:twisted}, the maps \eqref{equation:groupoidstabilisation} stabilise homologically---without assumptions on $\cG$---if the canonical resolution of $\tB \cG_{A,X}$ is sufficiently connected, or equivalently, if the spaces of destabilisations $W_\bullet(A\oplus X^{\oplus n})$ associated to $\tB \cG_{A,X}$ are (see \cref{remark:connectivityresolutionbyspacesofdestabilisation}).

The semi-simplicial sets $W_n(A,X)_\bullet$ of \cite{RWW} are equivalent to the spaces of destabilisations $W_\bullet(A\oplus X^{\oplus n})$ of $\tB \cG_{A,X}$ if conditions (i) and (ii) hold. Indeed, assumption (ii) implies that $W_n(A,X)_\bullet$ agrees with the semi-simplicial set $W^{\RW}_\bullet(A\oplus X^{\oplus n})$ associated to $\cG_{A,X}$ and hence also with $\pi_0(W_\bullet(A\oplus X^{\oplus n}))$ by \cref{lemma:equivalenttosimplicialsets}. The first condition imposes injectivity of $\cG_{A,X}$ at all objects $A\oplus X^{\oplus n}$, which is by \cref{lemma:equivalenttosimplicialsets} equivalent to the homotopy discreteness of the space of destabilisations $W_\bullet(A\oplus X^{\oplus n})$ of $\tB\cG_{A,X}$.

Hence, if one prefers to work in a discrete setting as in \cite{RWW}, i.e.~using semi-simplicial sets, condition (i) is necessary. Condition (ii) ensures that the semi-simplicial sets of \cite{RWW} agree with our spaces of destabilisations $W_\bullet(A\oplus X^{\oplus n})$, whose high-connectivity always imply stability by Theorems~\ref{theorem:constant} and~\ref{theorem:twisted}. The last two conditions are redundant, i.e.~imposing (i) and (ii) already implies (twisted) homological stability of \eqref{equation:groupoidstabilisation} under the connectivity assumptions of \cite{RWW}. The presence of these additional assumptions in \cite{RWW} is due to their usage of Quillen's construction $\langle\cG,\cG\rangle$ since the conditions (iii) and (iv) guarantee that the automorphism groups $\aut_{\langle\cG,\cG\rangle}(A\oplus X^{\oplus n})$ and $\aut_\cG(A\oplus X^{\oplus n})$ coincide. If (i)--(iii) are satisfied and the $W_n(A,X)$ are highly-conected, then \cite{RWW} implies stability for $\aut_{\langle\cG,\cG\rangle}(A\oplus X^{\oplus n})$. Hence, in this case, high-connectivity of $W_n(A,X)$ shows stability for both $\aut_\cG(A\oplus X^{\oplus n})$ and $\aut_{\langle\cG,\cG\rangle}(A\oplus X^{\oplus n})$. The reason for this is that, although these automorphism groups might differ, their quotients $\aut(A\oplus X^n)/\aut(A\oplus X^{\oplus n-p-1})\cong W^{\RW}_p(A\oplus X^{\oplus n})$, forming the corresponding semi-simplicial sets, agree.

\begin{rem}\label{remark:improvecoefficientsystemsRWW}
The coefficient systems \cite{RWW} deals with are functors of finite degree on the subcategory $\cC_{A,X}\subseteq\langle\cG,\cG\rangle$ generated by the objects $A\oplus X^{\oplus n}$. In contrast, \cref{theorem:twisted} is applicable to functors of finite degree on $\langle\cG_{A,X},\cB\rangle$ (see Remarks~\ref{remark:quillencoefficients} and~\ref{remark:coefficientsgroupoid}). 
As the canonical functors $\maptwoshort{\cG_{A,X}}{\cG}$ and $\maptwoshort{\cB}{\cG}$ induce $\maptwoshort{\langle\cG_{A,X},\cB\rangle}{\cC_{A,X}}$, every coefficient system of \cite{RWW} gives one in ours (cf.\,Remarks~\ref{remark:RWWcoefficients} and~\ref{remark:quillencoefficients}). 
\end{rem}

\begin{rem}\label{remark:improverangesRWW}
The ranges for coefficient systems of finite degree provided by \cref{theorem:twisted} agree with the ones of \cite{RWW} in the situations in which their work is applicable.
The ranges for abelian coefficients of \cref{theorem:constant} improve the ones of \cite{RWW} marginally, and so does the surjectivity range for constant coefficients in the case $k>2$. Note that, by \cref{remark:improvement}, these ranges can in some cases be further improved.
\end{rem}

\printbibliography

\end{document}